\DeclareSymbolFont{numbers}{T1}{EBGaramond-LF}{m}{n}
\DeclareMathSymbol{0}\mathalpha{numbers}{"30}
\DeclareMathSymbol{1}\mathalpha{numbers}{"31}
\DeclareMathSymbol{2}\mathalpha{numbers}{"32}
\DeclareMathSymbol{3}\mathalpha{numbers}{"33}
\DeclareMathSymbol{4}\mathalpha{numbers}{"34}
\DeclareMathSymbol{5}\mathalpha{numbers}{"35}
\DeclareMathSymbol{6}\mathalpha{numbers}{"36}
\DeclareMathSymbol{7}\mathalpha{numbers}{"37}
\DeclareMathSymbol{8}\mathalpha{numbers}{"38}
\DeclareMathSymbol{9}\mathalpha{numbers}{"39}
\definecolor{ceruleanblue}{rgb}{0.16, 0.32, 0.75}
\def\theglossary{\@restonecoltrue\if@twocolumn\@restonecolfalse\fi
\columnseprule\z@ \columnsep 35\p@
%\let\@makessectionhead\indexsec
%\@xp\section\@xp*\@xp{\glossaryname}%
\let\item\@idxitem
\parindent\z@ \parskip\z@\@plus.3\p@
\relax\footnotesize}
\newcommand\note[1]%
\newcommand\mr{\mathring}
\newcounter{num}
\newenvironment{numerate}%
{\begin{list}{\hskip\labelsep\hskip\parindent(\roman{num})}%
{\usecounter{num}%
\setlength\leftmargin{0em}\setlength\topsep{0em}%
\setlength\parsep{0em}\setlength\partopsep{0em}%
\setlength\itemsep{0em}\setlength\labelwidth{0em}}}%
{\end{list}}
\numberwithin{equation}{subsection}
\newtheorem{theorem}[equation]{Theorem}
\newtheorem{lemma}[equation]{Lemma}
\newtheorem{proposition}[equation]{Proposition}
\newtheorem{corollary}[equation]{Corollary}
\theoremstyle{definition}
\newtheorem{definition}[equation]{Definition}
\newtheorem{example}[equation]{Example}
\newtheorem{remark}[equation]{Remark}
\newtheorem{remarks}[equation]{Remarks}
\newtheorem{notation-definition}[equation]{Notation and definition}
\newcommand\eu{\mathfrak}
\newcommand\lie{\mathfrak}
\newcommand{\f}{\lie{f}} 
\newcommand{\g}{\lie{g}} 
\newcommand{\h}{\lie{h}}
\newcommand{\n}{\lie{n}}
\newcommand{\liea}{\lie{a}}
\newcommand{\liel}{\lie{l}}
\newcommand\bb{\mathbf}
\newcommand\R{\bb{R}}
\newcommand\ca{\mathscr}
\DeclareMathOperator\ad{ad} 
\DeclareMathOperator\ann{ann}
\DeclareMathOperator\bad{\mathbf{ad}} 
\DeclareMathOperator\an{\mathbf{an}}
\DeclareMathOperator\Der{Der}
\DeclareMathOperator\diag{diag}
\DeclareMathOperator\ev{ev}
\DeclareMathOperator\id{id}
\DeclareMathOperator\im{im}
\DeclareMathOperator\Lie{Lie}
\DeclareMathOperator\pr{pr}
\DeclareMathOperator\rank{rank}
\DeclareMathOperator\sspan{span}
\DeclareMathOperator\stab{stab}
\newcommand\group[1]{{\mathbf{#1}}}
\newcommand\abs[1]{\lvert#1\rvert}
\newcommand\inner[1]{\langle#1\rangle}
\newcommand\biginner[1]{\bigl\langle#1\bigr\rangle}
\newcommand\qu[1][\kern.3ex]{/\kern-.7ex/_{\kern-.4ex#1}}
\newcommand\bigqu[1][\,\,]{\big/\kern-.85ex\big/_{\!\!#1}}
\newcommand\powl{[\kern-.6ex[}
\newcommand\powr{]\kern-.6ex]}
\newcommand\bigpowl{\bigl[\kern-.7ex\bigl[}
\newcommand\bigpowr{\bigr]\kern-.7ex\bigr]}
\newcommand\Bigpowl{\Bigl[\kern-.7ex\Bigl[}
\newcommand\Bigpowr{\Bigr]\kern-.7ex\Bigr]}
\newcommand\biggpowl{\biggl[\kern-.7ex\biggl[}
\newcommand\biggpowr{\biggr]\kern-.7ex\biggr]}
\newcommand\sur{\mathrel{\to\kern-1.8ex\to}}
\newcommand\longto{\longrightarrow}
\newcommand\longsur{\mathrel{\longrightarrow\kern-1.8ex\to}}
\newcommand\pardif[2]{\frac{\partial#1}{\partial#2}}
\newcommand\tangent[1]{\partial\mspace{-2mu}/\mspace{-2mu}\partial#1}
\newcommand\eps{\varepsilon}
\renewcommand\phi{\varphi}
\newcommand\bu{{\scriptscriptstyle\bullet}}
\newcommand\hor{{\mathrm{hor}}}
\newcommand\can{{\mathrm{can}}}
\newcommand\F{{\ca{F}}}
\newcommand\barM{M\kern-0.7em\overline{\phantom I}\kern0.18em}
\begin{document}

%%%%%%%%%%%%%%%%%%%%%%%%%%%%%%%%%%%%%%%%%%%%%%%%%%%%%%%%%%%%%%%%%%%%%%%%
%%%%%%%%%%%%%%%%%%%%%%%%%%%%%%%%%%%%%%%%%%%%%%%%%%%%%%%%%%%%%%%%%%%%%%%%

\title[Symplectic Lie algebroids]{Symplectic reduction and a
  Darboux-Moser-Weinstein theorem for Lie algebroids}

\author{Yi Lin}

\address{%Department of Mathematical Sciences,% 
Georgia Southern University, Statesboro, Georgia 30460, USA}

\email{yilin@georgiasouthern.edu}

\author{Yiannis Loizides}

\address{%Department of Mathematics,%
Cornell University, Ithaca, New York 14853, USA}

\curraddr{%Department of Mathematics,%
George Mason University, Fairfax, Virginia 22030, USA}

\email{yloizide@gmu.edu}

\author{Reyer Sjamaar}

\address{%Department of Mathematics,%
Cornell University, Ithaca, New York 14853, USA}

\email{sjamaar@math.cornell.edu}

\author{Yanli Song}

\address{%Department of Mathematics,%
Washington University, St. Louis, Missouri 63130, USA}

\email{yanlisong@wustl.edu}

\date\today

\dedicatory{For Victor Guillemin}

%%%%%%%%%%%%%%%%%%%%%%%%%%%%%%%%%%%%%%%%%%%%%%%%%%%%%%%%%%%%%%%%%%%%%%%%
%%%%%%%%%%%%%%%%%%%%%%%%%%%%%%%%%%%%%%%%%%%%%%%%%%%%%%%%%%%%%%%%%%%%%%%%

\begin{abstract}
We extend the Marsden-Weinstein reduction theorem and the
Darboux-Moser-Weinstein theorem to symplectic Lie algebroids.  We also
obtain a coisotropic embedding theorem for symplectic Lie algebroids.
%% normal form theorems for Hamiltonian Lie group actions on
\end{abstract}

%%%%%%%%%%%%%%%%%%%%%%%%%%%%%%%%%%%%%%%%%%%%%%%%%%%%%%%%%%%%%%%%%%%%%%%%
%%%%%%%%%%%%%%%%%%%%%%%%%%%%%%%%%%%%%%%%%%%%%%%%%%%%%%%%%%%%%%%%%%%%%%%%

\maketitle

\tableofcontents

%%%%%%%%%%%%%%%%%%%%%%%%%%%%%%%%%%%%%%%%%%%%%%%%%%%%%%%%%%%%%%%%%%%%%%%%
%%%%%%%%%%%%%%%%%%%%%%%%%%%%%%%%%%%%%%%%%%%%%%%%%%%%%%%%%%%%%%%%%%%%%%%%

%%%%%%%%%%%%%%%%%%%%%%%%%%%%%%%%%%%%%%%%%%%%%%%%%%%%%%%%%%%%%%%%%%%%%%%%
\section{Introduction}
%%%%%%%%%%%%%%%%%%%%%%%%%%%%%%%%%%%%%%%%%%%%%%%%%%%%%%%%%%%%%%%%%%%%%%%%

In this paper we extend the Marsden-Weinstein reduction theorem and
the Darboux-Moser-Weinstein theorem to the setting of log symplectic
manifolds.  This work is a building block in our recently established
``quantization commutes with reduction'' theorem for log symplectic
manifolds~\cite{lin-li-sjamaar-song;log-qr}.

By a log symplectic manifold we mean a real manifold equipped with a
symplectic form that has first-order poles along a divisor (real
hypersurface) with normal crossings.  The log tangent bundle of such a
manifold, i.e.\ the vector bundle whose sections are the vector fields
tangent to the divisor, is an example of a Lie algebroid, and most of
our arguments extend to the case of an arbitrary Lie algebroid.  To
underscore the utility of Lie algebroids in ``desingularizing''
certain Poisson structures we have chosen to formulate our results, in
as far as possible, in terms of Poisson and symplectic structures on
general Lie algebroids.

%% Although Hamiltonian reduction and the Moser trick are well-worn
%% themes in symplectic geometry and our results are not very
%% surprising, we hope with our approach at least to underscore the
%% utility of Lie algebroids in ``desingularizing'' certain Poisson
%% structures.

Poisson structures on Lie algebroids were introduced under the name of
triangular Lie bialgebroids by Mackenzie and
Xu~\cite{mackenzie-xu;bialgebroids-poisson}.  Such structures include
Poisson structures in the usual sense as well as triangular Lie
bialgebras.  Symplectic structures on Lie algebroids were introduced
by Nest and
Tsygan~\cite{nest-tsygan;deformations-symplectic-lie-algebroids} in
order to extend Fedosov's work on deformation quantization from
symplectic manifolds to a wider class of Poisson manifolds, and by
Mart\'{\i}nez~\cite{martinez;mechanics-lie-algebroids},
\cite{martinez;lagrangian-lie-algebroids} to develop a version of
Lagrangian mechanics on Lie algebroids initiated by
Weinstein~\cite{weinstein;lagrangian-groupoids}.  Examples of
symplectic Lie algebroids include symplectic manifolds, log symplectic
manifolds, $b^m$-symplectic manifolds, complex symplectic manifolds,
and constant rank Poisson structures.

Our Hamiltonian reduction theorem,
Theorem~\ref{theorem;mikami-weinstein}, is a version of the
Mikami-Weinstein reduction
theorem~\cite{mikami-weinstein;moments-reduction-groupoids} carried
over to the context of Lie algebroids.  In the special case where the
target of the moment map is $\g^*$, the dual of a Lie algebra $\g$
equipped with the tangent Lie algebroid $T\g^*$, our theorem was
obtained earlier by Marrero et
al.~\cite[Theorem~3.11]{marrero-padron-rodriguez;symplectic-like}.
However, Hamiltonians on log symplectic manifolds may have logarithmic
poles, and one of the purposes of allowing more general momentum
codomains than $\g^*$ is to enable us to reduce at poles of the moment
map.  A novel feature of reduction ``at infinity'' is that it involves
not only the choice of a point in the codomain, but also a choice of a
subalgebra of its Lie algebroid stabilizer.

Versions of Moser's trick and the Darboux-Moser-Weinstein theorem in
the context of Lie algebroids have been found by many authors,
including Cavalcanti and
Gualtieri~\cite{cavalcanti-gualtieri;stable-generalized}, Cavalcanti
et al.~\cite{cavalcanti-klaasse-witte;self-crossing}, Geudens and
Zambon~\cite {geudens-zambon;coisotropic-b-symplectic}, Guillemin et
al.~\cite{guillemin-miranda-weitsman;desingularizing-bm-symplectic},
Kirchhof-Lukat~\cite{kirchhoff-lukat;lagrangian-stable-generalized},
Klaasse and
Lanius~\cite[\S\,4.3]{klaasse;geometric-structures-lie-algebroids},
\cite{klaasse-lanius;splitting-poisson}, Miranda and
Scott~\cite[\S\,2]{miranda-scott;e-manifolds}, and
Smilde~\cite{smilde;linearization-poisson}.  Our version,
Theorem~\ref{theorem;dmw}, overlaps with these results and contains
some of them as special cases.  It applies to situations where a Lie
subalgebroid is locally a deformation retract of the ambient Lie
algebroid.  Our proof relies on recent work of Bischoff et
al.~\cite{bischoff-bursztyn-lima-meinrenken} and Bursztyn et
al.~\cite{bursztyn-lima-meinrenken;splitting}, which provides us with
a method to produce Lie algebroid homotopies from so-called Euler-like
sections.  One corollary of our result is a normal form for transverse
coisotropic submanifolds, Theorem~\ref{theorem;coisotropic}, which is
an ingredient in our paper~\cite{lin-li-sjamaar-song;log-qr}.

We review Poisson and symplectic Lie algebroids in
Section~\ref{section;symplectic}.  The Hamiltonian reduction theorem
is in Section~\ref{section;hamilton}.  Section~\ref{section;normal}
contains a discussion of homotopies of the Lie algebroid de Rham
complex, as well as the Darboux-Moser-Weinstein theorem and the
coisotropic embedding theorem.  We spell out some consequences for the
log symplectic case in Section~\ref{section;log}.
Appendix~\ref{section;algebroid} is a review of Lie algebroids.

%%%%%%%%%%%%%%%%%%%%%%%%%%%%%%%%%%%%%%%%%%%%%%%%%%%%%%%%%%%%%%%%%%%%%%%%
\subsection{Notation and terminology}\label{subsection;notation}
%%%%%%%%%%%%%%%%%%%%%%%%%%%%%%%%%%%%%%%%%%%%%%%%%%%%%%%%%%%%%%%%%%%%%%%%

See Appendix~\ref{section;notation} for a notation index.  By a
\emph{manifold} we mean a finite-dimensional Hausdorff second
countable smooth ($\ca{C}^\infty$) real manifold without boundary,
typically denoted by~$M$.  By a \emph{submanifold} we mean an
injectively immersed, but not necessarily embedded, submanifold.  We
denote the inclusion map of a submanifold $N$ of\/ $M$ by~$i_N$.  Let
$E$ be a real vector bundle over~$M$.  We denote the vector bundle
projection by $\pi$ or $\pi_E$, the space of smooth sections over an
open subset $U$ of\/ $M$ by $\Gamma(U,E)$, and the space of global
smooth sections by $\Gamma(E)=\Gamma(M,E)$.  We denote the zero bundle
over $M$ by $0_M$.  By a \emph{subbundle} we mean a (not necessarily
embedded) submanifold $F$ of\/ $E$ such that $N=\pi_E(F)$ is a
submanifold of\/ $M$ and $\pi_F=\pi_E|_F\colon F\to N$ is a vector
bundle.  For instance, the annihilator $F^\circ$ of a subbundle $F$ is
a subbundle of the dual bundle~$E^*$.  By a \emph{foliation} we mean a
nonsingular (i.e.\ constant rank) smooth foliation.  We say that the
leaf space $M/\F$ of a foliation $\F$ of\/ $M$ \emph{is a manifold} if
it has a (necessarily unique) manifold structure that makes the
quotient map $M\to M/\F$ a submersion.  By a \emph{Poisson Lie
  algebroid} we mean a Lie algebroid equipped with a Poisson structure
(involutive $2$-section) and by a \emph{symplectic Lie algebroid} a
Lie algebroid equipped with a nondegenerate Poisson structure.
\glossary{iN@$i_N\colon N\to M$, inclusion of immersed submanifold}
\glossary{[@$^\circ$, annihilator of subspace or subbundle}
\glossary{W@$W^\circ$, annihilator of subspace or subbundle}
\glossary{MF@$(M,\F)$, foliated manifold}
\glossary{pi@$\pi_E\colon E\to M$, vector bundle projection}
\glossary{GammaUE@$\Gamma(U,E)$, smooth sections over $U$}
\glossary{GammaE@$\Gamma(E)$, global smooth sections}
\glossary{0@$0_M$, zero vector bundle over $M$}
\glossary{MFF@$M/\F$, leaf space}
%

%%%%%%%%%%%%%%%%%%%%%%%%%%%%%%%%%%%%%%%%%%%%%%%%%%%%%%%%%%%%%%%%%%%%%%%%
\section{Poisson Lie algebroids}\label{section;symplectic}
%%%%%%%%%%%%%%%%%%%%%%%%%%%%%%%%%%%%%%%%%%%%%%%%%%%%%%%%%%%%%%%%%%%%%%%%

In this section we review the notions of a Poisson structure and a
symplectic structure on a Lie algebroid and extend some standard
results of Poisson geometry to the wider context of Lie algebroids.
These include symplectization and reduction theorems for presymplectic
Lie algebroids, Theorems~\ref{theorem;symplectization}
and~\ref{theorem;reduction}.

%%%%%%%%%%%%%%%%%%%%%%%%%%%%%%%%%%%%%%%%%%%%%%%%%%%%%%%%%%%%%%%%%%%%%%%%
\subsection{Poisson and symplectic structures on Lie algebroids}
\label{subsection;symplectic}
%%%%%%%%%%%%%%%%%%%%%%%%%%%%%%%%%%%%%%%%%%%%%%%%%%%%%%%%%%%%%%%%%%%%%%%%

Let $A$ be a Lie algebroid over a manifold~$M$.  We denote the
projection by $\pi=\pi_A\colon A\to M$, the anchor by $\an=\an_A\colon
A\to TM$, and the Lie bracket on sections by
\[
[{\cdot},{\cdot}]=
[{\cdot},{\cdot}]_A\colon\Gamma(A)\times\Gamma(A)\longto\Gamma(A).
\]
The Lie algebroid has a \emph{de Rham complex}
$(\Omega_A^\bu(M),d_A)$.  Its elements, which we call \emph{Lie
  algebroid forms}, or \emph{$A$-forms}, or just \emph{forms}, are
sections of the exterior algebra bundle $\Lambda^\bu A^*$, and its
differential $d_A$ is defined in terms of the anchor and the Lie
bracket.  (See Appendix~\ref{subsection;cartan} for a review.)  The
Lie bracket on the space of sections $\Gamma(A)$ extends to a
$-1$-shifted graded Lie bracket $[\cdot,\cdot]_A$ on the algebra of
multisections $\Gamma(\Lambda^\bu A)$, known as the
\emph{Schouten-Nijenhuis bracket}.  (See
e.g.~\cite[\S\,2]{mackenzie-xu;bialgebroids-poisson}.)  In particular,
for each multisection $\sigma\in\Gamma(\Lambda^pA)$ we can form the
multisection $[\sigma,\sigma]\in\Gamma(\Lambda^{2p-1}A)$.  Let us call
$\sigma$ \emph{involutive} if\/ $[\sigma,\sigma]=0$.

If\/ $A=TM$ is the ordinary tangent bundle, then a $2$-section
$\lambda\in\Gamma(\Lambda^2A)$ defines a Poisson structure on $M$ if
and only if it is involutive.  A result of Coste et
al.~\cite[\S\,III.2]{coste-dazord-weinstein;groupoides} says that a
Poisson structure on $M$ makes the cotangent bundle $T^* M$ a Lie
algebroid.  Their result was extended by Mackenzie and Xu as follows.
Item~\eqref{item;dual-lie} of this statement is a reformulation
of~\cite[Theorem~4.3]{mackenzie-xu;bialgebroids-poisson} and
item~\eqref{item;schouten} summarizes the material
of~\cite[\S\,4]{mackenzie-xu;bialgebroids-poisson}.

\begin{theorem}[Mackenzie and Xu~\cite{mackenzie-xu;bialgebroids-poisson}]
\label{theorem;poisson}
Let $A\to M$ be a Lie algebroid and let\/
$\lambda\in\Gamma(\Lambda^2A)$ be a $2$-section of~$A$.  Define the
vector bundle map $\lambda^\sharp\colon A^*\to A$ by
$\beta(\lambda^\sharp(\alpha))=\lambda(\alpha,\beta)$ for $\alpha$,
$\beta\in\Omega_A^1(M)=\Gamma(A^*)$.  Define antisymmetric bracket
operations on functions and on $1$-forms by
\begin{equation}\label{equation;bracket-function}
\{f,g\}=\{f,g\}_\lambda=\iota_A(\lambda)(d_Af\wedge d_Ag)
\end{equation}
for $f$, $g\in\ca{C}^\infty(M)$ and
\begin{equation}\label{equation;bracket-form}
\{\alpha,\beta\}=\{\alpha,\beta\}_\lambda=
\iota_A(\lambda^\sharp\alpha)d_A\beta-\iota_A(\lambda^\sharp\beta)d_A\alpha+
d_A\iota_A(\lambda)(\alpha\wedge\beta)
\end{equation}
\glossary{lambda@$\lambda$, Poisson structure}
\glossary{lambda#@$\lambda^\sharp\colon A^*\to A$, structure map of
  Poisson structure}
\glossary{{}@$\{{\cdot},{\cdot}\}_\lambda$, Poisson bracket}
\glossary{dlambda@$d_\lambda$, Poisson differential}
for $\alpha$, $\beta\in\Omega_A^1(M)$.
\begin{enumerate}
\item\label{item;dual-lie}
$\lambda$ is involutive if and only if\/
  $\lambda^\sharp\{\alpha,\beta\}_\lambda=
  [\lambda^\sharp\alpha,\lambda^\sharp\beta]_A$ for all $\alpha$,
  $\beta\in\Omega_A^1(M)$.
\item\label{item;schouten}
Suppose $\lambda$ is involutive.  Then the
bracket~\eqref{equation;bracket-function} is a Poisson structure on
$M$ with associated Poisson tensor\/
$\an_A(\lambda)\in\Gamma(\Lambda^2TM)$, and the
bracket~\eqref{equation;bracket-form} is a Lie algebroid structure on
the dual bundle $A^*$ with anchor\/
$\an_\lambda={\an_A}\circ\lambda^\sharp$.  The map\/
$\lambda^\sharp\colon A^*\to A$ and the co-anchor $\an_A^*\colon T^*
M\to A^*$ (the transpose of the anchor $\an_A$) are Lie algebroid
morphisms.  The differential of the Lie algebroid $A^*$ is the
operator $d_\lambda\colon\Gamma(\Lambda^\bu
A)\to\Gamma(\Lambda^{{\bu}+1}A)$ given by $d_\lambda a=[\lambda,a]$.
\end{enumerate}
\end{theorem}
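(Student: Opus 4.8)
The plan is to phrase everything in terms of the Schouten--Nijenhuis calculus on the Gerstenhaber algebra $(\Gamma(\Lambda^\bu A),\wedge,[{\cdot},{\cdot}]_A)$ and to recognize the operator $d_\lambda=[\lambda,{\cdot}]_A$ as the de Rham differential of a candidate Lie algebroid structure on $A^*$. Since $\lambda\in\Gamma(\Lambda^2A)$ has degree $1$ in the shifted grading, the Leibniz rule for the Schouten bracket shows immediately that $d_\lambda$ is a degree $+1$ derivation of the wedge product, raising multisection degree by one. The graded Jacobi identity, applied to the two equal arguments $\lambda,\lambda$, gives $d_\lambda^2a=[\lambda,[\lambda,a]_A]_A=\tfrac12[[\lambda,\lambda]_A,a]_A$ for every $a$. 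Hence $d_\lambda^2=0$ as soon as $\lambda$ is involutive; conversely, evaluating $d_\lambda^2$ on functions recovers $[\lambda,\lambda]_A$ by contraction, so $d_\lambda^2=0$ forces involutivity. This identity is the engine behind both items.

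For item~\eqref{item;dual-lie} I would prove a universal identity, valid for an arbitrary (not necessarily involutive) $2$-section, of the form
\[
[\lambda^\sharp\alpha,\lambda^\sharp\beta]_A-\lambda^\sharp\{\alpha,\beta\}_\lambda
=\tfrac12\,\iota_\beta\iota_\alpha[\lambda,\lambda]_A,\qquad
\alpha,\beta\in\Omega_A^1(M),
\]
where the right-hand side is the section of $A$ obtained by contracting the $3$-section $[\lambda,\lambda]_A\in\Gamma(\Lambda^3A)$ with the $1$-forms $\alpha$ and $\beta$. Granting this, the defect on the left vanishes for all $\alpha,\beta$ if and only if $[\lambda,\lambda]_A=0$, which is exactly item~\eqref{item;dual-lie}. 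Establishing this identity is the main obstacle: it is a direct but bookkeeping-heavy computation that expands the definition of the Schouten bracket, the bracket formula~\eqref{equation;bracket-form}, and the defining relation $\beta(\lambda^\sharp\alpha)=\lambda(\alpha,\beta)$, keeping careful track of the signs inherent in the shifted grading.

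For item~\eqref{item;schouten} assume $\lambda$ involutive. A degree $+1$ derivation of $\Gamma(\Lambda^\bu A)=\Gamma(\Lambda^\bu(A^*)^*)$ squaring to zero is precisely the de Rham differential of a Lie algebroid structure on $A^*$, its anchor and bracket being recovered from the derivation by the usual Cartan formulas; by the first paragraph $d_\lambda$ is such a differential, which simultaneously yields the last assertion $d_{A^*}=d_\lambda$. To identify the structure with the stated data I would evaluate $d_\lambda$ in low degrees: on a function, $d_\lambda f=[\lambda,f]_A=\pm\lambda^\sharp(d_Af)$, so together with the identity $d_Af=\an_A^*(df)$ the induced anchor is $\an_A\circ\lambda^\sharp=\an_\lambda$; on a section $a\in\Gamma(A)$, pairing $d_\lambda a$ against $\alpha\wedge\beta$ through the Cartan formula reproduces~\eqref{equation;bracket-form}, so $[\alpha,\beta]_{A^*}=\{\alpha,\beta\}_\lambda$. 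That the function bracket~\eqref{equation;bracket-function} is Poisson with tensor $\an_A(\lambda)$ follows because the anchor $\an_A\colon A\to TM$ is a morphism of Gerstenhaber algebras, whence $[\an_A(\lambda),\an_A(\lambda)]=\an_A[\lambda,\lambda]_A=0$ makes $\an_A(\lambda)$ a genuine Poisson bivector, while $d_Af=\an_A^*(df)$ gives $\{f,g\}_\lambda=\lambda(d_Af,d_Ag)=\an_A(\lambda)(df,dg)$. Finally, the bracket compatibility of $\lambda^\sharp$ is item~\eqref{item;dual-lie} and its anchor compatibility is the definition $\an_\lambda=\an_A\circ\lambda^\sharp$, so $\lambda^\sharp$ is a Lie algebroid morphism; and $\an_A^*\colon T^*M\to A^*$ is a morphism by dualizing the anchor morphism $\an_A$, the cotangent algebroid of $(M,\an_A(\lambda))$ being compatible with $(A^*,\{{\cdot},{\cdot}\}_\lambda)$ by construction. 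The only genuinely laborious step is the defect identity of the second paragraph; everything else is formal once the Gerstenhaber-algebra framework is in place.
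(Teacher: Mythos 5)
Your overall architecture --- Schouten calculus, the operator $d_\lambda=[\lambda,\cdot\,]_A$, and Vaintrob's correspondence between square-zero degree-one derivations of\/ $\Gamma(\Lambda^\bu A)$ and Lie algebroid structures on $A^*$ --- is sound, and it is essentially the standard route; note that the paper itself gives no proof of this theorem but quotes it from \cite{mackenzie-xu;bialgebroids-poisson}, so your proposal has to stand on its own.  Within it there is one outright error, fortunately unused: the claim that $d_\lambda^2=0$ \emph{forces} involutivity is false.  Evaluating $d_\lambda^2=\tfrac12[[\lambda,\lambda]_A,\cdot\,]_A$ on functions only contracts $[\lambda,\lambda]_A$ against exact $1$-forms $d_Af=\an_A^*(df)$, which at a point span only $\im(\an_{A,x}^*)=(\ker\an_{A,x})^\circ$, not all of\/ $A_x^*$; evaluating on sections $\sigma$ only says $\ca{L}_A(\sigma)[\lambda,\lambda]_A=0$.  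A concrete counterexample is a Lie algebroid over a point, i.e.\ a Lie algebra $\g$, with $\lambda=r\in\Lambda^2\g$ a quasi-triangular $r$-matrix: if\/ $[r,r]\neq0$ is $\ad$-invariant (e.g.\ $r=e\wedge f$ in $\lie{sl}_2$, with $[r,r]=2\,h\wedge e\wedge f$ spanning the invariant top exterior power), then $[[r,r],\cdot\,]$ is a derivation of\/ $\Lambda^\bu\g$ killing constants and, by invariance, all of\/ $\g$, hence zero, so $d_r^2=0$ while $r$ is not involutive.  Luckily you never use this converse: item~\eqref{item;dual-lie} rests on your defect identity, and item~\eqref{item;schouten} needs only the implication ``involutive $\Rightarrow d_\lambda^2=0$'', so the error does not propagate; but the sentence should be deleted.

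The substantive gap is that the proof of item~\eqref{item;dual-lie} is not actually there: the defect identity
\[
[\lambda^\sharp\alpha,\lambda^\sharp\beta]_A-\lambda^\sharp\{\alpha,\beta\}_\lambda
=\tfrac12\,\iota_\beta\iota_\alpha[\lambda,\lambda]_A
\]
is where all of the content lives --- up to sign and normalization conventions it \emph{is} the statement of \cite[Theorem~4.3]{mackenzie-xu;bialgebroids-poisson} that the paper cites for item~\eqref{item;dual-lie} --- and you assert it rather than prove it.  The identity is true, and your deduction from it is valid: contraction against \emph{arbitrary} pairs of $1$-forms (unlike exact ones) does determine the $3$-section $[\lambda,\lambda]_A$, so the equivalence follows.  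But until that computation is carried out, you have proved item~\eqref{item;dual-lie} only modulo the very result being invoked.  The outline of item~\eqref{item;schouten} is correct: Vaintrob's theorem applies once $d_\lambda^2=0$, the anchor identification is immediate from $d_\lambda f=\lambda^\sharp(d_Af)$, the bracket identification with~\eqref{equation;bracket-form} is a routine Cartan-formula computation that should be displayed, $\an_A(\lambda)$ is Poisson because $\Lambda^\bu\an_A$ is a morphism of Gerstenhaber algebras, and the co-anchor claim --- which you dismiss as ``by construction'' --- does hold but requires the short term-by-term check in~\eqref{equation;bracket-form} that $\an_A^*$ intertwines the Koszul bracket of\/ $(M,\an_A(\lambda))$ with $\{\cdot,\cdot\}_\lambda$.
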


This theorem motivates the following definition.

\begin{definition}\label{definition;poisson}
An \emph{$A$-Poisson structure} on $M$ is an involutive $2$-section
$\lambda\in\Gamma(\Lambda^2A)$.  A \emph{Poisson Lie algebroid} is a
pair $(A,\lambda)$ consisting of a Lie algebroid $A$ over $M$ and an
$A$-Poisson structure $\lambda$ on~$M$.
\end{definition}

Thus an $A$-Poisson structure can be regarded as an ordinary Poisson
structure on $M$ together with a lift of the Poisson tensor to the Lie
algebroid~$A$.  Unlike an ordinary Poisson structure, an $A$-Poisson
structure is in general not determined by the Poisson bracket on
functions~\eqref{equation;bracket-function} alone.  Mackenzie and
Xu~\cite{mackenzie-xu;bialgebroids-poisson} refer to the triple
$(A,A^*,\lambda)$ as a \emph{triangular Lie bialgebroid}.

One use of Poisson Lie algebroids lies in the fact that sometimes an
ordinary Poisson structure on $M$ can be lifted to a ``less singular''
Poisson structure on a Lie algebroid $A$ over~$M$.  For instance, it
may happen that a degenerate Poisson structure lifts to a
nondegenerate, i.e.\ symplectic, $A$-Poisson structure.  (One such
situation is described in Section~\ref{subsection;phase}.)

\begin{definition}\label{definition;symplectic}
An \emph{$A$-symplectic form} on $M$ is a $2$-form
$\omega\in\Omega_A^2(M)$ that is $d_A$-closed, i.e.\ $d_A\omega=0$,
and non-degenerate.  We write $\omega^{-1}\in\Gamma(\Lambda^2A)$ for
the $2$-section and
\[
\begin{tikzcd}\omega^\flat\colon A\ar[r,"\cong"]&A^*\end{tikzcd},
\qquad
\begin{tikzcd}
\omega^\sharp\colon A^*\ar[r,"\cong"]&A
\end{tikzcd}
\]
\glossary{omega@$\omega$, symplectic structure}
\glossary{omegainv@$\omega^{-1}$, Poisson structure inverse to
  $\omega$}
\glossary{omegasharp@$\omega^\sharp\colon A^*\to A$, inverse of\/
  $\omega^\flat$}
\glossary{omegaflat@$\omega^\flat\colon A\to A^*$, structure map of\/
  $\omega$}
for the bundle isomorphisms determined by an $A$-symplectic
form~$\omega$.  A \emph{symplectic Lie algebroid} is a pair
$(A,\omega)$ consisting of a Lie algebroid $A$ and an $A$-symplectic
form $\omega$ on the base of~$A$.
\end{definition}

This definition follows Nest and
Tsygan~\cite{nest-tsygan;deformations-symplectic-lie-algebroids}.
(The term ``symplectic Lie algebroid'' is used by Coste et
al.~\cite[\S\,III.2]{coste-dazord-weinstein;groupoides} to mean
something different, namely the Lie algebroid of a local symplectic
groupoid.)  If\/ $\omega\in\Omega_A^2(M)$ is any nondegenerate $2$-form,
then a calculation using
Theorem~\ref{theorem;poisson}\eqref{item;dual-lie} shows that
$d_A\omega=0$ if and only if\/ $[\omega^{-1},\omega^{-1}]=0$.  So just
as in ordinary Poisson geometry an $A$-symplectic structure amounts to
an $A$-Poisson structure $\lambda$ such that the morphism
$\lambda^\sharp\colon A^*\to A$ is invertible.

Let $\lambda$ be an $A$-Poisson structure on~$M$.  To each function
$f\in\ca{C}^\infty(M)$ is associated a section
\begin{equation}\label{equation;hamilton-section}
\sigma_f=d_\lambda f=[\lambda,f]=\lambda^\sharp(d_Af)\in\Gamma(A)
\end{equation}
called the \emph{Hamiltonian section} of~$f$.  Conversely, if a
section $\sigma\in\Gamma(A)$ is of the form $\sigma=\sigma_f$ for some
function $f$, we say $f$ is a \emph{Hamiltonian} for~$\sigma$.  Like
any section of\/ $A$, the section $\sigma_f$ generates a flow on the
total space of\/ $A$ (see review in Appendix~\ref{subsection;cartan}),
which we call the \emph{Hamiltonian flow} of~$f$.  (In ordinary
Poisson geometry, this flow is the tangent flow on $A=TM$ of what one
usually calls the Hamiltonian flow of\/ $f$ on~$M$.)  The Poisson
structure $\lambda$ is invariant under the Hamiltonian flow of\/ $f$,
i.e.\
\[
\ca{L}_A(\sigma_f)\lambda=[\sigma_f,\lambda]=
d_\lambda\lambda^\sharp(d_Af)=\lambda^\sharp\bigl(d_A^2f\bigr)=0,
\]
 where $\ca{L}_A$ denotes the Lie algebroid Lie derivative.  The
 \emph{Hamiltonian correspondence} is the map
\[\ca{H}\colon\ca{C}^\infty(M)\longto\Gamma(A)\]
given by $\ca{H}(f)=\sigma_f$.  The Hamiltonian correspondence is a
Lie algebra homomorphism, and its kernel is the Lie ideal of\/
$A^*$-invariant functions.  In the symplectic case
($\lambda=\omega^{-1}$) the Poisson
bracket~\eqref{equation;bracket-function} is given by
$\{f,g\}=\omega(\sigma_f,\sigma_g)$.

%%%%%%%%%%%%%%%%%%%%%%%%%%%%%%%%%%%%%%%%%%%%%%%%%%%%%%%%%%%%%%%%%%%%%%%%
\subsection{The phase space of a Lie algebroid}\label{subsection;phase}
%%%%%%%%%%%%%%%%%%%%%%%%%%%%%%%%%%%%%%%%%%%%%%%%%%%%%%%%%%%%%%%%%%%%%%%%

The cotangent bundle (``phase space'') of a manifold has a natural
symplectic structure.  As noted by
Mart\'{\i}nez~\cite{martinez;mechanics-lie-algebroids} (see also de
Le\'on et al.~\cite{leon-marrero-martinez;lagrangian-lie-algebroid}
and Marrero et al.~\cite{marrero-padron-rodriguez;symplectic-like})
this familiar fact has an analogue in the world of Lie algebroids.
Let $B\to N$ be an arbitrary Lie algebroid.  The projection $\pi\colon
B^*\to N$ of the dual bundle $B^*$ is a submersion, so we can form the
pullback
\[A=\pi^!B=TB^*\times_{TN}B,\]
which is a Lie algebroid over~$B^*$.  (Pullbacks of Lie algebroids are
reviewed in Appendix~\ref{subsection;pull}.)  Elements of\/ $A=\pi^!B$
are tuples $(x,p,v,b)$, where $x\in N$, $p\in B_x^*$, $v\in T_pB^*$,
$b\in B_x$ satisfy $T_p\pi(v)=\an_B(b)$.  We define the
\emph{Liouville form} or \emph{canonical $1$-form}
$\alpha_\can\in\Omega_A^1(B^*)$ by
\[\alpha_\can(x,p,v,b)=p(b)\]
and the \emph{canonical $2$-form} $\omega_\can\in\Omega_A^2(B^*)$
by
\[\omega_\can=-d_A\alpha_\can.\]
We call the pair $(A,\omega_\can)$ the \emph{phase space} of the Lie
algebroid~$B$.  The following result says that the phase space is a
symplectic Lie algebroid and that the Poisson structure on $B^*$
determined by the canonical $2$-form is identical to the natural
linear Poisson structure that exists on the dual of any Lie algebroid.
We review the proof of item~\eqref{item;canonical} because we need the
details in Section~\ref{subsection;symplectization}.  By abuse of
language we say that a smooth map $f\colon P\to M$ \emph{cleanly}
(resp.\ \emph{transversely}) \emph{intersects} a Lie algebroid $C\to
M$ if its tangent map $Tf\colon TP\to TM$ cleanly
(resp.\ transversely) intersects the anchor $\an_C\colon C\to TM$ of\/
$A$ (Definition~\ref{definition;clean}).

\begin{proposition}[{\cite[\S\S\,3.2,~3.5,~7]%
{marrero-padron-rodriguez;symplectic-like}}]\label{proposition;phase}
  Let $B\to N$ be a Lie algebroid, let $\pi\colon B^*\to N$ be the
  dual vector bundle, and let $A=\pi^!B$ be the pullback of\/ $B$ to
  $B^*$\@.
\begin{enumerate}
\item\label{item;canonical}
The canonical $2$-form $\omega_\can\in\Omega_A^2(B^*)$ is an
$A$-symplectic form on $B^*$\@.
\item\label{item;phase-poisson}
The Poisson structure
$\an_A(\omega_\can^{-1})\in\Gamma(\Lambda^2TB^*)$ associated with
$\omega_\can$ is equal to the linear Poisson structure on $B^*$
determined by the Lie algebroid structure on~$B$.
\item\label{item;zero}
The zero section $\zeta\colon N\to B^*$ is transverse to $A$ and we
have a natural isomorphism $B\cong\zeta^!A$\@.  We have
$\zeta_!^*\omega_\can=0$, where $\zeta_!\colon B\cong\zeta^!A\to A$
denotes the canonical Lie algebroid
morphism~\eqref{equation;pull-morphism} induced by $\zeta$\@.
\end{enumerate}
\end{proposition}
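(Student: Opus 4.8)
The plan is to reduce everything to a local frame for the pullback algebroid $A=\pi^!B$ together with the defining formula $\alpha_\can(x,p,v,b)=p(b)$ and the identity $d_A^2=0$. Fix coordinates $x^i$ on $N$, a local frame $e_a$ of\/ $\Gamma(B)$ with dual frame $e^a$ and structure data $\an_B(e_a)=\rho_a^i\,\partial/\partial x^i$, $[e_a,e_b]_B=c_{ab}^ce_c$, and let $p_a$ be the fibre coordinates on $B^*$ dual to $e_a$. As a vector bundle over $B^*$ the pullback sits in an exact sequence $0\to\pi^*B^*\to A\to\pi^*B\to 0$, whose kernel is the vertical tangent bundle $\ker T\pi$ (canonically $\pi^*B^*$) and whose quotient records the $B$-component $b$. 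This yields a frame $\{\hat e_a,V_a\}$ of\/ $A$, where $\hat e_a=(\rho_a^i\,\partial/\partial x^i,\,e_a)$ is the coordinate horizontal lift and $V_a=(\partial/\partial p_a,\,0)$; write $\{\theta^a,\phi^a\}$ for the dual coframe, so that $\an_A(\hat e_a)=\rho_a^i\,\partial/\partial x^i$ and $\an_A(V_a)=\partial/\partial p_a$. From the definitions one reads off $\alpha_\can=p_a\,\theta^a$ and $d_Ap_a=\phi^a$.

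For item~\eqref{item;canonical}, closedness is free: $d_A\omega_\can=-d_A^2\alpha_\can=0$. For nondegeneracy I would expand $\omega_\can=-d_A(p_a\theta^a)=\phi^a\wedge\theta^a-p_a\,d_A\theta^a$. The leading term $\sum_a\phi^a\wedge\theta^a$ is the tautological pairing between the vertical and horizontal subbundles and is nondegenerate. The correction $-p_a\,d_A\theta^a$ contains no $\phi\wedge\phi$ part, since that part is governed by the $\hat e$-components of\/ $[V_b,V_c]_A$, and the two verticals have commuting anchors and zero $B$-components, forcing $[V_b,V_c]_A=0$; one finds in fact $d_A\theta^c=-\tfrac12 c_{ab}^c\,\theta^a\wedge\theta^b$, so $\omega_\can=\phi^a\wedge\theta^a+\tfrac12\,p_cc_{ab}^c\,\theta^a\wedge\theta^b$. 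A nondegenerate $\phi\wedge\theta$ pairing perturbed only by $\theta\wedge\theta$ terms stays nondegenerate, so $\omega_\can$ is symplectic. The main obstacle is pinning down the bracket relations of\/ $\{\hat e_a,V_a\}$ in the pullback algebroid precisely enough to control the type of\/ $d_A\theta^c$; I would extract these from the exact sequence above, from the requirement that the $B$-projection $A\to B$ be a Lie algebroid morphism over $\pi$, and from the compatibility $\rho_a^j\partial_j\rho_b^i-\rho_b^j\partial_j\rho_a^i=c_{ab}^c\rho_c^i$.

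For item~\eqref{item;phase-poisson} I would use that the ordinary Poisson bracket attached to $\lambda=\omega_\can^{-1}$ is $\{f,g\}=\omega_\can^{-1}(d_Af,d_Ag)$, because $d_Af=\an_A^*(df)$ gives $\lambda(d_Af,d_Ag)=\an_A(\lambda)(df,dg)$ (Theorem~\ref{theorem;poisson}\eqref{item;schouten}). It then suffices to check the three defining relations of the linear Poisson structure on the dual of\/ $B$ on generators: the fibrewise-linear functions $\ell_\beta$ (with $\ell_{e_a}=p_a$, hence $d_A\ell_{e_a}=\phi^a$) and the basic functions $\pi^*g$ (with $d_A\pi^*g=\pi^*(\an_B(e_a)g)\,\theta^a$). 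Inverting $\omega_\can$ in the frame and feeding these differentials in returns $\{\ell_\beta,\ell_{\beta'}\}=\ell_{[\beta,\beta']_B}$ (here $\{p_a,p_b\}=c_{ab}^cp_c$), $\{\ell_\beta,\pi^*g\}=\pi^*(\an_B(\beta)g)$, and $\{\pi^*f,\pi^*g\}=0$ (up to the global sign fixed by the chosen conventions), which are exactly the relations characterizing the linear Poisson structure.

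For item~\eqref{item;zero}, transversality is immediate: $\an_A(A)$ contains every vertical vector (those with $b=0$), while $T\zeta$ maps onto a complement because $T\pi\circ T\zeta=\id_{TN}$, so $\im T\zeta+\an_A(A)=TB^*$ along $\zeta(N)$. The isomorphism $B\cong\zeta^!A$ is the instance $\zeta^!\pi^!B\cong(\pi\circ\zeta)^!B=B$ of functoriality of pullback; concretely, unwinding the fibre product shows that the $A$-component of an element of\/ $\zeta^!A$ over $x$ must have the form $(x,0,T\zeta(\an_B(b)),b)$ with $b\in B_x$, giving the identification $b\leftrightarrow$ this tuple. Finally $\zeta_!^*\omega_\can=-d_B(\zeta_!^*\alpha_\can)$, and $\zeta_!^*\alpha_\can=0$ because along the zero section $p=0$, whence $\alpha_\can(x,0,v,b)=0(b)=0$; therefore $\zeta_!^*\omega_\can=0$.
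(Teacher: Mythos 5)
Your proposal is correct, and it divides naturally into a part that coincides with the paper and a part that goes beyond it. On item~(i) you are essentially giving the paper's own proof in coordinates: your frame $\{\hat e_a,V_a\}$ is precisely the paper's frame $\{e_i,f_i\}$, built from a local frame of\/ $B$ and the flat connection coming from the induced trivialization of\/ $B^*$, and your Gram-matrix conclusion (a nondegenerate $\phi\wedge\theta$ pairing perturbed only by $\theta\wedge\theta$ terms with coefficients $\pm[e_a,e_b]^\dagger=\pm c_{ab}^c p_c$) is exactly \eqref{equation;ff}--\eqref{equation;ee}. The step you single out as the main obstacle --- the bracket relations of the frame --- is dispatched in the paper in one line by the pullback bracket formula \eqref{equation;pull-bracket}, which gives $[\hat e_a,\hat e_b]=(\pi^*c_{ab}^c)\,\hat e_c$ and $[\hat e_a,V_b]=[V_a,V_b]=0$ directly; your proposed detour through the exact sequence and the morphism property also works, since a section of the fibred product $A=TB^*\times_{TN}B$ is determined by its two components, but it is less direct. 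Where you genuinely differ: the paper proves only item~(i) and cites Marrero et al.\ for items~(ii) and~(iii), whereas you supply self-contained arguments --- for~(ii), matching the two bivectors on brackets of the functions $p_a$ and $\pi^*g$, whose differentials span $T^*B^*$ pointwise, and for~(iii), functoriality of pullback ($\zeta^!\pi^!B\cong(\pi\circ\zeta)^!B=B$) together with the vanishing of\/ $\alpha_\can$ along $p=0$. Both arguments are correct and fill in what the paper leaves to the reference. The only caveat is bookkeeping: since $\omega_\can=-d_A\alpha_\can$, the expansion should read $\omega_\can=-\phi^a\wedge\theta^a-p_a\,d_A\theta^a$ rather than $+\phi^a\wedge\theta^a-p_a\,d_A\theta^a$, and correspondingly your $\theta\wedge\theta$ coefficient differs in sign from the paper's $-[b_i,b_j]^\dagger$ in \eqref{equation;ee}; these sign discrepancies are immaterial for nondegeneracy and only affect the overall convention you already flag in item~(ii).
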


\begin{proof}[Proof of~\eqref{item;canonical}]
Let $r$ be the rank of the vector bundle $B$ and let $x\in N$.  The
fibre of the vector bundle $A=\pi^!B$ over $B^*$ at $p\in B_x^*$ is
$A_p=T_pB^*\times_{T_xN}B_x$, so the rank of\/ $A$ is~$2r$.  For a
sufficiently small neighbourhood $U$ of\/ $x$ we will exhibit a frame
$e_1$, $e_2,\dots$,~$e_r$, $f_1$, $f_2,\dots$,~$f_r$ of\/ $A$ defined
on $\pi^{-1}(U)$ with respect to which the matrix of the bilinear form
$\omega_\can$ is invertible.  A section $\sigma$ of\/ $A$ can be
described as a pair $\sigma=(v,b)$ consisting of a vector field $v$ on
$B^*$ and a smooth map $b\colon B^*\to B$ satisfying $T\pi\circ
v=\an_B\circ b$.  The anchor of\/ $\sigma$ is then $\an_A(\sigma)=v$.
A section $\beta$ of\/ $B$ gives rise to a fibrewise linear smooth
function $\beta^\dagger$ on $B^*$ defined by
\begin{equation}\label{equation;fibre-linear}
\beta^\dagger(p)=\inner{p,\beta(\pi(p))},
\end{equation}
where $\inner{{\cdot},{\cdot}}$ denotes the dual pairing between $B^*$
and~$B$.  From~\eqref{equation;pull-bracket} and~\eqref{equation;d} we
obtain, for any pair of sections of\/ $A$ of the form
$\sigma_1=(v_1,\beta_1\circ\pi)$, $\sigma_2=(v_2,\beta_2\circ\pi)$,
\begin{equation}\label{equation;canonical}
\omega_\can(\sigma_1,\sigma_2)=-v_1\cdot\beta_2^\dagger+
v_2\cdot\beta_1^\dagger-[\beta_1,\beta_2]^\dagger.
\end{equation}
Now choose a frame $b_1$, $b_2,\dots$,~$b_r$ of the vector bundle $B$
defined on a neighbourhood $U$ of~$x$.  Let $b_1^*$,
$b_2^*,\dots$,~$b_r^*$ be the dual frame of~$B^*$.  We have a short
exact sequence of vector bundles over~$B^*$
\begin{equation}\label{equation;phase}
\begin{tikzcd}
\pi^*B^*\ar[r,hook]&TB^*\ar[r,two heads]&\pi^*TN,
\end{tikzcd}
\end{equation}
so each $b_i^*$ can be thought of as a vector field on $B^*$ tangent
to the fibres of\/ $\pi$, and the pair $f_i=(b_i^*,0)$ defines a
section of\/ $A$ over the open subset $\pi^{-1}(U)$ of~$B^*$.  Then
\begin{equation}\label{equation;ff}
\omega_\can(f_i,f_j)=0
\end{equation}
by~\eqref{equation;canonical}.  The frame $b_1^*$,
$b_2^*,\dots$,~$b_r^*$ determines a trivialization of the vector
bundle $B^*$ over $U$ and in particular a linear connection on $B^*$,
which gives a splitting $\theta\colon TN\to TB^*$ of the
sequence~\eqref{equation;phase}.  Each section $b_i$ then gives rise
to a vector field $v_i=\theta\circ\an_B(b_i)$ on~$B^*$.  For $1\le
i\le r$ the pair $e_i=(v_i,b_i\circ\pi)$ is a section of\/ $A$ defined
over $\pi^{-1}(U)$.  It follows from~\eqref{equation;canonical} that
\begin{equation}\label{equation;ef}
\omega_\can(e_i,f_j)=b_j^*\cdot{b}_i^\dagger=
\inner{b_j^*,b_i}=\delta_{ij}.
\end{equation}
The vector fields $v_i$ are horizontal and the functions ${b}_i^\dagger$
are covariantly constant, so
\begin{equation}\label{equation;ee}
\omega_\can(e_i,e_j)=-v_i\cdot{b}_j^\dagger+
v_j\cdot{b}_i^\dagger-[b_i,b_j]^\dagger=-[b_i,b_j]^\dagger.
\end{equation}
We conclude that the $2r$-tuple
$(e_1,e_2,\dots,e_r,f_1,f_2,\dots,f_r)$ is a frame of\/ $A$ and that
the matrix of\/ $\omega_\can$ relative to this frame is
$\bigl(\begin{smallmatrix}-C&-I_r\\I_r&0\end{smallmatrix}\bigr)$,
  where $C$ is the $r\times r$-matrix
  $\bigl([b_i,b_j]^\dagger\bigr)_{i,j}$.  In particular $\omega_\can$
  is nondegenerate.
\end{proof}

%%%%%%%%%%%%%%%%%%%%%%%%%%%%%%%%%%%%%%%%%%%%%%%%%%%%%%%%%%%%%%%%%%%%%%%%
\subsection{Poisson morphisms and coisotropic subalgebroids}
\label{subsection;coisotropic}
%%%%%%%%%%%%%%%%%%%%%%%%%%%%%%%%%%%%%%%%%%%%%%%%%%%%%%%%%%%%%%%%%%%%%%%%

The notions of a Poisson map and a coisotropic submanifold admit
straightforward extensions to the world of Poisson Lie algebroids.  If
$\phi\colon A\to E$ is a Lie algebroid morphism with base map
$\mr\phi\colon M\to P$, we say that multisections
$u\in\Gamma(\Lambda^\bu A)$ and $v\in\Gamma(\Lambda^\bu E)$ are
\emph{$\phi$-related}, notation $u\sim_\phi v$, if
$\phi(u_x)=v_{\mr\phi(x)}$ for all $x\in M$.

\begin{definition}\label{definition;poisson-morphism}
Let $(A\to M,\lambda_A)$ and $(E\to P,\lambda_E)$ be Poisson Lie
algebroids (Definition~\ref{definition;poisson}).  A \emph{Poisson
  morphism} from $A$ to $E$ is a Lie algebroid morphism $\phi\colon
A\to E$ such that the Poisson structures $\lambda_A$ and $\lambda_E$
are $\phi$-related.
\end{definition}

The $2$-sections $\lambda_A$ and $\lambda_E$ are $\phi$-related if and
only if the square
\[
\begin{tikzcd}
A_x^*\ar[r,"\lambda_A^\sharp"]&A_x\ar[d,"\phi"]\\
E_{\mr\phi(x)}^*\ar[r,"\lambda_E^\sharp"]\ar[u,"\phi^*"]&E_{\mr\phi(x)}
\end{tikzcd}
\]
commutes for all $x\in M$, where $\phi^*$ denotes the transpose
of~$\phi$.

Let $V$ be a vector space equipped with a constant Poisson structure
$\lambda$ and let $W$ be a subspace of~$V$.  We define
$W^\lambda=\lambda^\sharp(W^\circ)$ of\/ $V$, where $W^\circ\subseteq
V^*$ denotes the annihilator of~$W$.  We say $W$ is \emph{coisotropic}
if\/ $W^\lambda$ is contained in~$W$.  If\/ $\lambda$ is the inverse
of a symplectic structure $\omega$ on $V$, we write
$W^\lambda=W^\omega$ and call $W^\omega$ the \emph{symplectic
  orthogonal} of~$W$.
%% We say $W$ is \emph{Poisson} if $\lambda^\sharp(V^*)=\{0\}^\lambda$
%% is contained in~$W$.
%
\glossary{[@$^\circ$, annihilator of subspace or subbundle}
\glossary{W@$W^\circ$, annihilator of subspace or subbundle}
\glossary{Wlambda@$W^\lambda=\lambda^\sharp(W^\circ)$, Poisson
  ``orthogonal'' of subspace or subbundle}
\glossary{Womega@$W^\omega=\omega^\sharp(W^\circ)$, symplectic
  orthogonal of subspace or subbundle}

\begin{definition}\label{definition;coisotropic}
Let $A\to M$ be a Poisson Lie algebroid with Poisson structure
$\lambda\in\Gamma(\Lambda^2A)$.  A Lie subalgebroid $B\to P$ of\/ $A$
is \emph{coisotropic at $x\in P$} if the subspace $B_x$ of\/ $A_x$ is
coisotropic.  A submanifold $N$ of\/ $M$ is \emph{coisotropic at $x\in
  N$} if the subspace $\an^{-1}(T_xN)$ of\/ $A_x$ is coisotropic.  We
say $B$, resp.\ $N$, is \emph{coisotropic} if it is coisotropic at all
$x\in P$, resp.\ $x\in N$.  A submanifold $N$ is \emph{clean
  coisotropic} (resp.\ \emph{transverse coisotropic}) if\/ $N$ is
coisotropic and $TN$ cleanly (resp.\ transversely) intersects the
anchor $\an\colon A\to TM$.
%% A submanifold $N$ of $M$ is \emph{Poisson} if the subspace
%% $\an^{-1}(T_xN)$ of $A_x$ is Poisson for all $x\in N$.  A
%% submanifold $N$ is \emph{clean Poisson} (resp.\ \emph{transverse
%% Poisson}) if $N$ is Poisson and $TN$ cleanly (resp.\ transversely)
%% intersects the anchor $\an\colon A\to TM$.
\end{definition}

If a submanifold $N$ of\/ $M$ cleanly intersects the anchor of\/ $A$,
we have a well-defined pullback Lie algebroid $i_N^!A$ over $N$ whose
fibre at $x\in N$ is $\an_A^{-1}(T_xN)$ (see
Appendix~\ref{subsection;pull}), and in that case the submanifold $N$
is coisotropic if and only if the Lie subalgebroid $i_N^!A$ is
coisotropic in the sense of Definition~\ref{definition;coisotropic}.

Coisotropic subalgebroids behave in the expected way under Poisson
morphisms.  For submanifolds there is no distinction between being
coisotropic relative to $\lambda_A$ and being coisotropic relative to
the underlying Poisson structure $\an_A(\lambda_A)$ on~$M$.

\begin{lemma}\label{lemma;poisson-coisotropic}
Let $(A\to M,\lambda_A)$ and $(E\to P,\lambda_E)$ be Poisson Lie
algebroids and $\phi\colon A\to E$ a Poisson morphism.
\begin{enumerate}
\item\label{item;coisotropic-preimage}
Let $F\to Q$ be a Lie subalgebroid of\/ $E$ which cleanly intersects
the morphism~$\phi$.  Let $B=\phi^{-1}(F)$ and $N=\mr\phi^{-1}(Q)$.
The Lie subalgebroid $B$ of\/ $A$ is coisotropic if and only if\/ $F$
is coisotropic at $y$ for all $y\in\mr\phi(N)$.
%% Let $Q$ be a submanifold of $P$ which is coisotropic relative to
%% $\lambda_E$ and which cleanly intersects the base map
%% $\mr\phi\colon M\to P$.  Then the preimage $\mr\phi^{-1}(Q)$ is a
%% coisotropic submanifold of $M$ which is coisotropic relative to
%% $\lambda_A$.
%
\item\label{item;poisson-base}
The base map $\mr\phi\colon M\to P$ is a Poisson map in the usual
sense relative to the Poisson structures $\an_A(\lambda_A)$ on $M$ and
$\an_P(\lambda_E)$ on~$P$.
\item\label{item;coisotropic-base}
A submanifold of\/ $M$ is coisotropic relative to $\lambda_A$ if and
only if it is coisotropic in the usual sense, i.e.\ relative to the
Poisson structure $\an_A(\lambda_A)$ on~$M$.
\end{enumerate}
\end{lemma}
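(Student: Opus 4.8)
The plan is to reduce all three parts to a single fibrewise computation in linear algebra, so that \eqref{item;poisson-base} and \eqref{item;coisotropic-base} become instances of the same mechanism that settles \eqref{item;coisotropic-preimage}. The structural inputs I would use are: the conjugation identity $\lambda_E^\sharp=\phi\circ\lambda_A^\sharp\circ\phi^*$ expressing that $\phi$ is a Poisson morphism (this is the commuting square displayed after Definition~\ref{definition;poisson-morphism}); the anchor compatibility $\an_E\circ\phi=T\mr\phi\circ\an_A$ valid for any Lie algebroid morphism; and the fact, read off from Theorem~\ref{theorem;poisson}\eqref{item;schouten}, that the base Poisson tensor $\an_A(\lambda_A)$ has sharp map $\an_A\circ\lambda_A^\sharp\circ\an_A^*$, and similarly over $P$.

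First I would isolate the linear-algebra core. For a linear map $\phi\colon V\to U$ of finite-dimensional spaces and a subspace $W\subseteq U$, the identity $(\phi^{-1}(W))^\circ=\phi^*(W^\circ)$ holds: $\supseteq$ is immediate, while for $\subseteq$ one uses $\ker\phi\subseteq\phi^{-1}(W)$ to write any element of the left-hand side as $\phi^*\zeta$ with $\zeta$ annihilating $\phi(\phi^{-1}(W))=W\cap\im\phi$, then corrects $\zeta$ by an element of $(\im\phi)^\circ=\ker\phi^*$ to arrange $\zeta\in W^\circ$ without altering $\phi^*\zeta$. Granting this, let $\lambda$ and $\mu$ be bivectors on $V$ and $U$ with $\mu^\sharp=\phi\circ\lambda^\sharp\circ\phi^*$, and put $Z=\phi^{-1}(W)$. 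Then $Z^\lambda=\lambda^\sharp(Z^\circ)=\lambda^\sharp\phi^*(W^\circ)$, and since $z\in Z$ if and only if $\phi(z)\in W$, the inclusion $Z^\lambda\subseteq Z$ is equivalent to $\phi(Z^\lambda)=\mu^\sharp(W^\circ)\subseteq W$. Hence $\phi^{-1}(W)$ is coisotropic in $(V,\lambda)$ if and only if $W$ is coisotropic in $(U,\mu)$.

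Part~\eqref{item;coisotropic-preimage} is then fibrewise. Clean intersection guarantees (Appendix~\ref{subsection;pull}) that $B=\phi^{-1}(F)$ is a Lie subalgebroid over $N=\mr\phi^{-1}(Q)$ with $B_x=\phi_x^{-1}(F_{\mr\phi(x)})$ for $x\in N$; coisotropy of a subalgebroid is a pointwise condition (Definition~\ref{definition;coisotropic}), so I would apply the core equivalence at each $x$ with $V=A_x$, $U=E_{\mr\phi(x)}$, $W=F_{\mr\phi(x)}$. This shows $B_x$ is coisotropic exactly when $F_{\mr\phi(x)}$ is, and as $x$ runs over $N$ the base point $\mr\phi(x)$ runs over $\mr\phi(N)$; this is precisely why the conclusion quantifies over $y\in\mr\phi(N)$ and says nothing about $F_y$ for $y\in Q\setminus\mr\phi(N)$. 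For~\eqref{item;coisotropic-base} I would run the core equivalence with $\phi=\an_A$, $\lambda=\lambda_A$ and $\mu=\an_A(\lambda_A)$, for which the required conjugation formula $\mu^\sharp=\an_A\circ\lambda_A^\sharp\circ\an_A^*$ is the sharp-map identity recorded above, and with $W=T_xN$: the preimage $\an_A^{-1}(T_xN)$ is exactly the fibre used in Definition~\ref{definition;coisotropic}, so it is coisotropic if and only if $T_xN$ is coisotropic for $\an_A(\lambda_A)$, i.e.\ in the usual sense. Finally~\eqref{item;poisson-base} is a direct substitution: starting from $(\an_E(\lambda_E))^\sharp=\an_E\circ\lambda_E^\sharp\circ\an_E^*$, I would insert $\lambda_E^\sharp=\phi\circ\lambda_A^\sharp\circ\phi^*$ and use $\an_E\circ\phi=T\mr\phi\circ\an_A$, hence $\phi^*\circ\an_E^*=\an_A^*\circ(T\mr\phi)^*$, to rewrite the right-hand side as $T\mr\phi\circ(\an_A(\lambda_A))^\sharp\circ(T\mr\phi)^*$, which is exactly the assertion that $\mr\phi$ is Poisson.

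The one genuinely non-formal step is the annihilator identity $(\phi^{-1}(W))^\circ=\phi^*(W^\circ)$, where the correction of $\zeta$ by an element of $(\im\phi)^\circ$ must be justified (it amounts to the surjectivity of the restriction map $(\im\phi)^\circ\to(W\cap\im\phi)^\circ$ into $W^*$); once this is in hand, all three parts are bookkeeping. The only other point requiring care is invoking the pullback construction of Appendix~\ref{subsection;pull} to know that the clean-intersection hypothesis really does deliver $B$ as a subbundle with the stated fibres, so that the fibrewise equivalence can be applied uniformly over $N$.
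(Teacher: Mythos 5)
Your proposal is correct and takes essentially the same route as the paper: everything is reduced to the fibrewise linear-algebra equivalence coming from the identity $\phi\bigl(Z^{\lambda}\bigr)=W^{\mu}$ for $Z=\phi^{-1}(W)$, which is exactly the paper's equation~\eqref{equation;linear-poisson} (stated there as a ``straightforward'' fact, and which you additionally prove via the annihilator identity $(\phi^{-1}(W))^\circ=\phi^*(W^\circ)$), applied pointwise over $N$ for part~(i) and with $\phi=\an_A$ for part~(iii). Your part~(ii) is the same argument as the paper's, merely written out as an explicit conjugation computation with sharp maps rather than phrased as compositionality of Poisson morphisms through the anchor.
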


\begin{proof}
\eqref{item;coisotropic-preimage}~It follows from
Proposition~\ref{proposition;clean-regular} that $B$ is a Lie
subalgebroid of\/ $A$ whose base is the submanifold $N$ of\/ $M$.  Now
use the following straightforward linear algebra fact: if\/ $f\colon
V_1\to V_2$ is a linear Poisson map between Poisson vector spaces
$(V_1,\lambda_1)$ and $(V_2,\lambda_2)$, $W_2$ is a subspace of\/
$V_2$, and $W_1=f^{-1}(W_2)$, then
\begin{equation}\label{equation;linear-poisson}
f\bigl(W_1^{\lambda_1}\bigr)=W_2^{\lambda_2}.
\end{equation}
Hence $W_1$ is coisotropic if and only $W_2$ is coisotropic.

\eqref{item;poisson-base}~This follows from the fact that the anchor
map $\an_A$ is a Poisson morphism from $(A,\lambda_A)$ to
$(TM,\an_A(\lambda_A))$.

\eqref{item;coisotropic-base}~Let $N$ be a submanifold of\/ $M$.
Then~\eqref{equation;linear-poisson} yields
$\an_A\bigl(B_x^{\lambda_A}\bigr)=(T_xN)^{\lambda_M}$ for all $x\in
N$, where $B_x=\an_A^{-1}(T_xN)$ and $\lambda_M=\an_A(\lambda_A)$.
Hence $B_x$ is coisotropic if and only if\/ $T_xN$ is coisotropic.
\end{proof}

\subsection{Presymplectic Lie algebroids: symplectization}
\label{subsection;symplectization}
%%%%%%%%%%%%%%%%%%%%%%%%%%%%%%%%%%%%%%%%%%%%%%%%%%%%%%%%%%%%%%%%%%%%%%%%

A Lie algebroid equipped with a closed $2$-form of constant rank can
be turned into a symplectic Lie algebroid in two ``opposite'' ways.
In this section we explain the first method: symplectization.

\begin{definition}\label{definition;presymplectic}
Let $B\to N$ be a Lie algebroid.  A \emph{$B$-presymplectic form} on
$N$ is a $2$-form $\omega_B\in\Omega_B^2(N)$ that is $d_B$-closed and
of constant rank.  A \emph{presymplectic Lie algebroid} is a Lie
algebroid equipped with a presymplectic form.
\end{definition}

As in ordinary symplectic geometry, presymplectic Lie algebroids arise
naturally from symplectic ones through coisotropic embeddings.
Indeed, let $(A\to M,\omega)$ be a symplectic Lie algebroid and let
$B$ be a coisotropic Lie subalgebroid of\/ $A$
(Definition~\ref{definition;coisotropic}).  The pullback
$\omega_B=i_B^*\omega\in\Omega_B^2(N)$ is a closed $2$-form.  The rank
of\/ $\omega_B$ is constant equal to
$\rank(B/B^\omega)=2\rank(B)-\rank(A)$.  Hence $(B,\omega_B)$ is a
presymplectic Lie algebroid.

The symplectization theorem below asserts the converse: every
presymplectic Lie algebroid $B\to N$ arises as a pullback Lie
algebroid $i_N^!\bb{A}$ of a model symplectic Lie algebroid
$\bb{A}\to\bb{M}$ via a transverse coisotropic embedding $i_N\colon
N\to\bb{M}$.

The model $\bb{A}$ is constructed as follows.  The input data is any
presymplectic Lie algebroid $(B\to N,\omega_B)$.  Let
$K=\ker(\omega_B)$ be the kernel of\/ $\omega_B$, i.e.\ the bundle
whose fibre at $x\in N$ is
\[
K_x=\{\,b\in B_x\mid\text{$\omega_B(b,b')=0$ for all $b'\in B_x$}\,\}.
\]
We let $\bb{M}=K^*$ be the dual bundle of\/ $K$.  The bundle
projection $p\colon\bb{M}\to N$ is transverse to $B$, so we can form
the pullback Lie algebroid $\bb{A}=p^!B$ over~$\bb{M}$.  We identify
$N$ with the zero section $\group{j}\colon N\to\bb{M}=K^*$ and $B$
with the Lie algebroid $\group{j}^!\bb{A}\cong\group{j}^!p^!B$.  The
inclusion $\group{j}$ has a natural lift to a morphism
$\group{j}_!\colon B\to\bb{A}$.  The definition of the symplectic
structure on $\bb{A}$ involves the choice of a complement of the
subbundle $K$ of\/ $B$, i.e.\ a splitting $s\colon K^*\to B^*$ of the
natural surjection $B^*\to K^*$.  Let $\pi\colon B^*\to N$ be the
bundle projection of~$B^*$.  Then $p=\pi\circ s\colon K^*\to N$, so
\[\bb{A}=p^!B=(\pi\circ s)^!B=s^!\pi^!B.\]
We have canonical Lie algebroid morphisms
\[
p_!\colon\bb{A}\longto B,\qquad\pi_!\colon\pi^!B\longto B,\qquad
s_!\colon\bb{A}\longto\pi^!B
\]
satisfying $p_!=\pi_!\circ s_!$.
%% We have canonical Lie algebroid morphisms
%% $\pi_!$ and $s_!$ which make the following diagram
%% commute:
%% %
%% \[
%% %
%% \begin{tikzcd}
%% %
%% \bb{A}\ar[r,"s_!"]\ar[d]&\pi^!B\ar[r,"\pi_!"]\ar[d]&
%% B\ar[d]\\
%% %
%% \bb{M}\ar[r,"s"]&B^*\ar[r,"\pi"]&N
%% %
%% \end{tikzcd}
%% %
%% \]
%% %
We define a closed $\bb{A}$-form of degree $2$ on $\bb{M}$ by
\[
\omega^s=p_!^*\omega_B+s_!^*\omega_\can,
\]
where $\omega_\can$ is the canonical symplectic form on the phase
space $\pi^!B$ (Proposition~\ref{proposition;phase}).  We call the
tuple
\begin{equation}\label{equation;model}
\bigl(\bb{A}\longto\bb{M},\omega^s,\group{j}_!\colon
B\longto\bb{A}\bigr)
\end{equation}
the \emph{symplectization} of\/ $(B,\omega_B)$.

\begin{theorem}[symplectization]\label{theorem;symplectization}
Let\/ $(B\to N,\omega_B)$ be a presymplectic Lie algebroid.  Let\/
$(\bb{A},\omega^s,\group{j}_!)$ be the
symplectization~\eqref{equation;model}.  There is an open
neighbourhood\/ $\bb{U}$ of\/ $N$ in\/ $\bb{M}$ such that\/
$\omega^s|_{\bb{U}}$ is symplectic.  The embedding\/ $\group{j}$ is
transverse coisotropic and\/ $\group{j}_!^*\omega^s=\omega_B$.
\end{theorem}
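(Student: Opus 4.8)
The plan is to verify, in order, closedness and nondegeneracy of\/ $\omega^s$ (which together give the symplectic claim), then the pullback identity $\group{j}_!^*\omega^s=\omega_B$, and finally transversality and coisotropy of\/ $\group{j}$. Closedness is immediate: $p_!$ and $s_!$ are Lie algebroid morphisms, so they intertwine the de Rham differentials, and both $\omega_B$ and $\omega_\can=-d_{\pi^!B}\alpha_\can$ are closed; hence $\omega^s=p_!^*\omega_B+s_!^*\omega_\can$ is $d_{\bb{A}}$-closed. Since nondegeneracy is an open condition on $\bb{M}$ and $N=\group{j}(N)$ is the zero section of\/ $\bb{M}=K^*$, it suffices to prove that $\omega^s$ is nondegenerate at every point $0_x$ of the zero section; an open neighbourhood $\bb{U}$ on which $\omega^s$ stays nondegenerate then exists automatically, and there $\omega^s|_{\bb{U}}$ is symplectic.

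For the nondegeneracy along $N$ I would argue with an adapted frame, mirroring the computation in the proof of Proposition~\ref{proposition;phase}\eqref{item;canonical}. Choose a local frame $b_1,\dots,b_r$ of\/ $B$ near $x$ whose first $k=\rank K$ members span $K=\ker(\omega_B)$, and choose the splitting $s$ so that $s(K^*)$ is the image $\{y_{k+1}=\dots=y_r=0\}$ in the induced coordinates on $B^*$. Over $\pi^{-1}(U)\subseteq B^*$ this produces the sections $e_i=(v_i,b_i\circ\pi)$ and $f_i=(b_i^*,0)$ of\/ $\pi^!B$ from the cited proof. The horizontal fields $v_i$ and the vertical fields $b_a^*$ with $a\le k$ are tangent to $s(K^*)$, so $e_1,\dots,e_r$ and $f_1,\dots,f_k$ admit $s_!$-related lifts $E_1,\dots,E_r,F_1,\dots,F_k$ to sections of\/ $\bb{A}=s^!\pi^!B$ defined near $0_x$; these $r+k=\rank\bb{A}$ sections form a frame of\/ $\bb{A}$. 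Since $p_!=\pi_!\circ s_!$ sends $E_i$ to $b_i\circ p$ and $F_a$ to $0$, the term $p_!^*\omega_B$ contributes $\omega_B(b_i,b_j)$ on the $E$--$E$ block and nothing else, while $s_!^*\omega_\can$ is read off from \eqref{equation;ff}, \eqref{equation;ef}, \eqref{equation;ee} evaluated at $s(0_x)=\zeta(x)$, the zero of\/ $B^*$; in particular $\omega_\can(e_i,e_j)=-[b_i,b_j]^\dagger$ vanishes there because $[b_i,b_j]^\dagger$ is fibrewise linear. Thus at $0_x$ the matrix of\/ $\omega^s$ in this frame is
\[
\begin{pmatrix}\bigl(\omega_B(b_i,b_j)\bigr)&(\delta_{ia})\\(-\delta_{aj})&0\end{pmatrix},
\]
and reordering $B_x=K_x\oplus C_x$ exhibits it as a block sum of the standard pairing between $K_x$ and the vertical $K_x^*$ and the form induced by $\omega_B$ on $B_x/K_x$, both of which are nondegenerate. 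Hence $\omega^s$ is nondegenerate at $0_x$.

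The identity $\group{j}_!^*\omega^s=\omega_B$ I would obtain by functoriality: since $p\circ\group{j}=\id_N$ and $s\circ\group{j}=\zeta$ (the splitting $s$ is fibrewise linear, so it carries the zero section of\/ $K^*$ to the zero section $\zeta$ of\/ $B^*$), we get $p_!\circ\group{j}_!=\id$ and $s_!\circ\group{j}_!=\zeta_!$, whence $\group{j}_!^*\omega^s=\omega_B+\zeta_!^*\omega_\can=\omega_B$ by Proposition~\ref{proposition;phase}\eqref{item;zero}. Transversality of\/ $\group{j}$ to $\bb{A}$ is built into the construction, as $\group{j}$ is a section of the transverse submersion $p$; concretely, along $N$ the anchor image $\an_{\bb{A}}(\bb{A}_{0_x})=\an_B(B_x)\oplus K_x^*$ already contains the vertical $K_x^*$, which together with $T_xN$ fills $T_{0_x}\bb{M}$. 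Finally, coisotropy is a short rank count: the pullback $\group{j}^!\bb{A}\cong B$ has fibre $B_x\subseteq\bb{A}_{0_x}$, and since $\omega^s$ is nondegenerate its symplectic orthogonal $(B_x)^{\omega^s}$ has dimension $\rank\bb{A}-\rank B=k$; since the radical $K_x$ of\/ $\omega^s|_{B_x}=\omega_B$ satisfies $K_x\subseteq(B_x)^{\omega^s}$ and $\dim K_x=k$, we conclude $(B_x)^{\omega^s}=K_x\subseteq B_x$, which is exactly coisotropy in the sense of Definition~\ref{definition;coisotropic}.

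I expect the only real obstacle to be the bookkeeping in the nondegeneracy step: producing the $s_!$-related lifts $E_i,F_a$ and confirming that the anchor-matching and connection conventions make the $E$--$E$ block reduce to $\omega_B(b_i,b_j)$ at the zero section while the $E$--$F$ block is the identity. Everything else is formal once Proposition~\ref{proposition;phase} is in hand.
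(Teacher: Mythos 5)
Your proof is correct and follows essentially the same route as the paper's: the adapted frame, the $e_i,f_i$ frame of $\pi^!B$ from the proof of Proposition~\ref{proposition;phase}, the block computation of $\omega^s$ along the zero section, and the resulting orthogonal decomposition into $K_x\oplus K_x^*$ and $(L_x,\omega_{B,x})$ are exactly the paper's argument, with your functorial derivation of $\group{j}_!^*\omega^s=\omega_B$ (via $s\circ\group{j}=\zeta$ and Proposition~\ref{proposition;phase}\eqref{item;zero}) and your dimension count for coisotropy being only cosmetic variants of the paper's direct reading from the block matrix. One small correction: the splitting $s$ is given data, so instead of ``choosing $s$'' adapted to your frame you should choose the frame adapted to $s$ --- take $b_{k+1},\dots,b_r$ spanning the complement $L=(s(K^*))^\circ$ of $K$ --- after which $s(K^*)$ is indeed the coordinate subbundle $\{y_{k+1}=\dots=y_r=0\}$ and your computation proceeds verbatim for an arbitrary splitting.
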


\begin{proof}
Identify $\bb{M}=K^*$ with the subbundle $s(K^*)$ of~$B^*$.  Then we
have $B=K\oplus L$, where $L=(K^*)^\circ$ is the annihilator of~$K^*$.
The bilinear form $\omega_B$ is nondegenerate on the subbundle~$L$.
Let $x\in N$.  Choose a frame $b_1$, $b_2,\dots$,~$b_r$ of\/ $B$
defined in a neighbourhood $U$ of\/ $x$ such that $K$ is spanned by
$b_1$, $b_2,\dots$,~$b_l$, and $L$ is spanned by $b_{l+1}$,
$b_{l+2},\dots$,~$b_r$.  Let $b_1^*$, $b_2^*,\dots$,~$b_r^*$ be the
dual frame of~$B^*$.  The subbundle $K^*$ is spanned by $b_1^*$,
$b_2^*,\dots$,~$b_l^*$.  The frame $b_1$, $b_2,\dots$,~$b_r$ gives
rise to a frame $e_1$, $e_2,\dots$,~$e_r$, $f_1$, $f_2,\dots$,~$f_r$
of\/ $\pi^!B$ over $\pi^{-1}(U)$ as in the proof of
Proposition~\ref{proposition;phase}.  The fibre $\bb{A}_x$ of\/
$\bb{A}$ is spanned by (the values at $x$ of) the sections $e_1$,
$e_2,\dots$,~$e_l$, $f_1$, $f_2,\dots$,~$f_r$.  The anchor of\/ $f_i$
is $\an(f_i)=b_i^*$, so the image of the anchor $\an_x(\bb{A})$
contains the span of\/ $b_1^*$, $b_2^*,\dots$,~$b_l^*$, i.e.\ the
fibre~$K_x^*$.  Therefore the zero section $\group{j}\colon
N\to\bb{M}=K^*$ is transverse to~$\bb{A}$.  It follows
from~\eqref{equation;ff}--\eqref{equation;ee} that
\begin{equation}\label{equation;model-form}
\begin{aligned}
\omega^s_x(f_i,f_j)&=0&&\quad\text{for $1\le i,j\le r$,}\\
\omega^s_x(e_i,e_j)&=0&&\quad\text{for $1\le i\le l$, $1\le j\le
  r$,}\\
\omega^s_x(f_i,e_j)&=\delta_{ij}&&\quad\text{for $1\le i\le l$, $1\le
  j\le r$,}\\
\omega^s_x(e_i,e_j)&=\omega_{B,x}(b_i,b_j)&&\quad\text{for $l+1\le
  i,j\le r$.}
\end{aligned}
\end{equation}
This shows that $\group{j}_!^*\omega^s=\omega_B$.  Also, the fibre
$(\bb{A}_x,\omega^s_x)$ is an orthogonal direct sum of two symplectic
subspaces $K_x\oplus K_x^*$ and $(L_x,\omega_{B,x})$.  Hence the form
$\omega^s$ is symplectic near~$N$.  The fibre $B_x$ is spanned by
$e_1$, $e_2,\dots$,~$e_r$, so its orthogonal $B_x^{\omega^s}$ is
spanned by $e_1$, $e_2,\dots$,~$e_l$.  This shows that the embedding
$\group{j}$ is transverse to $\bb{A}$ and coisotropic.
\end{proof}

\begin{remark}\label{remark;path}
We will see in Section~\ref{subsection;gotay} that the form $\omega^s$
is independent of the splitting $s$ up to Lie algebroid automorphisms
of\/ $A$ fixing $N$.  For now we note that any two splittings $s_0$,
$s_1\colon\bb{M}=K^*\to B^*$ of the surjection $B^*\to K^*$ can be
joined by a path $s_t=(1-t)s_0+ts_1$ for $0\le t\le1$.  The
corresponding path of\/ $\bb{A}$-symplectic forms
$\omega_t=\omega^{s_t}$ satisfies $\dot{\omega}_t=-d_A\beta_t$, where
$\beta_t\in\Omega_{\bb{A}}^1(\bb{M})$ is defined by
\[\beta_t=\frac{d}{dt}(s_t)_!^*\alpha_\can.\]
Since $s_t(x)=x$ for all $x\in N$ and the Liouville form
$\alpha_\can\in\Omega_{\pi^!B}(N)$ vanishes along $N$, the form
$\beta_t$ vanishes along $N$ for all $t$.
\end{remark}

%%%%%%%%%%%%%%%%%%%%%%%%%%%%%%%%%%%%%%%%%%%%%%%%%%%%%%%%%%%%%%%%%%%%%%%%
\subsection{Presymplectic Lie algebroids: reduction}
\label{subsection;reduction}
%%%%%%%%%%%%%%%%%%%%%%%%%%%%%%%%%%%%%%%%%%%%%%%%%%%%%%%%%%%%%%%%%%%%%%%%

As we saw in Section~\ref{subsection;symplectization}, every
presymplectic Lie algebroid can be symplectized, i.e.\ coisotropically
embedded in a symplectic Lie algebroid.  A second method to produce
symplectic Lie algebroids out of presymplectic Lie algebroids, which
works only under favourable conditions, is \emph{symplectic
  reduction}, which means taking the quotient by the null foliation.
This is based on the following facts.

\begin{lemma}\label{lemma;constant-rank}
Let $(B\to N,\omega_B)$ be a presymplectic Lie algebroid.  The kernel
$K=\ker(\omega_B)$ is a Lie subalgebroid of\/ $B$\@.  The form
$\omega_B$ is $K$-basic in the sense that for all sections $\sigma$ of
$K$ we have $\iota_B(\sigma)\omega_B=\ca{L}_B(\sigma)\omega_B=0$.
\end{lemma}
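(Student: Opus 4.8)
The plan is to establish the three assertions of the lemma—that $K=\ker(\omega_B)$ is a subbundle, that it is involutive, and that $\omega_B$ is $K$-basic—in that order, with only the involutivity requiring the structure equations of the Lie algebroid de Rham complex. First I would identify $K$ with the kernel of the bundle map $\omega_B^\flat\colon B\to B^*$, $b\mapsto\iota_B(b)\omega_B$. Since $\omega_B$ has constant rank by the presymplectic hypothesis (Definition~\ref{definition;presymplectic}), the map $\omega_B^\flat$ has constant rank, so its kernel is a smooth vector subbundle of\/ $B$ whose fibre at $x$ is exactly the $K_x$ described in the text.

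Next I would dispatch the $K$-basic property. The vanishing $\iota_B(\sigma)\omega_B=0$ for $\sigma\in\Gamma(K)$ is immediate from the definition of\/ $K$: by construction $\omega_B(\sigma_x,b')=0$ for every $b'\in B_x$ and every $x\in N$, which says precisely that the $1$-form $\iota_B(\sigma)\omega_B$ vanishes identically. The Lie derivative statement then follows from Cartan's magic formula on the Lie algebroid (Appendix~\ref{subsection;cartan}): since $\ca{L}_B(\sigma)=d_B\iota_B(\sigma)+\iota_B(\sigma)d_B$, and both $d_B\omega_B=0$ (closedness) and $\iota_B(\sigma)\omega_B=0$ (just shown) hold, we get $\ca{L}_B(\sigma)\omega_B=d_B(0)+\iota_B(\sigma)(0)=0$.

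The one step with real content is the involutivity of\/ $K$, and I expect it to be the crux, although it is routine once the sign bookkeeping is in place. To show $[\sigma,\tau]_B\in\Gamma(K)$ for $\sigma,\tau\in\Gamma(K)$, I would evaluate the closedness condition $d_B\omega_B=0$ on the triple $(\sigma,\tau,\rho)$ with $\rho\in\Gamma(B)$ arbitrary, using the explicit Koszul formula for the Lie algebroid differential $d_B$ acting on a $2$-form. Of its six terms, the three anchor-derivative terms involve an expression $\omega_B(\cdot,\cdot)$ with at least one slot occupied by a section of\/ $K$, and two of the three bracket terms are of the form $\omega_B([\,\cdot\,,\rho],\cdot)$ with the second slot in $K$; all five vanish because $\iota_B$ of a section of\/ $K$ kills $\omega_B$. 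The only surviving term is $-\omega_B([\sigma,\tau]_B,\rho)$, so $0=d_B\omega_B(\sigma,\tau,\rho)=-\omega_B([\sigma,\tau]_B,\rho)$ for every $\rho$, whence $[\sigma,\tau]_B\in\Gamma(K)$. Combined with the subbundle property—and since the anchor $\an_B$ restricts automatically to $K$—this exhibits $K$ as a Lie subalgebroid of\/ $B$. Note that the constant-rank hypothesis is used only to guarantee that $K$ is a subbundle; the involutivity and the $K$-basic property rely solely on $d_B\omega_B=0$.
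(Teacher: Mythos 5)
Your proof is correct and follows essentially the same route as the paper's (very terse) proof: everything rests on $d_B\omega_B=0$ together with the Cartan calculus, with the constant-rank hypothesis supplying the subbundle property and Cartan's formula $\ca{L}_B(\sigma)=[\iota_B(\sigma),d_B]$ giving the basic property. Your explicit Koszul-formula expansion for involutivity is just the written-out form of the operator identity $\iota_B([\sigma,\tau])=[\ca{L}_B(\sigma),\iota_B(\tau)]$ that the paper's one-line proof implicitly invokes.
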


\begin{proof}
This follows from $d_B\omega_B=0$ and
$\ca{L}_B(\sigma)=[\iota_B(\sigma),d_B]$.
\end{proof}

We call the Lie subalgebroid $K$ of Lemma~\ref{lemma;constant-rank}
the \emph{null Lie algebroid} of the form~$\omega_B$.

A \emph{foliation Lie algebroid} is a Lie algebroid whose anchor is
injective.  A foliation Lie algebroid over a manifold $P$ is
equivalent to an involutive subbundle of\/ $TP$, in other words a
(nonsingular) foliation of~$P$.

\begin{lemma}\label{lemma;reduction}
Let $(B\to N,\omega_B)$ be a presymplectic Lie algebroid.  Suppose the
null Lie algebroid $K=\ker(\omega_B)$ is a foliation Lie algebroid and
therefore defines a foliation $\ca{K}$ of~$N$\@.  Let $i_S\colon S\to
N$ be a transverse slice of\/ $\ca{K}$\@.  Then $(i_S)_!^*\omega_B$ is
an $i_S^!B$-symplectic form on~$S$.
\end{lemma}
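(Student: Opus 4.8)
The plan is to verify the two defining properties of an $i_S^!B$-symplectic form, namely $d_{i_S^!B}$-closedness and non-degeneracy, for the pullback $(i_S)_!^*\omega_B$. First I would check that the pullback Lie algebroid $i_S^!B$ is even defined, i.e.\ that $i_S$ is transverse to $\an_B$. This is immediate: since $K$ is a foliation Lie algebroid, $\an_K=\an_B|_K$ is injective, so the leaf directions of $\ca{K}$ are $\an_B(K_x)\subseteq\an_B(B_x)$ for $x\in S$, and the slice condition $T_xS\oplus\an_B(K_x)=T_xN$ forces $T_xS+\an_B(B_x)=T_xN$. Hence $i_S^!B$ is a Lie algebroid over $S$ whose fibre at $x$ is $\an_B^{-1}(T_xS)$, which I will denote $W_x$; since the fibrewise action of the canonical morphism $(i_S)_!$ is just the inclusion $W_x\hookrightarrow B_x$, the value of $(i_S)_!^*\omega_B$ at $x$ is simply the restriction of $\omega_{B,x}$ to $W_x$.

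Closedness is the easy half: Lie algebroid morphisms intertwine the de Rham differentials, so $d_{i_S^!B}(i_S)_!^*\omega_B=(i_S)_!^*d_B\omega_B=0$ because $\omega_B$ is presymplectic. The substance of the lemma is therefore the pointwise statement that $\omega_{B,x}$ restricts to a non-degenerate form on $W_x$ for every $x\in S$, and this is where I expect the main work to lie.

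To establish non-degeneracy, fix $x\in S$ and write $V=B_x$ and $\omega=\omega_{B,x}$, so that the radical of $\omega$ is precisely $K_x=\ker(\omega)$. The key is to recognize $W_x$ as a linear complement to this radical. First, $W_x\cap K_x=0$: any $k\in K_x\cap W_x$ satisfies $\an_B(k)\in T_xS\cap\an_B(K_x)=0$, whence $k=0$ by injectivity of $\an_B|_{K_x}$. Second, a dimension count gives $\dim W_x=\rank(B)-\rank(K)$. Writing $\rho=\dim\an_B(V)$ and $n=\dim N$, transversality of $\an_B(V)$ and $T_xS$ gives $\dim(\an_B(V)\cap T_xS)=\rho+\dim T_xS-n$, and since $S$ is a transverse slice of the $\rank(K)$-dimensional foliation $\ca{K}$ we have $\dim T_xS=n-\rank(K)$; feeding these into $\dim W_x=\dim\ker(\an_B)+\dim(\an_B(V)\cap T_xS)$ yields $\dim W_x=(\rank(B)-\rho)+(\rho-\rank(K))=\rank(B)-\rank(K)$. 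As $\dim W_x+\dim K_x=\rank(B)$ and $W_x\cap K_x=0$, we conclude $V=W_x\oplus K_x$.

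Finally, the restriction of any skew form to a complement of its radical is non-degenerate: $\omega$ descends to a non-degenerate form on $V/K_x$, and the quotient map restricts to an isomorphism $W_x\iso V/K_x$, so the radical of $\omega|_{W_x}$ is trivial. Hence $\omega_{B,x}|_{W_x}$ is non-degenerate for every $x\in S$, and together with closedness this shows $(i_S)_!^*\omega_B$ is an $i_S^!B$-symplectic form on $S$. The only genuinely delicate point is the dimension bookkeeping identifying $W_x$ as a complement to $K_x$; once that is in place, non-degeneracy is a formality.
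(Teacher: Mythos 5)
Your proof is correct and follows essentially the same route as the paper's: both reduce the lemma to the pointwise splitting $B_x=K_x\oplus(i_S^!B)_x$, from which non-degeneracy is immediate because $K_x$ is exactly the radical of $\omega_{B,x}$ (closedness being formal). The only difference is cosmetic: the paper produces the splitting by lifting the decomposition $T_xN=\an(K_x)\oplus T_xS$ through the anchor, whereas you get it from trivial intersection plus a dimension count; both are sound.
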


\begin{proof}
Since $S$ is transverse to the leaves of\/ $\ca{K}$, it is also
transverse to the Lie algebroid $B$
(Proposition~\ref{proposition;pull}\eqref{item;pull-clean-transverse}),
so the pullback $i_S^!B=\an^{-1}(TS)\subseteq B$ and the morphism
$(i_S)_!\colon i_S^!B\to B$ are well-defined.  Let $x\in S$.  Since
$S$ is a transverse slice to the foliation, the tangent space to $N$
is a direct sum $T_xN=\an(K_x)\oplus T_xS$.  Hence, the anchor
$\an\colon K_x\to T_xN$ being injective, the fibre of\/ $B$ is
likewise a direct sum
$B_x=K_x\oplus\an^{-1}(T_xS)=K_x\oplus(i_S^!B)_x$.  It follows that
$\omega_B$ restricts to a nondegenerate form on~$i_S^!B$.  Therefore
$\bigl(i_S^!B,(i_S)_!^*\omega_B\bigr)$ is a symplectic Lie algebroid.
\end{proof}

We call the foliation $\ca{K}$ of Lemma~\ref{lemma;reduction} the
\emph{null foliation} of~$\omega_B$.

Let $B\to N$ and $K\to N$ be Lie algebroids over the same base and let
$f\colon K\to B$ be a morphism over the identity map of~$N$.  We
define a \emph{quotient} of\/ $B$ by $K$ to be a pair $(C\to Q,q)$
consisting of a Lie algebroid $C\to Q$ and a morphism $q\colon B\to C$
with $q\circ f(K)=0$ which has the following universal property: for
every Lie algebroid $A\to M$ and every morphism $g\colon B\to A$ with
$g\circ f(K)=0$ there is a unique morphism $g_C\colon C\to A$ with
$g=g_C\circ q$, as in the diagram
\[
\begin{tikzcd}
B\ar[d,"q"']\ar[r,"g"]&A\\
C\ar[ur,dashed,"g_C"']
\end{tikzcd}
\]

Clearly a quotient Lie algebroid, if it exists, is determined uniquely
up to isomorphism by the Lie algebroid morphism $f\colon K\to B$.  The
next result, which is a special case
of~\cite[Theorem~4.5]{higgins-mackenzie;algebraic-lie-algebroids} and
which does not involve any presymplectic structures, states a
sufficient condition for a quotient Lie algebroid to exist.  We say
that the leaf space $Q=P/\F$ of a foliation $\F$ of a manifold $P$
\emph{is a manifold} if\/ $Q$ has a manifold structure which makes the
quotient map $P\to Q$ a submersion.

\begin{proposition}[quotient Lie algebroids]
\label{proposition;quotient}
Let $B\to N$ be a Lie algebroid and $K\to N$ a Lie subalgebroid
of~$B$.  Suppose that $K$ is a foliation Lie algebroid with associated
foliation $\ca{K}$ and that the leaf space $Q=N/\ca{K}$ is a manifold.
Let $\mr{q}\colon N\to Q$ be the quotient map.  Let $\bar{B}$ be the
bundle $B/K$ over $N$ and identify sections
$\bar\tau\in\Gamma(\bar{B})\cong\Gamma(B)/\Gamma(K)$ with equivalence
classes of sections $\tau$ of\/ $B$ modulo sections of\/ $K$\@.
Suppose that the flat $K$-connection
\[
\nabla\colon\Gamma(K)\times\Gamma(\bar{B})\longto\Gamma(\bar{B})
\]
on $\bar{B}$ defined by
$\nabla_\sigma\bar\tau=\overline{[\sigma,\tau]}$ has trivial holonomy.
\begin{enumerate}
\item\label{item;horizontal}
The $\nabla$-horizontal subspaces of\/ $T\bar{B}$ define a foliation
$\ca{L}$ of\/ $\bar{B}$ which makes the pair $(\bar{B},\ca{L})$ a
foliated vector bundle over the foliated manifold $(N,\ca{K})$.
\item\label{item;quotient}
The leaf space $C=\bar{B}/\ca{L}$ is a manifold.  Let $q\colon B\to C$
the quotient map and $C\to Q$ the projection induced by the bundle
projection $\bar{B}\to N$.  Then $(C\to Q,q)$ is a quotient Lie
algebroid of\/ $B$ by $K$\@.  The quotient morphism $q\colon B\to C$
induces an isomorphism $B\cong q^!C$.
\item\label{item;section}
Let $s\colon U\to N$ be a section of\/ $\mr{q}$ defined over an open
subset $U$ of~$Q$.  The natural map $q\circ s_!\colon s^!B\to B\to C$
is a Lie algebroid isomorphism from $s^!B$ onto~$C|_U$.
\item\label{item;normal}
The Lie algebra of sections of\/ $C$ is isomorphic to\/
$\Gamma(C)\cong\n(\Gamma(K))/\Gamma(K)$, where $\n(\Gamma(K))$ denotes
the normalizer of\/ $\Gamma(K)$ in $\Gamma(B)$.
\end{enumerate}
\end{proposition}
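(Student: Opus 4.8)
The plan is to handle the four assertions in order, extracting the substantive content of~\eqref{item;quotient} from the cited theorem of Higgins and Mackenzie while using the flat $K$-connection $\nabla$ to build the explicit geometric model. The first task, underlying all of~\eqref{item;horizontal}, is to verify that $\nabla$ is a genuine flat partial $K$-connection on $\bar B$. Well-definedness is forced by $K$ being a subalgebroid: if $\tau\in\Gamma(K)$ then $[\sigma,\tau]\in\Gamma(K)$, so $\overline{[\sigma,\tau]}=0$; the Leibniz rule in $\tau$ and the $\ca{C}^\infty(N)$-linearity in $\sigma$ follow from the derivation property of the Schouten bracket together with $\bar\sigma=0$ for $\sigma\in\Gamma(K)$. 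Flatness is exactly the Jacobi identity: for $\sigma,\sigma'\in\Gamma(K)$,
\[
\nabla_\sigma\nabla_{\sigma'}\bar\tau-\nabla_{\sigma'}\nabla_\sigma\bar\tau-\nabla_{[\sigma,\sigma']}\bar\tau
=\overline{[\sigma,[\sigma',\tau]]-[\sigma',[\sigma,\tau]]-[[\sigma,\sigma'],\tau]}=0.
\]

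For~\eqref{item;horizontal}, I would note that because $\nabla$ lifts only directions in $\an_B(K)=T\ca{K}$, its horizontal distribution $H\subseteq T\bar B$ has rank $\rank(K)=\dim\ca{K}$ and projects isomorphically onto the leaf tangent spaces under $T\pi_{\bar B}$. The flatness displayed above makes $H$ involutive, so $H$ integrates to a foliation $\ca{L}$ of $\bar B$; the fibrewise linearity of $\nabla$ shows each $\ca{L}$-leaf covers an $\ca{K}$-leaf by a fibrewise linear map, which is precisely the statement that $(\bar B,\ca{L})$ is a foliated vector bundle over $(N,\ca{K})$. Turning to~\eqref{item;quotient}, the trivial-holonomy hypothesis is what guarantees that parallel transport along $\ca{K}$-leaves is single-valued, so that, combined with the assumption that $Q=N/\ca{K}$ is a manifold, the leaf space $C=\bar B/\ca{L}$ is a Hausdorff manifold and in fact a vector bundle over $Q$ (the linear structure descends because $\ca{L}$ preserves it). To equip $C\to Q$ with a Lie algebroid structure and establish the universal property I would invoke~\cite[Theorem~4.5]{higgins-mackenzie;algebraic-lie-algebroids}, identifying its quotient with $C=\bar B/\ca{L}$, checking that the composite $q\colon B\to\bar B\to C$ kills $f(K)$ (since $K\mapsto0$ in $\bar B$) and is the universal such morphism. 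The isomorphism $B\cong q^!C$ I would then verify fibrewise: the natural map $b\mapsto(\an_B(b),q(b))$ sends $B_x$ to the fibre $\{(v,c)\in T_xN\times C_{\mr q(x)}:T\mr q(v)=\an_C(c)\}$ of $q^!C$, it is injective because $\ker(B_x\to C_{\mr q(x)})=K_x$ meets $\ker\an_B$ trivially, and a rank count $\rank(q^!C)=(\rank B-\rank K)+\rank K=\rank B$ forces it to be an isomorphism.

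Assertion~\eqref{item;section} then follows formally from~\eqref{item;quotient} by functoriality of the pullback reviewed in Appendix~\ref{subsection;pull}: since $s$ is a section of $\mr q$ we have $\mr q\circ s=\id_U$, whence
\[
s^!B\cong s^!\mr q^!C=(\mr q\circ s)^!C=C|_U,
\]
and the resulting isomorphism is exactly $q\circ s_!$. For~\eqref{item;normal}, I would identify a section of $C\to Q$ with an $\ca{L}$-invariant section of $\bar B$, i.e.\ a $\nabla$-flat section $\bar\tau$; the flatness condition $\nabla_\sigma\bar\tau=\overline{[\sigma,\tau]}=0$ for all $\sigma\in\Gamma(K)$ says precisely that $\tau\in\n(\Gamma(K))$, while $\bar\tau=0$ says $\tau\in\Gamma(K)$, giving $\Gamma(C)\cong\n(\Gamma(K))/\Gamma(K)$. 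The bracket on $\Gamma(C)$ matches the quotient bracket because both the anchor $\an_B$ and the bracket $[{\cdot},{\cdot}]_B$ descend from the normalizer, which is a Lie subalgebra of $\Gamma(B)$ containing $\Gamma(K)$ as an ideal.

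I expect the main obstacle to be the middle step: correctly matching the foliated-bundle model $C=\bar B/\ca{L}$ to the quotient furnished by Higgins and Mackenzie and verifying $B\cong q^!C$, together with the smoothness bookkeeping for the global manifold structure on $C$ that the trivial-holonomy hypothesis is designed to control. The differential-geometric content of~\eqref{item;horizontal} and the purely algebraic identifications in~\eqref{item;section} and~\eqref{item;normal} are comparatively routine once the flat connection is in hand.
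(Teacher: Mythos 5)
Your proposal is correct, but it routes the core step differently from the paper, and in the opposite logical order. The paper's own proof is self-contained: part~\eqref{item;section} comes first and is the engine that produces part~\eqref{item;quotient}. For each local section $s\colon U\to N$ of $\mr{q}$ the composite $q\circ s_!\colon s^!B\to C|_U$ is a homeomorphism, and the Lie algebroid structure on $C$ is \emph{defined} by transferring structure along these maps; what makes this well posed, and where trivial holonomy enters a second time, is independence of the choice of $s$, proved by joining two sections with flows of sections of $K$, which are inner automorphisms of $B$ and therefore act trivially on horizontal sections of $\bar{B}$. The universal property is then checked by gluing the locally defined morphisms $f\circ s_!$, and $B\cong q^!C$ follows by gluing the local isomorphisms $q^!C|_U\cong q^!s^!B\cong B|_{\mr{q}^{-1}(U)}$. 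You instead obtain \eqref{item;quotient} by invoking \cite[Theorem~4.5]{higgins-mackenzie;algebraic-lie-algebroids}, which mirrors the paper's published justification, and then deduce \eqref{item;section} formally via $s^!B\cong s^!\mr{q}^!C\cong(\mr{q}\circ s)^!C=C|_U$; your fibrewise injectivity-plus-rank-count proof of $B\cong q^!C$ is a clean substitute for the paper's gluing, and your kernel computation is right ($\ker(q|_{B_x})=K_x$ meets $\ker\an_B$ trivially because $\an_K$ is injective). What your route buys is brevity; what it costs is the step you yourself flag as the main obstacle: identifying the abstract Higgins--Mackenzie quotient with the concrete foliated model $\bar{B}/\ca{L}$ (equivalently, verifying their hypotheses in this setting) is precisely the work that the paper's transfer-of-structure construction does by hand, and the flow argument for independence of $s$ is the one ingredient your sketch would need in order to close that step without the citation. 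Your treatments of \eqref{item;horizontal} and \eqref{item;normal} coincide with the paper's arguments.
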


Under the conditions of Proposition~\ref{proposition;quotient} the
quotient morphism $q\colon B\to C$ induces an isomorphism of complexes
\begin{equation}\label{equation;basic}
\begin{tikzcd}
q^*\colon\Omega_C^\bu(Q)\ar[r,"\cong"]&
\Omega_B^\bu(N)_{\text{\rmfamily$K$-bas}},
\end{tikzcd}
\end{equation}
where the subscript ``$K$-bas'' refers to the subcomplex of Lie
algebroid forms that are $K$-basic in the sense of
Lemma~\ref{lemma;constant-rank}.  If\/ $\alpha\in\Omega_B^\bu(N)$ is
$K$-basic, then $(q^*)^{-1}\alpha$ is determined by the following
fact: for a local section $s\colon U\to N$ of\/ $q$ defined over an open
subset $U$ of\/ $Q$ we have
$\bigl((q^*)^{-1}\alpha)\bigr)\big|_U=s_!^*\alpha$.  Together with
Lemma~\ref{lemma;reduction} this gives us the following criterion for
when a presymplectic Lie algebroid can be reduced to a symplectic Lie
algebroid.

\begin{theorem}[symplectic reduction]
\label{theorem;reduction}
Let\/ $(B\to N,\omega_B)$ be a presymplectic Lie algebroid.  Suppose
that the null Lie algebroid\/ $K=\ker(\omega_B)$ is a foliation Lie
algebroid, that the leaf space $N/\ca{K}$ of the foliation\/ $\ca{K}$
defined by $K$ is a manifold, and that the flat $K$-connection on
$B/K$ has trivial holonomy.  Then there is a unique form\/
$\omega_C\in\Omega_C^2(Q)$ on the quotient Lie algebroid $C\to Q$
satisfying\ $q^*\omega_C=\omega_B$.  The form\/ $\omega_C$ is
$C$-symplectic.
\end{theorem}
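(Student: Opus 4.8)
The plan is to use the isomorphism of complexes~\eqref{equation;basic} to transport $\omega_B$ down to a form $\omega_C$ on the quotient, and then to verify separately the two requirements for $\omega_C$ to be $C$-symplectic: closedness, which will be a formal consequence of~\eqref{equation;basic}, and nondegeneracy, which I will reduce to the local transverse-slice computation of Lemma~\ref{lemma;reduction}.

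First I would produce $\omega_C$ and establish its uniqueness. By Lemma~\ref{lemma;constant-rank} the presymplectic form $\omega_B$ is $K$-basic, hence lies in the subcomplex $\Omega_B^\bu(N)_{\text{\rmfamily$K$-bas}}$. Since the standing hypotheses are exactly those of Proposition~\ref{proposition;quotient}, the quotient Lie algebroid $C\to Q$ exists and $q^*$ is the isomorphism~\eqref{equation;basic}. I therefore set $\omega_C=(q^*)^{-1}\omega_B\in\Omega_C^2(Q)$; this is the unique $2$-form with $q^*\omega_C=\omega_B$, uniqueness being immediate from the injectivity of $q^*$. Closedness is then purely formal: as~\eqref{equation;basic} is an isomorphism of \emph{complexes}, $q^*$ intertwines the differentials, so $q^*(d_C\omega_C)=d_B(q^*\omega_C)=d_B\omega_B=0$, and injectivity of $q^*$ forces $d_C\omega_C=0$.

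The substantive point is nondegeneracy, which I would verify locally on $Q$. By Proposition~\ref{proposition;quotient}\eqref{item;section}, any point of $Q$ has a neighbourhood $U$ admitting a section $s\colon U\to N$ of the quotient map $\mr{q}$, and $q\circ s_!$ identifies $s^!B$ with $C|_U$; under this identification the remark following Proposition~\ref{proposition;quotient} gives $\omega_C|_U=s_!^*\omega_B$. The image $S=s(U)$ is a transverse slice of the null foliation $\ca{K}$, and $s$ identifies $s^!B$ with $i_S^!B$ and $s_!^*\omega_B$ with $(i_S)_!^*\omega_B$. Lemma~\ref{lemma;reduction} now says precisely that $(i_S)_!^*\omega_B$ is nondegenerate, so $\omega_C$ is nondegenerate on each such $U$; as these cover $Q$, the form $\omega_C$ is nondegenerate, and hence $C$-symplectic.

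I expect the only real content to be this last step, and within it the delicate part is matching the \emph{globally} defined quotient form $\omega_C$ against the \emph{local} slice form of Lemma~\ref{lemma;reduction}. Once the identity $\omega_C|_U=s_!^*\omega_B$ from the remark is invoked the matching is routine, but it is precisely this bridge between the abstract quotient construction and the concrete transverse-slice computation that makes nondegeneracy work.
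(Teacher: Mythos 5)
Your proposal is correct and takes essentially the same approach as the paper: the paper obtains existence, uniqueness, and closedness of $\omega_C$ from the isomorphism of complexes~\eqref{equation;basic}, and nondegeneracy from Lemma~\ref{lemma;reduction} via the local-section identity $\bigl((q^*)^{-1}\omega_B\bigr)\big|_U=s_!^*\omega_B$, exactly as you do. The only difference is one of exposition — you spell out the identification of $s(U)$ with a transverse slice, which the paper leaves implicit.
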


\section{Hamiltonian actions and reduction}\label{section;hamilton}
%%%%%%%%%%%%%%%%%%%%%%%%%%%%%%%%%%%%%%%%%%%%%%%%%%%%%%%%%%%%%%%%%%%%%%%%

Marsden and Weinstein%
~\cite{marsden-weinstein;reduction-symplectic-manifolds-symmetry}
showed how to reduce a symplectic manifold $M$ with respect to a
moment map, i.e.\ a Poisson map $M\to\g^*$ to the dual of a Lie
algebra~$\g$.  A version of their result for symplectic Lie algebroids
was obtained by Marrero et
al.~\cite[Theorem~3.11]{marrero-padron-rodriguez;symplectic-like}.
However, symplectic Lie algebroids include log symplectic manifolds,
where the symplectic structure has first-order poles and Hamiltonian
functions may have logarithmic poles.  How do we reduce log symplectic
manifolds at ``infinite'' values of momentum?  Following Mikami and
Weinstein~\cite{mikami-weinstein;moments-reduction-groupoids}, we will
deal with such situations by allowing moment maps to take values in
Poisson manifolds more general than~$\g^*$.  The upshot is a Lie
algebroid version of the Mikami-Weinstein reduction theorem,
Theorem~\ref{theorem;mikami-weinstein}.

As noted in Section~\ref{subsection;symplectic}, a Poisson structure
on a Lie algebroid $A\to M$ gives rise to a Poisson structure on $M$
in the usual sense, and in that sense
Theorem~\ref{theorem;mikami-weinstein} is a special case of the
Poisson reduction theorems of Marsden and
Ratiu~\cite{marsden-ratiu;reduction-poisson} and Cattaneo and
Zambon~\cite{cattaneo-zambon;supergeometric-poisson}.  But
Theorem~\ref{theorem;mikami-weinstein} offers the extra information
that the quotient Poisson structure lifts to an appropriate quotient
Lie algebroid over the reduced space.  A Poisson structure on a Lie
algebroid $A$ can be seen as a special type of Dirac structure on the
Courant algebroid $A\oplus A^*$.  Presumably the Dirac reduction
theorems of Bursztyn and Crainic~\cite[Theorem~4.11]%
{bursztyn-crainic;dirac-structures-moment-maps} and Bursztyn et
al.~\cite[Proposition~3.16]{bursztyn-iglesias-severa;courant-moment},
which are formulated there only for standard Courant algebroids
$TM\oplus T^*M$, can be extended to incorporate our setting, but we
will leave that for another day.

%%%%%%%%%%%%%%%%%%%%%%%%%%%%%%%%%%%%%%%%%%%%%%%%%%%%%%%%%%%%%%%%%%%%%%%%
\subsection{Lie algebroid actions and Poisson morphisms}
\label{subsection;action-morphism}
%%%%%%%%%%%%%%%%%%%%%%%%%%%%%%%%%%%%%%%%%%%%%%%%%%%%%%%%%%%%%%%%%%%%%%%%

In this section we review the notion of a Lie algebroid action on a
Lie algebroid and how Poisson maps give rise to such actions.

An action of a Lie algebra $\g$ on a Lie algebroid $A\to M$ simply
means a Lie algebra homomorphism $\g\to\Gamma(A)$.  Equivalently, a
$\g$-action on $A$ can be defined as a $\g$-action on $M$ together
with a Lie algebroid morphism $\g\ltimes M\to A$ from the action Lie
algebroid to~$A$.  This notion generalizes as follows.

\begin{definition}\label{definition;action}
Let $A\to M$ and $C\to P$ be Lie algebroids and let $\mu_0\colon M\to
P$ be a smooth map.  An \emph{action} of\/ $C$ on $A$ with
\emph{anchor} $\mu_0$ is a Lie algebra homomorphism
$\rho\colon\Gamma(C)\to\Gamma(A)$ that is $\ca{C}^\infty$-linear with
respect to $\mu_0$ in the sense that
$\rho(f\tau)=(\mu_0^*f)\rho(\tau)$ for $f\in\ca{C}^\infty(P)$ and
$\tau\in\Gamma(C)$.  The sections $\rho(\tau)\in\Gamma(A)$, where
$\tau$ ranges over the space of sections of\/ $C$, are called the
\emph{generating sections} of the action.
\end{definition}

Given a $C$-action $\rho$ on $A$ there is a unique smooth vector
bundle map $\bar\rho\colon\mu_0^*C\to A$ over the identity $\id_M$
such that the triangle
\begin{equation}\label{equation;algebroid-action}
\begin{tikzcd}
\Gamma(C)\ar[d,"\mu_0^*"']\ar[r,"\rho"]&\Gamma(A)\\
\Gamma(\mu_0^*C)\ar[ur,"\bar\rho_*"']
\end{tikzcd}
\end{equation}
commutes.  The pullback bundle $\mu_0^*C$ is equipped with an anchor
$\an_{\mu_0^*C}={\an_A}\circ\bar\rho\colon\mu_0^*C\to TM$.  There is a
unique Lie bracket on $\Gamma(\mu_0^*C)$ that makes the
maps~\eqref{equation;algebroid-action} Lie algebra homomorphisms.
With respect to this bracket $\mu_0^*C$ is a Lie algebroid over $M$,
and $\bar\rho$ and the natural map $\mu_0^*C\to C$ are Lie algebroid
morphisms.  Thus an action of\/ $C$ on $A$ can be alternatively
defined as consisting of a smooth map $\mu_0\colon M\to P$, a Lie
algebroid structure on the pullback bundle $\mu_0^*C$ such that
$\mu_0^*C\to C$ is a Lie algebroid morphism, and a Lie algebroid
morphism $\bar\rho\colon\mu_0^*C\to A$.

\begin{notation-definition}\label{notation;action}
To lighten the notation we will from now on denote the action map
$\rho\colon\Gamma(C)\to\Gamma(A)$, the vector bundle map
$\bar\rho\colon\mu_0^*C\to A$, and the pushforward map
$\bar\rho_*\colon\Gamma(\mu_0^*C)\to\Gamma(A)$ all by the same
letter,~$\rho$.  We will also denote by $\rho_x\colon C_{\mu_0(x)}\to
A_x$ the restriction of\/ $\bar\rho$ to a point $x\in M$.  We say that
the action $\rho$ is \emph{locally free} at $x$ if\/ $\rho_x$ is
injective and \emph{transitive} at $x$ if\/ $\rho_x$ is surjective.
\end{notation-definition}

The fact that $\mu_0^*C\to C$ is a Lie algebroid morphism can be
viewed as an equivariance property of the anchor $\mu_0$; it implies
that the square
\[
\begin{tikzcd}
\mu_0^*C\ar[r]\ar[d,"\an_{\mu_0^*C}"']&C\ar[d,"\an_C"]\\
TM\ar[r,"T\mu_0"]&TP
\end{tikzcd}
\]
commutes.  In particular we have inclusions
\begin{equation}\label{equation;stabilizers}
\stab(\mu_0^*C,x)\subseteq\stab(C,\mu_0(x))
\end{equation}
for all $x\in M$, where we identify the fibre of\/ $\mu_0^*C$ at $x$
with the fibre of\/ $C$ at $\mu_0(x)$.

If\/ $B\to N$ is a Lie subalgebroid of\/ $A$, then the
$\ca{C}^\infty(M)$-module of relative sections
\[\Gamma(A;B)=\{\,\tau\in\Gamma(A)\mid\tau|_N\in\Gamma(B)\,\}\]
is a Lie subalgebra of\/ $\Gamma(A)$ and therefore the
$\ca{C}^\infty(P)$-module $\rho^{-1}(\Gamma(A;B))$ is a Lie subalgebra
of\/ $\Gamma(C)$.  It follows from the Leibniz rule that the
$\ca{C}^\infty(P)$-module
\[
\Gamma\bigl(C;0_{\mu_0(N)}\bigr)=
\{\,\sigma\in\Gamma(C)\mid\text{$\sigma_p=0$ for all
  $p\in\mu_0(N)$}\,\}
\]
is a Lie ideal of\/ $\rho^{-1}(\Gamma(A;B))$.  This leads to the
following definition.

\begin{definition}\label{definition;stable}
Given an action $\rho\colon\Gamma(C)\to\Gamma(A)$ of\/ $C$ on $A$ with
anchor $\mu_0\colon M\to P$ and a Lie subalgebroid $B\to N$ of\/ $A$,
the \emph{stabilizer} of\/ $B$ under the action is the Lie algebra
\[
\stab(\rho,B)=\stab(C,B)=
\rho^{-1}\bigl(\Gamma(A;B)\bigr)\big/\Gamma\bigl(C;0_{\mu_0(N)}\bigr).
\]
Let $D$ be a Lie subalgebroid of\/ $C$ with base manifold $Q\subseteq
P$.  We say $B$ is \emph{stable under $D$} or \emph{$D$-stable} if
$\mu_0(N)\subseteq Q$ and
$\Gamma(C;D)\subseteq\rho^{-1}(\Gamma(A;B))$.
\glossary{stabCB@$\stab(C,B)$, stabilizer of\/ $B$ under $C$-action}
\end{definition}

\begin{remarks}\phantomsection\label{remark;stabilizer}
\begin{numerate}
\item\label{item;lie-rinehart}
The stabilizer $\stab(C,B)$ is a Lie-Rinehart algebra in the sense
of~\cite{huebschmann;history-lie-rinehart}, but as the next remark
shows it is usually not the space of sections of a Lie subalgebroid
of~$C$.
\item\label{item;stabilizer-not-bundle}
The Lie algebroid $A\to M$ acts on the tangent bundle $TM$ via the
anchor $A\to TM$.  The stabilizer of a point $x\in M$ (viewed as the
zero bundle $0_x\subseteq TM$) in the sense of
Definition~\ref{definition;stable} is $\stab(A,x)=\ker(\an_{A,x})$,
which agrees with the usual definition
(Remark~\ref{remark;clean}\eqref{item;point-orbit}).  The stabilizer
of the zero subalgebroid $0_M$ is
$\stab(A,0_M)=\ker(\an_A)\subseteq\Gamma(A)$, which is not the space
of sections of a subbundle of\/ $A$ unless the anchor has constant
rank.
%% The Lie algebra $\stab(\rho,B)$ is a module over the ring
%% $\ca{C}^\infty(\mu_0(N))=\ca{C}^\infty(P)/\ca{C}^\infty(P;0_{\mu_0(N)})$.
%% If $\mu_0(N)$ is a submanifold of $P$ and the module
%% $\stab(\rho,B)$ is projective and finitely generated, then
%% $\stab(\rho,B)$ is the module of sections of a Lie subalgebroid
%% $\Stab(\rho,B)$ of $C$.
%
\item\label{item;adapted}
Definition~\ref{definition;stable} is correct only if the submanifold
$N$ is closed and embedded, which will always be the case in the
situations that concern us.  (If\/ $N$ is not closed or embedded the
definition must be modified as follows.  Call a pair of open subsets
$U\subseteq M$ and $V\subseteq N$ \emph{adapted} to $N$ if\/ $V$ is a
closed embedded submanifold of\/ $U$.  There is a unique sheaf of
$\ca{C}^\infty_N$-modules $\ca{S}$ such that
$\ca{S}(V)=\stab(C|_U,B|_V)$ for every adapted pair $(U,V)$.  Define
$\stab(C,B)=\ca{S}(N)$ to be the space of global sections of
$\ca{S}$.)
\end{numerate}
\end{remarks}

Lie algebroid actions can be restricted to Lie subalgebroids in the
following way.

\begin{lemma}\label{lemma;action}
Let $A\to M$ and $C\to P$ be Lie algebroids and let
$\rho\colon\Gamma(C)\to\Gamma(A)$ be a $C$-action on $A$ with anchor
$\mu_0\colon M\to P$.  Let $B\to N$ be a Lie subalgebroid of\/ $A$.
The action $\rho$ restricts to a Lie algebra homomorphism
$\stab(C,B)\to\Gamma(B)$.
%% %
%% \begin{enumerate}
%% %
%% \item\label{item;action-restrict}
%% %
If\/ $B$ is stable under the action of a Lie subalgebroid $D$ of\/
$C$, we have a natural homomorphism $\Gamma(D)\to\stab(C,B)$ and hence
a homomorphism $\rho_D\colon\Gamma(D)\to\Gamma(B)$, which is a
$D$-action on $B$ with anchor $\mu_0|_N$.
%% %
%% \item\label{item;action-pull}
%% %
%% Let $i_Q\colon Q\to P$ be a submanifold of $P$ which cleanly
%% intersects the anchor $\mu_0$ and the Lie algebroid~$C$.  Let $N$ be
%% the submanifold $\mu_0^{-1}(Q)$ of $M$ and suppose that $N$ cleanly
%% intersects the Lie algebroid~$A$.  Then the Lie subalgebroid
%% $B=i_N^!A$ is stable under the action of $D=i_Q^!C$.  Hence the
%% $C$-action $\rho$ on $A$ restricts to an $D$-action on $B$ with
%% anchor~$\mu_0|_N$.
%% %
%% \end{enumerate}
%% %
\end{lemma}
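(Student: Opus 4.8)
The plan is to construct the map $\stab(C,B)\to\Gamma(B)$ as the descent of a restriction homomorphism, and then to realize $\rho_D$ as the composite of that map with a lift $\Gamma(D)\to\stab(C,B)$ supplied by the stability hypothesis.

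For the first assertion I would begin with the Lie algebra homomorphism $\rho\colon\Gamma(C)\to\Gamma(A)$ restricted to the subalgebra $\rho^{-1}(\Gamma(A;B))$, whose image lies in $\Gamma(A;B)$ by definition. Composing with the restriction map $\Gamma(A;B)\to\Gamma(B)$, $\tau\mapsto\tau|_N$—which is a Lie algebra homomorphism precisely because $B$ is a subalgebroid, so that $[\tau_1,\tau_2]_A|_N=[\tau_1|_N,\tau_2|_N]_B$—produces a homomorphism $\rho^{-1}(\Gamma(A;B))\to\Gamma(B)$. The key point is that it kills the ideal $\Gamma(C;0_{\mu_0(N)})$: for a section $\sigma$ vanishing on $\mu_0(N)$ and any $x\in N$, the fibrewise formula $\rho(\sigma)(x)=\rho_x(\sigma_{\mu_0(x)})$ extracted from the triangle~\eqref{equation;algebroid-action} gives $\rho(\sigma)(x)=\rho_x(0)=0$, since $\mu_0(x)\in\mu_0(N)$. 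Hence $\rho(\sigma)|_N=0$, and the homomorphism descends to $\stab(C,B)=\rho^{-1}(\Gamma(A;B))/\Gamma(C;0_{\mu_0(N)})$.

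For the second assertion, assume $B$ is $D$-stable, so that $\mu_0(N)\subseteq Q$ and $\Gamma(C;D)\subseteq\rho^{-1}(\Gamma(A;B))$. I would define $\Gamma(D)\to\stab(C,B)$ by choosing, for each $\eta\in\Gamma(D)$, an extension $\sigma\in\Gamma(C;D)$ with $\sigma|_Q=\eta$ (these exist because $Q$ is closed and embedded) and sending $\eta$ to the class $[\sigma]\in\stab(C,B)$. Independence of the extension is exactly where stability enters: two extensions differ by a section in $\Gamma(C;0_Q)$, and $\mu_0(N)\subseteq Q$ forces the containment $\Gamma(C;0_Q)\subseteq\Gamma(C;0_{\mu_0(N)})$, so their classes coincide. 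This map is a Lie algebra homomorphism because $\Gamma(C;D)$ is a Lie subalgebra and $[\sigma_1,\sigma_2]_C|_Q=[\eta_1,\eta_2]_D$, so the bracket of extensions extends the bracket.

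Finally I would set $\rho_D$ to be the composite $\Gamma(D)\to\stab(C,B)\to\Gamma(B)$, which is automatically a Lie algebra homomorphism. To confirm it is a $D$-action with anchor $\mu_0|_N$ in the sense of Definition~\ref{definition;action}, it remains to check $\ca{C}^\infty$-linearity: given $f\in\ca{C}^\infty(Q)$ and $\eta\in\Gamma(D)$, extend $f$ to $\tilde f\in\ca{C}^\infty(P)$ and $\eta$ to $\sigma\in\Gamma(C;D)$; then $\tilde f\sigma$ extends $f\eta$, and $\rho(\tilde f\sigma)|_N=((\mu_0^*\tilde f)\rho(\sigma))|_N=((\mu_0|_N)^*f)\,\rho_D(\eta)$, using $\tilde f|_Q=f$ together with $\mu_0(N)\subseteq Q$. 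The main obstacle is not any single computation but bookkeeping the three vanishing conditions—$\Gamma(C;0_{\mu_0(N)})$, $\Gamma(C;0_Q)$, and the kernel of $\Gamma(A;B)\to\Gamma(B)$—so that both descents are well defined; the containment $\Gamma(C;0_Q)\subseteq\Gamma(C;0_{\mu_0(N)})$ forced by $\mu_0(N)\subseteq Q$ is the linchpin that makes the lift $\Gamma(D)\to\stab(C,B)$ independent of choices.
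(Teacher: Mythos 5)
Your proposal is correct and follows essentially the same route as the paper: descend $\rho$ to $\stab(C,B)\to\Gamma(B)\cong\Gamma(A;B)/\Gamma(A;0_N)$ by noting that sections vanishing on $\mu_0(N)$ map to sections vanishing on $N$ (your fibrewise formula via $\bar\rho$ is exactly the paper's appeal to $\ca{C}^\infty$-linearity), then use $D$-stability and $\Gamma(D)\cong\Gamma(C;D)/\Gamma(C;0_Q)$ to produce the lift $\Gamma(D)\to\stab(C,B)$, and compose. Your explicit checks of the containment $\Gamma(C;0_Q)\subseteq\Gamma(C;0_{\mu_0(N)})$ and of $\ca{C}^\infty$-linearity over $\mu_0|_N$ simply fill in details the paper leaves implicit.
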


\begin{proof}
%% \eqref{item;action-restrict}~It is enough to show this under the
%% assumption that the Lie subalgebroids $B$ and $D$ are closed and
%% embedded.  (If they are not, cover the pair $(P,Q)$ with adapted
%% pairs $(U,V)$ and the pair $(M,N)$ with adapted pairs $(U_0,V_0)$.)
If a section $\sigma\in\Gamma(C)$ vanishes on $\mu_0(N)$, then by
$\ca{C}^\infty$-linearity $\rho(\sigma)$ vanishes on $N$.  Therefore
$\rho$ descends to a homomorphism
\[
\stab(C,B)=
\rho^{-1}\bigl(\Gamma(A;B)\bigr)\big/\Gamma\bigl(C;0_{\mu_0(N)}\bigr)
\longto\Gamma(B)\cong\Gamma(A;B)/\Gamma(A;0_N).
\]
If\/ $B$ is stable under $D\to Q$, then the inclusion
$\Gamma(C;D)\subseteq\rho^{-1}(\Gamma(A;B))$ gives us a homomorphism
\[
\Gamma(D)\cong\Gamma(C;D)/\Gamma(C;0_Q)\longto\stab(C,B).
\]
The composition of these two homomorphisms is $\ca{C}^\infty$-linear
over $\mu_0|_N$.
%% \eqref{item;action-pull}~Because of the cleanness assumptions on $Q$
%% and $N$ we have well-defined pullback Lie algebroids $D=i_Q^!C$
%% and~$B=i_N^!A$.  Since $\rho\colon\mu_0^*C\to A$ is a Lie algebroid
%% morphism, we have $\an_A(\rho(\sigma))=\an_{\mu_0^*C}(\mu_0^*\sigma)$
%% for all $\sigma\in\Gamma(C)$.  Hence
%% %
%% \begin{equation}\label{equation;restrict-action}
%% %
%% T_x\mu_0\bigl(\an_A(\rho(\sigma))\bigr)=
%% T_x\mu_0\bigl(\an_{\mu_0^*C}(\mu_0^*\sigma)\bigr)=
%% \an_C(\sigma_{\mu_0(x)})
%% %
%% \end{equation}
%% %
%% for all $x\in M$.  Now let $\sigma\in\Gamma(C;D)$, i.e.\ $\sigma$ is a
%% section of $C$ such that $\an_C(\sigma)$ is tangent to $Q$,
%% i.e.\ $\an_C(\sigma_y)\in T_yQ$ for all $y\in Q$.  Then
%% $\an_A(\rho(\sigma_{\mu_0(x)})\in T_xN=(T_x\mu_0)^{-1}(T_{\mu_0(x)}Q)$
%% for all $x\in N$ by~\eqref{equation;restrict-action}.  Hence
%% $\an_A(\rho(\sigma))$ is tangent to $N$, so
%% $\rho(\sigma)\in\Gamma(A;B)$, which proves the first assertion.  The
%% second assertion follows from~\eqref{item;action-restrict}.
\end{proof}

Recall that a Poisson structure on a Lie algebroid $A$ gives rise to a
Lie algebroid structure on the dual bundle $A^*$
(Theorem~\ref{theorem;poisson}).  The following lemma shows that a
Poisson morphism gives rise to a Lie algebroid action in the same way
that a moment map $M\to\g^*$ gives rise to a Hamiltonian Lie algebra
action $\g\to\Gamma(TM)$ on a symplectic manifold.

\begin{lemma}\label{lemma;poisson-action}
Let $(A\to M,\lambda_A)$ and $(E\to P,\lambda_E)$ be Poisson Lie
algebroids.
\begin{enumerate}
\item\label{item;poisson-pull}
Let $\mu\colon A\to E$ be a Poisson morphism.  The pullback map
\[
\mu^*\colon\Gamma(E^*)=\Omega_E^1(P)\longto\Gamma(A^*)= \Omega_A^1(M)
\]
defines an action of the Lie algebroid $E^*$ on the Lie algebroid
$A^*$ with anchor the base map $\mr\mu\colon M\to P$ of~$\mu$.
\item\label{item;poisson-hamiltonian}
Let $\rho\colon\Omega_E^1(P)\to\Omega_A^1(M)$ be an $E^*$-action on
$A^*$ with anchor $\mu_0\colon M\to P$.  Suppose that the conditions
$\rho\circ d_E=d_A\circ\rho$ and
$\rho^*\circ\lambda_A^\sharp\circ\rho=\lambda_E^\sharp$ are satisfied.
Then there exists a unique Poisson morphism $\mu\colon A\to E$ with
base map $\mr\mu=\mu_0$ such that $\mu^*=\rho$.
\end{enumerate}
\end{lemma}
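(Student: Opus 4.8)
For part~\eqref{item;poisson-pull}, the plan is to verify the two defining properties of a Lie algebroid action (Definition~\ref{definition;action}) for $\mu^*$. First I would record that $\mu^*$ is $\ca{C}^\infty$-linear over $\mr\mu$: for $f\in\ca{C}^\infty(P)$, $\beta\in\Omega_E^1(P)$ and $a\in A_x$ one computes $(\mu^*(f\beta))_x(a)=(f\beta)_{\mr\mu(x)}(\mu a)=(\mr\mu^*f)(x)\,(\mu^*\beta)_x(a)$, so $\mu^*(f\beta)=(\mr\mu^*f)\,\mu^*\beta$. The substantive point is that $\mu^*$ intertwines the Lie algebroid brackets on $\Gamma(E^*)$ and $\Gamma(A^*)$ furnished by Theorem~\ref{theorem;poisson}. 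I would check this directly from formula~\eqref{equation;bracket-form} using three facts about a Poisson morphism (Definition~\ref{definition;poisson-morphism}): (a) since $\mu$ is a Lie algebroid morphism, $\mu^*d_E=d_A\mu^*$ (Appendix~\ref{subsection;cartan}); (b) the commuting square of Definition~\ref{definition;poisson-morphism} gives $\mu\circ\lambda_A^\sharp\circ\mu^*=\lambda_E^\sharp$, whence for each $\beta$ the section $\lambda_A^\sharp(\mu^*\beta)\in\Gamma(A)$ is $\mu$-related to $\lambda_E^\sharp\beta\in\Gamma(E)$, and likewise $\lambda_A(\mu^*\beta_1,\mu^*\beta_2)=\mr\mu^*\bigl(\lambda_E(\beta_1,\beta_2)\bigr)$ as functions; and (c) the naturality $\iota_A(\sigma)\mu^*\gamma=\mu^*(\iota_E(\tau)\gamma)$ for $\mu$-related sections $\sigma\sim_\mu\tau$. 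Feeding these into the three summands of~\eqref{equation;bracket-form} matches each term of $\{\mu^*\beta_1,\mu^*\beta_2\}_{\lambda_A}$ with $\mu^*$ of the corresponding term of $\{\beta_1,\beta_2\}_{\lambda_E}$, so $\mu^*$ is a Lie algebra homomorphism and hence an $E^*$-action on $A^*$ with anchor $\mr\mu$.

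For part~\eqref{item;poisson-hamiltonian}, I would begin with uniqueness and the construction of $\mu$. Because $\rho$ is $\ca{C}^\infty$-linear over $\mu_0$, the construction underlying diagram~\eqref{equation;algebroid-action} yields a unique smooth vector bundle map $\bar\rho\colon\mu_0^*E^*\to A^*$ over $\id_M$ with $\rho(\beta)_x=\bar\rho_x(\beta_{\mu_0(x)})$; I then \emph{define} $\mu\colon A\to E$ fibrewise as the transpose $\mu_x=\bar\rho_x^*\colon A_x\to E_{\mu_0(x)}$. By construction $\mu^*=\rho$, and any bundle map with this property agrees with $\mu$ fibrewise, giving uniqueness. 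It remains to see that $\mu$ is a Poisson morphism. That it is a Lie algebroid morphism is the content of the hypothesis $\rho\circ d_E=d_A\circ\rho$: extending $\rho$ to the multiplicative pullback $\mu^*$ on all degrees (agreeing with $\mu_0^*$ on functions), the hypothesis says $\mu^*$ commutes with $d$ on functions and $1$-forms, and since $\Omega_E^\bu(P)$ is generated as an algebra in degrees $0$ and $1$ the cochain-map property propagates to all degrees by induction over wedge factors; the standard characterization of Lie algebroid morphisms via the de Rham differential (Appendix~\ref{section;algebroid}) then makes $\mu$ a morphism. Finally, the hypothesis $\rho^*\circ\lambda_A^\sharp\circ\rho=\lambda_E^\sharp$ is literally $\mu\circ\lambda_A^\sharp\circ\mu^*=\lambda_E^\sharp$, i.e.\ the commuting square of Definition~\ref{definition;poisson-morphism}, so $\lambda_A$ and $\lambda_E$ are $\mu$-related and $\mu$ is a Poisson morphism.

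The step I expect to be the main obstacle is the termwise bracket verification in~\eqref{item;poisson-pull}: one must track precisely which sections are $\mu$-related and apply the contraction-naturality (c) and the differential-naturality (a) to each of the three terms of~\eqref{equation;bracket-form} without sign or basepoint errors, the delicate input being the identity $\lambda_A^\sharp(\mu^*\beta)\sim_\mu\lambda_E^\sharp\beta$ coming from the Poisson condition. Once that computation is secured, part~\eqref{item;poisson-hamiltonian} is largely a matter of reading the same identities backwards, the only genuinely new ingredient being the propagation of the cochain-map condition from degrees $0$ and $1$ to all degrees.
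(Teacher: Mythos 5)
Your proposal is correct and follows essentially the same route as the paper: part~(i) rests on $\ca{C}^\infty$-linearity plus the term-by-term bracket computation that the paper compresses into ``because $\lambda_A$ and $\lambda_E$ are $\mu$-related,'' and part~(ii) constructs $\mu$ as the transpose of $\rho$, gets uniqueness for free, and deduces the morphism property from the cochain-map condition before reading off the Poisson condition. The only discrepancy is bibliographic: the characterization of Lie algebroid morphisms via the de Rham differential that you attribute to the appendix is not there --- it is Vaintrob's theorem, which the paper cites explicitly for exactly this step.
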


\begin{proof}
\eqref{item;poisson-pull}~The pullback map $\mu^*$ is
$\ca{C}^\infty$-linear.  It is a Lie algebra homomorphism with respect
to the bracket~\eqref{equation;bracket-form} because the $2$-sections
$\lambda_A$ and $\lambda_E$ are $\mu$-related.

\eqref{item;poisson-hamiltonian}~Let $\rho^*\colon A\to\mu_0^*E$ be
the transpose of\/ $\rho$ and $\mu\colon A\to E$ the composition of
$\rho^*$ with the natural map $\mu_0^*E\to E$.  Then $\mu$ is the
unique vector bundle map with base map $\mu_0$ such that the pullback
map on sections $\mu^*\colon\Gamma(E^*)\to\Gamma(A^*)$ coincides
with~$\rho$.  Since $\rho$ commutes with the exterior derivative, by
Vaintrob's theorem~\cite{vaintrob;lie-algebroids-homological}
(cf.\ also~\cite[\S\,12.2]{meinrenken;groupoids-algebroids}) $\mu$ is
a Lie algebroid morphism.  The condition
$\rho^*\circ\lambda_A^\sharp\circ\rho=\lambda_E^\sharp$ gives that
$\mu\circ\lambda_A^\sharp\circ\mu^*=\lambda_E^\sharp$, i.e.\ $\mu$ is
Poisson.
\end{proof}

\begin{definition}\label{definition;hamiltonian}
Let $\mu\colon A\to E$ be a Poisson morphism of Poisson Lie algebroids
$(A\to M,\lambda_A)$ and $(E\to P,\lambda_E)$ and let $\mu^*$ be the
$E^*$-action on $A^*$ of
Lemma~\ref{lemma;poisson-action}\eqref{item;poisson-pull}.  The
$E^*$-action $\gamma$ on $A$ with anchor $\mr\mu$ obtained by
composing the maps
\[
\begin{tikzcd}
\gamma\colon\Omega_E^1(P)\ar[r,"\mu^*"]&
\Omega_A^1(M)\ar[r,"\lambda_A^\sharp"]&\Gamma(A)
\end{tikzcd}
\]
is the \emph{Hamiltonian action} of\/ $E^*$ on $A$ with \emph{moment}
$\mu\colon A\to E$.  A function $f\in\ca{C}^\infty(M)$ is
\emph{collective} for the Hamiltonian action if it is of the form
$f=g\circ\mr\mu$ for some $g\in\ca{C}^\infty(P)$.
\end{definition}

\begin{remark}\label{remark;collective}
Under the Hamiltonian action $\gamma$ a collective function
$f=g\circ\mr\mu$ acts by the Hamiltonian section
\[
\gamma(d_Eg)=\lambda_A^\sharp\mu^*d_Eg=\lambda_A^\sharp
d_A\mr\mu^*g=\lambda_A^\sharp d_Af=\sigma_f
\]
(see~\eqref{equation;hamilton-section}), which leaves $\lambda_A$
invariant.
\end{remark}

For each $x\in M$ the Hamiltonian action $\gamma$ defines a linear map
on the fibres $\gamma_x\colon E_{\mr\mu(x)}^*\to A_x$ (notational
convention~\ref{notation;action}).  The following lemma describes the
kernel and the image of~$\gamma_x$.  Recall that $W^\circ\subseteq
V^*$ denotes the annihilator of a subspace $W$ of a vector space $V$.

\begin{lemma}\label{lemma;moment}
Let $(A\to M,\lambda_A)$ and $(E\to P,\lambda_E)$ be Poisson Lie
algebroids.  Let $\mu\colon A\to E$ be a Poisson morphism and let
$\gamma\colon\Gamma(E^*)\to\Gamma(A)$ be the associated Hamiltonian
action.  Let $x\in M$ and $y=\mr\mu(x)\in P$.  Let $\gamma_x^*\colon
A_x^*\to E_y$ be the transpose of\/ $\gamma_x\colon E_y^*\to A_x$.
\begin{enumerate}
\item\label{item;transpose}
We have the identities
\[
\gamma_x=\lambda_{A,x}^\sharp\circ\mu_x^*,\qquad
\gamma_x^*=-\mu_x\circ\lambda_{A,x}^\sharp,\qquad
\lambda_{E,y}^\sharp=\mu_x\circ\gamma_x=-\gamma_x^*\circ\mu_x^*.
\]
\item\label{item;kernel}
For every linear subspace $\liel$ of\/ $E_y^*$ we have
\[
\bigl(\mu_x\circ\lambda_{A,x}^\sharp\bigr)^{-1}(\liel^\circ)=
(\gamma_x(\liel))^\circ.
\]
In particular
$\ker\bigl(\mu_x\circ\lambda_{A,x}^\sharp\bigr)=(\im(\gamma_x))^\circ$.
\item\label{item;image}
For every linear subspace $L$ of\/ $A_x$ we have
\[
\bigl(\mu_x\circ\lambda_{A,x}^\sharp\bigr)(L^\circ)=
(\gamma_x^{-1}(L))^\circ.
\]
In particular
$\im\bigl(\mu_x\circ\lambda_{A,x}^\sharp\bigr)=(\ker(\gamma_x))^\circ$.
\end{enumerate}
\end{lemma}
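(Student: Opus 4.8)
Before seeing the authors' argument, here is how I would prove Lemma~\ref{lemma;moment}.

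The plan is to reduce everything to part~\eqref{item;transpose} together with two standard facts of linear duality. First I would establish~\eqref{item;transpose}. The identity $\gamma_x=\lambda_{A,x}^\sharp\circ\mu_x^*$ is immediate by reading the definition of the Hamiltonian action (Definition~\ref{definition;hamiltonian}) fibrewise at $x$. For the second identity I would transpose the first, using that $\mu_x^{**}=\mu_x$ and that the structure map of a $2$-section is antisymmetric, i.e.\ $(\lambda_{A,x}^\sharp)^*=-\lambda_{A,x}^\sharp$; this last point is a one-line check from the defining relation $\beta(\lambda^\sharp\alpha)=\lambda(\alpha,\beta)$ combined with $\lambda(\alpha,\beta)=-\lambda(\beta,\alpha)$. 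The upshot is $\gamma_x^*=(\mu_x^*)^*\circ(\lambda_{A,x}^\sharp)^*=-\mu_x\circ\lambda_{A,x}^\sharp$. For the third identity I would invoke the hypothesis that $\mu$ is a Poisson morphism, which by the commuting square following Definition~\ref{definition;poisson-morphism} says exactly that $\mu_x\circ\lambda_{A,x}^\sharp\circ\mu_x^*=\lambda_{E,y}^\sharp$; substituting the first identity turns this into $\lambda_{E,y}^\sharp=\mu_x\circ\gamma_x$, and substituting the second turns it into $\lambda_{E,y}^\sharp=-\gamma_x^*\circ\mu_x^*$.

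The real payoff of~\eqref{item;transpose} is the relation $\mu_x\circ\lambda_{A,x}^\sharp=-\gamma_x^*$, which recasts the maps appearing in~\eqref{item;kernel} and~\eqref{item;image} as the transpose of the single map $\gamma_x\colon E_y^*\to A_x$. Since the annihilator of a subspace, the preimage of a subspace, and the image of a subspace are all insensitive to an overall sign, I may freely replace $\mu_x\circ\lambda_{A,x}^\sharp$ by $\gamma_x^*$ throughout. I would then apply two duality facts, valid for any linear map $T\colon V\to W$ of finite-dimensional spaces: for $U\subseteq V$ one has $(T(U))^\circ=(T^*)^{-1}(U^\circ)$, and for $Z\subseteq W$ one has $T^*(Z^\circ)=(T^{-1}(Z))^\circ$. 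The first is a direct unwinding of the definitions of transpose and annihilator, and the second follows from the first applied to $T^*$ in place of $T$, using $T^{**}=T$ and $(S^\circ)^\circ=S$ in finite dimensions. Taking $T=\gamma_x$, the first fact with $U=\liel$ gives~\eqref{item;kernel}, and the second with $Z=L$ gives~\eqref{item;image}. The two ``in particular'' statements are then just the special cases $\liel=E_y^*$ (whence $\liel^\circ=0$, so the preimage is $\ker(\mu_x\circ\lambda_{A,x}^\sharp)$) and $L=0$ (whence $L^\circ=A_x^*$, so the image is all of $\im(\mu_x\circ\lambda_{A,x}^\sharp)$).

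I do not anticipate a genuine obstacle: the content is entirely finite-dimensional linear algebra. The one step deserving care, and in effect the crux of the whole lemma, is the sign bookkeeping behind $(\lambda_{A,x}^\sharp)^*=-\lambda_{A,x}^\sharp$ together with the canonical identifications $(A_x^*)^*\cong A_x$ and $(E_y^*)^*\cong E_y$; it is precisely this sign that makes $\gamma_x^*$, rather than $\gamma_x$ itself, the natural map to track and thereby lets the generic duality facts apply verbatim.
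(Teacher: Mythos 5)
Your proof is correct and follows essentially the same route as the paper's: part~(i) from the definition of $\gamma$, the commuting square expressing that $\mu$ is Poisson, and antisymmetry of the sharp maps, with parts~(ii) and~(iii) then being finite-dimensional duality consequences of~(i). The paper's proof is simply a terser version of exactly this argument, so your write-up just supplies the linear-algebra details the paper leaves implicit.
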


\begin{proof}
\eqref{item;transpose}~This follows from the commutativity of the
diagram
\[
\begin{tikzcd}
A_x^*\ar[r,"\lambda_{A,x}^\sharp"]&A_x\ar[d,"\mu_x"]\\
E_y^*\ar[r,"\lambda_{E,y}^\sharp"]\ar[u,"\mu_x^*"]
\ar[ur,"\gamma_x"]&E_y
\end{tikzcd}
\]
(Definition~\ref{definition;poisson-morphism}) and from the fact that
$\lambda_A^\sharp$ and $\lambda_E^\sharp$ are antisymmetric.

\eqref{item;kernel} and~\eqref{item;image} follow immediately
from~\eqref{item;transpose}.
\end{proof}

\begin{remark}\label{remark;equivariant}
The identity morphism $E\to E$ is Poisson and therefore generates a
Hamiltonian action $\lambda_E^\sharp\colon\Gamma(E^*)\to\Gamma(E)$.
For $E=T\g^*$, the tangent bundle of the dual of a Lie algebra $\g$,
this is the coadjoint action of\/ $\g$ on $\g^*$.  The equality
$\lambda_{E,y}^\sharp=\mu_x\circ\gamma_x$ of
Lemma~\ref{lemma;moment}\eqref{item;transpose} expresses the
\emph{equivariance} of the moment $\mu$ with respect to the actions
$\gamma$ and $\lambda_E^\sharp$: for every $1$-form
$\alpha\in\Omega_E^1(P)$ the $1$-form $\gamma(\alpha)\in\Omega_A^1(M)$
and the section $\lambda_E^\sharp(\alpha)\in\Gamma(E)$ are
$\mu$-related.
\end{remark}

%%%%%%%%%%%%%%%%%%%%%%%%%%%%%%%%%%%%%%%%%%%%%%%%%%%%%%%%%%%%%%%%%%%%%%%%
\subsection{Hamiltonian reduction}\label{subsection;hamilton}
%%%%%%%%%%%%%%%%%%%%%%%%%%%%%%%%%%%%%%%%%%%%%%%%%%%%%%%%%%%%%%%%%%%%%%%%

Let $\mu\colon A\to E$ be a Poisson morphism between two Poisson Lie
algebroids $(A\to M,\lambda_A)$ and $(E\to P,\lambda_E)$ and let
$\gamma$ be the associated Hamiltonian action of the Lie algebroid
$E^*$ on the Lie algebroid $A$
(Definition~\ref{definition;hamiltonian}).  \emph{Reducing} the
Hamiltonian action means choosing a suitable Lie subalgebroid $B$ of
$A$, forming the quotient of\/ $B$ by (a suitable restriction of) the
$E^*$-action, and pushing the $A$-Poisson structure on $M$ down to the
quotient Lie algebroid $B/E^*$.

We will treat here the case of most interest to us, namely where the
Lie algebroid $A$ is symplectic and $B$ is the moment fibre of a
``point'' of\/ $E$.  In the realm of Lie algebroids a ``point'' means
a Lie algebra.  Accordingly, by a ``point'' of\/ $E$ we will mean a
pair $(p,\f)$, where $p$ is a point of\/ $P$ and $\f$ is a Lie
subalgebra of the stabilizer $\stab(E,p)$.  At a point $p$ where the
anchor of\/ $E$ is not injective, various choices for $\f$ are
possible, which lead to different reduced symplectic Lie algebroids.

Theorem~\ref{theorem;mikami-weinstein} departs in two other ways from
the Marsden-Weinstein
theorem~\cite{marsden-weinstein;reduction-symplectic-manifolds-symmetry}
and the Mikami-Weinstein
theorem~\cite{mikami-weinstein;moments-reduction-groupoids}.  Firstly,
whereas in the classical setting regularity of the moment map and
freeness of the action are narrowly related, this relationship is
looser in our context and these two conditions need to be imposed
separately (but see Lemma~\ref{lemma;regular}).  Secondly, the
stabilizer $\stab(E^*,\f)$ of the ``point'' $\f$ does not preserve the
null foliation of the fibre, so to obtain a quotient which is
symplectic we must pass to a proper subalgebra of the stabilizer.

%% Whereas one reduces a symplectic form by restricting it to the fibre
%% and then pushing it down to the reduced space, reduction of a Poisson
%% tensor works the other way around: one first pushes $\lambda_A$ down
%% to the orbit space $M/E^*$ and then restricts to the reduced space.
%% The space $M/E^*$ is typically not a manifold and it does not make
%% sense to speak of a Lie algebroid over it, but we avoid this
%% complication by dealing instead with slices to the $E^*$-action.

\begin{theorem}[Hamiltonian reduction for symplectic Lie algebroids]%
\label{theorem;mikami-weinstein}
Let\/ $(A\to M,\omega)$ be a symplectic Lie algebroid and let $(E\to
P,\lambda)$ be a Poisson Lie algebroid.  Let $\mu\colon A\to E$ be a
Poisson morphism and let $\gamma\colon\Gamma(E^*)\to\Gamma(A)$ be the
associated Hamiltonian action.  Let $p\in P$ and $N=\mr\mu^{-1}(p)$.
Suppose that $p$ is a regular value of\/ $\mr\mu\colon M\to P$ and
that the $E^*$-action on $TM$ defined by
$\an_A\circ\gamma\colon\Gamma(E^*)\to\Gamma(TM)$ is locally free at
$x$ for all $x\in N$.
\begin{enumerate}
\item\label{item;lie-fibre-act}
Let\/ $\f$ be a Lie subalgebra of\/ $\stab(E,p)$.  Then $\mu$ is
transverse to $\f$, $B=\mu^{-1}(\f)$ is a Lie subalgebroid of\/ $A$,
whose base is the submanifold $N$ of~$M$, and the $2$-form
$\omega_B=i_B^*\omega$ on $B$ is presymplectic.  The null Lie
algebroid $K=\ker(\omega_B)$ is a foliation Lie algebroid and the Lie
subalgebra
\[\h=(\lambda_p^\sharp)^{-1}(\f)\cap\f^\circ\]
of\/ $\stab(E^*,\f)$ acts transitively on~$K$.
\item\label{item;reduction}
Suppose that the leaf space $Q=N/\h$ is a manifold and that the
canonical flat $K$-connection on $B/K$ has trivial holonomy.  Then
$\omega_B$ descends to a $C$-symplectic structure
$\omega_C\in\Omega_C^2(Q)$ on the quotient Lie algebroid $C=(B/K)/\h$.
\end{enumerate}
\end{theorem}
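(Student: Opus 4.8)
The plan is to obtain part~(i) from the fibrewise linear algebra of Lemma~\ref{lemma;moment}, and to obtain part~(ii) by feeding the presymplectic Lie algebroid $(B,\omega_B)$ produced in part~(i) into the symplectic reduction theorem, Theorem~\ref{theorem;reduction}. The whole of part~(i) is essentially organized around one identity, $K_x=\gamma_x(\h)$, which simultaneously yields constant rank, the foliation property of $K$, and the transitivity of the $\h$-action.

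First I would cash in the local freeness hypothesis. Since $A$ is symplectic, $\lambda_{A,x}^\sharp=\omega_x^\sharp$ is an isomorphism, and Lemma~\ref{lemma;moment}\eqref{item;transpose} gives $\gamma_x=\omega_x^\sharp\circ\mu_x^*$. Local freeness says $\an_A\circ\gamma_x$ is injective, whence $\gamma_x$ is injective, whence $\mu_x^*$ is injective, whence $\mu_x\colon A_x\to E_p$ is surjective for every $x\in N$. Surjectivity of $\mu_x$ is exactly transversality of $\mu$ to $\f$; combined with the hypothesis that $p$ is a regular value of $\mr\mu$ it feeds Proposition~\ref{proposition;clean-regular}, which produces the Lie subalgebroid $B=\mu^{-1}(\f)$ with base $N=\mr\mu^{-1}(p)$. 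That $\omega_B=i_B^*\omega$ is $d_B$-closed is automatic, since $d_A\omega=0$ and pullback commutes with the differential.

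The heart of the matter is the computation of the null directions. Because $\mu_x$ is surjective, $B_x^\circ=(\mu_x^{-1}(\f))^\circ=\mu_x^*(\f^\circ)$, so the symplectic orthogonal is $B_x^\omega=\omega_x^\sharp(B_x^\circ)=\omega_x^\sharp\mu_x^*(\f^\circ)=\gamma_x(\f^\circ)$. A vector $\gamma_x(\xi)$ with $\xi\in\f^\circ$ lies in $B_x=\mu_x^{-1}(\f)$ precisely when $\mu_x\gamma_x(\xi)=\lambda_{E,p}^\sharp(\xi)\in\f$, i.e. when $\xi\in(\lambda_p^\sharp)^{-1}(\f)$; intersecting with the condition $\xi\in\f^\circ$ yields $\xi\in\h$. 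Hence $K_x=B_x\cap B_x^\omega=\gamma_x(\h)$. As $\gamma_x$ is injective, $K_x\cong\h$ has constant dimension, so $\omega_B$ has constant rank and is presymplectic, with $K=\ker(\omega_B)$ the null Lie algebroid of Lemma~\ref{lemma;constant-rank}; applying local freeness to $\an_A\circ\gamma_x|_\h$ shows $\an_A$ is injective on each $K_x$, so $K$ is a foliation Lie algebroid. Since $\h\subseteq(\lambda_p^\sharp)^{-1}(\f)$, the fibre realization of the stabilizer $\stab(E^*,\f)$, and since $\gamma_x(\h)=K_x\subseteq B_x$, the subalgebroid $B$ is $\h$-stable (Definition~\ref{definition;stable}); by Lemma~\ref{lemma;action} the Hamiltonian action restricts to an $\h$-action on $B$ whose generating sections land in $K$ and span it fibrewise, which is the asserted transitivity. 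That $\h$ is closed under the bracket I would check by combining the homomorphism property of $\gamma$ with the involutivity of $K$ and the fibrewise injectivity of $\gamma_x$.

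For part~(ii), the leaves of the foliation $\ca{K}$ defined by $K$ are the $\h$-orbits, because $\an_A(K_x)=\an_A\gamma_x(\h)$ is the tangent space to the orbit through $x$; hence $N/\ca{K}=N/\h=Q$, which is assumed to be a manifold. With $K$ a foliation Lie algebroid and the flat $K$-connection on $B/K$ assumed to have trivial holonomy, every hypothesis of Theorem~\ref{theorem;reduction} is met for $(B,\omega_B)$, and that theorem delivers the unique $C$-symplectic form $\omega_C\in\Omega_C^2(Q)$ on $C=(B/K)/\h$ with $q^*\omega_C=\omega_B$. I expect the main obstacle to be precisely the fibrewise identity $K_x=\gamma_x(\h)$ and the accompanying verification that $\h$ is a subalgebra; once these are in hand, constant rank, the foliation property, transitivity, and the reduction in part~(ii) all follow formally.
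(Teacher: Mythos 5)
Your proposal is correct and takes essentially the same route as the paper's proof: local freeness plus the regular-value hypothesis yield transversality of $\mu$ to $\f$ (feeding Remark~\ref{remark;clean-regular} and Proposition~\ref{proposition;clean-regular}), the central fibrewise identity $K_x=B_x\cap B_x^\omega=\gamma_x(\h)$ is exactly the paper's computation via Lemma~\ref{lemma;moment} and injectivity of $\gamma_x$, and part~(ii) is the same appeal to Theorem~\ref{theorem;reduction}. The only cosmetic difference is that you obtain surjectivity of $\mu_x$ directly from injectivity of $\gamma_x=\omega_x^\sharp\circ\mu_x^*$, whereas the paper expresses local freeness as $\mu_x(\stab(A,x)^\omega)=E_p$ using Lemma~\ref{lemma;moment}(iii); both arguments feed the same transversality criterion.
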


\begin{proof}
\eqref{item;lie-fibre-act}~The stabilizer at $x\in M$ of the
$E^*$-action on $TM$ is
\[
\ker(\an_A\circ\gamma_x)=\gamma_x^{-1}(\stab(A,x))=
\bigl(\mu_x(\stab(A,x)^\omega)\bigr)^\circ,
\]
where we used Lemma~\ref{lemma;moment}\eqref{item;image}.  Our local
freeness hypothesis amounts to $\ker(\an_A\circ\gamma_x)=0$, i.e.\
\begin{equation}\label{equation;free}
\mu_x(\stab(A,x)^\omega)=E_p
\end{equation}
for all $x\in N$.  It now follows from our regular value hypothesis
that the moment $\mu\colon A\to E$ is transverse to the Lie subalgebra
$\f$; see Remark~\ref{remark;clean-regular}.  Hence, by the regular
value theorem, Proposition~\ref{proposition;clean-regular},
$B=\mu^{-1}(\f)$ is a Lie subalgebroid over $N=\mr\mu^{-1}(p)$.  By
Definition~\ref{definition;stable} the stabilizer of\/ $B$ under the
action $\gamma$ is the Lie algebra
\[
\stab(E^*,B)=
\gamma^{-1}\bigl(\Gamma(A;B)\bigr)\big/\Gamma\bigl(E^*;0_{\mr\mu(N)}\bigr).
\]
If\/ $N$ is empty, the theorem is true for trivial reasons.  Suppose
now that $N$ is nonempty.  Then
\begin{align*}
\gamma^{-1}\bigl(\Gamma(A;B)\bigr)
&=\{\,\tau\in\Gamma(E^*)\mid\text{$\mu_x(\gamma_x(\tau))\in\f$ for all
  $x\in N$}\,\}\\
&=\{\,\tau\in\Gamma(E^*)\mid\lambda_p^\sharp(\tau)\in\f\,\},
\end{align*}
where we used the equivariance of\/ $\mu$,
Lemma~\ref{lemma;moment}\eqref{item;transpose}.  Also
\[
\Gamma\bigl(E^*;0_{\mr\mu(N)}\bigr)=\Gamma(E^*;0_p)=
\{\,\tau\in\Gamma(E^*)\mid\tau_p=0\,\},
\]
so
\[\stab(E^*,B)=(\lambda^\sharp)^{-1}(\f)=\stab(E^*,\f),\]
a Lie subalgebra of\/ $\stab(E^*,p)$.  By Lemma~\ref{lemma;action}
this stabilizer acts on $B$, but it does not preserve the intersection
$K=B\cap B^\omega$.  The fibre at $x\in N$ of the symplectic
orthogonal of\/ $B$ is
\[
B_x^\omega=\omega_x^\sharp(B_x^\circ)=
\bigl((\mu_x\circ\omega_x^\sharp)^{-1}(\f)\bigr)^\circ=\gamma_x(\f^\circ),
\]
where we used Lemma~\ref{lemma;moment}\eqref{item;transpose}.  This
yields
\[K_x=B_x\cap B_x^\omega=\mu_x^{-1}(\f)\cap\gamma_x(\f^\circ).\]
The local freeness hypothesis implies that $\gamma_x\colon E_p^*\to
A_x$ is injective for all $x\in N$.  Therefore
\[
\gamma_x^{-1}(K_x)=
\gamma_x^{-1}\bigl(\mu_x^{-1}(\f)\cap\gamma_x(\f^\circ)\bigr)=
\gamma_x^{-1}(\mu_x^{-1}(\f))\cap\f^\circ=
(\lambda_p^\sharp)^{-1}(\f)\cap\f^\circ=\h,
\]
where we used $\lambda_p^\sharp=\mu_x\circ\gamma_x$
(Lemma~\ref{lemma;moment}\eqref{item;transpose}).  Thus the action of
$\h$ preserves the Lie algebroid $K$ and for every $x\in N$ the map
$\gamma_x\colon E_p^*\to A_x$ maps $\h$ bijectively onto $K_x$.  It
follows that $\omega_B$ has constant rank and that $\h$ acts locally
freely and transitively on $K$ in the sense of
Definition~\ref{notation;action}.  In other words, $\gamma$ induces an
isomorphism from the action Lie algebroid $\h\ltimes N$ onto $K$ and
in particular the anchor of\/ $K$ is injective.

\eqref{item;reduction}~This follows from~\eqref{item;lie-fibre-act}
and Theorem~\ref{theorem;reduction}.
\end{proof}

\begin{definition}\label{definition;reduction}
The symplectic Lie algebroid $(C\to Q,\omega_C)$ is the
\emph{symplectic quotient} of\/ $A\to M$ at the ``point''~$(p,\f)$.
\end{definition}

\begin{example}[reduction with respect to the identity map]
\label{example;identity}
Let $\mu=\id_M$ be the identity map of\/ $M$.  The associated
Hamiltonian action is the identity morphism $\gamma=\id\colon A\to A$.
We can reduce $M$ at any ``point'' $(x,\liea)$, where $\liea$ is a Lie
subalgebra of\/ $\stab(A,x)$.  The symplectic quotient of\/ $M$ at the
``point'' $(x,\liea)$ is the point $x$ equipped with the symplectic
Lie algebra $\liea/(\liea\cap\liea^\omega)$.
\end{example}

\begin{remarks}\phantomsection\label{remark;reduction}
\begin{numerate}
\item\label{item;proper-free}
The hypothesis of
Theorem~\ref{theorem;mikami-weinstein}\eqref{item;reduction} is
satisfied if the action of\/ $\h$ on $B$ integrates to a proper and
free action of a Lie group $H$ with Lie algebra $\h$.
\item\label{item;example}
See Section~\ref{subsection;example} for further illustrations of
Theorem~\ref{theorem;mikami-weinstein}.
\end{numerate}
\end{remarks}

In the special case where the anchor of\/ $E$ is bijective at $p$,
i.e.\ $E\cong TP$ in a neighbourhood of~$p$, we have the following
relationship between freeness of the Hamiltonian action and regularity
of the moment map.

\begin{lemma}\label{lemma;regular}
In the context of Theorem~\ref{theorem;mikami-weinstein} suppose that
the anchor of\/ $E$ is bijective at $p$.  Let $x\in\mr\mu^{-1}(p)$ and
let $L$ be the symplectic leaf of\/ $x$ with respect to the Poisson
structure on $M$ determined by the $A$-symplectic form $\omega$.  Then
the action\/ $\an_A\circ\gamma\colon\Gamma(E^*)\to\Gamma(TM)$ is
locally free at $x$ if and only if\/ $x$ is a regular point of the map
$\mr\mu|_L$.
\end{lemma}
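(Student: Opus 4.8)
The plan is to show that each of the two conditions in the statement is equivalent to the single linear-algebraic equality
\[
\mu_x\bigl(\stab(A,x)^\omega\bigr)=E_p
\]
at the point $x$. For the local freeness side this has essentially been done already: in the proof of Theorem~\ref{theorem;mikami-weinstein}\eqref{item;lie-fibre-act} it is shown, via Lemma~\ref{lemma;moment}\eqref{item;image}, that the stabilizer of $x$ under the $E^*$-action on $TM$ is $\ker(\an_A\circ\gamma_x)=\bigl(\mu_x(\stab(A,x)^\omega)\bigr)^\circ$, where $\stab(A,x)=\ker(\an_{A,x})$. Hence local freeness at $x$ is precisely the condition~\eqref{equation;free}. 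It therefore remains to identify regularity of $\mr\mu|_L$ at $x$ with the same equality.

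First I would compute the tangent space to the symplectic leaf in algebroid terms. The Poisson tensor on $M$ determined by $\omega$ is $\lambda_M=\an_A(\omega^{-1})$ (Theorem~\ref{theorem;poisson}\eqref{item;schouten}), and pushing the bivector $\omega^{-1}$ forward along the anchor conjugates its sharp map, so
\[
\lambda_{M,x}^\sharp=\an_{A,x}\circ\omega_x^\sharp\circ\an_{A,x}^*,
\]
of which only the image is needed (so sign conventions are immaterial). Since the image of the transpose is $\im(\an_{A,x}^*)=(\ker\an_{A,x})^\circ=\stab(A,x)^\circ$ and $\omega_x^\sharp(\stab(A,x)^\circ)=\stab(A,x)^\omega$ by definition of the symplectic orthogonal, I obtain
\[
T_xL=\im\bigl(\lambda_{M,x}^\sharp\bigr)=\an_{A,x}\bigl(\stab(A,x)^\omega\bigr).
\]

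Now $x$ is a regular point of $\mr\mu|_L$ exactly when $T_x\mr\mu$ maps $T_xL$ onto $T_pP$, i.e.\ $T_x\mr\mu\bigl(\an_{A,x}(\stab(A,x)^\omega)\bigr)=T_pP$. Because $\mu\colon A\to E$ is a Lie algebroid morphism with base map $\mr\mu$, the anchors intertwine: $\an_{E,p}\circ\mu_x=T_x\mr\mu\circ\an_{A,x}$. Substituting this rewrites the regularity condition as
\[
\an_{E,p}\bigl(\mu_x(\stab(A,x)^\omega)\bigr)=T_pP.
\]
By hypothesis $\an_{E,p}\colon E_p\to T_pP$ is bijective, so this is equivalent to $\mu_x(\stab(A,x)^\omega)=E_p$, which is the equality already matched with local freeness. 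This closes the chain of equivalences.

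The one substantive step, and the only place where anything beyond bookkeeping enters, is the identification $T_xL=\an_{A,x}(\stab(A,x)^\omega)$ of the leaf tangent space with algebroid data; everything else is the anchor-intertwining relation for the morphism $\mu$ together with the bijectivity of $\an_{E,p}$, which is exactly the hypothesis that forces the two conditions to collapse onto the same subspace of $E_p$.
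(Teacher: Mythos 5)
Your proposal is correct and follows essentially the same route as the paper's proof: both reduce each condition to the single equality $\mu_x\bigl(\stab(A,x)^\omega\bigr)=E_p$, using the identification $T_xL=\an_{A,x}\bigl(\stab(A,x)^\omega\bigr)$ together with the anchor-intertwining relation $\an_E\circ\mu=T\mr\mu\circ\an_A$ and the bijectivity of $\an_{E,p}$. The only cosmetic difference is that where you re-derive the leaf identity by writing $\lambda_{M,x}^\sharp=\an_{A,x}\circ\omega_x^\sharp\circ\an_{A,x}^*$ and taking images, the paper obtains it by citing its linear-algebra fact~\eqref{equation;linear-poisson} with $W_2=\{0\}$ — the same computation in substance.
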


\begin{proof}
Since $\an_E$ is bijective at $p$ and
$\an_E\circ\mu=T\mr\mu\circ\an_A$, condition~\eqref{equation;free} is
equivalent to $T\mr\mu\circ\an_A(\stab(A,x)^\omega)=T_pP$.  Let
$\lambda_M$ be the Poisson structure determined by $\omega$.
Using~\eqref{equation;linear-poisson} we see that
$\an_A(\stab(A,x)^\omega)=\{0\}^{\lambda_M}=T_xL$.  So the
$E^*$-action on $M$ is locally free at $x$ if and only if
$T_x\mr\mu(T_xL)=T_pP$.
\end{proof}

\section{Some normal forms for symplectic Lie algebroids}
\label{section;normal}
%%%%%%%%%%%%%%%%%%%%%%%%%%%%%%%%%%%%%%%%%%%%%%%%%%%%%%%%%%%%%%%%%%%%%%%%

In this section we establish a Lie algebroid version of the
Darboux-Moser-Weinstein theorem, Theorem~\ref{theorem;dmw} and of the
coisotropic embedding theorem, Theorem~\ref{theorem;coisotropic}.  Our
method is based on Lie algebroid homotopies, which were introduced
(for Lie algebroid paths) by Crainic and
Fernandes~\cite[\S\,1]{crainic-fernandes;integrability-lie}.  We
review in Section~\ref{subsection;homotopy} how a Lie algebroid
homotopy gives rise to a cochain homotopy of the de Rham complex.  Of
particular interest in the context of the Moser trick are deformation
retractions, which are a weak version of Lie algebroid splittings.
Section~\ref{subsection;retract} describes a technique for obtaining
deformation retractions of Lie algebroids based on the notion of an
Euler-like section of Bischoff et
al.~\cite{bischoff-bursztyn-lima-meinrenken} and Bursztyn et
al.~\cite{bursztyn-lima-meinrenken;splitting}.

%%%%%%%%%%%%%%%%%%%%%%%%%%%%%%%%%%%%%%%%%%%%%%%%%%%%%%%%%%%%%%%%%%%%%%%%
\subsection{Lie algebroid homotopies}\label{subsection;homotopy}
%%%%%%%%%%%%%%%%%%%%%%%%%%%%%%%%%%%%%%%%%%%%%%%%%%%%%%%%%%%%%%%%%%%%%%%%

Let $A\to M$ be a Lie algebroid.  As in ordinary de Rham theory the
operations $d_A$, $\iota_A$, and $\ca{L}_A$ lead naturally to homotopy
formulas and a Poincar\'e lemma for the de Rham complex of~$A$.  Let
$T\R=\R\times\R$ be the tangent bundle of\/ $\R$ and let $T\R\times
A\to\R\times M$ be the direct product Lie algebroid.  The Cartesian
projections $\pr_1\colon T\R\times A\to T\R$ and $\pr_2\colon
T\R\times A\to A$ are Lie algebroid morphisms.  For each $t$ the map
\begin{equation}\label{equation;cylinder-slice}
i_t\colon A\to T\R\times A
\end{equation}
defined by $i_t(a)=(0_t,a)$ (where $a\in A$ and where $0_t$ denotes
the origin of\/ $T_t\R$) is likewise a Lie algebroid morphism.  The
section $\tangent t$ of\/ $T\R$ can be regarded as the section
$(t,x)\mapsto(t,1,0_x)$ of\/ $T\R\times A$ and as such generates a flow
$\Upsilon_t$.  For each $t$ the morphism
\begin{equation}\label{equation;translate}
\Upsilon_t\colon T\R\times A\longto T\R\times A
\end{equation}
is simply the shift by $t$ in the $\R$-direction.  The infinite
cylinder $\R\times M$ contains as a submanifold the finite cylinder
$[0,1]\times M$, and the product Lie algebroid $T[0,1]\times A$ over
$[0,1]\times M$ is a Lie subalgebroid of the product $T\R\times A$.
The operator % \emph{cylinder operator}
\[
\kappa\colon\Omega_{T[0,1]\times A}^k([0,1]\times
M)\longto\Omega_A^{k-1}(M)
\] 
is defined as follows: for $\alpha\in\Omega_{T[0,1]\times
  A}^k([0,1]\times M)$ and sections $\sigma_1$,
$\sigma_2,\dots$,~$\sigma_{k-1}\in\Gamma(A)$ put
\begin{equation}\label{equation;integral}
(\kappa\alpha)(\sigma_1,\sigma_2,\dots,\sigma_{k-1})=
  \int_0^1\alpha(\tangent t,\sigma_1,\sigma_2,\dots,\sigma_{k-1})\,dt.
\end{equation}
It follows from~\eqref{equation;lie} and~\eqref{equation;cartan} that
\begin{align*}
\kappa d_A\alpha+d_A\kappa\alpha&= \int_0^1\iota_A(\tangent
t)d_A\alpha\,dt+ d_A\int_0^1\iota_A(\tangent t)\alpha\,dt\\
&=\int_0^1\bigl(\iota_A(\tangent t)d_A\alpha+d_A\iota_A(\tangent
t)\alpha\bigr)\,dt\\
&=\int_0^1\ca{L}_A(\tangent t)\alpha\,dt=
\int_0^1\frac{d}{du}\Upsilon_u^*\alpha\big|_{u=0}\,dt=
i_1^*\alpha-i_0^*\alpha,
\end{align*}
where $\Upsilon_t$ is as in~\eqref{equation;translate}.  This proves
\begin{equation}\label{equation;cylinder}
[\kappa,d_A]=i_1^*-i_0^*.
\end{equation}

\begin{definition}\label{definition;homotopy}
A \emph{Lie algebroid homotopy} is a Lie algebroid morphism
$\phi\colon T[0,1]\times A\to B$, where $A\to M$ and $B\to N$ are Lie
algebroids.
\end{definition}

Let $\phi$ a Lie algebroid homotopy.  The base map
$\mr\phi\colon[0,1]\times M\to N$ is then a homotopy of manifolds in
the usual sense.  We denote the Lie algebroid morphism
$i_t\circ\phi\colon A\to B$, which is defined for $0\le t\le1$, by
$\phi_t$, and we say that $\phi$ is a homotopy \emph{between $\phi_0$
  and $\phi_1$}.  Put
\begin{equation}\label{equation;de-rham-homotopy}
\kappa_\phi=\kappa\circ\phi^*,
\end{equation}
where $\kappa$ is the operator~\eqref{equation;integral};
then~\eqref{equation;cylinder} yields the \emph{homotopy formula}
\begin{equation}\label{equation;homotopy}
[\kappa_\phi,d_A]=\phi_1^*-\phi_0^*,
\end{equation}
which tells us that
$\kappa_\phi\colon\Omega_B^\bu(N)\to\Omega_A^{{\bu}-1}(M)$ is a
homotopy of complexes.
 
\begin{definition}\label{definition;equivalence}
A Lie algebroid morphism $\phi\colon A\to B$ is a \emph{homotopy
  equivalence} if it has a \emph{homotopy inverse}, i.e.\ a morphism
$\psi\colon B\to A$ such that $\psi\circ\phi$ is homotopic to $\id_A$
and $\phi\circ\psi$ is homotopic to~$\id_B$.
\end{definition}

The homotopy formula has the usual consequences, for instance the
following lemma.

\begin{lemma}\label{lemma;homotopy}
Let $A\to M$ and $B\to N$ be Lie algebroids.
\begin{enumerate}
\item\label{item;homotopic}
Homotopic morphisms $\phi_0$, $\phi_1\colon A\to B$ induce the same
morphism in cohomology $\phi_0^*=\phi_1^*\colon H_B^\bu(N)\to
H_A^\bu(M)$.
\item\label{item;equivalence}
A homotopy equivalence $\phi\colon A\to B$ induces an isomorphism in
cohomology
\[
\begin{tikzcd}[cramped,sep=small]\phi^*\colon
  H_B^\bu(N)\ar[r,"\cong"]&H_A^\bu(M)\end{tikzcd}.
\]
\end{enumerate}
\end{lemma}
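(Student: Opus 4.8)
The plan is to deduce both statements purely formally from the homotopy formula~\eqref{equation;homotopy}, together with the two standard functoriality properties of the de Rham pullback: every Lie algebroid morphism $\phi\colon A\to B$ induces a cochain map $\phi^*\colon\Omega_B^\bu(N)\to\Omega_A^\bu(M)$ (so that $\phi^*d_B=d_A\phi^*$), and pullback is contravariantly functorial, $(\psi\circ\phi)^*=\phi^*\circ\psi^*$ and $\id^*=\id$. Both properties are already available (Appendix~\ref{subsection;cartan}), and the homotopy operator $\kappa_\phi$ has already been constructed, so no further geometric input is needed; the argument is exactly the one from ordinary de Rham theory.

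For~\eqref{item;homotopic}, suppose $\phi_0$ and $\phi_1$ are homotopic, so there is a Lie algebroid homotopy $\phi\colon T[0,1]\times A\to B$ with restrictions $\phi_0$ and $\phi_1$ (Definition~\ref{definition;homotopy}), and form the cochain homotopy $\kappa_\phi$ of~\eqref{equation;de-rham-homotopy}. Let $\alpha\in\Omega_B^\bu(N)$ be $d_B$-closed. Applying~\eqref{equation;homotopy} and using $d_B\alpha=0$ gives
\[
\phi_1^*\alpha-\phi_0^*\alpha=\kappa_\phi d_B\alpha+d_A\kappa_\phi\alpha=d_A(\kappa_\phi\alpha),
\]
so $\phi_1^*\alpha$ and $\phi_0^*\alpha$ differ by a $d_A$-exact form and hence represent the same class in $H_A^\bu(M)$. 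Therefore $\phi_0^*=\phi_1^*$ on cohomology.

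For~\eqref{item;equivalence}, let $\psi\colon B\to A$ be a homotopy inverse of $\phi$ (Definition~\ref{definition;equivalence}), so that $\psi\circ\phi$ is homotopic to $\id_A$ and $\phi\circ\psi$ to $\id_B$. The induced maps on cohomology are $\phi^*\colon H_B^\bu(N)\to H_A^\bu(M)$ and $\psi^*\colon H_A^\bu(M)\to H_B^\bu(N)$, and by contravariant functoriality together with part~\eqref{item;homotopic},
\[
\phi^*\circ\psi^*=(\psi\circ\phi)^*=\id_A^*=\id_{H_A^\bu(M)},\qquad
\psi^*\circ\phi^*=(\phi\circ\psi)^*=\id_B^*=\id_{H_B^\bu(N)},
\]
so $\phi^*$ is an isomorphism with inverse $\psi^*$.

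Since every ingredient is already in place, there is no substantive obstacle here; the statement is a formal corollary of the homotopy formula. The only points that require care are keeping track of the contravariance, which reverses the order of composition in part~\eqref{item;equivalence}, and remembering that $\kappa_\phi$ is a genuine cochain homotopy only after the chain-map property of $\phi^*$ has been invoked, which is precisely how~\eqref{equation;homotopy} was obtained from~\eqref{equation;cylinder}.
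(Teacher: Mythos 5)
Your proof is correct and is exactly the standard argument the paper has in mind: the paper states Lemma~\ref{lemma;homotopy} without proof as one of the ``usual consequences'' of the homotopy formula~\eqref{equation;homotopy}, and your two steps (exactness of $\phi_1^*\alpha-\phi_0^*\alpha$ for closed $\alpha$, then contravariant functoriality applied to a homotopy inverse) are precisely that routine deduction. Your writing $\kappa_\phi d_B\alpha+d_A\kappa_\phi\alpha$ rather than the paper's looser $[\kappa_\phi,d_A]$ is, if anything, slightly more careful about which differential acts on which side.
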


\begin{remark}\label{remark;poincare}
Lemma~\ref{lemma;homotopy} gives the following version of the
Poincar\'e lemma: if\/ $A$ is homotopy equivalent to a ``point'',
i.e.\ a Lie subalgebra $\lie{a}$ of\/ $\stab(A,x)$ for some $x\in M$,
then $H_A^\bu(M)$ is isomorphic to $H^\bu(\lie{a})$, the Lie algebra
cohomology of\/ $\lie{a}$.  So if\/ $\lie{a}$ is acyclic, then
$H_A^0(M)=\R$ and $H_A^k(M)=0$ for $k\ge1$.
\end{remark}

\begin{definition}\label{definition;weak}
Let $B\to N$ be a Lie subalgebroid of\/ $A$.  A \emph{weak deformation
  retraction of\/ $A$ onto $B$} is a homotopy $\varrho\colon
T[0,1]\times A\to A$ such that $\varrho_1=\id_A$, $\varrho_0(A)=B$,
and $\varrho_0|_B=i_B$.
\end{definition}

The next statement follows immediately from
Lemma~\ref{lemma;homotopy}.

\begin{corollary}\label{corollary;retract}
Let $A\to M$ be a Lie algebroid and let $B\to N$ be a Lie subalgebroid
of\/ $A$.  Suppose there exists a weak deformation retraction
$\varrho\colon T[0,1]\times A\to A$ onto $B$.  The inclusion
$i_B\colon B\to A$ induces an isomorphism
$\begin{tikzcd}[cramped,sep=small]i_B^*\colon
  H_A^\bu(M)\ar[r,"\cong"]&H_B^\bu(N)\end{tikzcd}$
with inverse~$\varrho_0^*$.
\end{corollary}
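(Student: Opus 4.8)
The plan is to treat Corollary~\ref{corollary;retract} as the usual deformation-retract argument: read off two structural identities from Definition~\ref{definition;weak} and feed them into Lemma~\ref{lemma;homotopy}\eqref{item;homotopic}.

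First I would unpack the data. Since $\varrho_0(A)=B$, the self-morphism $\varrho_0\colon A\to A$ corestricts to a Lie algebroid morphism $r\colon A\to B$ with $\varrho_0=i_B\circ r$; this corestriction is exactly what the statement denotes by $\varrho_0$ when it is read as a map into $B$, so that $r^*=\varrho_0^*\colon H_B^\bu(N)\to H_A^\bu(M)$. The condition $\varrho_0|_B=i_B$ says $\varrho_0\circ i_B=i_B$ as morphisms $B\to A$; substituting $\varrho_0=i_B\circ r$ and cancelling the injective inclusion $i_B$ gives the first key identity $r\circ i_B=\id_B$.

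Next I would use the homotopy itself. By construction $\varrho\colon T[0,1]\times A\to A$ is a homotopy between $\varrho_0$ and $\varrho_1=\id_A$, both regarded as self-morphisms of $A$; in other words $i_B\circ r=\varrho_0$ is homotopic to $\id_A$. Lemma~\ref{lemma;homotopy}\eqref{item;homotopic} then yields $(i_B\circ r)^*=\id_A^*=\id$ on $H_A^\bu(M)$, which by contravariance of pullback is the second key identity $r^*\circ i_B^*=\id_{H_A^\bu(M)}$.

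Finally I would combine the two. Pulling back $r\circ i_B=\id_B$ gives $i_B^*\circ r^*=\id_{H_B^\bu(N)}$, while the homotopy gives $r^*\circ i_B^*=\id_{H_A^\bu(M)}$, so $i_B^*$ is an isomorphism with two-sided inverse $r^*=\varrho_0^*$, as claimed. There is essentially no analytic content here; the one point demanding care is the bookkeeping, namely distinguishing $\varrho_0$ as a self-map of $A$ from its corestriction $r\colon A\to B$, verifying that $\varrho_0|_B=i_B$ really produces $r\circ i_B=\id_B$, and tracking the order reversal under $(\cdot)^*$. I expect this notational disambiguation, rather than any geometric difficulty, to be the main thing to get right.
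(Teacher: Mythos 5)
Your proof is correct and is exactly the argument the paper intends: the paper offers no written proof beyond ``follows immediately from Lemma~\ref{lemma;homotopy}'', and your two identities (the on-the-nose identity $r\circ i_B=\id_B$ from $\varrho_0|_B=i_B$, and $r^*\circ i_B^*=\id$ from the homotopy $\varrho_0\simeq\varrho_1=\id_A$ via Lemma~\ref{lemma;homotopy}\eqref{item;homotopic}) are precisely that immediate deduction, with the corestriction bookkeeping handled as the statement implicitly requires.
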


\begin{definition}\label{definition;retraction}
Let $B\to N$ be a Lie subalgebroid of\/ $A$.  A \emph{deformation
  retraction of\/ $A$ onto $B$} is a weak deformation retraction
$\varrho\colon T[0,1]\times A\to A$ with the additional property that
$\varrho(t,u,b)=b$ for all $(t,u)\in T[0,1]=[0,1]\times\R$ and all
$b\in B$.
\end{definition}

A deformation retraction $\varrho$ satisfies $\varrho(\tangent
t,0_x)=0_x$ for all $x\in N$, where we think of\/ $\tangent t$ as the
section $(t,1)$ of\/ $T[0,1]$ and where $0_x$ denotes the origin of the
fibre $A_x$.  Its base homotopy $\mr\varrho\colon[0,1]\times M\to M$
restricts to the identity map $\mr\varrho_t|_N=\id_N$ for all~$t$,
i.e.\ $\mr\varrho$ is a deformation retraction in the usual sense.
These facts have the following significance for the
homotopy~$\kappa_\varrho$.

\begin{lemma}\label{lemma;retract}
Let $A\to M$ be a Lie algebroid and $B\to N$ a Lie subalgebroid.  Let
$\varrho\colon T[0,1]\times A\to A$ be a deformation retraction onto
$B$.  Then $(\kappa_\varrho\alpha)_x=0$ for all $A$-forms
$\alpha\in\Omega_A^k(M)$ and all $x\in N$.  If\/ $\alpha$ vanishes at a
point $x_0\in N$, then
$(\ca{L}_A(\sigma)\kappa_\varrho\alpha)_{x_0}=0$ for all sections
$\sigma$ of~$A$.
\end{lemma}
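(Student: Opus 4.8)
The plan is to prove the two assertions in turn: the first vanishing statement will fall out directly from the defining properties of a deformation retraction, while the second will require differentiating the integral defining $\kappa_\varrho$ and exploiting two independent sources of cancellation. First I would unwind $\kappa_\varrho=\kappa\circ\varrho^*$ using~\eqref{equation;integral}: for $x\in N$ and sections $\sigma_1,\dots,\sigma_{k-1}\in\Gamma(A)$,
\[
(\kappa_\varrho\alpha)_x(\sigma_1,\dots,\sigma_{k-1})
=\int_0^1\alpha_{\mr\varrho(t,x)}\bigl(\varrho(\tangent t),\varrho(\sigma_1),\dots,\varrho(\sigma_{k-1})\bigr)\,dt.
\]
Because $\varrho$ is a deformation retraction we have $\mr\varrho_t|_N=\id_N$, so $\mr\varrho(t,x)=x$, and $\varrho(\tangent t)=0_x$ for all $x\in N$. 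The first slot of $\alpha$ is thus the zero vector, and by multilinearity the integrand vanishes identically. This gives $(\kappa_\varrho\alpha)_x=0$ for every $x\in N$, and note that no hypothesis on $\alpha$ was used.

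For the second assertion, write $\beta=\kappa_\varrho\alpha\in\Omega_A^{k-1}(M)$ and apply the Cartan formula for the Lie algebroid Lie derivative,
\[
(\ca{L}_A(\sigma)\beta)(\tau_1,\dots,\tau_{k-1})
=\an_A(\sigma)\cdot\beta(\tau_1,\dots,\tau_{k-1})
-\sum_i\beta(\tau_1,\dots,[\sigma,\tau_i]_A,\dots,\tau_{k-1}),
\]
and evaluate at $x_0$. By the first assertion $\beta$ vanishes at every point of $N$, so $\beta_{x_0}=0$ and the entire sum on the right drops out. It remains to show that the directional derivative $\an_A(\sigma)_{x_0}\cdot f$ of the function $f=\beta(\tau_1,\dots,\tau_{k-1})$ vanishes. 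This does \emph{not} follow from $f|_N=0$ alone, since $\an_A(\sigma)_{x_0}$ need not be tangent to $N$; this is exactly where the extra hypothesis $\alpha_{x_0}=0$ must be used.

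To compute this derivative I would pick a curve $c(s)$ in $M$ with $c(0)=x_0$ and $\dot c(0)=\an_A(\sigma)_{x_0}$, and differentiate $f(c(s))=\int_0^1 g(t,c(s))\,dt$ under the integral sign (justified by smoothness of the integrand on the compact interval), where $g(t,x)=\alpha_{\mr\varrho(t,x)}\bigl(\varrho(\tangent t),\varrho(\tau_1),\dots,\varrho(\tau_{k-1})\bigr)$. Fixing an auxiliary connection on $A$, or equivalently working in a local frame near $x_0$, the Leibniz rule splits $\frac{d}{ds}g(t,c(s))|_{s=0}$ into one term in which the base-point dependence of $\alpha$ is differentiated and several terms in which one of the fibre arguments $\varrho(\tangent t),\varrho(\tau_1),\dots$ is differentiated. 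In the first term the undifferentiated arguments still include $\varrho(\tangent t)|_{x_0}=0_{x_0}$, so it vanishes by multilinearity; in each of the remaining terms the undifferentiated form $\alpha_{x_0}$ appears as a factor and therefore vanishes by hypothesis. Hence $\frac{d}{ds}g(t,c(s))|_{s=0}=0$ for every $t$, so the integral of the derivative is zero and $\an_A(\sigma)_{x_0}\cdot f=0$. Together with the vanishing of the bracket sum this yields $(\ca{L}_A(\sigma)\kappa_\varrho\alpha)_{x_0}=0$.

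The main obstacle is this last step: establishing the vanishing of the possibly transverse directional derivative of $f$. The crux is that there are two independent cancellation mechanisms available — the identity $\varrho(\tangent t)|_{x_0}=0_{x_0}$, which kills the term where the derivative lands on $\alpha$ itself, and the hypothesis $\alpha_{x_0}=0$, which kills every term where the derivative lands on a fibre argument — so that the Leibniz expansion vanishes term by term.
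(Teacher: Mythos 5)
Your proposal is correct and takes essentially the same route as the paper's own proof: the paper contracts $\kappa_\varrho\alpha$ with sections to obtain a function whose integrand is a pairing of $\alpha_{\mr\varrho(t,x)}$ with arguments including $\varrho(\tangent t,0_x)$ --- two factors each vanishing at $x_0$ --- so the function vanishes to second order there, and then it invokes $[\ca{L}_A(\sigma),\iota_A(\sigma_i)]=\iota_A([\sigma,\sigma_i])$; your Cartan-formula expansion plus term-by-term Leibniz differentiation under the integral is precisely this argument in slightly different packaging.
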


\begin{proof}
Let $\sigma_1$, $\sigma_2,\dots$,~$\sigma_{k-1}$ be sections of\/ $A$
and let $f$ be the smooth function
\[
f=\iota_A(\sigma_{k-1})\cdots\iota_A(\sigma_2)\iota_A(\sigma_1)
\kappa_\varrho\alpha
\]
on $M$.  From~\eqref{equation;integral}
and~\eqref{equation;de-rham-homotopy} we obtain
\begin{equation}\label{equation;coefficient}
f(x)=\int_0^1\alpha_{\mr\varrho(t,x)} \bigl(\varrho(\tangent
t,0_x),\varrho_t(\sigma_1(x)),
\varrho_t(\sigma_2(x)),\dots,\varrho_t(\sigma_{k-1}(x))\bigr)\,dt
\end{equation}
for $x\in M$.  Since $\varrho$ is a deformation retraction, we have
$\varrho(\tangent t,0_x)=0_x$ for $x\in N$, so it follows
from~\eqref{equation;coefficient} that the function $f$ vanishes on
$N$.  Thus $(\kappa_\varrho\alpha)_x=0$ for all $x\in N$.  If
$\alpha_{x_0}=0$ for some $x_0\in N$, then
$\alpha_{\mr\varrho(t,x_0)}=\alpha_{x_0}=0$ for all $t$, so we see
from~\eqref{equation;coefficient} that the function $f$ vanishes to
second order at~$x_0$.  Since $\ca{L}_A(\sigma)$ is a first-order
differential operator, this tells us that $(\ca{L}_A(\sigma)
f)(x_0)=0$ for all sections $\sigma$ of~$A$.  Using
$[\ca{L}_A(\sigma),\iota_A(\sigma_i)]=\iota_A([\sigma,\sigma_i])$ we
obtain $(\ca{L}_A(\sigma)\kappa_\varrho\alpha)_{x_0}=0$.
\end{proof}

%%%%%%%%%%%%%%%%%%%%%%%%%%%%%%%%%%%%%%%%%%%%%%%%%%%%%%%%%%%%%%%%%%%%%%%%
\subsection{Isotopies and deformation retractions from sections}
\label{subsection;retract}
%%%%%%%%%%%%%%%%%%%%%%%%%%%%%%%%%%%%%%%%%%%%%%%%%%%%%%%%%%%%%%%%%%%%%%%%

For the results of Section~\ref{subsection;homotopy} to be useful we
need methods for producing Lie algebroid homotopies.  We are aware of
two such methods: (1)~the flow of a time-dependent section produces an
isotopy, and (2)~if the section is Euler-like near a subalgebroid in
the sense of Bischoff et al.~\cite{bischoff-bursztyn-lima-meinrenken}
and Bursztyn et al.~\cite{bursztyn-lima-meinrenken;splitting}, we can
reparametrize the isotopy to produce a deformation retraction onto the
subalgebroid.  We explain both methods in turn.

\begin{definition}\label{definition;isotopy}
Let $A\to M$ be a Lie algebroid.  A \emph{(global) Lie algebroid
  isotopy} is a Lie algebroid homotopy $\phi\colon T[0,1]\times A\to
A$ such that $\phi_0=\id_A$ and each $\phi_t$ is an isomorphism.
%% A \emph{Lie algebroid path} in $A$ is an Lie algebroid morphism
%% $TJ\to A$, where $J\subseteq\R$ is an interval.
\end{definition}

Just as time-dependent vector fields give rise to isotopies of
manifolds, so time-dependent sections give rise to isotopies of Lie
algebroids.  

\begin{proposition}\label{proposition;isotopy}
Let $A\to M$ be a Lie algebroid.  Let $\sigma_t$ be a time-dependent
section of\/ $A$ defined for $0\le t\le1$, let\/ $\bad_A(\sigma_t)$ be
the time-dependent vector field on $A$ defined in~\eqref{equation;ad},
and let\/ $\Phi_t$ be its flow with initial condition\/
$\Phi_0=\id_A$.  Suppose that\/ $\Phi_t$ is defined globally on $A$
for all $0\le t\le1$.  Then\/ $\Phi\colon[0,1]\times A\to A$ extends
to a Lie algebroid isotopy $\phi\colon T[0,1]\times A\to A$ given by
\[\phi(t,u,a)=u\sigma_t\bigl(\mr\Phi_t(\pi_A(a))\bigr)+\Phi_t(a)\]
for $(t,u)\in T[0,1]=[0,1]\times\R$ and $a\in A$.
\end{proposition}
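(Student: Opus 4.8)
The plan is to verify that the formula
\[
\phi(t,u,a)=u\,\sigma_t\bigl(\mr\Phi_t(\pi_A(a))\bigr)+\Phi_t(a)
\]
defines a Lie algebroid morphism $T[0,1]\times A\to A$ which restricts to an isomorphism $\phi_t$ for each $t$, and that $\phi_0=\id_A$. The case $u=0$ recovers the given flow $\Phi_t$, so the formula genuinely extends $\Phi\colon[0,1]\times A\to A$, and setting $t=0$ gives $\phi(0,u,a)=u\sigma_0(\pi_A(a))+a$; but the isotopy condition asks for $\phi_0=i_0\circ\phi=\id_A$, and $i_0(a)=(0_0,a)$ forces $u=0$ there, so $\phi_0=\Phi_0=\id_A$ as required. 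The main conceptual input is already supplied by the preceding sections: a section $\sigma$ of $A$ generates a flow on the total space via the operator $\bad_A(\sigma)$ of~\eqref{equation;ad}, and the infinitesimal statement is that this flow is a Lie algebroid automorphism covering the base flow of $\an_A(\sigma)$.

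First I would fix notation and reduce the problem to checking the defining properties of a Lie algebroid morphism out of the product $T[0,1]\times A$. Concretely, I would confirm that the base map of $\phi$ is $\mr\phi(t,u,x)=\mr\Phi_t(x)$, so it does not see the fibre coordinate $u$ of $T[0,1]$ and agrees with the base flow; then that $\phi$ is fibrewise a bundle map over $\mr\phi$ (linear in $a$ for fixed $(t,u)$, which is clear since both terms are linear in $a$ through $\pi_A(a)$ and $\Phi_t$). Second, I would identify what $\phi$ does to the distinguished sections. Pairing against $\tangent t\in\Gamma(T[0,1])$, i.e.\ setting $u=1$ and differentiating appropriately, should reproduce the time-dependent section $\sigma_t$ together with the generator of the flow, which is exactly the content encoded in the first summand $u\,\sigma_t(\mr\Phi_t(\pi_A(a)))$; pairing against sections of $A$ reproduces $\Phi_t$. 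This is the step where the defining relation $\tfrac{d}{dt}\Phi_t=\bad_A(\sigma_t)\circ\Phi_t$ enters.

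The heart of the argument is verifying that $\phi$ respects brackets, i.e.\ that it is a bona fide Lie algebroid morphism and not merely a smooth bundle map. The clean way to organize this is to invoke the characterization that a vector bundle map is a Lie algebroid morphism precisely when the induced pullback on the de Rham complexes commutes with the differentials, $\phi^*\circ d_A=d_{T[0,1]\times A}\circ\phi^*$ (Vaintrob's theorem, used already in the proof of Lemma~\ref{lemma;poisson-action}). Using this, the morphism property of $\phi$ follows from the morphism property of $\Phi_t$ for each fixed $t$ — namely that $\bad_A(\sigma_t)$ generates automorphisms of $A$, so each $\Phi_t$ is a Lie algebroid isomorphism — together with compatibility in the $t$-direction supplied by the flow equation. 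I expect the main obstacle to be precisely this bracket/differential compatibility in the mixed $\R$-and-$A$ directions: one must check that the $u$-linear term $u\,\sigma_t$ interacts correctly with $d_A$ under $\phi^*$, which amounts to encoding the statement ``the flow of $\sigma_t$ is generated by $\sigma_t$'' in a way that is manifestly a chain map on the product algebroid. Once each $\phi_t$ is known to be an isomorphism (its inverse is the flow run backwards, $\Phi_t^{-1}$, which exists because $\Phi_t$ is assumed globally defined), the isomorphism clause of Definition~\ref{definition;isotopy} is immediate, and the proof concludes.
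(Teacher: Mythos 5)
Your handling of the routine assertions (that $\phi$ extends $\Phi$, that $\phi_0=\Phi_0=\id_A$, and that each $\phi_t=\Phi_t$ is an isomorphism with inverse the backwards flow) is correct and matches the paper. The gap is at the morphism property, which is the entire content of the proposition, and there your proposal stops exactly where the work begins: you assert that it ``follows from'' the fact that each $\Phi_t$ is an automorphism together with the flow equation, and then you concede that the compatibility in the mixed $\R$-and-$A$ directions is ``the main obstacle'' which ``one must check'' --- but you never check it. Saying that the per-$t$ automorphism property plus the flow equation imply the morphism property of the map out of $T[0,1]\times A$ is a restatement of what is to be proved, not an argument; as written, the decisive step is an expectation rather than a proof.

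The gap is fillable along the route you chose, and doing so would give a proof genuinely different from the paper's. Via Vaintrob's criterion one computes, for $\alpha\in\Omega_A^k(M)$, the decomposition
\[
\phi^*\alpha=\Phi_t^*\alpha+dt\wedge\Phi_t^*\iota_A(\sigma_t)\alpha,
\]
which follows from $\phi_t=\Phi_t$ and $\phi(\tangent t)=\sigma_t\circ\mr\Phi_t$; the chain-map identity $d_{T[0,1]\times A}\circ\phi^*=\phi^*\circ d_A$ then splits into exactly two identities, namely $d_A\Phi_t^*\alpha=\Phi_t^*d_A\alpha$ (each $\Phi_t$ is a morphism, by Lemma~\ref{lemma;pull-auto}\eqref{item;automorphism}) and $\frac{d}{dt}\Phi_t^*\alpha=\Phi_t^*\ca{L}_A(\sigma_t)\alpha$, which is~\eqref{equation;pull} combined with Cartan's formula~\eqref{equation;cartan}. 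This computation is what your proposal is missing. The paper proceeds quite differently: it never touches the de Rham complex, but instead forms the graph map $\hat\phi(t,u,a)=(t,u,\phi(t,u,a))$ on $\hat{A}=T[0,1]\times A$, interpolates it by a family $\hat\phi_s$, verifies the ODE $\frac{d}{ds}\hat\phi_s=\bad_{\hat{A}}(\hat\sigma_s)\circ\hat\phi_s$ for the rescaled section $\hat\sigma_s(t,x)=t\sigma_{st}(x)$ of the product algebroid, and invokes Lemma~\ref{lemma;pull-auto}\eqref{item;automorphism} (applied to $\hat{A}$) to conclude that $\hat\phi=\hat\phi_1$ is an automorphism. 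The paper's trick buys a proof by pure reduction to an already-established lemma about flows of $\bad$-fields; your route, once actually carried out, is more computational but exhibits the cochain-level functoriality of $\phi$ explicitly, which is what is used later in the homotopy formula. In its present form, however, the proposal establishes only the easy claims.
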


\begin{proof}
The equalities
\[\phi_t(a)=\phi\circ i_t(a)=\phi(t,0,a)=\Phi_t(a),\]
where $i_t$ is as in~\eqref{equation;cylinder-slice}, show that
$\phi_0=\id_A$, that $\phi_t=\Phi_t$ is a Lie algebroid isomorphism
for each~$t$, and that the map $\phi$ extends~$\Phi$. It remains to
show that $\phi$ is a Lie algebroid morphism.  Let $\hat{A}$ be the
product Lie algebroid $T[0,1]\times A$ and define
$\hat\phi\colon\hat{A}\to\hat{A}$ by
\[\hat\phi(t,u,a)=(t,u,\phi(t,u,a)).\]
Then $\phi=\pr_A\circ\hat\phi$, where $\pr_A\colon\hat{A}\to A$ is the
projection, so it is enough to show that $\hat\phi$ is a Lie algebroid
morphism.  For $0\le s\le1$ define $\hat\phi_s\colon\hat{A}\to\hat{A}$
by
\[
\hat\phi_s(t,u,a)=
\Bigl(t,u,su\sigma_{st}\bigl(\mr\Phi_{st}(\pi_A(a))\bigr)+
\Phi_{st}(a)\Bigr).
\]
Then $\hat\phi_0=\id_{\hat{A}}$, $\hat\phi_1=\hat\phi$, and
\begin{multline}\label{equation;ode}
\frac{d\hat\phi_s}{ds}(t,u,a)=
u\sigma_{st}\bigl(\mr\Phi_{st}(\pi_A(a))\bigr)+
stu\dot{\sigma}_{st}\bigl(\mr\Phi_{st}(\pi_A(a))\bigr)+
\\
stuT\sigma_{st}\bigl(\an_A(\sigma_{st}(\pi_A(a)))\bigr)+
t\bad_A(\sigma_{st})(\Phi_{st}(a)),
\end{multline}
where we used the differential equations
\[
\frac{d\Phi_t}{dt}(a)=\bad_A(\sigma_t)(\Phi_t(a))
\qquad\text{and}\qquad
\frac{d\mr\Phi_t}{dt}(x)=\an_A(\sigma_t)\bigl(\mr\Phi_t(x)\bigr).
\]
Define an $s$-dependent section $\hat{\sigma}_s\in\Gamma(\hat{A})$ by
$\hat{\sigma}_s(t,x)=t\sigma_{st}(x)$.  A computation
using~\eqref{equation;ad} shows that
\[
\bad_{\hat{A}}(\hat{\sigma}_s)(t,u,a)=
t\bad_A(\sigma_{st})(a)+u\sigma_{st}(\pi_A(a))+
stu\dot{\sigma}_{st}(\pi_A(a)).
\]
Comparing with~\eqref{equation;ode} we see that
\[
\frac{d\hat\phi_s}{ds}(t,u,a)=
\bad_{\hat{A}}(\hat\sigma_s)(\hat\phi_s(t,u,a)).
\]
Thus $\hat\phi_s$ is the flow of the vector field
$\bad_{\hat{A}}(\hat\sigma_s)$ on $\hat{A}$.  It now follows from
Lemma~\ref{lemma;pull-auto}\eqref{item;automorphism} that $\hat\phi_s$
is a Lie algebroid automorphism for each $s$.  In particular
$\hat\phi=\hat\phi_1$ is a Lie algebroid automorphism.
\end{proof}

A vector field $v$ on a manifold $M$ which is tangent to a submanifold
$N$ can be viewed as a smooth map of pairs $v\colon(M,N)\to(TM,TN)$.
Its tangent map is then a map of pairs $Tv\colon(TM,TN)\to(TTM,TTN)$,
and hence descends to a map of normal bundles
\[\ca{N}(v)\colon\ca{N}(M,N)\longto\ca{N}(TM,TN)\cong T\ca{N}(M,N),\]
i.e.\ a vector field on the normal bundle $\ca{N}(M,N)$ that is linear
along the fibres.  We call $\ca{N}(v)$ the \emph{linear approximation}
to $v$ along~$N$.  A vector field $v$ on $M$ is called
\emph{Euler-like} along $N$ if\/ $v$ is complete, vanishes on $N$, and
$\ca{N}(v)$ is equal to the Euler vector field of the vector bundle
$\ca{N}(M,N)$.  The significance of this notion lies in the following
``quantitative'' version of the tubular neighbourhood
theorem~\cite[Proposition~2.7]{bursztyn-lima-meinrenken;splitting}:
for every Euler-like vector field $v$ there exists a unique tubular
neighbourhood embedding $i_v\colon\ca{N}(M,N)\to M$ such that $i_v^*v$
is equal to the Euler vector field of\/ $\ca{N}(M,N)$.  We call $i_v$
the \emph{tubular neighbourhood embedding associated with $v$}.
\glossary{NMNW@$\ca{N}(M,N)$, normal bundle of\/ $N$ in $M$}
\glossary{Nv@$\ca{N}(v)$, linearization of vector field along $N$}
\glossary{iv@$i_v\colon\ca{N}(M,N)\to M$, tubular neighbourhood
  embedding associated with Euler-like vector field}

\begin{definition}\label{definition;euler-like}
Let $A\to M$ be a Lie algebroid and $B\to N$ a Lie subalgebroid.  An
\emph{Euler-like} section of\/ $A$ along $B$ is a section
$\eps\in\Gamma(A)$ such that $\eps|_N=0$ and the vector field
$\bad_A(\eps)$ on $A$ defined in~\eqref{equation;ad} is Euler-like
along the submanifold $B$ of~$A$.
\end{definition}

\begin{lemma}\label{lemma;euler-like}
Let $A\to M$ be a Lie algebroid, $B\to N$ a Lie subalgebroid, and
$\eps\in\Gamma(A)$ a section of~$A$.
\begin{enumerate}
\item\label{item;euler-like}
If the vector field $\bad_A(\eps)$ on $A$ is Euler-like along $B$,
then the vector field $\an_A(\eps)$ on $M$ is Euler-like along~$N$.
\item\label{item;euler-like-clean}
Suppose $N$ cleanly intersects $A$ and let $B=i_N^!A$.  If\/ $\eps$
vanishes along $N$ and the vector field $\an_A(\eps)$ on $M$ is
Euler-like along $N$, then the section $\eps$ is Euler-like along~$B$.
\end{enumerate}
\end{lemma}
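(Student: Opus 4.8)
The plan is to establish both items by reducing everything to the behaviour of the linear approximation of $\bad_A(\eps)$ along $B$, which is precisely what the Euler-like condition controls. The two statements go in opposite directions, but both hinge on understanding how the vector field $\bad_A(\eps)$ on the total space $A$ projects to the vector field $\an_A(\eps)$ on $M$ under the bundle projection $\pi_A\colon A\to M$, and on how the normal bundles $\ca{N}(A,B)$ and $\ca{N}(M,N)$ are related. The key structural fact I would exploit is that $\pi_A$ carries $\bad_A(\eps)$ to $\an_A(\eps)$, since $\bad_A(\eps)$ is by construction the Lie algebroid lift of $\an_A(\eps)$ (see~\eqref{equation;ad}), and that $\pi_A$ restricts to the bundle projection $\pi_B\colon B\to N$ on the submanifold $B$.

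\textbf{For item~\eqref{item;euler-like}}, first I would note that if $\bad_A(\eps)$ vanishes on $B$ and is complete, then its $\pi_A$-image $\an_A(\eps)$ vanishes on $N=\pi_A(B)$ and is complete as well. It remains to identify the linear approximation. The projection $\pi_A\colon(A,B)\to(M,N)$ is a map of pairs and induces a surjection of normal bundles $\ca{N}(\pi_A)\colon\ca{N}(A,B)\to\ca{N}(M,N)$. Because taking linear approximations is natural with respect to maps of pairs, the linear approximation $\ca{N}(\an_A(\eps))$ is the image of $\ca{N}(\bad_A(\eps))$ under this surjection. Since $\ca{N}(\bad_A(\eps))$ is the Euler vector field of $\ca{N}(A,B)$ by hypothesis, and the Euler vector field is natural under linear bundle maps (it is the generator of the fibrewise scaling action), its image is the Euler vector field of $\ca{N}(M,N)$. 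Hence $\an_A(\eps)$ is Euler-like along $N$.

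\textbf{For item~\eqref{item;euler-like-clean}}, the direction is reversed and the hypotheses are stronger: $N$ cleanly intersects $A$, so $B=i_N^!A$ has fibre $\an_A^{-1}(T_xN)$ at $x\in N$, and the vector field $\an_A(\eps)$ is assumed Euler-like along $N$. Here I would first verify that $\bad_A(\eps)$ is complete and vanishes along $B$; vanishing follows from $\eps|_N=0$ together with the formula for $\bad_A$, while completeness follows from completeness of $\an_A(\eps)$ via the standard fact that the Lie algebroid flow $\Phi_t$ exists as long as its base flow $\mr\Phi_t$ does (this is the global-existence hypothesis appearing already in Proposition~\ref{proposition;isotopy}). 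The crux is then the linear approximation. Using the clean intersection, the normal bundle $\ca{N}(A,B)$ fits into a short exact sequence built from the anchor, and the linear approximation $\ca{N}(\bad_A(\eps))$ splits into a horizontal part governed by $\ca{N}(\an_A(\eps))$ and a fibrewise-linear part governed by the linearization of $\eps$ as a section, i.e.\ by the operator $a\mapsto\ad_A(\eps)$ acting on the fibres transverse to $B$. Since $\eps$ vanishes along $N$, this fibrewise part is controlled by the first-order jet of $\eps$, and the Euler condition on $\an_A(\eps)$ forces the horizontal eigenvalue to be $1$; a direct computation of $\ad$ on the normal fibres of $B$ shows the remaining eigenvalue is also $1$, so that $\ca{N}(\bad_A(\eps))$ equals the full Euler vector field of $\ca{N}(A,B)$.

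\textbf{The main obstacle} I anticipate is the second item, specifically the identification of the fibrewise-linear part of $\ca{N}(\bad_A(\eps))$ and the verification that its eigenvalue along the ``vertical'' directions of $\ca{N}(A,B)$ equals $1$. This requires unpacking the definition of $\bad_A$ and carefully analysing the clean-intersection splitting $\ca{N}(A,B)\cong\pi^*\ca{N}(M,N)\oplus(\text{vertical})$ where the vertical piece measures the failure of $\eps$ to be tangent to $B$; showing that the linearized flow scales this piece by the correct factor is where the real content lies, and it is here that the hypothesis $B=i_N^!A$ (rather than an arbitrary subalgebroid) is essential.
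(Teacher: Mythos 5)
Your item~\eqref{item;euler-like} is correct and is essentially the paper's own argument: from $\bad_A(\eps)\sim_{\pi_A}\an_A(\eps)$ one applies the normal bundle functor to get $\ca{N}(\bad_A(\eps))\sim_{\ca{N}(\pi_A)}\ca{N}(\an_A(\eps))$, and since $\ca{N}(\pi_A)\colon\ca{N}(A,B)\to\ca{N}(M,N)$ is fibrewise surjective and Euler vector fields are related under vector bundle maps, the linear field on the target related to the Euler field of $\ca{N}(A,B)$ is unique and must be the Euler field of $\ca{N}(M,N)$.

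Item~\eqref{item;euler-like-clean}, however, has a genuine gap at exactly the point you yourself flag as ``the real content''. First, the splitting of $\ca{N}(A,B)$ into a horizontal and a vertical part that your argument rests on does not exist canonically; what exists is only the short exact sequence $\pi_B^*(i_N^*A/B)\hookrightarrow\ca{N}(A,B)\twoheadrightarrow\pi_B^*\ca{N}(M,N)$. Second, and more seriously, the inference ``both eigenvalues are $1$, hence $\ca{N}(\bad_A(\eps))$ is the Euler field'' is invalid. Since $\bad_A(\eps)$ vanishes on all of $B$, its linear approximation is a fibrewise endomorphism of $\ca{N}(A,B)$; an endomorphism preserving the subbundle $\pi_B^*(i_N^*A/B)$ and inducing the identity on both the subbundle and the quotient can still differ from the identity by a nilpotent off-diagonal term in $\Hom\bigl(\pi_B^*\ca{N}(M,N),\pi_B^*(i_N^*A/B)\bigr)$, and your sketch provides no mechanism to kill it. (Also, the ``direct computation of $\ad$ on the normal fibres'' is never carried out, and it would have to be performed at every point $b\in B$, not just along the zero section $N$.)

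The paper closes this step by a different device, which you would in effect be forced to reproduce: it relates $\bad_A(\eps)$ through the anchor to the tangent lift $T\an(\eps)\in\Gamma(TTM)$, which is Euler-like along $TN$ because $\an(\eps)$ is Euler-like along $N$, so that $\ca{N}(\bad_A(\eps))\sim_{\ca{N}(\an)}\ca{N}(T\an(\eps))$ with $\ca{N}(T\an(\eps))$ the Euler field of $\ca{N}(TM,TN)$. The whole point is then to show that $\ca{N}(\an)\colon\ca{N}(A,B)\to\ca{N}(TM,TN)$ is \emph{fibrewise injective}; this is where cleanness and the hypothesis $B=i_N^!A$ enter, via
Lemma~\ref{lemma;constant-clean}\eqref{item;submanifold} (the map $A_x/B_x\to T_xM/T_xN$ is injective precisely because $B_x=\an^{-1}(T_xN)$, and clean intersection makes this of constant rank). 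Uniqueness of linear vector fields related through a fibrewise injective bundle map then forces $\ca{N}(\bad_A(\eps))$ to be the Euler vector field, with no block decomposition and no off-diagonal worry. Your preliminary steps (vanishing of $\bad_A(\eps)$ on $B$ from $\eps|_N=0$ and the Leibniz rule, completeness via linearity over the complete base field) are fine and agree with the paper.
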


\begin{proof}
We will use the following simple facts: (1)~a linear vector field on a
vector bundle $E\to M$ is complete if and only if the associated base
vector field on $M$ is complete; (2)~if\/ $F\to N$ is a second vector
bundle and $\phi\colon E\to F$ is a vector bundle map, then the Euler
vector fields of\/ $E$ and $F$ are $\phi$-related; (3)~if
$v\in\Gamma(TE)$ and $w\in\Gamma(TF)$ are linear vector fields and
$v\sim_\phi w$, then $v$ is uniquely determined by $w$ if\/ $F$ is
fibrewise injective, and $w$ is uniquely determined by $v$ if\/ $F$ is
fibrewise surjective.

\eqref{item;euler-like}~The vector field $\bad_A(\eps)$ is a linear
lift of the vector field $\an_A(\eps)$, so $\an_A(\eps)$ is complete
because of fact~(1) above.  Also $\an_A(\eps)|_N=0$ because
$\bad_A(\eps)|_B=0$.  From $\bad_A(\eps)\sim_\pi\an_A(\eps)$ we get
\[\ca{N}(\bad_A(\eps))\sim_{\ca{N}(\pi)}\ca{N}(\an_A(\eps)).\]
Since $\ca{N}(\pi)\colon\ca{N}(A,B)\to\ca{N}(M,N)$ is a surjective
vector bundle map and $\ca{N}(\bad_A(\eps))$ is the Euler vector field
of\/ $\ca{N}(A,B)$, it follows from facts~(2) and~(3) above that
$\ca{N}(\an_A(\eps))$ is the Euler vector field of\/ $\ca{N}(M,N)$.

\eqref{item;euler-like-clean}~We must show that the vector field
$\bad_A(\eps)$ on $A$ is Euler-like along $B=i_N^!A=\an_A^{-1}(N)$.
This is verified in~\cite[\S\,3.6]{bursztyn-lima-meinrenken;splitting}
under the assumption that $N$ is transverse to~$A$.  The proof in the
clean case is almost the same and goes as follows.  The module of
relative sections $\Gamma(A;B)$ defined in~\eqref{equation;relative}
is a Lie subalgebra of\/ $\Gamma(A)$ and the submodule $\Gamma(A;0_N)$
is an ideal of\/ $\Gamma(A;B)$.  By hypothesis $\eps|_N=0$, so
$[\eps,\upsilon]|_N=0$ for all $\upsilon\in\Gamma(A;B)$.  It now
follows from~\eqref{equation;ad} that the vector field $\bad_A(\eps)$
vanishes on $B$.  It remains to show that its linear approximation
$\ca{N}(\bad_A(\eps))$ is the Euler vector field of the normal bundle
$\ca{N}(A,B)$.  Let $T\an(\eps)\in\Gamma(TTM)$ be the tangent lift of
$\an(\eps)\in\Gamma(TM)$.  The vector field $T\an(\eps)$ is Euler-like
along $TN$, so its linear approximation $\ca{N}(T\an(\eps))$ is the
Euler vector field of the bundle $\ca{N}(TM,TN)$.  The vector fields
$\bad_A(\eps)$ and $T\an(\eps)$ are related via the anchor:
$\bad_A(\eps)\sim_{\an}T\an(\eps)$.  Applying the normal bundle
functor we obtain
\begin{equation}\label{equation;euler}
\ca{N}(\bad_A(\eps))\sim_{\ca{N}(\an)}\ca{N}(T\an(\eps)).
\end{equation}
The map $\ca{N}(\an)$ fits into a commutative diagram of vector bundle
maps
\[
\begin{tikzcd}
\pi_B^*(i_N^*A/B)\ar[r,hook]\ar[d]&\ca{N}(A,B)\ar[r,two
  heads]\ar[d,"\ca{N}(\an)"]&\pi_B^*\ca{N}(M,N)\ar[d]
\\
\pi_{TN}^*\ca{N}(M,N)\ar[r,hook]&\ca{N}(TM,TN)\ar[r,two
  heads]&\pi_{TN}^*\ca{N}(M,N),
\end{tikzcd}
\]
the rows of which are exact.  The vertical map on the right is a
fibrewise isomorphism.  By
Lemma~\ref{lemma;constant-clean}\eqref{item;submanifold} the vertical
map on the left is fibrewise injective because $N$ cleanly
intersects~$A$.  Hence $\ca{N}(\an)$ is fibrewise injective.  It now
follows from~\eqref{equation;euler} and from facts~(2) and~(3) that
$\ca{N}(\bad_A(\eps))$ is the Euler vector field of\/ $\ca{N}(A,B)$.
\end{proof}

The utility of Euler-like sections is that their flows give rise to
deformation retractions.

\begin{theorem}[retraction theorem]\label{theorem;retraction}
Let $A\to M$ be a Lie algebroid and $B\to N$ a Lie subalgebroid.
Suppose there exists an Euler-like section $\eps$ of\/ $A$ along~$B$.
Let $U$ be the tubular neighbourhood of\/ $N$ associated with the
Euler-like vector field $\an_A(\eps)$.  Then there exist a deformation
retraction of\/ $A|_U$ onto~$B$.  It follows that $i_B^*\colon
H_A^\bu(U)\to H_B^\bu(N)$ is an isomorphism.
\end{theorem}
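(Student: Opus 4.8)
The plan is to build the deformation retraction from the flow of the Euler-like vector field $\bad_A(\eps)$ on the total space of $A$, reparametrized so that it contracts $A|_U$ onto $B$ as the homotopy parameter runs from $1$ down to $0$. First I would record the structural consequences of the hypothesis: since $\eps$ is Euler-like along $B$, the vector field $\bad_A(\eps)$ is complete and vanishes on $B$, and by Lemma~\ref{lemma;euler-like}\eqref{item;euler-like} its anchor image $\an_A(\eps)$ is Euler-like along $N$, with associated tubular neighbourhood $U$. Let $\Phi_s$ be the flow of $\bad_A(\eps)$ and $\mr\Phi_s$ the flow of $\an_A(\eps)$ on $M$. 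Because $\bad_A(\eps)$ is an infinitesimal automorphism lifting $\an_A(\eps)$, each $\Phi_s$ is a Lie algebroid automorphism of $A$ (exactly as in the proof of Proposition~\ref{proposition;isotopy}, via Lemma~\ref{lemma;pull-auto}\eqref{item;automorphism}); and since $\mr\Phi_s$ preserves $U$ for all $s$, the $\Phi_s$ restrict to automorphisms of $A|_U$. As $\bad_A(\eps)$ is complete, the construction of Proposition~\ref{proposition;isotopy} applied to the time-independent section $\eps$ yields a Lie algebroid isotopy $\phi\colon T\R\times A|_U\to A|_U$ given by $\phi(s,u,a)=u\,\eps(\mr\Phi_s(\pi_A(a)))+\Phi_s(a)$.

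Next I would reparametrize the cylinder. The tangent map of $g(\tau)=\log\tau$ is a Lie algebroid morphism $Tg\colon T(0,1]\to T\R$, $(\tau,u)\mapsto(\log\tau,u/\tau)$, so the composite $\varrho=\phi\circ(Tg\times\id_{A|_U})$, namely
\[
\varrho(\tau,u,a)=\frac{u}{\tau}\,\eps\bigl(\mr\Phi_{\log\tau}(\pi_A(a))\bigr)+\Phi_{\log\tau}(a),
\]
is a Lie algebroid morphism $T(0,1]\times A|_U\to A|_U$. Writing $\Psi_\tau=\Phi_{\log\tau}$, I already have $\varrho_1=\Phi_0=\id$, and for $b\in B$ I get $\varrho(\tau,u,b)=b$, since the first term vanishes ($\eps|_N=0$ and $\mr\Phi_s$ fixes $N$) and $\Phi_s$ fixes $B$ pointwise.

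The crux is to extend $\varrho$ smoothly across $\tau=0$ and to identify $\varrho_0$. For this I would pass to the tubular neighbourhood model: by \cite[Proposition~2.7]{bursztyn-lima-meinrenken;splitting}, the embedding associated with the Euler-like field $\bad_A(\eps)$ identifies $A|_U$ with the normal bundle $\ca{N}(A,B)$ so that $\bad_A(\eps)$ becomes its Euler vector field. In this model $\Phi_s$ is scalar multiplication by $e^s$, hence $\Psi_\tau$ is scalar multiplication by $\tau$ and the $u$-weighted term is likewise linear in $\tau$; consequently the displayed formula extends to a smooth map on all of $T[0,1]\times A|_U$, with $\varrho_0=\Psi_0$ equal to scaling by $0$, i.e.\ the projection of $\ca{N}(A,B)$ onto $B$. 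Since being a Lie algebroid morphism is a closed condition under smooth limits, $\varrho$ is a morphism on $T[0,1]\times A|_U$.

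Finally I would check the axioms of Definition~\ref{definition;retraction}: $\varrho_1=\id_{A|_U}$, $\varrho_0$ has image $B$ and restricts to the identity on $B$, and $\varrho(\tau,u,b)=b$ as noted; thus $\varrho$ is a deformation retraction of $A|_U$ onto $B$. The assertion that $i_B^*\colon H_A^\bu(U)\to H_B^\bu(N)$ is an isomorphism is then immediate from Corollary~\ref{corollary;retract}. The one genuinely delicate point I expect is the extension across $\tau=0$: away from $\tau=0$ the morphism property is automatic, being a reparametrization of the isotopy produced by Proposition~\ref{proposition;isotopy}, but both the smoothness of the $\tfrac1\tau$-weighted term and the identification of $\varrho_0$ with the projection onto $B$ rest on the linear model furnished by the Euler-like hypothesis, which is exactly the input drawn from \cite{bischoff-bursztyn-lima-meinrenken,bursztyn-lima-meinrenken;splitting}.
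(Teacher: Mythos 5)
Your proposal is correct and follows essentially the same route as the paper's proof: the flow of $\bad_A(\eps)$ yields a Lie algebroid isotopy via Proposition~\ref{proposition;isotopy}, which is reparametrized by $\log$ and extended across the endpoint using the normal-bundle model supplied by the Euler-like hypothesis, with Corollary~\ref{corollary;retract} finishing the cohomology statement. One minor imprecision: the term $\tfrac{u}{\tau}\,\eps\bigl(\mr\Phi_{\log\tau}(\pi_A(a))\bigr)$ is not literally linear in $\tau$ in the model (only its normal-bundle linearization is); its smooth extension across $\tau=0$ follows, as in the paper, from the hypothesis $\eps|_N=0$ (Hadamard's lemma makes the singularity removable), a reason you in fact invoke when checking $\varrho(\tau,u,b)=b$.
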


\begin{proof}
The proof is an adaptation
of~\cite[\S\,3.6]{bursztyn-lima-meinrenken;splitting}.  Let $\Phi_t$
be the flow of the vector field $\bad_A(\eps)$ and let $\phi\colon
T\R\times A\to A$ be the corresponding Lie algebroid isotopy as in
Proposition~\ref{proposition;isotopy}.  Since $\bad_A(\eps)$ is
Euler-like, near $B$ this isotopy is exponentially expanding in the
directions normal to~$B$.  We turn it into a retraction by rescaling
the time variable.  Put $f(s)=\log s$ for $0<s\le1$; then the Lie
algebroid homotopy
\[
\begin{tikzcd}
\varrho\colon T(0,1]\times A\ar[r,"Tf\times\id_A"]& T\R\times
  A\ar[r,"\phi"]&A
\end{tikzcd}
\]
is given by the formula
\begin{equation}\label{equation;retract}
\varrho(s,v,a)=\frac{v}{s}\eps\bigl(\mr\Phi_{\log s}(\pi(a))\bigr)+
\Phi_{\log s}(a)
\end{equation}
for $(s,v)\in T(0,1]=(0,1]\times\R$ and $a\in A$.  The base homotopy
    $\mr\varrho\colon(0,1]\times M\to M$ is given by
      $\mr\varrho(s,x)=\mr\Phi_{\log s}(x)$.
By Lemma~\ref{lemma;euler-like}\eqref{item;euler-like} the vector
field $\an(\eps)$ is Euler-like along $N$ and therefore determines a
tubular neighbourhood embedding $i\colon\ca{N}(M,N)\to U$.  Let us
restrict $\mr\varrho$ to $U$ and $\varrho$ to $A|_U$.  If we identify
$U$ with the normal bundle via the embedding $i$, the base homotopy
$\mr\varrho$ is given by fibrewise multiplication by $s$, so it
extends smoothly to a homotopy $\mr\varrho\colon[0,1]\times U\to U$,
which retracts $U$ onto~$N$.  Likewise, if we identify $A|_U$ with the
normal bundle $\ca{N}(A,B)$ via the tubular neighbourhood embedding
$\ca{N}(A,B)\to A|_U$ determined by $\bad_A(\eps)$, the flow
$\Phi_{\log s}$ is given by fibrewise multiplication by $s$, which
retracts $A|_U$ onto $B$.  By hypothesis $\eps$ vanishes on $N$, so
the singularity at $s=0$ in the term
$\frac{v}{s}\eps\bigl(\mr\varrho(s,\pi(a))\bigr)$ is removable.  It
follows that the homotopy~\eqref{equation;retract} extends smoothly to
a homotopy $T[0,1]\times A|_U\to A|_U$.  We have
$\varrho(1,0,a)=\Phi_0(a)=a$ for all $a\in A$, so $\varrho_1=\id_A$.
If\/ $b\in B$, then $\varrho(s,v,b)=b\in B$ for all $(s,v)\in T[0,1]$.
Also $\varrho(0,0,a)\in B$ for all $a\in A$, so $\varrho_0(A)=B$.  We
have shown that $\varrho$ is a deformation retraction of\/ $A|_U$ onto
$B$.  The second assertion of the theorem is now an immediate
consequence of Corollary~\ref{corollary;retract}.
\end{proof}

Given a Lie algebroid $A\to M$ and a Lie subalgebroid $B\to N$, the
normal bundle $\ca{N}(A,B)$ has the structure of a Lie algebroid over
the normal bundle $\ca{N}(M,N)$.  The space $\ca{N}(A,B)$ is a double
vector bundle
\[
\begin{tikzcd}[column sep=small]
\ca{N}(A,B)\ar[r]\ar[d]&\ca{N}(M,N)\ar[d]\\
B\ar[r]&N
\end{tikzcd}
\]
and the vector bundle operations on the vertical bundle
$\ca{N}(A,B)\to B$ are Lie algebroid morphisms of\/ $\ca{N}(A,B)$;
see~\cite[Theorem~A.7]{meinrenken;groupoids-algebroids}.

\begin{definition}\label{definition;linear}
The Lie algebroid $\ca{N}(A,B)\to\ca{N}(M,N)$ is the
\emph{linearization of\/ $A$ at $B$}.  The Lie algebroid $A$ is
\emph{linearizable at the Lie subalgebroid $B$} if there is a
\emph{linearization map} $\ca{N}(A,B)\to A$, i.e.\ a map that is
simultaneously a Lie algebroid morphism and a tubular neighbourhood
embedding.
\end{definition}

Under a slightly weaker hypothesis than
Theorem~\ref{theorem;retraction} (we don't need $\eps|_N=0$) we obtain
a linearization theorem.

\begin{theorem}[linearization theorem]\label{theorem;linearization}
Let $A\to M$ be a Lie algebroid and $B\to N$ a Lie subalgebroid.
Suppose there exists a section $\eps$ of\/ $A$ such that the vector
field $\bad_A(\eps)$ on $A$ is Euler-like along~$B$.  The tubular
neighbourhood embedding $i_v\colon\ca{N}(A,B)\to A$ associated with
$v=\bad_A(\eps)$ on $A$ is a linearization map, whose base map is the
tubular neighbourhood embedding $\iota_{\mr{v}}\colon\ca{N}(M,N)\to M$
associated with the Euler-like vector field $\mr{v}=\an_A(\eps)$ on
$M$.
\end{theorem}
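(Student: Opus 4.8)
The plan is to read off $i_v$ from the quantitative tubular neighbourhood theorem and then upgrade it to a Lie algebroid isomorphism by transporting the structure of $A$ and invoking the homogeneity of the resulting structure. First I would apply the quantitative tubular neighbourhood theorem of \cite{bursztyn-lima-meinrenken;splitting} to the manifold $A$, the submanifold $B$, and the complete vector field $v=\bad_A(\eps)$, which is Euler-like along $B$ by hypothesis. This produces the embedding $i_v\colon\ca{N}(A,B)\to A$ with $i_v^*v$ equal to the Euler vector field of the normal bundle, so the ``tubular neighbourhood embedding'' half of the assertion holds by construction. To identify the base map, note that $\bad_A(\eps)$ is a linear lift of $\an_A(\eps)$, whence $\bad_A(\eps)\sim_\pi\an_A(\eps)$; by Lemma~\ref{lemma;euler-like}\eqref{item;euler-like} the field $\mr v=\an_A(\eps)$ is Euler-like along $N$, and applying the normal-bundle functor $\ca{N}(\pi)$ to the relation $i_v^*v=(\text{Euler field})$ shows that the base map of $i_v$ pulls $\mr v$ back to the Euler vector field of $\ca{N}(M,N)$. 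Uniqueness in the tubular neighbourhood theorem then forces the base map to be $\iota_{\mr{v}}$.

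It remains to prove that $i_v$ is a Lie algebroid morphism, and here I would reuse the flow identification from the proof of Theorem~\ref{theorem;retraction}, which does not in fact require $\eps|_N=0$. Let $\Phi_t$ be the flow of $\bad_A(\eps)$; by Proposition~\ref{proposition;isotopy} each $\Phi_t$ is a Lie algebroid automorphism of $A$. Since $i_v^*\bad_A(\eps)$ is the Euler vector field, intertwining the two flows gives $i_v\circ\kappa_s=\Phi_{\log s}\circ i_v$ for $0<s\le1$, where $\kappa_s$ denotes fibrewise multiplication by $s$ in $\ca{N}(A,B)$. On the other side, $\ca{N}(A,B)$ carries its linearized Lie algebroid structure over $\ca{N}(M,N)$, and by \cite[Theorem~A.7]{meinrenken;groupoids-algebroids} the vertical vector bundle operations of $\ca{N}(A,B)\to B$, in particular the scalar multiplications $\kappa_s$, are morphisms of this linearized algebroid.

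I would then conclude as follows. Transport the Lie algebroid structure of $A$ through the diffeomorphism $i_v$ to obtain a Lie algebroid structure $A'$ on an open neighbourhood of $B$ in $\ca{N}(A,B)$, with respect to which $i_v$ is tautologically an isomorphism onto its image. Because $i_v$ conjugates $\kappa_s$ to the automorphism $\Phi_{\log s}$, namely $\kappa_s=i_v^{-1}\circ\Phi_{\log s}\circ i_v$, each $\kappa_s$ is a morphism of $A'$; that is, $A'$ is invariant under the fibrewise scalar multiplications. A Lie algebroid structure on the total space of a vector bundle whose scalar multiplications are morphisms is a VB-algebroid, i.e.\ linear, and is therefore equal to its own linear approximation along the zero section. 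Since $i_v$ is a tubular neighbourhood embedding its normal derivative along $B$ is the identity, so this linear approximation coincides with that of $A$ along $B$, which is by definition the canonical linearization $\ca{N}(A,B)$. Hence $A'$ equals the linearized structure, $i_v\colon\ca{N}(A,B)\to A$ is an isomorphism onto its image, and in particular a Lie algebroid morphism; being also a tubular neighbourhood embedding, it is a linearization map.

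The main obstacle is the homogeneity step in the last paragraph: the assertion that a scalar-multiplication-invariant Lie algebroid structure on a vector bundle is linear and is pinned down by its linearization. This is the genuine content of the linearization theorem and is where the work of \cite{bursztyn-lima-meinrenken;splitting} enters; it can be proved either through the homogeneity-structure formalism or, via Vaintrob's criterion, by showing that the operator $d_{\ca{N}}i_v^*-i_v^*d_A$ vanishes through a weight analysis against the $\kappa_s$-action, using that it intertwines $\Phi_{\log s}^*$ with $\kappa_s^*$ and that its source and target differentials are preserved by $\Phi_t$ and $\kappa_s$ respectively. Everything else---the construction of $i_v$, the identification of the base map, and the flow identification---is routine bookkeeping with the double vector bundle once the machinery of Theorem~\ref{theorem;retraction} is in place.
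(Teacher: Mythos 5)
Your construction of $i_v$, the identification of its base map, and the flow conjugation $i_v\circ\kappa_s=\Phi_{\log s}\circ i_v$ are all fine, but the transport-of-structure step at the heart of your argument is circular. A Lie algebroid structure is not a structure on a manifold alone: it is anchored to a specific vector bundle structure, and it can only be transported along a map that is itself a morphism of vector bundles. As it stands, $i_v\colon\ca{N}(A,B)\to A$ is merely a diffeomorphism onto an open subset of $A$; you have not shown that its image is saturated (of the form $A|_U$), nor that $i_v$ carries the fibres of $\ca{N}(A,B)\to\ca{N}(M,N)$ linearly onto the fibres of $A\to M$ over $\iota_{\mr{v}}$. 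Consequently the transported object $A'$ is a priori a Lie algebroid on the \emph{transported} bundle structure, not on the canonical bundle $\ca{N}(A,B)\to\ca{N}(M,N)$, and your appeal to ``a $\kappa_s$-invariant Lie algebroid structure on the total space of a vector bundle is a VB-algebroid'' presupposes exactly the compatibility (a double vector bundle relation between the two structures) that is in question. The same circularity infects your Vaintrob-style alternative: the operator $d_{\ca{N}}i_v^*-i_v^*d_A$ is only defined once $i_v^*$ maps $A$-forms to forms on $\ca{N}(A,B)$, i.e.\ once $i_v$ is already known to be a vector bundle morphism. Fibrewise linearity of $i_v$ is not bookkeeping: it is the ``linear'' half of the linearization theorem, and nothing in your write-up produces it. (Your $\kappa_s$-conjugation can in principle be leveraged to obtain it, but only by running the full Grabowski--Rotkiewicz homogeneity formalism --- maps intertwining homogeneity structures are automatically linear, commuting homogeneity structures constitute double vector bundles --- followed by the VB-algebroid recognition theorem; that machinery is precisely what is missing, and it is not what the cited splitting paper provides.)

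The paper's proof is organized so that this issue never arises. It works on the deformation space $\ca{D}(A,B)$, which is a Lie algebroid over $\ca{D}(M,N)$ fibring over $\R$ with fibre $\ca{N}(A,B)$ at $t=0$ and $A$ at $t\ne0$; the Euler-like hypothesis guarantees that $\tangent t+t^{-1}\bad_A(\eps)$ extends to a smooth complete vector field $w$ on $\ca{D}(A,B)$ which is an \emph{infinitesimal Lie algebroid automorphism} and is $\pi$-related to $\tangent t$. Its time-$1$ flow therefore carries $\pi^{-1}(0)=\ca{N}(A,B)$ to $\pi^{-1}(1)=A$ by a Lie algebroid morphism (in particular a fibrewise-linear map), and one then checks that this map coincides with the tubular neighbourhood embedding $i_v$. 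In other words, the morphism property is built in from the start rather than retrofitted onto $i_v$ afterwards; the deformation space, which is absent from your proposal, is exactly the device that makes this possible.
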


\begin{proof}
Let $\ca{D}(M,N)$ be the deformation space of the pair $(M,N)$.  As a
set this is the disjoint union
\[\ca{D}(M,N)=\ca{N}(M,N)\sqcup(\R^\times\times M).\]
See~\cite[\S\S\,2--3]{bischoff-bursztyn-lima-meinrenken}
and~\cite[Appendix~A]{meinrenken;groupoids-algebroids} for the
manifold structure of\/ $\ca{D}(M,N)$ and for the fact that the
deformation space $\ca{D}(A,B)$ is a Lie algebroid over $\ca{D}(M,N)$.
The map $\pi\colon\ca{D}(A,B)\to\R$ which sends $\ca{N}(A,B)$ to $0$
and agrees with the projection $\R^\times\times M\to\R^\times$ on the
complement of\/ $\ca{N}(A,B)$ is a smooth submersion.  The fibres
$\pi^{-1}(t)$ are Lie algebroids isomorphic to $A\to M$ for $t\ne0$
and to $\ca{N}(A,B)\to\ca{N}(M,N)$ for $t=0$.  In this sense
$\ca{D}(A,B)$ is a family of Lie algebroids that deforms $A$ to its
normal bundle in $B$.  Since the vector field $\bad_A(\eps)$ is
Euler-like, the vector field $\tangent t+t^{-1}\bad(\eps)$, which is
defined on the complement of\/ $\ca{N}(A,B)$ in $\ca{D}(A,B)$, extends
smoothly to a complete vector field $w$ on $\ca{D}(A,B)$.  The vector
field $w$ is an infinitesimal Lie algebroid automorphism and is
$\pi$-related to the vector field $\tangent t$ on $\R$, so its flow at
time~$1$ is a Lie algebroid morphism from $\pi^{-1}(0)=\ca{N}(A,B)$ to
$\pi^{-1}(1)=A$, which agrees with the tubular neighbourhood embedding
$i_v$.  These statements are proved
in~\cite[\S\,A.6]{meinrenken;groupoids-algebroids} when $B$ is the
pullback to a transversal $N$, but the same proofs work for an
arbitary Lie subalgebroid $B$, as long as we have a section $\eps$
with $\bad_A(\eps)$ Euler-like along $B$.
\end{proof}

\begin{remarks}\label{remark;transverse}
Let $A\to M$ be a Lie algebroid and $B\to N$ a Lie subalgebroid.
\begin{numerate}
\item\label{item;basis}
If\/ $\eps$ is an Euler-like section of\/ $A$ along $N$, then so is
$f\eps$ for any bounded smooth function $f$ which is equal to $1$ on a
neighbourhood of~$N$.  Hence, by Theorem~\ref{theorem;retraction},
there exist deformation retractions of\/ $A|_U$ onto $B$ for all $U$
ranging over a basis of tubular neighbourhoods of~$N$.
\item\label{item;split}
Let $N$ be an embedded submanifold that is transverse to $A$ and let
$B=i_N^!A$ be the pullback Lie algebroid.  Then there automatically
exists an Euler-like section of\/ $A$ along $B$, as proved
in~\cite[Lemma~3.9]{bursztyn-lima-meinrenken;splitting}, and therefore
the retraction and linearization theorems apply.  In fact, in this
case the normal bundle is $\ca{N}(A,B)=\pi_{M,N}^!i_N^!A$, where
$\pi_{M,N}\colon\ca{N}(M,N)\to N$ is the projection, so the
linearization theorem amounts to
\[\pi_{M,N}^!i_N^!A\cong A|_U.\]
This is the Dufour-Fernandes-Weinstein splitting theorem.  Our
Theorems~\ref{theorem;retraction} and~\ref{theorem;linearization} were
prompted by the treatment of the splitting theorem given by Bischoff
et al.~\cite{bischoff-bursztyn-lima-meinrenken} and Bursztyn et
al.~\cite[Theorem~4.1]{bursztyn-lima-meinrenken;splitting}.  See also
Meinrenken's notes~\cite[Theorem~8.13,
  Appendix~A]{meinrenken;groupoids-algebroids}.  The isomorphism
$H_A^\bu(U)\cong H_B^\bu(N)$ of Theorem~\ref{theorem;retraction} was
obtained by
Frejlich~\cite[Theorem~B]{frejlich;submersions-lie-algebroids} in the
transverse case.  If the intersection is clean but not transverse,
typically $\pi_{M,N}^!i_N^!A$ has higher rank than $A$, so the
splitting theorem fails, but sometimes an Euler-like section still
exists.
\item\label{item;foliation}
Let $A=T\F$ be the tangent bundle of a (regular) foliation $\F$ of
$M$.  Let $B=i_N^!A$ be the pullback of\/ $A$ to an embedded submanifold
$N$ of\/ $M$ that cleanly intersects $A$.  By
Proposition~\ref{proposition;pull} the dimension of the intersection
$T_xN\cap T_x\F$ is independent of\/ $x\in N$, so $\F$ induces a
foliation $\F_N$ of\/ $N$ and $B$ is its tangent bundle $T\F_N$.  An
Euler-like section of\/ $A$ along $B$ is just an Euler-like vector field
along $N$ which is tangent to the leaves of\/ $\F$.  Such a vector field
exists if and only if\/ $N$ is transverse to the leaves.  In
non-transverse cases (e.g.\ when $N$ is a single point)
Theorems~\ref{theorem;retraction} and~\ref{theorem;linearization} do
not apply.  But see Remark~\ref{remark;foliation}.
\end{numerate}
\end{remarks}

In situations where there exists no Euler-like section the following
modification of the retraction and linearization theorems may be of
use.

\begin{corollary}\label{corollary;retraction}
Let $\bar{A}\to M$ be a Lie algebroid.  Let $A\to M$ be a Lie
subalgebroid of\/ $\bar{A}$ over the same base and let $B\to N$ be a Lie
subalgebroid of\/ $A$.  Suppose there exists a section $\eps$ of
$\bar{A}$ along~$B$ which normalizes $\Gamma(A)$.  If the vector field
$\bad_{\bar{A}}(\eps)$ on $\bar{A}$ is Euler-like along $B$, the
linearization map $\ca{N}(\bar{A},B)\to\bar{A}$ given by the
Euler-like vector field $\bad_{\bar{A}}(\eps)$ on $\bar{A}$ restricts
to a linearization map $\ca{N}(A,B)\to A$.  If additionally
$\eps|_N=0$, there exist a deformation retraction of\/ $A|_U$ onto $B$,
where $U$ is the tubular neighbourhood of\/ $N$ associated with the
Euler-like vector field $\an_{\bar{A}}(\eps)$ on $M$, and hence
$i_B^*\colon H_A^\bu(U)\to H_B^\bu(N)$ is an isomorphism.
\end{corollary}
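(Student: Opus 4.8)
The plan is to deduce both assertions from the linearization and retraction theorems already established for $\bar A$ (Theorems~\ref{theorem;linearization} and~\ref{theorem;retraction}), using the normalization hypothesis to control how those constructions meet the subbundle $A\subseteq\bar A$. First I would record the mechanism behind the hypothesis. The vector field $\bad_{\bar A}(\eps)$ of~\eqref{equation;ad} is the linear vector field on the total space of $\bar A$ whose associated derivation of $\Gamma(\bar A)$ is $[\eps,{\cdot}\,]$. A linear vector field is tangent to a subbundle precisely when its derivation preserves the sections of that subbundle; since $[\eps,\Gamma(A)]\subseteq\Gamma(A)$ by assumption, $\bad_{\bar A}(\eps)$ is tangent to the submanifold $A$ of $\bar A$, so its flow $\Phi_t$ preserves $A$. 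By the Jacobi identity the restriction $[\eps,{\cdot}\,]|_{\Gamma(A)}$ is bracket-preserving, so $\bad_{\bar A}(\eps)|_A$ is an infinitesimal Lie algebroid automorphism of $A$.

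For the first assertion I would run the proof of Theorem~\ref{theorem;linearization} inside the deformation space. Because $B\subseteq A\subseteq\bar A$ and the deformation space construction is functorial, $\ca{D}(A,B)$ is a Lie subalgebroid of $\ca{D}(\bar A,B)$ over $\ca{D}(M,N)$, with $\ca{N}(A,B)=\pi^{-1}(0)\cap\ca{D}(A,B)$ and $A=\pi^{-1}(1)\cap\ca{D}(A,B)$, where $\pi\colon\ca{D}(\bar A,B)\to\R$ is the defining submersion. The vector field $w=\tangent t+t^{-1}\bad_{\bar A}(\eps)$ used to build $i_v$ is tangent to $\ca{D}(A,B)$: the $\tangent t$ summand is tangent for trivial reasons, and $t^{-1}\bad_{\bar A}(\eps)$ is tangent because $\bad_{\bar A}(\eps)$ is tangent to $A$ by the previous step. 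Hence the time-$1$ flow of $w$, which is $i_v$, carries $\ca{N}(A,B)$ into $A$. Since $i_v$ is simultaneously a Lie algebroid morphism and a tubular neighbourhood embedding, so is its restriction $i_A\colon\ca{N}(A,B)\to A$, giving the asserted linearization map, an isomorphism onto $A|_U$.

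For the second assertion the naive idea of restricting the retraction~\eqref{equation;retract} of $\bar A|_U$ onto $B$ fails, and this is the main obstacle: although the object map $a\mapsto\Phi_{\log s}(a)$ preserves $A$, the homotopy's $\tangent s$-component equals $\frac1s\eps\circ\mr\Phi_{\log s}$, which is valued in $\bar A$ and leaves $A$ because $\eps$ is merely a section of $\bar A$, not of $A$. I would instead build the retraction intrinsically on the linearization. By the first assertion $i_A\colon\ca{N}(A,B)\iso A|_U$ is an isomorphism of Lie algebroids, and the fibrewise scalar multiplication $T[0,1]\times\ca{N}(A,B)\to\ca{N}(A,B)$, $(s,\nu)\mapsto s\nu$, is a Lie algebroid homotopy contracting $\ca{N}(A,B)$ onto its zero section $B$, since the vector-bundle operations of the vertical bundle $\ca{N}(A,B)\to B$ are Lie algebroid morphisms (the double vector bundle structure recalled just before Definition~\ref{definition;linear}). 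Transporting this homotopy through $i_A$ produces a deformation retraction of $A|_U$ onto $B$ in the sense of Definition~\ref{definition;retraction}, and Corollary~\ref{corollary;retract} then yields the isomorphism $i_B^*\colon H_A^\bu(U)\to H_B^\bu(N)$. The extra hypothesis $\eps|_N=0$ enters exactly here: it makes $\eps$ a genuine Euler-like section, so that $\bad_{\bar A}(\eps)$ vanishes along $B$ and the zero section of $\ca{N}(A,B)$ is $B$; it plays no role in the first assertion.
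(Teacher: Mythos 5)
Your treatment of the linearization assertion is correct and coincides with the paper's own argument: the normalization hypothesis makes $\bad_{\bar A}(\eps)$ tangent to $A$, hence $w=\tangent t+t^{-1}\bad_{\bar A}(\eps)$ tangent to $\ca{D}(A,B)\subseteq\ca{D}(\bar A,B)$, and one restricts the time-$1$ flow; this is exactly the paper's appeal to Lemma~\ref{lemma;pull-auto}\eqref{item;preserve}. Your objection to restricting the retraction~\eqref{equation;retract} is also cogent: the image of $(\tangent s,0_x)$ is $\frac{1}{s}\eps\bigl(\mr\Phi_{\log s}(x)\bigr)$, which lies in $\bar A$ but in general not in $A$, since $\eps$ is not a section of~$A$. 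You should be aware that this naive restriction \emph{is} the paper's proof of the second assertion (it cites Lemma~\ref{lemma;pull-auto}\eqref{item;preserve} again), so your objection applies to the paper itself. The genuine gap is in your repair. The VB-algebroid structure on $\ca{N}(A,B)$ does give, for each fixed $s$, a Lie algebroid morphism $m_s$ (homothety of the vertical bundle $\ca{N}(A,B)\to B$), but a smooth family of morphisms is not a Lie algebroid homotopy. A homotopy must be a single morphism $\varrho\colon T[0,1]\times\ca{N}(A,B)\to\ca{N}(A,B)$ with $\varrho_s=m_s$, and it must send $(\tangent s,0_m)$, for $m\in\ca{N}(M,N)$, to an element of $\ca{N}(A,B)$ whose anchor equals $\frac{d}{ds}(sm)$, i.e.\ $\frac{1}{s}$ times the Euler vector field of $\ca{N}(M,N)$ at $sm$; the map $(s,u,\nu)\mapsto s\nu$, which ignores the $T[0,1]$-direction, violates this anchor compatibility. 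Such a $\varrho$ therefore exists only if the Euler vector field of $\ca{N}(M,N)$ lifts through the anchor of $\ca{N}(A,B)$ --- in other words, only if the linearized algebroid itself admits an Euler-like section, which is precisely what is unavailable in the situations this corollary is designed to cover.

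Moreover, this gap cannot be closed by any construction, because the retraction assertion is false as stated. Take the example of Remark~\ref{remark;foliation}: $M=\R^2$, $\F$ the foliation by horizontal lines, $A=T\F$, $\bar A=TM$, $N=\{x\}$ the origin, $B=0_x$, and $\eps=x_1\tangent x_1+x_2\tangent x_2$. All hypotheses hold: $\eps$ normalizes $\Gamma(A)$, the tangent lift $\bad_{TM}(\eps)$ is the Euler vector field of $TM\cong\R^4$ and hence Euler-like along $B$, and $\eps|_N=0$, with $U=\R^2$. If a deformation retraction of $A|_U$ onto $B$ existed, Corollary~\ref{corollary;retract} would make $i_B^*\colon H_A^0(U)\to H_B^0(N)=\R$ an isomorphism; but $H_A^0(U)$ is the space of leafwise constant functions, i.e.\ all smooth functions of $x_2$, and $i_B^*$ is evaluation at the origin, which is far from injective. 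So no Lie algebroid homotopy with the required properties exists for this pair, and the second assertion of the corollary (together with its cohomological consequence) fails. What your objection has uncovered is an error in the statement and in the paper's proof, not something a different construction can fix: what survives is the linearization assertion, and the weaker fact that each $\Phi_{\log s}$, $s\in(0,1]$, is a Lie algebroid automorphism preserving~$A$.
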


\begin{proof}
The proof of Theorem~\ref{theorem;linearization} tells us that the
time~$1$ flow of the vector field $w=\tangent
t+t^{-1}\bad{\bar{A}}(\eps)$ on the deformation space
$\ca{D}(\bar{A},B)$ induces a linearization
$\ca{N}(\bar{A},B)\to\bar{A}$.  It follows from
Lemma~\ref{lemma;pull-auto}\eqref{item;preserve} that the flow of\/ $w$
preserves $\ca{D}(A,B)$ and therefore gives us a linearization
$\ca{N}(A,B)\to A$.  Similarly, if\/ $\eps|_N=0$ then
Theorem~\ref{theorem;retraction} tells us that the flow of
$\bad_{\bar{A}}(\eps)$ induces a deformation retraction of
$\bar{A}|_U$ onto $B$.  Again by
Lemma~\ref{lemma;pull-auto}\eqref{item;preserve} this deformation
retraction preserves $A$ and therefore gives us a deformation
retraction of\/ $A|_U$ onto $B$.
\end{proof}

\begin{remark}\label{remark;foliation}
Let $\F$ be a foliation of\/ $M$ and let $A=T\F$ be the tangent bundle
of\/ $\F$.  Let $B=i_N^!A$ be the pullback of\/ $A$ to an embedded
submanifold $N$ of\/ $M$ that cleanly intersects $A$.  We take $\bar{A}$
to be the tangent bundle of\/ $M$.  A section of\/ $\bar{A}$, i.e.\ vector
field on $M$, normalizes $\Gamma(A)$ if and only if its flow maps
leaves to leaves.  For instance, if\/ $N=\{x\}$ is a single point, then
in a foliation chart $U$ centred at $x$ in which $\F$ is spanned by
the vector fields $\tangent x_1$, $\tangent x_2,\dots$,~$\tangent x_p$
the Euler vector field $\eps=\sum_ix_i\tangent x_i$ normalizes
$\Gamma(A)$.  Hence the conclusions of
Corollary~\ref{corollary;retraction} apply to the pair $(A,B=0_x)$.
An arbitrary clean submanifold $N$ can be covered with foliation
charts $U$ in which $\F$ is spanned by the vector fields $\tangent
x_1$, $\tangent x_2,\dots$,~$\tangent x_p$ and in which $N$ is given
by the equations $x_{i_1}=x_{i_2}=\cdots=x_{i_k}=0$.  In such a chart
the vector field $\eps_U=\sum_{l=1}^kx_{i_l}\tangent x_{i_l}$ is
Euler-like along $N$ and normalizes $\Gamma(A)$.  It would be
interesting to have conditions under which the $\eps_U$ can be glued
to a global vector field $\eps$ that is Euler-like along $N$ and
normalizes $\Gamma(A)$.
%% Gluing the vector
%% fields $\eps_U$ by means of a basic partition of unity $\chi_U$ (which
%% exists because $\F$ is Riemannian) we obtain a global vector field
%% $\eps=\sum_U\chi_U\eps_U$ which is Euler-like along $N$.  Since the
%% $\chi_U$ are basic, we have $[\eps,\tau]=\sum_U\chi_U[\eps_U,\tau]$
%% for all sections $\tau$ of $A$, so $\eps$ normalizes
%% $\Gamma(A)$.\note{Nonsensical argument.  Use Molino manifold instead?}
\end{remark}

%% An indirect application of Corollary~\ref{corollary;retraction} is the
%% next result.  We refer to~\cite{scott;geometry-bk} for the definition
%% of a $b^m$-tangent bundle.

%% \begin{lemma}\label{lemma;bm}
%% Let $M$ be a manifold and let $Z$ be a co-orientable closed embedded
%% hypersurface.  Let $m\ge1$ and let $j$ be the $m-1$-jet of a defining
%% function of $Z$.  Let $A$ be the $b^m$-tangent bundle of the triple
%% $(M,Z,j)$.  Let $N$ be a submanifold of $M$ that cleanly intersects
%% $A$ and let $B=i_N^!A$.  Then
%% \end{lemma}

%% \begin{proof}

%% \end{proof}

%%%%%%%%%%%%%%%%%%%%%%%%%%%%%%%%%%%%%%%%%%%%%%%%%%%%%%%%%%%%%%%%%%%%%%%%
\subsection{A Darboux-Moser-Weinstein theorem for Lie algebroids}
\label{subsection;moser}
%%%%%%%%%%%%%%%%%%%%%%%%%%%%%%%%%%%%%%%%%%%%%%%%%%%%%%%%%%%%%%%%%%%%%%%%

Here is a version of Moser's trick for Lie algebroids.  See
Definition~\ref{definition;isotopy} for Lie algebroid isotopies.

\begin{proposition}\label{proposition;moser}
Let $A\to M$ be a Lie algebroid.  Let\/ $\omega_t\in\Omega_A^2(M)$ be
a smooth path of $A$-symplectic forms defined for\/ $0\le t\le1$.  Let
$\alpha_t\in\Omega_A^1(M)$ be a smooth path of\/ $1$-forms such that
$\dot\omega_t=-d_A\alpha_t$.
\begin{enumerate}
\item\label{item;compact}
If $M$ is compact, there exists an isotopy $\phi$ of $A$ such that
$\phi_t^*\omega_t=\omega_0$.
\item\label{item;moser-subalgebroid}
If $B\to N$ is a Lie subalgebroid of $A$ and\/ $(\alpha_t)_x=0$ for
all $x\in N$ and for all $t$, there exist a neighbourhood $U$ of $N$
and an isotopy $\phi$ of $A|_U$ such that $\phi_t^*\omega_t=\omega_0$
and $\phi_t|_B=\id_B$.
\end{enumerate}
\end{proposition}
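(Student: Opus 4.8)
The plan is to run Moser's trick in the Lie algebroid setting. Define the time-dependent section $\sigma_t=\omega_t^\sharp(\alpha_t)\in\Gamma(A)$, which is smooth in $t$ because $\omega_t$ is a smooth path of nondegenerate forms. By construction $\iota_A(\sigma_t)\omega_t=\alpha_t$. Using the Cartan formula $\ca{L}_A(\sigma_t)=[\iota_A(\sigma_t),d_A]$ (Appendix~\ref{subsection;cartan}), the closedness $d_A\omega_t=0$, and the hypothesis $\dot\omega_t=-d_A\alpha_t$, I obtain
\[
\ca{L}_A(\sigma_t)\omega_t=\iota_A(\sigma_t)d_A\omega_t+d_A\iota_A(\sigma_t)\omega_t=d_A\alpha_t=-\dot\omega_t,
\]
so that $\dot\omega_t+\ca{L}_A(\sigma_t)\omega_t=0$. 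This is the infinitesimal identity that drives the whole argument.

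Next I would integrate $\sigma_t$ to an isotopy. Let $\Phi_t$ be the flow of the linear vector field $\bad_A(\sigma_t)$ on the total space of\/ $A$ with $\Phi_0=\id_A$. In case~\eqref{item;compact} the base vector field $\an_A(\sigma_t)$ is complete because $M$ is compact, hence so is its linear lift $\bad_A(\sigma_t)$, and $\Phi_t$ is defined on all of\/ $A$ for $0\le t\le1$. In case~\eqref{item;moser-subalgebroid} the hypothesis $(\alpha_t)_x=0$ for $x\in N$ gives $(\sigma_t)_x=\omega_{t,x}^\sharp(0)=0$, so $\an_A(\sigma_t)$ vanishes along $N$; by a standard ODE argument there is a neighbourhood $U$ of\/ $N$ on which $\Phi_t$ is defined for all $t\in[0,1]$, and after shrinking $U$ we may assume it restricts to an isotopy of\/ $A|_U$. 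In either case Proposition~\ref{proposition;isotopy} promotes $\Phi_t$ to a Lie algebroid isotopy $\phi$ of\/ $A$ (resp.\ $A|_U$) with $\phi_t=\Phi_t$.

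With the isotopy in hand, the Moser computation finishes the proof. Since $\phi_t$ is generated by $\sigma_t$, the Cartan calculus yields the differentiation formula
\[
\frac{d}{dt}\phi_t^*\omega_t=\phi_t^*\bigl(\dot\omega_t+\ca{L}_A(\sigma_t)\omega_t\bigr)=0,
\]
whence $\phi_t^*\omega_t=\phi_0^*\omega_0=\omega_0$ for all $t$, proving~\eqref{item;compact}. For~\eqref{item;moser-subalgebroid} it remains to check $\phi_t|_B=\id_B$. Since $\sigma_t|_N=0$, the section $\sigma_t$ lies in the ideal $\Gamma(A;0_N)$ of\/ $\Gamma(A;B)$, so $[\sigma_t,\upsilon]|_N=0$ for every $\upsilon\in\Gamma(A;B)$; exactly as in the proof of Lemma~\ref{lemma;euler-like}\eqref{item;euler-like-clean} this forces the linear vector field $\bad_A(\sigma_t)$ to vanish along the submanifold $B$ of\/ $A$. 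Hence $\Phi_t$, and therefore $\phi_t$, fixes $B$ pointwise.

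The conceptual heart of the argument is the differentiation formula $\frac{d}{dt}\phi_t^*\omega_t=\phi_t^*(\dot\omega_t+\ca{L}_A(\sigma_t)\omega_t)$, which identifies $\ca{L}_A(\sigma_t)$ as the infinitesimal generator of the pullback cocycle $\phi_t^*$; this is the Lie algebroid counterpart of the classical Lie derivative formula and follows from the Cartan calculus together with Proposition~\ref{proposition;isotopy}. I expect the genuine technical obstacle to lie not there but in the relative case~\eqref{item;moser-subalgebroid}: producing a neighbourhood $U$ of\/ $N$ on which the time-dependent flow is defined for all $t\in[0,1]$ and restricts to a bona fide isotopy of\/ $A|_U$, and verifying carefully that the flow fixes the subalgebroid $B$ and not merely its base $N$. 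Both points are handled by the completeness of linear vector fields over a complete base (fact~(1) in the proof of Lemma~\ref{lemma;euler-like}) and by the ideal property of\/ $\Gamma(A;0_N)$ inside $\Gamma(A;B)$.
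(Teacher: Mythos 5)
Your proof is correct and follows essentially the same route as the paper's: the Moser section $\sigma_t=\omega_t^\sharp\alpha_t$, its integration to a Lie algebroid isotopy via $\bad_A(\sigma_t)$ and Proposition~\ref{proposition;isotopy}, and the pullback differentiation formula~\eqref{equation;pull} to conclude $\phi_t^*\omega_t=\omega_0$. The only difference is that you spell out two points the paper leaves implicit, namely the completeness argument in the compact case and the ideal-property argument ($\Gamma(A;0_N)\subseteq\Gamma(A;B)$) showing that $\bad_A(\sigma_t)$ vanishes along $B$, so the flow fixes $B$ and not merely $N$.
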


\begin{proof}
We omit the proof of~\eqref{item;compact}, which is similar to that
of~\eqref{item;moser-subalgebroid}.  The time-dependent section
$\sigma_t=\omega_t^\sharp\alpha_t$ of $A$ satisfies
\begin{equation}\label{equation;flow}
d_A\iota_A(\sigma_t)\omega_t=d_A\alpha_t=-\dot{\omega}_t.
\end{equation}
By hypothesis the form $\alpha_t$ vanishes at every point of~$N$, and
therefore so does the section $\sigma_t$.  Hence the vector field
$\an_A(\sigma_t)$ on $M$ integrates to a flow $\mr\Phi_t$ defined for
$0\le t\le1$ on a suitable neighbourhood of $N$ which leaves $N$
fixed.  The vector field $\bad_A(\sigma_t)$ on $A$ is a linear lift of
$\an_A(\sigma_t)$ and integrates to a flow $\Phi_t$ with initial
condition $\phi_0=\id$ defined for $0\le t\le1$ in a neighbourhood of
$B$.  Since $\bad_A(\sigma_t)$ vanishes along $B$, the flow leaves $B$
fixed.  According to Proposition~\ref{proposition;isotopy} the flow
$\Phi_t$ determines a Lie algebroid isotopy $\phi$.
Using~\eqref{equation;pull}, \eqref{equation;cartan},
and~\eqref{equation;flow} we obtain $\phi_t^*\omega_t=\omega_0$ for
$0\le t\le1$.
\end{proof}

Next we give a Lie algebroid version of the Darboux-Moser-Weinstein
theorem~\cite[\S\,5]{weinstein;lectures-symplectic}, which says that
two symplectic forms that agree at all points of a submanifold are
isomorphic in a neighbourhood of the submanifold.  See
Definition~\ref{definition;retraction} for the notion of a Lie
algebroid retraction.

\begin{theorem}\label{theorem;dmw}
Let $A\to M$ be a Lie algebroid and let $B\to N$ be a Lie subalgebroid
such that $A|_U$ admits a deformation retraction onto~$B$ for some
neighbourhood $U$ of $N$.  Let\/ $\omega_0$ and\/ $\omega_1$ be
$A$-symplectic forms on $M$ satisfying $\omega_{0,x}=\omega_{1,x}$ for
all $x\in N$\@.  There exist open neighbourhoods $U_0$ and $U_1$ of
$N$ in $M$ and a Lie algebroid isomorphism $\phi\colon A|_{U_0}\to
A_{U_1}$ such that $\phi^*\omega_1=\omega_0$, $\mr\phi|_N=\id_N$, and
$\phi|_{A_x}=\id_{A_x}$ for all $x\in N$.
\end{theorem}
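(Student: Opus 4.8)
The plan is to run Moser's trick, Proposition~\ref{proposition;moser}\eqref{item;moser-subalgebroid}, along the linear path joining the two forms, using the deformation retraction to produce a primitive of their difference that vanishes along $N$ to the order needed for the fibrewise conclusion. First I would set $\eta=\omega_1-\omega_0$ and $\omega_t=\omega_0+t\eta$ for $0\le t\le1$. The form $\eta$ is $d_A$-closed, and $\eta_x=0$ for every $x\in N$, so $\omega_{t,x}=\omega_{0,x}$ is nondegenerate along $N$. Nondegeneracy is an open condition and $[0,1]$ is compact, so after shrinking $U$ I may assume every $\omega_t$ is $A$-symplectic on $U$; thus $(\omega_t)$ is a smooth path of $A$-symplectic forms with $\dot\omega_t=\eta$.

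Next I would manufacture a primitive. Let $\varrho\colon T[0,1]\times A|_U\to A|_U$ be the given deformation retraction onto $B$ and $\kappa_\varrho=\kappa\circ\varrho^*$ the associated homotopy operator. Since $\varrho_1=\id$ and $d_A\eta=0$, the homotopy formula~\eqref{equation;homotopy} gives $d_A\kappa_\varrho\eta=\eta-\varrho_0^*\eta$. The base map of $\varrho_0$ sends $U$ into $N$, so $\varrho_0$ carries each fibre of $A|_U$ into $A|_N$, where $\eta$ vanishes; hence $\varrho_0^*\eta=0$ and $\alpha:=\kappa_\varrho\eta$ satisfies $d_A\alpha=\eta$. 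By Lemma~\ref{lemma;retract}, $\alpha_x=0$ for all $x\in N$, and, since $\eta_x=0$ at each $x\in N$, also $(\ca{L}_A(\sigma)\alpha)_x=0$ for every $\sigma\in\Gamma(A)$ and every $x\in N$.

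Taking $\alpha_t=-\alpha$ I have $\dot\omega_t=\eta=-d_A\alpha_t$ and $(\alpha_t)_x=0$ along $N$, so Proposition~\ref{proposition;moser}\eqref{item;moser-subalgebroid} yields, on a neighbourhood of $N$, an isotopy $\phi_t$ with $\phi_t^*\omega_t=\omega_0$, namely the flow of $\bad_A(\sigma_t)$ with $\sigma_t=\omega_t^\sharp\alpha_t=-\omega_t^\sharp\alpha$. Setting $\phi=\phi_1$ and restricting to suitable neighbourhoods $U_0$, $U_1=\mr\phi(U_0)$ of $N$ gives a Lie algebroid isomorphism $\phi\colon A|_{U_0}\to A|_{U_1}$ with $\phi^*\omega_1=\omega_0$; as $\sigma_t$ vanishes along $N$ the base flow fixes $N$, whence $\mr\phi|_N=\id_N$.

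The main obstacle is the last and strongest conclusion, $\phi|_{A_x}=\id_{A_x}$ for $x\in N$: Proposition~\ref{proposition;moser} only guarantees $\phi_t|_B=\id_B$, and I must promote this from $B_x$ to the whole fibre $A_x$. The flow $\Phi_t$ of $\bad_A(\sigma_t)$ fixes $A_{x_0}$ pointwise ($x_0\in N$) exactly when its fibrewise linearization there vanishes; since $\sigma_t(x_0)=0$ this linearization is the endomorphism $a\mapsto[\sigma_t,\tau]_{x_0}$ of $A_{x_0}$ (with $\tau_{x_0}=a$), which in a local frame $(e_i)$, writing $\sigma_t=\sum_i s^i_te_i$, is controlled by the anchor-derivatives $\bigl(\an_A(e_j)\cdot s^i_t\bigr)(x_0)$. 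Writing $\alpha=\sum_i\alpha_ie^i$, the two vanishing statements from Lemma~\ref{lemma;retract} say precisely that $\alpha_i(x_0)=0$ and that $(d\alpha_i)_{x_0}$ annihilates $\an_A(A_{x_0})$; because $\sigma_t=-\omega_t^\sharp\alpha$ and $\alpha_i(x_0)=0$, the component derivatives $(ds^i_t)_{x_0}$ inherit the same annihilation, so the linearization vanishes and $\Phi_t$ fixes each $A_{x_0}$ pointwise. This fibrewise rigidity, powered by the second clause of Lemma~\ref{lemma;retract} (the effective second-order vanishing of $\alpha$ along $N$), is the step I expect to require the most care.
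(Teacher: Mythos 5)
Your proposal is correct and follows essentially the same route as the paper's proof: the linear path $\omega_t$, a primitive $\alpha=\kappa_\varrho(\omega_1-\omega_0)$ obtained from the deformation retraction via the homotopy formula, Moser's trick (Proposition~\ref{proposition;moser}\eqref{item;moser-subalgebroid}), and then the two vanishing statements of Lemma~\ref{lemma;retract} to show $[\sigma_t,\tau]$ vanishes at every $x\in N$, which upon integrating the flow gives $\phi|_{A_x}=\id_{A_x}$. The only differences are cosmetic: you carry out that last step in a local frame, where the paper argues invariantly via $\iota_A([\tau,\sigma_t])\omega_t=\ca{L}_A(\tau)\alpha-\iota_A(\sigma_t)\ca{L}_A(\tau)\omega_t$ and nondegeneracy of $\omega_t$, and your sign bookkeeping ($d_A\alpha=\omega_1-\omega_0$, so $\alpha_t=-\alpha$) is in fact more careful than the paper's.
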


\begin{proof}
Let $\omega_t=(1-t)\omega_0+t\omega_1$.  Choose an open neighbourhood
$U$ of $N$ such that the form $\omega_t$ is symplectic on $A|_U$ for
all $t$ and such that $A|_U$ admits a deformation retraction
$\varrho=\varrho_U\colon A|_U\to A|_U$ onto~$B$.  Put
$\alpha=\kappa_\varrho(\omega_1-\omega_0)\in\Omega_A^1(M)$, where
$\kappa_\varrho$ is as in~\eqref{equation;de-rham-homotopy}.  The
homotopy formula~\eqref{equation;homotopy} yields
\[
d_A\alpha=d_A\kappa_\varrho(\omega_1-\omega_0)=
\varrho_1^*(\omega_1-\omega_0)-\varrho_0^*(\omega_1-\omega_0)=
\omega_0-\omega_1=-\dot{\omega}_t.
\]
By Lemma~\ref{lemma;retract} the form $\alpha$ vanishes at every point
of~$N$, and therefore
Proposition~\ref{proposition;moser}\eqref{item;moser-subalgebroid}
applies, giving us an isotopy $\phi_t$ of a possibly smaller
neighbourhood $U$ fixing $B$ and satisfying
$\phi_t^*\omega_t=\omega_0$.  Taking $\phi=\phi_1$ gives
$\phi^*\omega_1=\omega_0$.  Let $x\in N$.  To prove that $\phi\colon
A_x\to A_x$ is the identity map we show that
\begin{equation}\label{equation;fix}
\phi_t(\tau(x))=\tau(x)
\end{equation}
for all $t\in[0,1]$ and all sections $\tau$ of $A$.  The formula
$\iota_A([\tau,\sigma_t])=[\ca{L}_A(\tau),\iota_A(\sigma_t)]$ yields
\[
\iota_A([\tau,\sigma_t])\omega_t=
\ca{L}_A(\tau)\iota_A(\sigma_t)\omega_t-
\iota_A(\sigma_t)\ca{L}_A(\tau)\omega_t=
\ca{L}_A(\tau)\alpha-\iota_A(\sigma_t)\ca{L}_A(\tau)\omega_t,
\]
so
\[
\bigl(\iota_A([\tau,\sigma_t])\omega_t\bigr)_x=
(\ca{L}_A(\tau)\alpha)_x-(\iota_A(\sigma_t)\ca{L}_A(\tau)\omega_t)_x
=0,
\]
where we used Lemma~\ref{lemma;retract} and the fact that $\sigma_t$
vanishes at~$x$.  Since $\omega_t$ is symplectic, it follows that the
section $[\tau,\sigma_t]$ of $A$ vanishes at~$x$.  In view
of~\eqref{equation;lie-section} this implies
\[
\frac{d}{dt}\phi_t(\tau(x))=0
\]
for all $t$.  Since $\phi_0=\id_A$, this proves~\eqref{equation;fix}.
\end{proof}

\begin{remarks}
\begin{numerate}
%% %
%% \item\label{item;group}
%% %
%% In the situation of Theorem~\ref{theorem;dmw} suppose a compact Lie
%% group $G$ acts on $A$ by automorphisms and that the forms $\omega_0$
%% and $\omega_1$ and the submanifold $N$ are $G$-invariant.  Then the
%% neighbourhoods $U_0$ and $U_1$ can be taken to be $G$-invariant and
%% the isomorphism $\phi$ to be $G$-equivariant.  The reason is that the
%% Euler-like section can be averaged over $G$ to yield a $G$-invariant
%% Euler-like section.  With this choice of section the neighbourhood $U$
%% of $N$ is $G$-invariant and the retraction $\varrho_U$ is
%% $G$-equivariant.  Consequently the form $\alpha$ and the section
%% $\sigma_t$ are $G$-invariant, so the flow $\phi_t$ is $G$-equivariant.
%
\item\label{item;retract}
If $A$ admits an Euler-like section along $B$, then a deformation
retraction from $A|_U$ onto $B$ exists for some neighbourhood $U$ of
$N$ by Theorem~\ref{theorem;retraction} and
Remark~\ref{remark;transverse}\eqref{item;basis}.  Most importantly,
this is the case if $N$ is transverse to $A$ and $B=i_N^!A$
(Remark~\ref{remark;transverse}\eqref{item;split}).  See
Corollary~\ref{corollary;retraction} for another condition under which
a local deformation retraction is guaranteed to exist.
\item\label{item;klaasse-lanius}
A local deformation retraction is too much to ask for in many
situations; all one needs to prove Theorem~\ref{theorem;dmw} is a
suitable primitive $\alpha\in\Omega_A^1(U)$ of $\omega_1-\omega_0$.
See Scott~\cite[\S\,5]{scott;geometry-bk}, Klaasse and
Lanius~\cite[\S\,4.3]{klaasse;geometric-structures-lie-algebroids},
\cite{klaasse-lanius;splitting-poisson}, and Miranda and
Scott~\cite[\S\,2]{miranda-scott;e-manifolds} for instances of such
situations.
\item\label{item;poisson}
The map $\phi$ of Theorem~\ref{theorem;dmw} is a Poisson isomorphism
relative to the Poisson structures on $M$ induced by $\omega_0$ and
$\omega_1$.
\end{numerate}
\end{remarks}

\subsection{The coisotropic embedding theorem}\label{subsection;gotay}
%%%%%%%%%%%%%%%%%%%%%%%%%%%%%%%%%%%%%%%%%%%%%%%%%%%%%%%%%%%%%%%%%%%%%%%%

Theorem~\ref{theorem;dmw} yields Lie algebroid analogues of the
familiar local normal forms of symplectic geometry.  In this section
we offer the Lie algebroid version of Gotay's coisotropic embedding
theorem~\cite{gotay;coisotropic-presymplectic}, which is a complement
to the symplectization theorem, Theorem~\ref{theorem;symplectization}.
It states that a symplectic Lie algebroid $A\to M$ is in a
neighbourhood of a transverse coisotropic submanifold $i_N\colon N\to
M$ completely determined by the pullback Lie algebroid $B=i_N^!A$ and
by the $B$-presymplectic form on $N$.  Our proof uses the proof of the
Lie algebroid splitting theorem given by Bursztyn et
al.~\cite{bursztyn-lima-meinrenken;splitting}.  A version of
Theorem~\ref{theorem;coisotropic} for $b$-symplectic manifolds was
established by Geudens and
Zambon~\cite{geudens-zambon;coisotropic-b-symplectic}.

\begin{theorem}[coisotropic embeddings]\label{theorem;coisotropic}
Let\/ $(B\to N,\omega_B)$ be a presymplectic Lie algebroid.  Let\/
$(A_0\to M_0,\omega_0)$ and $(A_1\to M_1,\omega_1)$ be symplectic Lie
algebroids and let $i_0\colon N\to M_0$ and $i_1\colon N\to M_1$ be
transverse coisotropic embeddings such that $i_0^!A_0$ and $i_1^!A_1$
are isomorphic to $B$ and
$\omega_B=(i_0)_!^*\omega_0=(i_1)_!^*\omega_1$.  There exist open
neighbourhoods $U_0$ of $N$ in $M_0$ and\/ $U_1$ of $N$ in\/ $M_1$ and
an isomorphism of Lie algebroids\/ $\phi\colon A_0|_{U_0}\to
A_1|_{U_1}$ such that\/ $\phi\circ(i_0)_!=(i_1)_!$ and\/
$\phi^*\omega_1=\omega_0$.
\end{theorem}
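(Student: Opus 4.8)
The plan is to reduce both symplectic Lie algebroids to a single universal model and then compose. The natural model is the symplectization $(\bb{A},\omega^s,\group{j}_!)$ of $(B,\omega_B)$ furnished by Theorem~\ref{theorem;symplectization}, which lives over $\bb{M}=K^*$ with $K=\ker(\omega_B)$ and realizes $N$ as the transverse coisotropic zero section $\group{j}\colon N\to\bb{M}$ with $\group{j}^!\bb{A}\cong B$ and $\group{j}_!^*\omega^s=\omega_B$. Thus the theorem follows once we establish the following claim: any transverse coisotropic embedding $i\colon N\to M$ into a symplectic Lie algebroid $(A,\omega)$ with $i^!A\cong B$ and $i_!^*\omega=\omega_B$ is isomorphic, in a neighbourhood of $N$, to $(\bb{A},\omega^s)$ by a Lie algebroid isomorphism that intertwines $i_!$ with $\group{j}_!$ and pulls $\omega^s$ back to $\omega$. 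Applying this to $(A_0,\omega_0)$ and $(A_1,\omega_1)$ and composing $A_0|_{U_0}\to\bb{A}\to A_1|_{U_1}$ then produces a Lie algebroid isomorphism $\phi$ with $\phi\circ(i_0)_!=(i_1)_!$ and $\phi^*\omega_1=\omega_0$.

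First I would match the underlying Lie algebroids. Since $i$ is a transverse coisotropic embedding, $B=i^!A$ is the pullback to a transversal, so Remark~\ref{remark;transverse}\eqref{item;split} supplies an Euler-like section of $A$ along $B$, and the linearization theorem, Theorem~\ref{theorem;linearization}, gives a Lie algebroid isomorphism $A|_U\cong\ca{N}(A,B)=\pi^!B$ over the normal bundle $\ca{N}(M,N)$, with $\pi\colon\ca{N}(M,N)\to N$ the projection. Next I identify $\ca{N}(M,N)$ with $\bb{M}=K^*$. Coisotropy gives $K=B^\omega=\ker(\omega_B)$ inside $A|_N$; the isomorphism $\omega^\flat\colon A\iso A^*$ restricts to $K=B^\omega\iso B^\circ\cong(A/B)^*$, while transversality identifies $\ca{N}(M,N)\cong A|_N/B$ via the anchor. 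Dualizing yields a bundle isomorphism $\ca{N}(M,N)\iso K^*$, under which $\ca{N}(A,B)=\pi^!B$ becomes $p^!B=\bb{A}$ as a Lie algebroid. Composing, I obtain a Lie algebroid isomorphism $A|_U\cong\bb{A}$ carrying the inclusion of $B$ to $\group{j}_!$, and I transport $\omega$ through it to a symplectic form $\tilde\omega$ on $\bb{A}$ with $\group{j}_!^*\tilde\omega=\omega_B$.

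It then remains to match $\tilde\omega$ with $\omega^s$ near $N$, for which I would invoke the Lie algebroid Darboux-Moser-Weinstein theorem, Theorem~\ref{theorem;dmw}. The model $\bb{A}=p^!B$ admits a deformation retraction onto $B$: the zero section $\group{j}$ is transverse to $\bb{A}$, so Theorem~\ref{theorem;retraction} together with Remark~\ref{remark;transverse}\eqref{item;basis} provides such retractions on a basis of neighbourhoods of $N$. Hence, once we know that $\tilde\omega$ and $\omega^s$ agree as bilinear forms at every point of $N$, Theorem~\ref{theorem;dmw} produces a Lie algebroid isomorphism fixing $N$ and restricting to the identity on each fibre of $B$, with $\phi^*\omega^s=\tilde\omega$; this proves the claim.

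The main obstacle is precisely securing the pointwise agreement $\tilde\omega_x=\omega^s_x$ for all $x\in N$ demanded by Theorem~\ref{theorem;dmw}. At each such $x$ both forms restrict to $\omega_{B,x}$ on $B_x$ and induce the canonical pairing between $A_x/B_x$ and $K_x=B_x^\omega$ under the identification $A_x/B_x\cong K_x^*$; by the standard symplectic linear algebra of coisotropic subspaces, $(\bb{A}_x,\tilde\omega_x)$ and $(\bb{A}_x,\omega^s_x)$ are therefore symplectically isomorphic by a map fixing $B_x$, the only remaining freedom being the choice of an isotropic complement to $B_x$. The delicate point is to realize this normalization \emph{smoothly} and, crucially, by a Lie algebroid automorphism of $\bb{A}$ fixing $N$, rather than a mere vector bundle automorphism. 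Here I would exploit Remark~\ref{remark;path}: changing the splitting $s\colon K^*\to B^*$ alters $\omega^s$ only up to Lie algebroid automorphisms of $\bb{A}$ fixing $N$, so adjusting $s$ (equivalently, the isotropic complement encoded by the linearization) brings $\omega^s|_N$ into agreement with $\tilde\omega|_N$ within the category of Lie algebroid isomorphisms. With agreement along $N$ in hand, Theorem~\ref{theorem;dmw} applies and the composition described above yields the desired $\phi$.
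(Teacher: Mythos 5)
Your overall architecture coincides with the paper's: reduce each symplectic Lie algebroid to the symplectization model $(\bb{A},\omega^s)$ by a linearization along $B$, absorb the dependence on the splitting $s$ using Remark~\ref{remark;path} together with Moser's trick, and conclude with Theorem~\ref{theorem;dmw}. However, the step you yourself flag as delicate --- securing the pointwise agreement $\tilde\omega_x=\omega^s_x$ for all $x\in N$ --- is not repaired by the device you propose, and this is a genuine gap. Every form $\omega^s$, for \emph{every} splitting $s\colon K^*\to B^*$, has the same shape along $N$: by~\eqref{equation;model-form} the normal summand $K_x^*\subseteq\bb{A}_x$ is \emph{isotropic}, pairs canonically with $K_x$, and annihilates the complement of $K_x$ in $B_x$ determined by $s$. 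By contrast, your $\tilde\omega=\psi^*\omega$ restricted to $K_x^*$ equals $\omega_x$ restricted to $C_x=\psi(K_x^*)$, the image of the normal directions under the linearization $\psi$; since you took $\psi$ from the Euler-like section supplied by Remark~\ref{remark;transverse}\eqref{item;split}, you have no control over its normal derivative, so $C_x$ is merely \emph{some} complement of $B_x$ in $A_x$ and need not be isotropic. When it is not, $\tilde\omega_x$ makes $K_x^*$ non-isotropic, hence $\tilde\omega_x\neq\omega^s_x$ for every choice of $s$: varying $s$ only moves $\omega^s_x$ within the family of forms for which $K_x^*$ is isotropic. Your parenthetical ``equivalently, the isotropic complement encoded by the linearization'' conflates two independent choices: $s$ picks a complement of $K$ \emph{inside} $B$, whereas the linearization's normal derivative picks a complement of $B$ inside $i^*\!A$; adjusting the former cannot repair the latter. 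Nor can you fall back on Moser applied to the path $(1-t)\tilde\omega+t\omega^s$ without pointwise agreement: the convex combination of two symplectic forms that differ at points of $N$ can degenerate at such points, so no neighbourhood $U$ on which the whole path is symplectic need exist.

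The repair --- and this is precisely what the paper does --- is to make the three choices in a coordinated way rather than sequentially. First choose an \emph{isotropic} subbundle $L$ of $i^*\!A$ complementary to $B$; then choose the section $\eps$, vanishing along $N$, whose normal derivative $\ca{N}(\eps)$ equals the splitting of the sequence~\eqref{equation;coisotropic} determined by $L$ (such an $\eps$ exists and is Euler-like along $B$ by the proof of Lemma~3.9 of Bursztyn et al.); and finally define the splitting $s$ by the complement $E=(K\oplus L)^\omega$ of $K$ in $B$. With these coordinated choices the tubular neighbourhood embedding $\psi$ determined by $\bad_A(\eps)$ restricts over $N$ to the symplectic bundle isomorphism~\eqref{equation;sum}, which yields $\psi^*\omega_x=\omega^s_x$ for all $x\in N$ on the nose; only then does Theorem~\ref{theorem;dmw} apply and finish the argument as you intended.
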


\begin{proof}
We will prove the theorem in two special cases, which taken together
establish the general case.  Let $K$ be the Lie subalgebroid
$\ker(\omega_B)$ of $B$.  Recall the model Lie algebroid
$\bb{A}\to\bb{M}$ of Section~\ref{subsection;symplectization}, whose
base manifold $\bb{M}$ is the dual bundle $K^*$ and whose total space
$\bb{A}$ is the pullback of $B$ to $\bb{M}$.  We have a transverse
coisotropic embedding $\bb{j}\colon N\to\bb{M}$ and a family of
symplectic structures $\omega^s\in\Omega_{\bb{A}}^2(\bb{M})$
parametrized by splittings $s\colon K^*\to B^*$ of the surjective
vector bundle map $B^*\to K^*$.

\emph{Case}~1.  Let $A_0$ and $A_1$ be two copies of the model Lie
algebroid $\bb{A}$ and let $i_0=i_1$ be the canonical embedding
$\bb{j}\colon N\to\bb{M}$.  We equip $\bb{A}$ with two symplectic
forms $\omega_0=\omega^{s_0}$ and $\omega_1=\omega^{s_1}$
corresponding to two different splittings $s_0$, $s_1\colon K^*\to
B^*$.  The theorem in this case follows from Remark~\ref{remark;path}
and
Proposition~\ref{proposition;moser}\eqref{item;moser-subalgebroid}.

\emph{Case}~2.  Let $(A\to M,\omega)$ be an arbitrary symplectic Lie
algebroid equipped with a transverse coisotropic embedding $i\colon
N\to M$ such that $B\cong i^!A$ and $i_!^*\omega=\omega_B$.  We assert
that the theorem is true for the pair
\[
(A_0,\omega_0,i_0)=\bigl(\bb{A},\omega^s,\bb{j}\bigr)
\qquad\text{and}\qquad(A_1,\omega_1,i_1)=(A,\omega,i)
\]
for an appropriate choice of splitting $s\colon K^*\to B^*$ depending
on $A$.  We choose $s$ as follows.  Since $B$ is a coisotropic
subbundle of the symplectic vector bundle $A|_N=i^*\!A$, the map
\[
\begin{tikzcd}
i^*\!A\ar[r,"\omega^\sharp"]&(i^*\!A)^*\ar[r]&(B^\omega)^*=K^*
\end{tikzcd}
\]
has kernel $B$, which gives a short exact sequence
\begin{equation}\label{equation;coisotropic}
\begin{tikzcd}
B=i^!A\ar[r,hook]&i^*\!A\ar[r,two heads]&K^*
\end{tikzcd}
\end{equation}
of vector bundles on $N$.  Now choose an isotropic subbundle $L$ of
$i^*\!A$ which is complementary to $B$.  The map $i^*\!A\to K^*$
restricts to a vector bundle isomorphism
\begin{equation}\label{equation;complement}
\begin{tikzcd}
L\ar[r,"\cong"]&K^*.
\end{tikzcd}
\end{equation}
The subbundle $K\oplus L$ of $i^*\!A$ is symplectic and is isomorphic,
via the map~\eqref{equation;complement}, to $K\oplus K^*$ equipped
with its standard symplectic form.  We define $E=(K\oplus L)^\omega$
to be its symplectic orthogonal, so that we have an orthogonal
decomposition
\begin{equation}\label{equation;sum}
i^*\!A=E\oplus(K\oplus L)\cong E\oplus(K\oplus K^*).
\end{equation}
The subbundle $E$ of $B$ is complementary to $K$ and therefore defines
a splitting $s\colon K^*\to B^*$ of the surjection $B^*\to K^*$.  We
equip $\bb{A}$ with the form $\omega^s$ defined by this splitting $s$.
Next we explain how to map $\bb{A}$ to $A$.  The embedding $i$ is
transverse to $A$, so by
Lemma~\ref{lemma;constant-clean}\eqref{item;submanifold} the anchor
induces a bundle isomorphism $i^*\!A/B\cong\ca{N}(M,N)$.  Comparing
with~\eqref{equation;coisotropic} we obtain a canonical identification
\begin{equation}\label{equation;transverse-coisotropic}
\bb{M}=K^*\cong\ca{N}(M,N)
\end{equation}
between the model manifold and the normal bundle of $N$.  Via this
identification the model Lie algebroid $\bb{A}$ is just the pullback
$\pi_{M,N}^!B$, where $\pi_{M,N}\colon\ca{N}(M,N)\to N$ is the
projection.  In fact,
by~\cite[Lemma~3.8]{bursztyn-lima-meinrenken;splitting} we have a
natural isomorphism $\bb{A}\cong\ca{N}(A,B)$ between $\bb{A}$ and the
normal bundle of $B$ in $A$, which is a Lie algebroid over
$\ca{N}(M,N)$.  Let $\eps\in\Gamma(A)$ be a section which vanishes
along $N$ and whose normal derivative
\[\ca{N}(\eps)\colon\ca{N}(M,N)\longto\ca{N}(i^*\!A,0_M)\cong i^*\!A\]
is equal to the splitting of~\eqref{equation;coisotropic} given by the
complement~$L$.  Such a section $\eps$ exists and is Euler-like along
$B$; see the proof
of~\cite[Lemma~3.9]{bursztyn-lima-meinrenken;splitting}.  Let
\[\psi\colon\bb{A}\longto A\]
be the tubular neighbourhood embedding determined by the Euler-like
vector field $\bad_A(\eps)$ on~$A$.  The triangle
\begin{equation}\label{equation;embedding}
\begin{tikzcd}
&B\ar[dl,"\bb{j}_!"']\ar[dr,"i_!"]&\\
\bb{A}\ar[rr,"\psi"]&&A
\end{tikzcd}
\end{equation}
commutes and, according to~\cite[Theorem~3.13, Remark~3.19,
  Theorem~4.1]{bursztyn-lima-meinrenken;splitting}, $\psi$ is a
morphism of Lie algebroids.  The base map $\mr\psi\colon\bb{M}\to M$
is the tubular neighbourhood embedding defined by the Euler-like
vector field $\an_A(\eps)$ on~$M$, so $\psi$ is a Lie algebroid
isomorphism $\bb{A}\cong A|_U$, where $U=\mr\psi(\bb{M})$.  The
restriction of the bundle $\bb{A}\to\bb{M}$ to the submanifold $N$ is
$\bb{j}^*\bb{A}=B\oplus K^*$.  The map
\[\psi|_N\colon\bb{j}^*\bb{A}=B\oplus K^*\longto i^*\!A=B\oplus L\]
is the identity on $B$ (because of~\eqref{equation;embedding}) and on
$K^*$ is equal to the normal derivative $\ca{N}(\eps)$, i.e.\ the
inverse of the isomorphism~\eqref{equation;complement}.  So $\psi|_N$
is identical to the symplectic bundle
isomorphism~\eqref{equation;sum}, which shows that
$\psi^*\omega_x=\omega_x^s$ for all $x\in N$.  The Lie algebroid
$\bb{A}$ deformation retracts onto $B$ in view of
Remark~\ref{remark;transverse}\eqref{item;split}.  The theorem now
follows from Theorem~\ref{theorem;dmw} applied to the Lie algebroid
$\bb{A}$ and the symplectic forms $\omega^s$ and $\psi^*\omega$.
\end{proof}

\begin{remarks}\phantomsection\label{remark;coisotropic}
\begin{numerate}
\item\label{item;first-order}
Theorems~\ref{theorem;symplectization} and~\ref{theorem;coisotropic}
together say that locally near $B$ the symplectic Lie algebroid
$(A,\omega)$ is isomorphic to its first-order approximation
$(\bb{A},\omega^s)$ along $B$.
\item\label{item;clean}
There is no hope of obtaining a similar result for arbitrary clean
coisotropic submanifolds.  For instance, let $(A\to M,\omega)$ be an
arbitrary symplectic Lie algebroid.  Its associated \emph{adiabatic
  Lie algebroid} $\tilde{A}\to\tilde{M}$ has base manifold
$\tilde{M}=\R\times M$ and total space $\tilde{A}=\pr_2^*A=\R\times
A$.  The anchor $\an_{\tilde{A}}\colon\tilde{A}\to T\tilde{M}$ is
defined by $\an_{\tilde{A}}(t,a)=t\an_A(a)$ for $t\in\R$ and $a\in A$.
The Lie bracket $[\sigma,\tau]_{\tilde{A}}\in\Gamma(\tilde{A})$ for
sections $\sigma$, $\tau\in\Gamma(A)$ is defined by
\[[\sigma,\tau]_{\tilde{A}}(t,x)=t[\sigma,\tau]_A(x),\]
where $(t,x)\in\tilde{M}$.  This bracket extends uniquely to a bracket
on $\Gamma(\tilde{A})$ satisfying the Leibniz rule and so makes
$\tilde{A}$ a Lie algebroid.  The form
$\tilde\omega=\pr_2^*\omega\in\Omega_{\tilde{A}}^2(\tilde{M})$ is
$\tilde{A}$-symplectic.  Let $N=\{(0,x)\}$, where $x$ is any point in
$M$.  Then $N$ is an orbit of $\tilde{A}$ and hence is clean
coisotropic.  The induced Lie algebroid is $B=i_N^!\tilde{A}=A_x$
equipped with the zero Lie bracket and the form $\omega_B=\omega_x$.
It retains no memory of the Lie bracket on $A$ and therefore cannot
determine the structure of $\tilde{A}$ in a neighbourhood of $N$.  (Of
course the adiabatic Lie algebroid is not linearizable at $N$.  It
might be possible to build a first-order model for a coisotropic Lie
subalgebroid that admits an Euler-like section.)
\item\label{item;lagrange}
In the Lagrangian case (i.e.\ $\omega_B=0$) the model symplectic Lie
algebroid $\bb{A}$ is the phase space $\pi^!B$ of
Section~\ref{subsection;phase}.  The form on $\bb{A}$ is the canonical
symplectic form $\omega_\can$, which, in contrast to the general
coisotropic case, is linear along the fibres and independent of any
choices.  Theorem~\ref{theorem;coisotropic} tells us that a
neighbourhood of a transverse Lagrangian submanifold $N$ of a
symplectic Lie algebroid $(A,\omega)$ is equivalent to a neighbourhood
of the zero section in $(\bb{A},\omega_\can)$.  See
Smilde~\cite{smilde;linearization-poisson} for further linearization
theorems.
\end{numerate}
\end{remarks}

\section{Log symplectic manifolds}\label{section;log}
%%%%%%%%%%%%%%%%%%%%%%%%%%%%%%%%%%%%%%%%%%%%%%%%%%%%%%%%%%%%%%%%%%%%%%%%

In this section we state a symplectic reduction theorem 
%% and a Darboux theorem
and a local normal form theorem, Theorem~\ref{theorem;log-normal}, in
the setting of log symplectic manifolds.  Our results extend some of
the work of Geudens and
Zambon~\cite{geudens-zambon;coisotropic-b-symplectic}, Gualtieri et
al.~\cite{gualtieri-li-pelayo-ratiu;tropical-toric-log} and Guillemin
et al.~\cite[\S\,6]{guillemin-miranda-pires;symplectic-poisson-b}.
See also Braddell et
al.~\cite{braddell-kiesenhofer-miranda;b-lie-poisson},
\cite{braddell-kiesenhofer-miranda;b-symplectic-slice}, and Matveeva
and Miranda~\cite{matveeva-miranda;reduction-singular} for related
recent work.

%%%%%%%%%%%%%%%%%%%%%%%%%%%%%%%%%%%%%%%%%%%%%%%%%%%%%%%%%%%%%%%%%%%%%%%%
\subsection{Divisors with normal crossings}\label{subsection;divisor}
%%%%%%%%%%%%%%%%%%%%%%%%%%%%%%%%%%%%%%%%%%%%%%%%%%%%%%%%%%%%%%%%%%%%%%%%

The following definition is a $\ca{C}^\infty$ analogue of a familiar notion
from algebraic geometry.

\begin{definition}\label{definition;normal-crossing}
Let $M$ be a manifold.  A \emph{(simple) normal crossing divisor} is a
locally finite collection $\ca{Z}$ of hypersurfaces (connected closed
embedded submanifolds of codimension $1$) in $M$, called the
\emph{components} of $\ca{Z}$, which intersect transversely in the
following sense: for all $x\in M$, if $Z_1$, $Z_2,\dots$,~$Z_k$ is the
list of all components of $\ca{Z}$ containing $x$ and if $f_i$ is a
defining function for $Z_i$ near $x$, then the differentials $df_1$,
$df_2,\dots$,~$df_k$ are independent at $x$.  The \emph{support} of a
normal crossing divisor $\ca{Z}$ is the union of all its components
and is denoted by $\abs{\ca{Z}}$.
\end{definition}

For the remainder of this section we will fix a manifold $M$ with a
normal crossing divisor $\ca{Z}$.

The collection $\ca{X}_{\ca{Z}}(M)$ of all vector fields on $M$ that
are tangent to (every component of) $\ca{Z}$ is a Lie subalgebra of
$\ca{X}(M)=\Gamma(TM)$ and is a finitely generated projective
$\ca{C}^\infty(M)$-module of rank $n=\dim(M)$.  Therefore
$\ca{X}_{\ca{Z}}(M)$ is the space of sections of a Lie algebroid
$T_{\ca{Z}}M$ of rank~$n$, which we call the \emph{logarithmic tangent
  bundle} of the pair $(M,\ca{Z})$.  If $\ca{Z}$ consists of a single
component $Z$, we also speak of the \emph{$b$-tangent bundle} of
$(M,Z)$ and we write $T_{\ca{Z}}M=T_ZM$.

Define $\ca{Z}^{(k)}$ to be the collection of all points of $M$ that
are contained in exactly $k$ distinct components of~$\ca{Z}$.  Then
$\ca{Z}^{(k)}$ is a submanifold of codimension $k$ of $M$, which we
call the \emph{codimension $k$ stratum}, and we have inclusions
\[
\overline{\ca{Z}^{(0)}}=M\supseteq\overline{\ca{Z}^{(1)}}=
\abs{\ca{Z}}\supseteq\overline{\ca{Z}^{(2)}}\supseteq\cdots.
\]
The image of the anchor $\an\colon T_{\ca{Z}}M\to TM$ at
$x\in\ca{Z}^{(k)}$ is equal to $T_x\ca{Z}^{(k)}$.  Thus the orbits of
the logarithmic tangent bundle are equal to the connected components
of the strata.  In particular the Lie algebroid $T_{\ca{Z}}M$
determines the divisor~$\ca{Z}$.

For each $x\in M$ the tangent spaces $T_xZ$ for $Z\in\ca{Z}$ define a
normal crossing divisor of the tangent space $T_xM$.  The manifold $M$
admits an atlas consisting of \emph{normal crossing charts},
i.e.\ charts $\phi\colon U\to\R^n$ with the property that $\phi(U\cap
Z_i)=\phi(U)\cap H_i$, where $Z_1$, $Z_2,\dots$,~$Z_k$ are the
components of $\ca{Z}$ intersecting $U$ and $H_i=\{\,x\in\R^n\mid
x_i=0\,\}$ is the $i$th coordinate hyperplane.

For each component $Z$ of $\ca{Z}$ the vector bundle $T_{\ca{Z}}M|_Z$
has a distinguished nowhere vanishing section $\xi_Z$, which at
generic points of $Z$ spans the kernel of the anchor.  In a
neighbourhood $U$ of a point $x\in Z$ this section is given by
$\xi_Z|_{Z\cap U}=(fv)|_{Z\cap U}$, where $f\colon U\to\R$ is a local
defining function for $Z$ and $v$ is any vector field on $U$ with
$\ca{L}(v)(f)=1$.  In a normal crossing chart at $x$ in which $Z$ is
given by $x_1=0$ we have $\xi_Z=x_1\pardif{}{x_1}\big|_Z$.  We define
\begin{equation}\label{equation;line}
L_Z=\sspan(\xi_Z)
\end{equation}
to be the trivial real line bundle on $Z$ spanned by $\xi_Z$.

Under appropriate conditions the intersection of a normal crossing
divisor with a submanifold $N$ is a normal crossing divisor of~$N$.

\begin{lemma}\label{lemma;cross}
Let $M$ be a manifold with normal crossing divisor $\ca{Z}$ and let
$A=T_{\ca{Z}}M$ be the logarithmic tangent bundle.  Let $i_N\colon
N\to M$ be a connected embedded submanifold which intersects $A$
cleanly.  Let $Z_1$, $Z_2,\dots$,~$Z_l$ be the list of all components
$Z$ of\/ $\ca{Z}$ such that $N\subseteq Z$.  Let $\ca{Z}_N$ be the
collection consisting of all intersections $Z\cap N$ with
$Z\in\ca{Z}\backslash\{Z_1,Z_2,\dots,Z_l\}$.  Then $\ca{Z}_N$ is a
normal crossing divisor of~$N$.  Let $L_j=L_{Z_j}\big|_N$, where
$L_{Z_j}$ is as in~\eqref{equation;line}, and let $V$ be the rank $l$
trivial vector bundle $L_1\oplus L_2\oplus\cdots\oplus L_l$,
considered as a Lie algebroid over $N$ with zero anchor $V\to TN$ and
trivial Lie bracket.  The pullback Lie algebroid $B=i_N^!A$ is
isomorphic to the direct sum $V\oplus T_{\ca{Z}_N}N$.  If $N$
intersects $A$ transversely, we have $l=0$ and $B\cong T_{\ca{Z}_N}N$.
\end{lemma}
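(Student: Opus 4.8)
The plan is to carry out the whole analysis in normal crossing charts, using the clean‑intersection hypothesis to control the position of $N$ relative to the components of $\ca{Z}$, and then to recognise $B=i_N^!A$ as a split extension of Lie algebroids. Fix $x\in N$ and a normal crossing chart $(U;x_1,\dots,x_n)$ centred at $x$ in which the components of $\ca{Z}$ meeting $U$ are the hyperplanes $\{x_i=0\}$, $1\le i\le m$. Since each $Z_j$ ($1\le j\le l$) contains $N$ and hence passes through $x$, I arrange that $Z_1,\dots,Z_l$ are $\{x_1=0\},\dots,\{x_l=0\}$ and that the remaining components through $x$, none containing $N$, are $\{x_{l+1}=0\},\dots,\{x_m=0\}$. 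Then $A=T_{\ca{Z}}M$ is framed over $U$ by $x_1\partial_1,\dots,x_m\partial_m,\partial_{m+1},\dots,\partial_n$, one has $N\subseteq\{x_1=\dots=x_l=0\}$, and $\im\an_{A,y}=\bigcap\{\ker dx_i: i\le m,\ y_i=0\}$. From the fibre description $B_y=\an_{A,y}^{-1}(T_yN)$ (valid by clean intersection, as recalled after Definition~\ref{definition;coisotropic}) and the identity $\dim B_y=\dim\ker\an_{A,y}+\dim(T_yN\cap\im\an_{A,y})$ I obtain
\[\dim B_y=c(y)+\dim N-\rho(y),\]
where $c(y)$ is the number of components through $y$ and $\rho(y)$ is the rank at $y$ of the differentials of the defining functions of the non-containing components through $y$, restricted to $T_yN$.

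The first key step is the dimension count. Clean intersection makes $B$ a vector bundle, so $\dim B_y$ is constant on the connected manifold $N$, say $\dim B_y\equiv d$. Since $\rho(y)\le c(y)-l$, one always has $\dim B_y\ge l+\dim N$. Each non‑containing component meets $N$ in a nowhere‑dense set — otherwise $N$ would lie in that component on an open subset, where the extra kernel direction would make $\dim B$ jump, contradicting constancy — so the generic $y\in N$ lies on no non‑containing component, where $c(y)=l$, $\rho(y)=0$, and hence $\dim B_y=l+\dim N$; thus $d=l+\dim N$. Feeding this back into the formula forces $\rho(y)=c(y)-l$ for \emph{every} $y$, i.e.\ the defining functions of the non‑containing components through $y$ restrict to $N$ with independent differentials. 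This says exactly that $N$ meets each non‑containing component transversely and that these meet $N$ in normal crossings; hence each $Z\cap N$ ($Z$ non‑containing) is a codimension‑one closed embedded submanifold, and the collection of connected components of all such $Z\cap N$ is a normal crossing divisor $\ca{Z}_N$ of $N$ (local finiteness being inherited from $\ca{Z}$). Refining the chart to this transverse position, I may take $N\cap U=\{x_1=\dots=x_l=0,\ x_{m+1}=\dots=x_{m+\kappa}=0\}$ with the non‑containing components still coordinate hyperplanes.

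In this normal form $B$ is framed over $N\cap U$ by the pairwise commuting sections $x_j\partial_j|_N$ ($1\le j\le m$) and $\partial_j|_N$ ($m+\kappa<j\le n$). The first $l$ of these, $x_j\partial_j|_N=\xi_{Z_j}|_N$ by~\eqref{equation;line}, span $V=L_1\oplus\dots\oplus L_l$, which globally is a canonical Lie subalgebroid of $B$ with zero anchor and abelian bracket. Next I build a canonical surjection $q\colon B\to T_{\ca{Z}_N}N$: a section of $B$ is the restriction to $N$ of a log vector field on $M$ tangent to $N$, so its anchor is a vector field on $N$ tangent to every $N\cap Z$, i.e.\ a section of $T_{\ca{Z}_N}N$; as the anchor $T_{\ca{Z}_N}N\to TN$ is injective on sections, $\an_B$ factors through a unique $\ca{C}^\infty(N)$‑linear map $q$, which the local frame exhibits as surjective with kernel exactly $V$, and a short diagram chase (again using injectivity on sections) shows $q$ is a Lie algebroid morphism. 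This gives a short exact sequence of Lie algebroids over $N$,
\[0\longrightarrow V\longrightarrow B\longrightarrow T_{\ca{Z}_N}N\longrightarrow 0,\]
in which $T_{\ca{Z}_N}N$ acts on the trivialised bundle $V$ through the flat connection differentiating coefficients in the global frame $\xi_{Z_1}|_N,\dots,\xi_{Z_l}|_N$; the \emph{direct sum} $V\oplus T_{\ca{Z}_N}N$ of the statement is precisely the corresponding split extension.

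The step I expect to be the main obstacle is splitting this sequence globally. Over each adapted chart the commuting frame furnishes a Lie algebroid splitting of $q$, and on overlaps two such splittings differ by a $V$‑valued $1$‑form built from the logarithmic derivatives of the transition functions of the normal line bundles $\nu_j=\ca{N}(M,Z_j)|_N$, so the obstruction to patching is a class in $H^2(T_{\ca{Z}_N}N;V)$. Since each $\nu_j$ is a \emph{real} line bundle it has structure group $O(1)$ and therefore admits a flat structure (equivalently $c_1^{\R}(\nu_j)=0$); choosing defining functions for the $Z_j$ adapted to such flat structures makes the transition terms vanish, so the local splittings agree on overlaps and assemble into a global Lie algebroid splitting, whence $B\cong V\oplus T_{\ca{Z}_N}N$. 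Finally, in the transverse case, if some component contained $N$ then both $T_yN$ and $\im\an_{A,y}$ would lie in its tangent hyperplane $\ker dx_j$, forcing $T_yN+\im\an_{A,y}\ne T_yM$ and contradicting transversality of $N$ to $A$; hence $l=0$, $V=0$, and $B\cong T_{\ca{Z}_N}N$.
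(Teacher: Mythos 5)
Your proposal is correct in substance, and its skeleton matches the paper's: adapted normal crossing charts extracted from cleanness, then the anchor-induced exact sequence $0\to V\to B\to T_{\ca{Z}_N}N\to 0$ with kernel $V$ spanned by the central sections $\xi_{Z_j}|_N$. The genuine divergence is at the splitting step, and there your extra care is warranted rather than wasted. The paper splits the sequence by sending a vector field $w$ on $N$ tangent to $\ca{Z}_N$ to the class of an extension $\tilde w\in\Gamma(A)$ tangent to $N$, asserting that $\tilde w$ is unique modulo $\Gamma(A;0_N)$, which would make the splitting canonical and gluing a non-issue. That uniqueness claim is too strong precisely when $l\ge 1$: if $N\subseteq Z_1=\{x_1=0\}$, then $0$ and $x_1\,\partial/\partial x_1$ both extend the zero vector field on $N$ (both are log, tangent to $N$, and vanish on $N$ as vector fields), yet their difference restricts to $\xi_{Z_1}\ne 0$ in $A|_N$; in general an extension is unique only modulo sections whose restriction lies in $\Gamma(V)$, so the prescription $w\mapsto[\tilde w]$ is well defined only in $\Gamma(B)/\Gamma(V)$. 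Thus some choice really is needed, and your mechanism supplies it: local splittings determined by defining functions $f_j$ (declaring the $V$-components $(\tilde w(f_j)/f_j)|_N$ to vanish), whose discrepancies on overlaps are exactly the $V$-valued forms $w\mapsto\sum_j\bigl(w\cdot\log\abs{u_j}\big|_N\bigr)\,\xi_{Z_j}$ coming from the ratios $u_j$ of defining functions; choosing flat structures on the real normal line bundles (possible since real line bundles have structure group reducible to $\{\pm1\}$) makes $\abs{u_j}\big|_N$ locally constant and the local splittings agree, and one checks as you indicate that these splittings are bracket-preserving and commute with the $\xi_{Z_j}$. What the paper's route would buy, were its uniqueness valid, is a one-line canonical splitting; what yours buys is an argument that actually closes, at the honest price that the isomorphism $B\cong V\oplus T_{\ca{Z}_N}N$ depends on choices.

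One spot in your own write-up to shore up: in the dimension count, the assertion that each non-containing component meets $N$ in a nowhere dense set is justified by a ``jump'' argument that presupposes the existence of comparison points with $c(y)=l$, which is part of what is being proved (a priori every point of $N$ could lie on some non-containing component, with the inflated dimension constant throughout). The claim is true, but establishing it needs more: semicontinuity of $c$, constancy of $\dim B_y$ on the connected manifold $N$, and an argument (e.g.\ limits of tangent planes along the dense open set where $c$ is locally constant) excluding the possibility that $N$ is covered by non-containing components. The paper is equally terse at this point, deducing its adapted charts directly from Proposition~\ref{proposition;pull}, so this is a gloss to fix rather than a flaw in your approach.
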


\begin{proof}
Let $x\in N$.  It follows from
Proposition~\ref{proposition;pull}\eqref{item;pull-clean-transverse}
that there is a normal crossing chart $\phi\colon U\to\R^n$ at $x$
such that $\phi(U\cap N)$ is a coordinate subspace of~$\R^n$.  Let
$Z_1,\dots$,~$Z_l$ be those components $Z$ of $\ca{Z}$ for which the
image $\phi(U\cap Z)$ is a coordinate hyperplane of~$\R^n$ containing
$\phi(U\cap N)$.  (Such $Z$ do not exist if $N$ intersects $A$
transversely, so $l=0$ in the transverse case.)  Then $Z_j\cap N$ is
open and closed in $N$ for all $j$, so $Z_j\cap N=N$.  Moreover, for
all $Z\in\ca{Z}\backslash\{Z_1,\dots,Z_l\}$ the image $\phi(U\cap Z)$
is either empty or a coordinate hyperplane transverse to $N$.  Thus
$Z\cap N$ is a hypersurface in $N$ for each
$Z\in\ca{Z}\backslash\{Z_1,Z_2,\dots,Z_l\}$.  Restricting the normal
crossing chart $\phi$ to $N$ gives a normal crossing chart for the
pair $(N,\ca{Z}_N)$.  Since
\[\Gamma(i_N^*A)=\Gamma(A)/\Gamma(A;0_N),\]
we can think of a section of $A|_N=i_N^*A$ as an equivalence class
$\bar{v}=v\bmod\Gamma(A;0_N)$ of a section $v\in\Gamma(A)$.  By the
definition of pullbacks we have
\[
\Gamma(B)=\{\,\bar{v}\mid\text{$v\in\Gamma(A)$, $v$ is tangent to
  $N$}\,\}.
\]
Every $v\in\Gamma(A)$ which is tangent to $N$ is tangent to
$\ca{Z}_N$, so we have a natural $C^\infty$-linear Lie algebra
morphism $\phi\colon\Gamma(B)\to\Gamma(T_{\ca{Z}_N}N)$.  The kernel of
this morphism is
\[
\ker(\phi)=\{\,\bar{v}\mid\text{$v\in\Gamma(A)$, $v=0$ on
  $N$}\,\}=\Gamma(V).
\]
The ideal $\Gamma(V)$ of $\Gamma(B)$ is central, because sections of
$V$ induce trivial flows on $N$.  Every vector field $w$ on $N$ which
is tangent to $\ca{Z}_N$ extends to a vector field $\tilde{w}$ on $M$
which is tangent to $\ca{Z}$, and $\tilde{w}$ is unique modulo
$\Gamma(A;0_N)$.  Thus $\phi$ has a canonical splitting
$\Gamma(T_{\ca{Z}_N}N)\to\Gamma(B)$, which is a $C^\infty$-linear Lie
algebra morphism.  We conclude that $B=V\oplus T_{\ca{Z}_N}N$ as Lie
algebroids.
\end{proof}

Let $P$ be a second manifold with normal crossing divisor $\ca{D}$.
For a map $\phi\colon M\to P$ to induce a Lie algebroid morphism
$T_{\ca{Z}}M\to T_{\ca{D}}P$ it must map strata to strata, but that is
not enough.

\begin{example}\label{example;log-tangent}
Let $M=P=\R$ and let $\ca{Z}=\ca{D}=\{0\}$.  Let $\phi\colon M\to P$
be a smooth function with $\phi(0)=0$ and $\phi(x)\ne0$ if $x\ne0$.
Then $\phi$ maps strata to strata, but $\phi$ lifts to a Lie algebroid
morphism $T_{\ca{Z}}M\to T_{\ca{D}}P$ if and only if the function
$x\mapsto x\phi'(x)$ is smoothly divisible by $\phi$.  This is the
case if and only if $\phi$ is not flat at $0$.
\end{example}

\begin{definition}\label{definition;log-tangent}
A \emph{morphism} $\phi\colon(M,\ca{Z})\to(P,\ca{D})$ of manifolds
with normal crossing divisors is a smooth map $\phi\colon M\to P$ with
the following property: every component $D$ of $\ca{D}$ has a covering
$\eu{V}$ consisting of open subsets of $P$ such that for each
$V\in\eu{V}$ either (1)~$\phi^{-1}(D\cap V)=\phi^{-1}(V)$, or
(2)~there is a component $Z$ of\/ $\ca{Z}$ with $\phi^{-1}(D\cap
V)=Z\cap\phi^{-1}(V)$.  In case~(2) we demand additionally that if $g$
is a local defining function for $D$, then there exist an integer
$\nu\ge1$ and a local defining function $f$ for $Z$ such that
$\phi^*g=f^\nu$.
\end{definition}

We call the lift $T^{\log}\phi\colon T_{\ca{Z}}M\to T_{\ca{D}}P$ of a
morphism $\phi$ guaranteed by the next lemma the \emph{log tangent
  map} of~$\phi$.

\begin{lemma}\label{lemma;log-tangent}
Let $\phi\colon(M,\ca{Z})\to(P,\ca{D})$ be a morphism of manifolds
with normal crossing divisors.
\begin{enumerate}
\item\label{item;log-lift}
The map $\phi$ lifts to a unique Lie algebroid morphism
$T^{\log}\phi\colon T_{\ca{Z}}M\to T_{\ca{D}}P$ whose restriction to
$M\backslash\abs{\ca{Z}}$ agrees with the usual tangent map of\/
$\phi$.  In particular, if $M^0$ is a connected component of\/ $M$
whose image $\phi(M^0)$ is contained in a component $D$ of\/ $\ca{D}$,
then $T^{\log}\phi$ maps $T_{\ca{Z}}M|_{M^0}$ to the Lie subalgebroid
$T_{\ca{D}_D}D$ of\/ $T_{\ca{D}}P$, where $\ca{D}_D$ is the normal
crossing divisor of\/ $D$ defined in Lemma~\ref{lemma;cross}.
\item\label{item;order}
Let $Z$ be a component of\/ $\ca{Z}$ and $\xi_Z$ the distinguished
section of\/ $T_{\ca{Z}}M|_Z$.  Suppose there is a component $D$ of\/
$\ca{D}$ such that $\phi(Z)\subseteq D$ and for some local defining
function $g$ of\/ $D$ the function $\phi^*g$ vanishes at $Z$ to
constant order $\nu_Z<\infty$.  Then
$T^{\log}\phi(\xi_Z)=\nu_Z\eta_D$, where $\eta_D$ is the distinguished
section of\/ $T_{\ca{D}}P|_D$.  If there is no such component $D$, we
have $T^{\log}\phi(\xi_Z)=0$.
\end{enumerate}
\end{lemma}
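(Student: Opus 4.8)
The plan is to prove (i) by induction on $\dim P$ (equivalently, on the largest number of components of $\ca D$ whose common intersection contains the image of a connected component of $M$), distinguishing the ``generic'' case, in which no component of $\ca D$ swallows a whole connected component of $M$, from the degenerate case in which one does; part (ii) will then follow from (i) by a computation at generic points of $Z$ using the distinguished sections. For the generic case, let $M^0$ be a connected component of $M$ with $\phi(M^0)$ contained in no component of $\ca D$. Then Definition~\ref{definition;log-tangent} puts every component of $\ca D$, near each point, into case~(2), so $\phi^{-1}(\abs{\ca D})\cap M^0\subseteq\abs{\ca Z}$ and $\phi$ maps the dense open set $M^0\setminus\abs{\ca Z}$ into $P\setminus\abs{\ca D}$; there both anchors are isomorphisms and the ordinary tangent map already determines the lift, which forces uniqueness by continuity. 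To see this lift extends across $\abs{\ca Z}$ as a morphism, I construct the dual pullback on log de Rham complexes. Log $1$-forms on $P$ are locally $\ca C^\infty$-combinations of closed generators $d\log g$ (for defining functions $g$ of the components of $\ca D$) and of ordinary differentials; the morphism condition gives $\phi^*g=f^\nu$ for a defining function $f$ of the corresponding component of $\ca Z$, whence
\[
\phi^*(d\log g)=d\log(\phi^*g)=\nu\,d\log f
\]
is again a log $1$-form on $M$, while $\phi^*(dy)=d(\phi^*y)$ is ordinary and hence log. Thus $\phi^*$ preserves the log de Rham complex and commutes with $d$, so by Vaintrob's theorem~\cite{vaintrob;lie-algebroids-homological} it is dual to a Lie algebroid morphism $T^{\log}\phi$, which restricts to $T\phi$ off $\abs{\ca Z}$.

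For a connected component $M^0$ with $\phi(M^0)\subseteq D$ for some component $D$ of $\ca D$, I argue inductively. Since $D$ is a closed embedded hypersurface, Lemma~\ref{lemma;cross} (with $l=1$, $Z_1=D$) identifies $i_D^!T_{\ca D}P$ with $L_D\oplus T_{\ca D_D}D$, exhibiting $T_{\ca D_D}D$ as a Lie subalgebroid of $T_{\ca D}P$ over $D$. One checks that $\phi\colon(M^0,\ca Z)\to(D,\ca D_D)$ is again a morphism of manifolds with normal crossing divisors and that $\dim D<\dim P$, so the inductive lift $T^{\log}(\phi|_{M^0})\colon T_{\ca Z}M|_{M^0}\to T_{\ca D_D}D$, followed by the inclusion $T_{\ca D_D}D\hookrightarrow T_{\ca D}P$, is the desired morphism. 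Because it lands in $T_{\ca D_D}D$, its component along the distinguished section $\eta_D$ vanishes, which is exactly the ``in particular'' assertion; and since the inclusion is injective, inductive uniqueness yields uniqueness here. The base case of the induction is precisely the generic case above, completing (i).

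For (ii), I work at a generic point $x\in Z$, where $\xi_Z$ spans $\ker(\an_{\ca Z})$, so $\an_{\ca Z}(\xi_Z)=0$ and hence
\[
\an_{\ca D}\bigl(T^{\log}\phi(\xi_Z)\bigr)=T\phi\bigl(\an_{\ca Z}(\xi_Z)\bigr)=0,
\]
placing $T^{\log}\phi(\xi_Z)$ in $\ker(\an_{\ca D})_{\phi(x)}$. If $\phi(Z)$ lies in no component of $\ca D$, this kernel is trivial at generic $\phi(x)$, so $T^{\log}\phi(\xi_Z)$ vanishes on a dense subset of $Z$ and thus everywhere. If instead $\phi(Z)\subseteq D$, then at generic $\phi(x)$ the kernel is spanned by $\eta_D$, so $T^{\log}\phi(\xi_Z)=c\,\eta_D$, and the scalar is recovered by pairing with the dual generator:
\[
c=\bigl\langle d\log g,\,T^{\log}\phi(\xi_Z)\bigr\rangle
 =\bigl\langle \phi^*(d\log g),\,\xi_Z\bigr\rangle
 =\bigl\langle d\log(\phi^*g),\,\xi_Z\bigr\rangle .
\]
Writing $\phi^*g=f^{\nu_Z}h$ with $h$ nonvanishing near $x$ (possible since $\phi^*g$ vanishes along $Z$ to the constant order $\nu_Z$, which matches the morphism exponent), one has $d\log(\phi^*g)=\nu_Z\,d\log f+d\log h$; using $\langle d\log f,\xi_Z\rangle=1$ and that $d\log h$ pairs with $\xi_Z$ to a quantity divisible by $f$ and so vanishing on $Z$, I obtain $c=\nu_Z$. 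Extending by continuity from the generic locus gives $T^{\log}\phi(\xi_Z)=\nu_Z\eta_D$ on all of $Z$.

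The main obstacle is the construction and uniqueness of the lift over the locus where $\phi$ maps an entire connected component into the divisor (case~(1) of Definition~\ref{definition;log-tangent}): there the ordinary tangent map leaves the component of the lift along $\eta_D$ undetermined, and the naive formula $d\log(\phi^*g)$ is indeterminate because $\phi^*g\equiv0$. Overcoming this is exactly the purpose of the inductive reduction to $(D,\ca D_D)$ through Lemma~\ref{lemma;cross}: it simultaneously forces the $\eta_D$-component to vanish and restores, inside the smaller target, a dense off-divisor locus on which the tangent map pins down the lift uniquely.
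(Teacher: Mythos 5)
Your part~(i) is correct but follows a genuinely different route from the paper's. The paper works in normal crossing charts and solves the matrix equation $A_P\Psi=\Phi A_M$ for the matrix $\Psi$ of the lift relative to the log frames, obtaining explicit formulas for every entry $\psi_{ij}$; you instead dualize, pulling back log forms via $\phi^*(d\log g)=\nu\,d\log f$ and invoking Vaintrob's theorem, and you handle the locus where a whole connected component of $M$ maps into a divisor component by induction on $\dim P$ through Lemma~\ref{lemma;cross}. Both constructions impose the same normalization in case~(1) of Definition~\ref{definition;log-tangent} (the lift must land in $T_{\ca{D}_D}D$: the paper sets $\psi_{ij}=0$, you factor through the inclusion), and in both proofs it is this normalization, not the tangent map alone, that pins the lift down there; so I do not count that as a gap. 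What the paper's computation buys, and your structural argument does not, is explicit knowledge of \emph{all} coefficients of $T^{\log}\phi(\xi_Z)$ at once; this is exactly what makes its part~(ii) a one-line evaluation, and it is where your proof develops a genuine gap.

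The gap is in your part~(ii) case analysis: it does not cover components $D'$ of $\ca{D}$ with $\phi(Z)\subseteq D'$ but $\phi^*g'\equiv 0$ near $Z$, i.e.\ the case-(1) components, which occur exactly when the connected component of $M$ containing $Z$ maps entirely into $D'$. Concretely, let $M=\R^2$ with $\ca{Z}=\{Z\}$, $Z=\{x_1=0\}$, let $P=\R^2$ with $\ca{D}=\{D_1,D_2\}$, $D_i=\{y_i=0\}$, and let $\phi(x_1,x_2)=(x_1,0)$; this is a morphism ($D_1$ is in case~(2) with $\phi^*y_1=x_1$, $D_2$ is in case~(1)). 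For every $x\in Z$, generic or not, $\phi(x)=(0,0)$ lies on \emph{both} components, so $\ker(\an_{\ca{D}})_{\phi(x)}$ is spanned by $\eta_{D_1}$ and $\eta_{D_2}$, not by $\eta_{D_1}$ alone; your ansatz $T^{\log}\phi(\xi_Z)=c\,\eta_{D_1}$ is therefore unjustified, and your pairing argument computes only the $\eta_{D_1}$-coefficient while saying nothing about the $\eta_{D_2}$-coefficient. Worse, if $\phi$ is the constant map to the corner $(0,0)$ (both components in case~(1)), the lemma asserts $T^{\log}\phi(\xi_Z)=0$, but neither of your cases applies: the hypothesis of your first case ($\phi(Z)$ in no component of $\ca{D}$) fails, and your second case requires writing $\phi^*g=f^{\nu_Z}h$ with $\nu_Z<\infty$, which is impossible when $\phi^*g\equiv0$. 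The missing coefficients can only be handled by appealing to the normalization built into your part~(i): for every case-(1) component $D'$ the lift factors through $T_{\ca{D}_{D'}}D'$, so the $\eta_{D'}$-coefficient vanishes by construction, after which your pairing argument can be run inside the reduced target for the finite-order component. That fix stays within your framework, but as written the step fails; the paper avoids the issue because its explicit formulas ($\psi_{ij}=0$ for case-(1) rows, off-diagonal and non-logarithmic entries vanishing on $Z$) deliver all components of $T^{\log}\phi(\xi_Z)$ simultaneously.
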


\begin{proof}
\eqref{item;log-lift}~Let $R=C^\infty(M)$ and $S=C^\infty(P)$.  We
regard $R$ as an $S$-module via the pullback map $\phi^*\colon S\to
R$.  The usual tangent map $T\phi\colon TM\to TP$ is the vector bundle
map induced by the pushforward map
\[
\phi_*\colon\ca{X}(M)=\Der(R)\longto\Gamma(\phi^*TP)=R\otimes_S\ca{X}(P)=
\Der(S,R),
\]
which maps $D\in\Der(R)$ to the derivation $\phi_*D\in\Der(S,R)$
defined by $(\phi_*D)(g)=D(\phi^*g)$ for $g\in S$.  We must show that
$\phi_*$ lifts to an $R$-linear Lie algebra map $\phi_*^{\log}$,
\[
\begin{tikzcd}
\ca{X}_{\ca{Z}}(M)\ar[r,dashed,"\phi_*^{\log}"]
\ar[d,hook,"a_M"']&R\otimes_S\ca{X}_{\ca{D}}(P)
\ar[d,"a_P"]\\
\ca{X}(M)\ar[r,"\phi_*"]&R\otimes_S\ca{X}(P),
\end{tikzcd}
\]
where $a_M=\an_M$ is the anchor map for $(M,\ca{Z})$ and
$a_P=\id_R\otimes\an_P$, with $\an_P$ being the anchor map for
$(P,\ca{D})$.  First suppose that $M$ is an open neighbourhood of the
origin in $\R^n$ with normal crossing divisor
$\ca{Z}=\{Z_1,Z_2,\dots,Z_l\}$, where $Z_j=\{\,x\in\R^n\mid
x_j=0\,\}$, and that $P$ is an open neighbourhood of the origin in
$\R^m$ with normal crossing divisor $\ca{D}=\{D_1,D_2,\dots,D_k\}$,
where $D_i=\{\,y\in\R^n\mid y_i=0\,\}$, and that $\phi(0)=0$.  Let
$\phi_1$, $\phi_2,\dots$,~$\phi_m\in R$ be the components of $\phi$.
The $R$-modules $\ca{X}(M)$ and $R\otimes_S\ca{X}(P)$ are free with
bases
\[
\pardif{}{x_1},\dots,\pardif{}{x_n},\quad\text{resp.}\quad
1\otimes\pardif{}{y_1},\dots,1\otimes\pardif{}{y_m},
\]
The $R$-modules $\ca{X}_{\ca{Z}}(M)$ and
$R\otimes_S\ca{X}_{\ca{D}}(P)$ are also free with bases
\[
x_1\pardif{}{x_1},\dots,x_l\pardif{}{x_l},\pardif{}{x_{l+1}},\dots,
\pardif{}{x_n},\quad\text{resp.}\quad 1\otimes
y_1\pardif{}{y_1},\dots,1\otimes
y_k\pardif{}{y_k},\pardif{}{y_{k+1}},\dots, \pardif{}{y_m}.
\]
Therefore, finding the map $\phi_*^{\log}$ amounts to finding an
$m\times n$-matrix $\Psi=(\psi_{ij})$ with entries in the ring $R$
such that $A_P\Psi=\Phi A_M$, where
$\Phi=\bigl(\pardif{\phi_i}{x_j}\bigr)$ is the matrix of $\phi_*$ and
\[
A_M=\diag(x_1,x_2\dots,x_l,1,1,\dots,1),\qquad
A_P=\diag(\phi_1,\phi_2,\dots,\phi_k,1,1,\dots,1)
\]
are the matrices of $a_M$, resp.\ $a_P$.  We obtain the following
equations for the $\psi_{ij}$:
\begin{align*}
\phi_i\psi_{ij}&=\begin{cases}x_j\partial\phi_i/\partial x_j&\text{for
  $1\le i\le k$,\quad $1\le j\le l$}\\\partial\phi_i/\partial
x_j&\text{for $1\le i\le k$,\quad $l+1\le j\le n$}\end{cases}\\
\psi_{ij}&=\begin{cases}x_j\partial\phi_i/\partial x_j&\text{for
  $k+1\le i\le m$,\quad $1\le j\le l$}\\\partial\phi_i/\partial
x_j&\text{for $k+1\le i\le m$,\quad $l+1\le j\le n$.}\end{cases}
\end{align*}
Clearly $\psi_{ij}$ is uniquely determined for $k+1\le i\le m$.  Let
$1\le i\le k$.  According to Definition~\ref{definition;log-tangent}
we are in either of two cases.  In case~1 we have
$\phi_i=\phi^*y_i=0$, i.e.\ $\phi(M)$ is contained in $D_i$.  In this
case we want $\phi_*^{\log}$ to map $\ca{X}_{\ca{Z}}(M)$ to
$\ca{X}_{\ca{D}_{D_i}}(D_i)$.  In other words, if we take any
$v\in\ca{X}_{\ca{Z}}(M)$ and express $\phi_*^{\log}(v)$ in the basis
of $\ca{X}_{\ca{D}}(P)$, we want the coefficient of
$y_i\pardif{}{y_i}$ to be equal to $0$.  This forces us to put
$\psi_{ij}=0$ for all $j$.  In case~2 we have a unique $j=j_i\le l$
such that $\phi_i$ vanishes to constant finite order $\nu_i$ along
$Z_{j_i}$.  Thus we have $\phi_i(x)=x_{j_i}^{\nu_i}f_i(x)$ for some
$f_i\in R$ that vanishes nowhere on $Z_{j_i}$.  Hence for $1\le j\le
l$
\begin{equation}\label{equation;order}
\psi_{ij}=\frac{x_j\partial\phi_i/\partial x_j}{\phi_i}=
\begin{cases}
\frac{x_j\partial f_i/\partial x_j}{f_i}&\text{if $j\ne j_i$}\\
\nu_i+\frac{x_{j_i}\partial f_i/\partial x_{j_i}}{f_i}&\text{if
  $j=j_i$,}
\end{cases}
\end{equation}
while for $l+1\le j\le n$
\[
\psi_{ij}=\frac{\partial\phi_i/\partial x_j}{\phi_i}=\frac{\partial
  f_i/\partial x_j}{f_i}.
\]
We see that for $1\le i\le m$ and $1\le j\le n$ the function
$\psi_{ij}$ is a well-defined and uniquely determined element of $R$.
This proves the existence of a unique $R$-linear lift $\phi_*^{\log}$
locally, and hence globally by a gluing argument.  The map
$\phi_*^{\log}$ is a Lie algebra homomorphism, because on the open
dense set $M\backslash\abs{\ca{Z}}$ it agrees with the pushforward map
$\phi_*$.  It follows that the associated vector bundle map
$T^{\log}\phi$ is a Lie algebroid morphism.

\eqref{item;order}~Setting $x_{j_i}=0$ in~\eqref{equation;order}
yields $\psi_{ij_i}=\nu_i$ on $Z=Z_{j_i}$,
i.e.\ $T^{\log}\phi(\xi_Z)=\nu_i\eta_{D_i}$.
\end{proof}

A \emph{log Poisson structure} on $(M,\ca{Z})$ is an $A$-Poisson
structure (Definition~\ref{definition;poisson}) on $M$, where $A$ is
the log tangent bundle $T_{\ca{Z}}M$.  A \emph{log symplectic form} on
$(M,\ca{Z})$ is an $A$-symplectic form
(Definition~\ref{definition;symplectic}) on $M$.  If $\ca{Z}$ consists
of a single component, we also speak of \emph{$b$-Poisson} and
\emph{$b$-symplectic} structures.

The reduction theorem, Theorem~\ref{theorem;mikami-weinstein},
specializes to the following result for a log Poisson morphism
$(M,\ca{Z})\to(P,\ca{D})$.  We can reduce $M$ at any ``point''
$\f\subseteq\stab(T_{\ca{D}}P,p)$, but the resulting quotient is not
necessarily log symplectic unless we reduce at the trivial subalgebra
$\f=0$.  If we reduce at a nonzero ``point'', the reduced symplectic
Lie algebroid will be a direct sum of a log tangent bundle and a
trivial Lie algebroid.

\begin{theorem}[log symplectic reduction]%
\label{theorem;log-mikami-weinstein}
Let\/ $(M,\ca{Z},\omega)$ be a log symplectic manifold and let
$(P,\ca{D},\lambda)$ be a log Poisson manifold.  Let
$\mu\colon(M,\ca{Z})\to(P,\ca{D})$ be a log Poisson morphism and let
$\gamma\colon\Gamma(T_{\ca{D}}^*P)\to\Gamma(T_{\ca{Z}}M)$ be the
associated Hamiltonian action.  Let $p\in P$ and $N=\mu^{-1}(p)$.
Suppose that $p$ is a regular value of\/ $\mu\colon M\to P$ and that
the action $\gamma$ is locally free at $x$ for all $x\in N$.
\begin{enumerate}
\item\label{item;log-lie-fibre-act}
The submanifold $N$ of~$M$ intersects the Lie algebroid $T_{\ca{Z}}M$
cleanly, the $2$-form $\omega_N=i_N^*\omega$ is logarithmic relative
to the normal crossing divisor $\ca{Z}_N$ and is presymplectic.  The
null Lie algebroid $K=\ker(\omega_N)$ is a foliation Lie algebroid and
the Lie algebra
\[\h=\stab(T^*_{\ca{D}}P,p)\]
acts transitively on~$K$.
\item\label{item;log-reduction}
Suppose that the leaf space $Q=N/\h$ is a manifold.  Then the
collection $\ca{Z}_Q$ consisting of all quotients $Z/\h$\/ for
$Z\in\ca{Z}_N$ is a normal crossing divisor of\/ $Q$ and $\omega_N$
descends to a log symplectic structure on $(Q,\ca{Z}_Q)$.
\end{enumerate}
\end{theorem}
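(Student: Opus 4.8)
The plan is to read the theorem off Theorem~\ref{theorem;mikami-weinstein} applied to $A=T_{\ca{Z}}M$, $E=T_{\ca{D}}P$, and the Poisson morphism $\mu=T^{\log}\mu$ furnished by Lemma~\ref{lemma;log-tangent}\eqref{item;log-lift}, reducing at the fibre over $p$ with $\f=\stab(T_{\ca{D}}P,p)$. Under this dictionary the two standing hypotheses are verbatim those of the general theorem, so the first task is the clean intersection asserted in~\eqref{item;log-lie-fibre-act}. From the morphism relation $\an_E\circ\mu=T\mu\circ\an_A$ together with $T_xN=\ker T_x\mu$ one obtains, for $x\in N$, the fibrewise identity $\an_A^{-1}(T_xN)=(T^{\log}\mu)_x^{-1}(\stab(E,p))$. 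Since $p$ lies in a single stratum $\ca{D}^{(k)}$ the rank $\dim\stab(E,p)=k$ is constant along $N$, while local freeness forces $(T^{\log}\mu)_x$ to be fibrewise surjective (equation~\eqref{equation;free}); hence this fibre has constant dimension, $N$ meets $A$ cleanly, and $B=i_N^!A=\mu^{-1}(\f)$ with $\omega_N=i_N^*\omega=\omega_B$.

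With the clean pullback in hand I would apply Lemma~\ref{lemma;cross} to $B=i_N^!A$ to obtain the splitting $B\cong V\oplus T_{\ca{Z}_N}N$, where $V=\bigoplus_{Z\supseteq N}L_Z|_N$ collects the components of $\ca{Z}$ containing $N$ and $L_Z$ is the line~\eqref{equation;line}. This exhibits $\omega_N$ as a $B$-form adapted to the normal crossing divisor $\ca{Z}_N$, so it is logarithmic and, by Theorem~\ref{theorem;mikami-weinstein}\eqref{item;lie-fibre-act}, presymplectic of constant rank. That $K=\ker(\omega_N)$ is a foliation Lie algebroid on which $\h=\stab(T^*_{\ca{D}}P,p)$ acts transitively is then the corresponding assertion of the general theorem. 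The underlying computation is the chain of identities from Lemma~\ref{lemma;moment}: the equivariance $\lambda_p^\sharp=\mu_x\circ\gamma_x$ gives $\gamma_x^{-1}(B_x)=\ker(\an_E\circ\lambda_p^\sharp)=\stab(E^*,p)$, and injectivity of $\an_A\circ\gamma_x$ together with Lemma~\ref{lemma;moment}\eqref{item;kernel}--\eqref{item;image} shows that $\gamma_x$ carries this space isomorphically onto $K_x=B_x\cap B_x^\omega$. This last step, $B_x\cap\im\gamma_x=B_x\cap B_x^\omega$, is exactly where the log structure enters: Lemma~\ref{lemma;log-tangent}\eqref{item;order} pins down the image of $\ker\an_A$ under $\mu_x$, forcing the degenerate directions of $\omega_N$ to be precisely the Hamiltonian directions generated by $\h$.

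For part~\eqref{item;log-reduction} the presymplectic reduction theorem, Theorem~\ref{theorem;reduction} (through Theorem~\ref{theorem;mikami-weinstein}\eqref{item;reduction}), immediately delivers a \emph{symplectic} Lie algebroid $(C,\omega_C)$ over $Q=N/\h$ with $q^*\omega_C=\omega_N$. The real content --- and the step I expect to be the main obstacle --- is to promote ``symplectic'' to ``log symplectic'' by identifying $C$ with the log tangent bundle $T_{\ca{Z}_Q}Q$ and checking that $\ca{Z}_Q=\{\,Z/\h:Z\in\ca{Z}_N\,\}$ is a genuine normal crossing divisor. The key observation is that every generating section $\gamma(\tau)$, for $\tau\in\h$, is a section of $T_{\ca{Z}}M$ and hence a log vector field tangent to each component of $\ca{Z}$; its flow therefore preserves $\ca{Z}_N$ and carries each component $Z$ to itself, descending it to a hypersurface $Z/\h$ of $Q$. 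The transversality demanded by Definition~\ref{definition;normal-crossing} survives because the reducing foliation is the null foliation of $\omega_N$, which by Lemma~\ref{lemma;reduction} is transverse to the nondegenerate directions, in particular to the components of $\ca{Z}_N$.

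To finish I would make the isomorphism $C\cong T_{\ca{Z}_Q}Q$ explicit via Proposition~\ref{proposition;quotient}\eqref{item;section}: a local section $s\colon U\to N$ of the quotient map identifies $C|_U\cong s^!B$, and combining the splitting of Lemma~\ref{lemma;cross} with the matching of $K$-basic forms in~\eqref{equation;basic} shows that the transverse slice singles out $s^!\bigl(T_{\ca{Z}_N}N\bigr)\cong T_{\ca{Z}_Q}Q|_U$, on which $\omega_C$ restricts to a nondegenerate \emph{log} form. Because the reduction is by the \emph{entire} null algebroid $K$ --- the case the discussion preceding the theorem attaches to the trivial subalgebra --- the summand $V$ is paired off against its symplectic conjugates and no trivial factor survives; reducing by a proper subalgebra would instead leave the unpaired part of $V$ as the trivial Lie algebroid summand anticipated there, which is precisely why the log symplectic conclusion is the one obtained here.
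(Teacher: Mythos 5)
Your reduction is performed at the wrong ``point'', and this is not a cosmetic slip: it changes $\h$, the quotient $Q$, and the conclusion. You take $\f=\stab(T_{\ca{D}}P,p)$, so your $B=\mu^{-1}(\f)=i_N^!A\cong V\oplus T_{\ca{Z}_N}N$ is the \emph{full} pullback algebroid, and you then claim that Theorem~\ref{theorem;mikami-weinstein}\eqref{item;lie-fibre-act} makes $\h=\stab(T^*_{\ca{D}}P,p)$ act transitively on $K=\ker(\omega_B)$. But for your choice of $\f$ the general theorem produces the Lie algebra $(\lambda_p^\sharp)^{-1}(\f)\cap\f^\circ=\stab(T^*_{\ca{D}}P,p)\cap\stab(T_{\ca{D}}P,p)^\circ$, which is in general a \emph{proper} subalgebra of $\stab(T^*_{\ca{D}}P,p)$; correspondingly your claim that $\gamma_x$ carries all of $\stab(T^*_{\ca{D}}P,p)$ onto $B_x\cap B_x^\omega$ is false, since injectivity of $\gamma_x$ gives $\gamma_x^{-1}\bigl(B_x\cap B_x^\omega\bigr)=\gamma_x^{-1}(B_x)\cap\f^\circ$, not $\gamma_x^{-1}(B_x)$. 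The theorem as stated corresponds to reducing at the \emph{trivial} subalgebra $\f=0_p$: then $B=(T^{\log}\mu)^{-1}(0_p)=T_{\ca{Z}_N}N$ (this is where Lemma~\ref{lemma;log-tangent}\eqref{item;order} and Lemma~\ref{lemma;cross} actually enter), $\omega_N$ is the induced form on this log tangent bundle --- that is what ``logarithmic relative to $\ca{Z}_N$'' means --- and $K$ is its kernel there. This is exactly how the paper proceeds: $\f=\stab(T_{\ca{D}}P,p)$ is used only through Corollary~\ref{corollary;clean-regular} to get the clean intersection and the splitting $i_N^!A\cong V\oplus T_{\ca{Z}_N}N$, after which the reduction machinery is applied with $\f=0_p$. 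Your closing paragraph has the dictionary backwards: the discussion preceding the theorem attaches the log symplectic conclusion to $\f=0$ and warns that a nonzero $\f$ --- your case --- leaves a trivial summand.

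The paper's own example after the theorem shows your route cannot give the stated conclusion. Take $M=\R^2$, $Z$ the $y$-axis, $\omega=x^{-1}dx\wedge dy$, $P=\R$, $\ca{D}=\{0\}$, $\lambda=0$, $\mu(x,y)=x$, $p=0$, so $N=Z$. With your $\f=\stab(T_{\ca{D}}P,0)=\R$ one gets $B=i_N^!A=A|_Z$, on which $\omega_B$ is nondegenerate (in the frame $x\partial/\partial x$, $\partial/\partial y$ one has $\omega_B(x\partial/\partial x,\partial/\partial y)=1$); hence $K=0$, your $\h$ computes to $0$, and the reduced object is the one-dimensional manifold $Z$ carrying the rank-two symplectic algebroid $A|_Z$, which contains the trivial line $L_Z$ as a direct summand and is not a log tangent bundle of $Z$. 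The theorem instead asserts $\h=\stab(T^*_{\ca{D}}P,0)=\R$, acting by $\partial/\partial y$, and $Q=N/\h$ a single point. Note also that the summand $V$ is \emph{not} ``paired off and removed'' by reduction: symplectic reduction removes only the kernel directions of $\omega_B$ together with the base foliation, and since $V$ pairs nondegenerately with its conjugate directions it survives into the quotient, exactly as the paper's discussion predicts. Your part~(i) argument for the clean intersection (the identity $\an_A^{-1}(T_xN)=(T^{\log}\mu)_x^{-1}(\stab(T_{\ca{D}}P,p))$ plus constancy of the fibre dimension) and the appeal to Lemma~\ref{lemma;cross} are fine; everything from the identification of $\omega_N$, $K$ and $\h$ onward must be redone with $\f=0_p$.
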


\begin{proof}
Theorem~\ref{theorem;mikami-weinstein}\eqref{item;lie-fibre-act},
applied to the Lie algebroids $A=T_{\ca{Z}}M$ and $E=T_{\ca{D}}P$,
says that $T^{\log}\mu$ is transverse to any Lie subalgebra $\f$ of\/
$\stab(E,p)$.  Taking $\phi=T^{\log}\mu$ and $\f=\stab(E,p)$ in
Corollary~\ref{corollary;clean-regular} we find that $N$ cleanly
intersects $A$ and that $i_N^!A=(T^{\log}\mu)^{-1}(\stab(E,p))$.  Let
$D_1$, $D_2,\dots$,~$D_k$ be the components of $\ca{D}$ that contain
$p$ and let $\eta_i$ be the distinguished section of\/
$T_{\ca{D}}P|_{D_i}$.  The elements $\eta_1(p)$,
$\eta_2(p),\dots$,~$\eta_k(p)\in E_p$ form a basis of $\stab(E,p)$.
Let $x\in N$ and let $N_0$ be the connected component of $N$
containing $x$.  Since $\mu$ is a morphism, there exist components
$Z_1$, $Z_2,\dots$,~$Z_k$ of $\ca{Z}$ that contain $N_0$ and satisfy
$\mu(Z_i)\subseteq D_i$.  Since $p$ is a regular value of $\mu$, we
have $T^{\log}\mu(\xi_i)=\eta_i$, where $\xi_i$ is the distinguished
section of\/ $T_{\ca{Z}}M|_{Z_i}$.  By Lemma~\ref{lemma;log-tangent},
for any component $Z$ of $\ca{Z}\backslash\{Z_1,Z_2,\dots,Z_k\}$ that
contains $x$ we have $T^{\log}\mu(\xi_Z)=0$.  It now follows from
Lemma~\ref{lemma;cross} that $i_N^!A=V\oplus T_{\ca{Z}_N}N$, where $V$
is a trivial $k$-dimensional vector bundle equipped with a trivial Lie
algebroid structure and $\ca{Z}_N$ is the induced divisor of~$N$.
Moreover, the preimage of the zero ``point'' $\f=0_p$ is the log
tangent bundle of the induced normal crossing divisor $\ca{Z}_N$,
\[
T_{\ca{Z}_N}N=(T^{\log}\mu)^{-1}(0_p).
\]
Theorem~\ref{theorem;mikami-weinstein}\eqref{item;lie-fibre-act} (with
$\f=0_p$) shows that the Lie algebra $\h=\stab(T^*_{\ca{D}}P,p)$ acts
transitively on~$K=\ker(\omega_N)$.  Now suppose that the leaf space
$Q=N/\h$ is a manifold.  To see that $Z_Q$ is a normal crossing
divisor of~$Q$, choose an open subset $U$ of $Q$ such that the
quotient map $q\colon N\to Q$ admits a section $s\colon U\to
V=q^{-1}(U)$.  Then $s$ is transverse to the logarithmic tangent
bundle $T_{\ca{Z}_N}N$, so by Lemma~\ref{lemma;cross} $s$ is
transverse to all strata of $\ca{Z}_N$ and $\ca{Z}_Q|_U$ is a normal
crossing divisor of~$U$.  Thus we have a well-defined log tangent
bundle $T_{\ca{Z}_Q}Q$ (the holonomy condition of
Theorem~\ref{theorem;mikami-weinstein}\eqref{item;reduction} is
vacuous here), which is the quotient of $T_{\ca{Z}_N}N$ by $K$, and
the log presymplectic form $\omega_N$ descends to a log symplectic
form on $Q$.
\end{proof}

\begin{example}
Let $M$ be the plane $\R^2$, $Z$ the $y$-axis,
$\omega=x^{-1}\,dx\wedge dy$, and $A=T_ZM$.  Let $P=\R$, $D=\{0\}$,
$\lambda=0$ the zero Poisson structure, and $E=T_DP$.  The map
$\mu(x,y)=x$ is log Poisson.  We have $\stab(E,p)=0$ for $p\in
P\backslash\{0\}$ and $\stab(E,0)=\R$.  Let us reduce $M$ at the
``point'' $\f=0_p$ for any $p\in P$.  We have
$N=\mu^{-1}(p)=\{p\}\times\R$, $(T^{\log}\mu)^{-1}(\f)=TN$,
$\h=\stab(E^*,0)=\R$, which acts on $M$ as the vertical vector field
$\partial/\partial y$.  The reduced space $Q=N/\h$ is a point equipped
with a zero-dimensional Lie algebroid.  However, if we reduce at $p=0$
with respect to the ``point'' $\f=\stab(E,p)=\R$, we have
$N=\mu^{-1}(p)=Z$, $(T^{\log}\mu)^{-1}(\f)=A|_Z$, and $\h=0$.  The
reduced space is now the divisor $Q=Z/\h=Z$ equipped with the
symplectic Lie algebroid~$A|_Z$.
\end{example}

See Section~\ref{subsection;example} for further examples.  The log
symplectic version of Lemma~\ref{lemma;regular} is the following.

\begin{lemma}\label{lemma;log-regular}
In the context of Theorem~\ref{theorem;log-mikami-weinstein} suppose
that $p$ is contained in the complement of the divisor $\ca{D}$.  Let
$x\in\mu^{-1}(p)$ and let $L$ be the symplectic leaf of $x$ with
respect to the Poisson structure on $M$ determined by the log
symplectic form $\omega$.  Then the Hamiltonian action\/ $\gamma$ is
locally free at $x$ if and only if $x$ is a regular point of the map
$\mu|_L$.
\end{lemma}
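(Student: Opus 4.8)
The plan is to reduce the statement directly to Lemma~\ref{lemma;regular} by checking that its standing hypothesis is automatically met here. In the present setting the two Lie algebroids are $A=T_{\ca{Z}}M$ and $E=T_{\ca{D}}P$, the Poisson morphism is the log tangent map $T^{\log}\mu\colon T_{\ca{Z}}M\to T_{\ca{D}}P$, and its base map is $\mu$ itself; the phrase ``$\gamma$ is locally free at $x$'' refers, as in Theorem~\ref{theorem;mikami-weinstein}, to local freeness of the induced $E^*$-action $\an_A\circ\gamma$ on $TM$. Lemma~\ref{lemma;regular} requires only that the anchor of $E$ be bijective at $p$, so this is the one point to establish.

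To that end I would invoke the structure of the anchor of a logarithmic tangent bundle recalled in Section~\ref{subsection;divisor}: at a point $q\in\ca{D}^{(k)}$ the image of $\an\colon T_{\ca{D}}P\to TP$ equals $T_q\ca{D}^{(k)}$, and $T_{\ca{D}}P$ has rank $\dim P$. By hypothesis $p$ lies in the complement of $\abs{\ca{D}}$, that is $p\in\ca{D}^{(0)}$, which is an open subset of $P$; hence $T_p\ca{D}^{(0)}=T_pP$, so $\an_E$ is surjective at $p$ and therefore bijective by the equality of ranks. Equivalently, no component of $\ca{D}$ passes through $p$, so near $p$ the log tangent bundle $T_{\ca{D}}P$ coincides with the ordinary tangent bundle $TP$ and $\an_E$ is the identity there.

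With the anchor bijective at $p$, Lemma~\ref{lemma;regular} applies unchanged: writing $\lambda_M=\an_A(\omega^{-1})$ for the Poisson structure on $M$ determined by $\omega$ and $L$ for the symplectic leaf through $x$, local freeness of $\an_A\circ\gamma$ at $x$ is equivalent to $T_x\mu(T_xL)=T_pP$, which is precisely the condition that $x$ be a regular point of $\mu|_L$. There is essentially no obstacle in this argument; the only step needing care is the identification of $T_{\ca{D}}P$ with $TP$ over $\ca{D}^{(0)}$, which is exactly what places us in the hypotheses of Lemma~\ref{lemma;regular}.
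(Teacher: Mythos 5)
Your proposal is correct and matches the paper's intent exactly: the paper states this lemma without a separate proof, introducing it as ``the log symplectic version of Lemma~\ref{lemma;regular}'', precisely because over the complement of $\abs{\ca{D}}$ the log tangent bundle $T_{\ca{D}}P$ coincides with $TP$ and its anchor is bijective at $p$, so Lemma~\ref{lemma;regular} applies verbatim. Your verification of the anchor's bijectivity (via the rank count and the description of the anchor's image on strata) and your reading of ``$\gamma$ locally free'' as local freeness of $\an_A\circ\gamma$, consistent with Theorem~\ref{theorem;mikami-weinstein}, are exactly the intended argument.
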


The following normal form theorem, which extends a result of Guillemin
and Sternberg~\cite[\S\,1]{guillemin-sternberg;birational}, is a
direct consequence of Theorems~\ref{theorem;coisotropic}
and~\ref{theorem;log-mikami-weinstein}.  It involves a log Poisson
morphism $\mu\colon M\to\g^*$, where $\g^*$ is the dual of a Lie
algebra $\g$ (equipped with the empty divisor).  If the infinitesimal
Hamiltonian action $\gamma\colon\g\to T_{\ca{Z}}M$ determined by $\mu$
integrates to an action of a connected Lie group $G$ with Lie algebra
$\g$, we call the tuple $(M,\ca{Z},\omega,\mu)$ a \emph{log
  Hamiltonian $G$-manifold}.

\begin{theorem}[normal form near zero fibre for log symplectic forms]
\label{theorem;log-normal}
Let\/ $(M,\ca{Z},\omega,\mu)$ be a log Hamiltonian $G$-manifold and
let $N=\mu^{-1}(0)$ be the zero fibre of $\mu$.  Suppose that the
$G$-action on $N$ is proper and free.  Let $Q=N/G$ be the quotient
manifold, $\ca{Z}_Q$ its normal crossing divisor, and\/ $\omega_Q$ its
log symplectic form.  Choose a connection\/ $1$-form\/
$\theta\in\Omega^1(N,\g)^G$ on the principal $G$-bundle $q\colon N\to
Q$.  Let\/ $\bb{M}$ be the product $N\times\g^*$ with projections\/
${\pr}_1\colon\bb{M}\to N$ and\/ ${\pr}_2\colon\bb{M}\to\g^*$.  Let
$\ca{Z}_{\bb{M}}$ be the normal crossing divisor
$i_N^!\ca{Z}\times\g^*$ of~$\bb{M}$.  The closed\/ $2$-form
\[
\omega_{\bb{M}}=\pr_1^*q^*\omega_Q+
d\biginner{\pr_2,\pr_1^*\theta}\in\Omega^2\bigl(\bb{M}\backslash
\abs{\ca{Z}_{\bb{M}}}\bigr)
\]
has logarithmic singularities at $Z_{\bb{M}}$ and is symplectic in a
neighbourhood of $N=N\times\{0\}$ in~$\bb{M}$.  The $G$-action on $N$
and the coadjoint action on\/ $\g^*$ combine to a Hamiltonian
$G$-action\/ $\gamma_{\bb{M}}$ on $\bb{M}$ with moment map\/
$\mu_{\bb{M}}=\pr_2$.  In an open neighbourhood of $N$ the log
Hamiltonian $G$-manifold\/ $(M,Z,\omega,\mu)$ is isomorphic to\/
$(\bb{M},Z_{\bb{M}},\omega_{\bb{M}},\mu_{\bb{M}})$.
\end{theorem}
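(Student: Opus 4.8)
The plan is to realize $N=\mu^{-1}(0)$ as a transverse coisotropic submanifold of the symplectic Lie algebroid $(A,\omega)$ with $A=T_{\ca{Z}}M$, to identify the model $(\bb{M},\ca{Z}_{\bb{M}},\omega_{\bb{M}})$ with the symplectization of the induced presymplectic data, and then to invoke the coisotropic embedding theorem, carrying the $G$-action along at every step. First I would apply the log symplectic reduction theorem, Theorem~\ref{theorem;log-mikami-weinstein}, with $(P,\ca{D})=(\g^*,\varnothing)$, $E=T\g^*$, and $p=0$. Its hypotheses hold: freeness of the $G$-action on $N$ is exactly local freeness of $\gamma$ at every $x\in N$, and since $\gamma$ is locally free, condition~\eqref{equation;free} forces the log tangent map to be fibrewise onto $\g^*$, so $T\mu_x$ is surjective and $0$ is a regular value (see also Lemma~\ref{lemma;log-regular}). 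The theorem yields: $N$ meets $A$ transversely, $B=i_N^!A=T_{\ca{Z}_N}N$, the pullback $\omega_N=(i_N)_!^*\omega=\omega_B$ is presymplectic with null Lie algebroid $K=\ker(\omega_B)$, and $\h=\stab(T^*_{\ca{D}}P,0)=\g$ acts transitively on $K$. Because $\lambda_0^\sharp=0$, the fibre $B$ is coisotropic, so $i_N$ is a transverse coisotropic embedding; because the action is free, $\gamma$ identifies $\g\ltimes N$ with $K$, giving $K\cong N\times\g$ and $K^*\cong N\times\g^*=\bb{M}$; and $Q=N/G$ carries the reduced log symplectic form $\omega_Q$ with $q^*\omega_Q=\omega_N$.

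Next I would identify the model with the symplectization of $(B,\omega_B)$ from Theorem~\ref{theorem;symplectization}. Under $K^*\cong\bb{M}$ the model algebroid $\bb{A}=p^!B=\pr_1^!T_{\ca{Z}_N}N$ is precisely $T_{\ca{Z}_{\bb{M}}}\bb{M}$, since tangency to the divisor imposes no condition along the $\g^*$ factor. The invariant connection $\theta$ splits $TN$ into horizontal and vertical ($=K$) parts and so determines a splitting $s\colon K^*\to B^*$. Theorem~\ref{theorem;symplectization} then guarantees that $\omega^s$ is symplectic near $N$, that the zero section $\group{j}\colon N\to\bb{M}$ is transverse coisotropic, and that $\group{j}_!^*\omega^s=\omega_B$. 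The remaining point is the matching $\omega^s=\omega_{\bb{M}}$: the first summand $p_!^*\omega_B=\pr_1^*\omega_N=\pr_1^*q^*\omega_Q$ is the first term of $\omega_{\bb{M}}$, while a computation with the Liouville form, exactly in the style of the proof of Proposition~\ref{proposition;phase}, shows that the connection splitting pulls the canonical form back to $s_!^*\alpha_\can=-\inner{\pr_2,\pr_1^*\theta}$, whence $s_!^*\omega_\can=d\inner{\pr_2,\pr_1^*\theta}$. This exhibits $(\bb{M},\ca{Z}_{\bb{M}},\omega_{\bb{M}})$ as a second transverse coisotropic embedding of $(B,\omega_B)$ and simultaneously shows $\pr_2$ is a moment map for the combined $G$-action.

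I would then feed the two embeddings $i_N\colon N\to M$ and $\group{j}\colon N\to\bb{M}$ of the presymplectic Lie algebroid $(B,\omega_B)$ into the coisotropic embedding theorem, Theorem~\ref{theorem;coisotropic}, obtaining a Lie algebroid isomorphism $\phi\colon A|_{U_0}\to T_{\ca{Z}_{\bb{M}}}\bb{M}|_{U_1}$ with $\phi\circ(i_N)_!=\group{j}_!$ and $\phi^*\omega_{\bb{M}}=\omega$. Since a log tangent bundle determines its divisor, the base map $\mr\phi$ is then an isomorphism of the pairs $(M,\ca{Z})$ and $(\bb{M},\ca{Z}_{\bb{M}})$ near $N$.

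The hard part will be making $\phi$ equivariant and matching the moment maps, since Theorem~\ref{theorem;coisotropic} is not stated equivariantly. Because $\theta\in\Omega^1(N,\g)^G$ is $G$-invariant, the model $(\bb{M},\omega_{\bb{M}})$ and the embedding $\group{j}$ are $G$-invariant, and properness of the action supplies a $G$-invariant tubular neighbourhood of $N$. I would therefore arrange that every choice in the proof of Theorem~\ref{theorem;coisotropic} is $G$-invariant: the isotropic complement $L$ (taken to be the horizontal distribution of $\theta$), the Euler-like section $\eps$ (the Euler vector field of the invariant normal bundle), and hence the deformation retraction $\varrho$, so that the Moser primitive $\alpha=\kappa_\varrho(\omega^s-\psi^*\omega)$ and the section $\sigma_t=\omega_t^\sharp\alpha_t$ appearing in Theorem~\ref{theorem;dmw} are $G$-invariant; the resulting isotopy, and thus $\phi$, is then $G$-equivariant. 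Finally, $\mu$ and $\mr\phi^*\pr_2$ are both moment maps for the Hamiltonian $G$-action on $(U_0,\omega)$, so they differ by a locally constant element of $(\g^*)^G$; as both vanish along $N$ and $U_0$ retracts onto $N$, they coincide, and $\phi$ is the desired isomorphism of log Hamiltonian $G$-manifolds.
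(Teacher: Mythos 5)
Your proposal is correct and follows essentially the same route as the paper, which presents Theorem~\ref{theorem;log-normal} as a direct consequence of Theorems~\ref{theorem;coisotropic} and~\ref{theorem;log-mikami-weinstein}: reduction at $p=0$ exhibits $N$ as a transverse coisotropic submanifold with induced data $(B,\omega_B)$, the model $(\bb{M},\ca{Z}_{\bb{M}},\omega_{\bb{M}})$ is identified with its symplectization via the splitting determined by $\theta$, and the coisotropic embedding theorem, with all choices made $G$-invariantly, supplies the isomorphism. Your explicit handling of equivariance and of matching the two moment maps (locally constant difference vanishing along $N$) fills in details the paper leaves implicit but does not alter the approach.
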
  

%%%%%%%%%%%%%%%%%%%%%%%%%%%%%%%%%%%%%%%%%%%%%%%%%%%%%%%%%%%%%%%%%%%%%%%%
\subsection{Some examples of log symplectic reduction}
\label{subsection;example}
%%%%%%%%%%%%%%%%%%%%%%%%%%%%%%%%%%%%%%%%%%%%%%%%%%%%%%%%%%%%%%%%%%%%%%%%

In this section $(M,\ca{Z},\omega)$ denotes a log symplectic manifold
with log tangent bundle $A=T_{\ca{Z}}M$.

\begin{example}[reduction with respect to the identity map]
\label{example;log-identity}
Let $\mu=\id_M$ be the identity map of\/ $M$.  The symplectic quotient
of\/ $M$ at a ``point'' $(x,\liea)$, where $\liea$ is a Lie subalgebra
of\/ $\stab(A,x)$, is the point $x$ equipped with the symplectic Lie
algebra $\liea/(\liea\cap\liea^\omega)$; see
Example~\ref{example;identity}.  If\/ $x\in\ca{Z}^{(k)}$, then
$\stab(A,x)$ is a $k$-dimensional abelian Lie algebra, spanned in a
normal crossing chart by the elements $x_1\tangent x_1$, $x_2\tangent
x_2,\dots$,~$x_k\tangent x_k$.  Hence $\liea/(\liea\cap\liea^\omega)$
is abelian as well.
\end{example}

\begin{example}[log cotangent reduction]\label{example;log-cotangent}
Let $X$ be a manifold and $\ca{Z}_X$ a normal crossing divisor of~$X$.
If\/ $\pi\colon Y\to X$ is a submersion, then
$\ca{Z}_Y=\pi^{-1}(\ca{Z}_X)$ is a normal crossing divisor of\/ $Y$
and the log tangent bundle of\/ $(Y,\ca{Z}_Y)$ is the pullback Lie
algebroid $\pi^!B$, where $B=T_{\ca{Z}_X}X$.  Taking $Y=B^*$ and $\pi$
to be the bundle projection $B^*\to X$, in which case $\pi^!B$ is the
phase space of the Lie algebroid $B$ (see
Section~\ref{subsection;phase}), we obtain that the log cotangent
bundle $M=T_{\ca{Z}_X}^*X$ equipped with the divisor
$\ca{Z}=\pi^{-1}(\ca{Z}_X)$ and the form $\omega_\can=-d\alpha_\can$
is log symplectic.  Let $G$ be a Lie group and $G\times X\to X$ an
action that preserves $\abs{\ca{Z}_X}$.  The action lifts naturally to
an action $G\times M\to M$, which preserves $\abs{\ca{Z}}$ and leaves
the Liouville form $\alpha_\can$ invariant.  Therefore the lifted
action is log Hamiltonian with moment map $\mu\colon M\to\g^*$ given
by $\inner{\mu,\xi}=\iota_B(\xi_X)\alpha_\can$ for $\xi\in\g$.  If the
$G$-action on $X$ is proper and free, the quotient $Q=X/G$ is a
manifold with normal crossing divisor $\ca{Z}_Q=\ca{Z}_X/G$, and the
symplectic quotient of\/ $M$ at $0$ is naturally isomorphic to the log
cotangent bundle $T_{\ca{Z}_Q}^*Q$ of~$Q$.
\end{example}

\begin{example}[log linear Poisson structures]
\label{example;log-affine} 
Let $\g$ be a finite-dimensional real Lie algebra and let
$\bb{v}\colon\g\to\R^l$ be a surjective Lie algebra homomorphism to
the abelian Lie algebra $\R^l$.  We will regard $\bb{v}$ as a linearly
independent $l$-tuple $v_1$, $v_2,\dots$,~$v_l\in\g^*$ of characters
of~$\g$.  Extend $v_1$,~$v_2,\dots$,~$v_l$ to a basis
$v_1$,~$v_2,\dots$,~$v_n$ of\/ $\g^*$ and let
$c_{ij}^k=v_k([v^*_i,v^*_j])$ be the corresponding structure
constants.  Then $c_{ij}^k=0$ for $k\le l$ because $v_k$ is a
character for $k\le l$.  The usual linear Poisson structure on $\g^*$
is given by
\[
\lambda_0=\sum_{1\le i<j\le
  n}c_{ij}(y)\pardif{}{y_i}\wedge\pardif{}{y_j},
\]
where $c_{ij}(y)=\sum_{k=l+1}^nc_{ij}^ky_k$.  Substituting
\begin{equation}\label{equation;log-substitution}
y_k=\log\abs{x_k}\quad\text{for $k\le l$},\qquad y_k=x_k\quad\text{for
  $k\ge l+1$}
\end{equation}
into $\lambda_0$ yields the cubic Poisson structure
\begin{multline}\label{equation;log-poisson}
\lambda=\sum_{1\le i<j\le
  l}x_ix_jc_{ij}(x)\pardif{}{x_i}\wedge\pardif{}{x_j}+
\sum_{i=1}^l\sum_{j=l+1}^nx_ic_{ij}(x)
\pardif{}{x_i}\wedge\pardif{}{x_j}+
\\
\sum_{l+1\le i<j\le n}c_{ij}(x)\pardif{}{x_i}\wedge\pardif{}{x_j}.
\end{multline}
This is a priori only defined for $x_1x_2\cdots x_l\ne0$, but
manifestly extends to a smooth Poisson structure on $P=\R^n$, which is
log Poisson with respect to the divisor
$\ca{D}=\{D_1,D_2,\dots,D_l\}$, where $D_k$ is the coordinate
hyperplane $\{x_k=0\}$.  Thus $(P,\ca{D},\lambda)$ is a log Poisson
manifold.  We will also denote this log Poisson manifold by
$P(\g,\bb{v})$.  The top stratum of\/ $P$, i.e.\ the complement of
$\abs{\ca{D}}$, is the disjoint union of\/ $2^l$ copies of the linear
Poisson space $(\g^*,\lambda_0)$, so we can think of\/ $P$ as the
result of gluing $2^l$ copies of\/ $\g^*$ along hyperplanes.  Each of
the components of the divisor $\ca{D}$ is a log Poisson manifold in
its own right, namely $D_k=P(\g_k,\bb{v}_k)$, where $\g_k$ is the
ideal $\ker(v_k)$ of\/ $\g$ and $\bb{v}_k$ is the $l-1$-tuple
\[
v_1|_{\g_k},\,v_2|_{\g_k},\dots,\,v_{k-1}|_{\g_k},\,v_{k+1}|_{\g_k},\dots,
\,v_l|_{\g_k}
\]
of characters of\/ $\g_k$.  Just as the Poisson manifold $\g^*$
integrates to a symplectic groupoid, so the log Poisson manifold $P$
integrates to a log symplectic groupoid $M$, as follows.  Let $G$ be a
connected Lie group with Lie algebra $\g$ such that the characters
$v_k$ exponentiate to characters $\bar{v}_k\colon G\to\R$ (e.g.\ take
$G$ to be simply connected).  We denote the tuple of characters
$(\bar{v}_1,\bar{v}_2,\dots,\bar{v}_k)$ by $\bar{\bb{v}}$.  Making the
substitution~\eqref{equation;log-substitution} into the coadjoint
action of\/ $G$ on $\g^*$ yields a smooth action of\/ $G$ on $P$ that
preserves the divisor $\ca{D}$ and the Poisson structure~$\lambda$.
We define
\[M=M(G,\bar{\bb{v}})=G\ltimes P\]
to be the corresponding action groupoid and $\ca{Z}$ the divisor
$G\times\ca{D}$ of~$M$.  Recall that the action groupoid
$G\ltimes\g^*=T^*G$ of the coadjoint action on $\g^*$, equipped with
the cotangent symplectic form $\omega_\can=-d\alpha_\can$, is a
symplectic groupoid.  Let $\theta\in\Omega^1(G,\g)$ be the
left-invariant Maurer-Cartan form.  We have
$\alpha_\can=\sum_ky_k\theta_k$, where the $\theta_k$ are the
components of\/ $\theta$ relative to the basis~$v_k^*$, so the
Maurer-Cartan equation gives
\[
\omega_\can=\sum_{k=1}^n(\theta_k\wedge dy_k-y_k\,d\theta_k)=
\sum_{k=1}^n\theta_k\wedge dy_k-\frac12\sum_{1\le i<j\le
  n}c_{ij}(y)\theta_i\wedge\theta_j.
\]
Substituting~\eqref{equation;log-substitution} into
$\omega_\can$ yields the form
\begin{equation}\label{equation;log-affine}
\omega=\sum_{k=1}^l\theta_k\wedge\frac{dx_k}{x_k}+
\sum_{k=l+1}^n\theta_k\wedge dx_k-\frac12\sum_{1\le i<j\le
  n}c_{ij}(x)\theta_i\wedge\theta_j
\end{equation}
on $M$, which is log symplectic with respect to the divisor $\ca{Z}$.
The log symplectic groupoid $M(G,\bar{\bb{v}})$ integrates the log
Poisson manifold $P(\g,\bb{v})$ in the sense
of~\cite[\S\,III]{coste-dazord-weinstein;groupoides}.  The projection
$\mu=\pr_2\colon M\to P$, i.e.\ the source map of the groupoid, is a
log Poisson map.  The Hamiltonian action of\/ $T_{\ca{D}}^*P=\g\ltimes
P$ on $T_{\ca{Z}}M$ with moment $\mu$ is the infinitesimal left
translation action of\/ $\g$ on~$G$.  The fibre of\/ $\mu$ at a point
$p\in P$ is $\mu^{-1}(p)=G\times\{p\}$.  The stabilizer
$\h=\stab(T_{\ca{D}}^*P,p)\subseteq\g$ of\/ $p=(p_1,p_2,\dots,p_n)\in
P=\R^n$ is equal to the coadjoint stabilizer of the element
$v(p)=\sum_{k=1}^np_kv_k\in\g^*$.  Thus the reduced space of\/ $M$
with respect to $\mu$ at $p$ is $G/\h$, which is up to a covering map
the coadjoint $G$-orbit of\/ $v(p)$.
\end{example}

\begin{example}\label{example;log-cotangent-affine}
The log symplectic groupoid $M(G,\bar{\bb{v}})$ of
Example~\ref{example;log-affine} can itself be obtained by a log
symplectic reduction as follows.  Let $\ca{Z}_0$ be the normal
crossing divisor $q_1q_2\cdots q_l=0$ of~$\R^l$.  The action of the
Lie group $H=\R^l$ on $\R^l$ given by
\[h\cdot q=(e^{h_1}q_1,e^{h_2}q_2,\dots,e^{h_l}q_l)\]
preserves the divisor.  By Example~\ref{example;log-cotangent} the
lift of the action to the log cotangent bundle $T_{\ca{Z}_0}^*\R^l$ is
log Hamiltonian with respect to the log symplectic form
$\omega_0=\sum_{k=1}^lq_i^{-1}\,dq_i\wedge dp_i$, with moment map
$\phi_0\colon T_{\ca{Z}_0}^*\R^l\to\Lie(H)=\R^l$ given by
$\phi_0(q,p)=p$.  The action is not proper and we cannot form a good
quotient.  But let $G$ be another Lie group and let
$\bar{\bb{v}}\colon G\to\R^l$ be a surjective Lie group homomorphism.
We regard $\bar{\bb{v}}$ as an $l$-tuple of characters
$\bar{v}_k\colon G\to\R$; the infinitesimal characters
$v_k=d_1\bar{v}_k\colon\g\to\R$ are then linearly independent for
$1\le k\le l$.  The $H$-action on $T_{\ca{Z}_0}\R^l\times
T^*G=T_{\ca{Z}_0}\R^l\times G\times\g^*$ defined by
\[
h\cdot(q,p,g,y)=\biggl(h\cdot q,p,g,y-\sum_{k=1}^lh_kv_k\biggr)
\]
for $h\in H$, $q\in\R^l$, $g\in G$, and $y\in\g^*$ is proper and free,
because the translation action of\/ $\g^*$ on itself is proper and free.
This $H$-action is log Hamiltonian relative to the divisor
$\ca{Z}_0\times T^*G$ and the log symplectic form
$\omega_0+\omega_\can$, where $\omega_\can$ is the cotangent
symplectic form on $T^*G$.  The moment map $\phi\colon
T_{\ca{Z}_0}\R^l\times T^*G\to\R^l$ is given by
\[\phi(q,p,g,y)=p+\bar{\bb{v}}(g),\]
the symplectic quotient at $0$ is
\[(\R^l\times T^*G)/H=G\times(\R^l\times\g^*)/H=M(G,\bar{\bb{v}}),\]
and the reduced log symplectic form agrees
with~\eqref{equation;log-affine}.  Similarly, the log Poisson manifold
$P(\g,\bb{v})$ is the quotient $(\R^l\times\g^*)/H$, where we give
$\R^l$ the zero Poisson structure and $\g^*$ its linear Poisson
structure $\lambda_0$.  The action of\/ $\g^*$ on $\R^l\times\g^*$ given
by translation in the second factor descends to a $\g^*$-action on
$P(\g,\bb{v})$, the orbits of which are the strata.  The spaces
$P(\g,\bb{v})$ are a nonabelian version of the tropical welded spaces
of Gualtieri et
al.~\cite[\S\,3.2]{gualtieri-li-pelayo-ratiu;tropical-toric-log} and
the moment codomains of Guillemin et
al.~\cite[\S\,5]{guillemin-miranda-pires-scott;toric-b-symplectic}.
\end{example}

%%%%%%%%%%%%%%%%%%%%%%%%%%%%%%%%%%%%%%%%%%%%%%%%%%%%%%%%%%%%%%%%%%%%%%%%
%%%%%%%%%%%%%%%%%%%%%%%%%%%%%%%%%%%%%%%%%%%%%%%%%%%%%%%%%%%%%%%%%%%%%%%%
\appendix
%%%%%%%%%%%%%%%%%%%%%%%%%%%%%%%%%%%%%%%%%%%%%%%%%%%%%%%%%%%%%%%%%%%%%%%%
%%%%%%%%%%%%%%%%%%%%%%%%%%%%%%%%%%%%%%%%%%%%%%%%%%%%%%%%%%%%%%%%%%%%%%%%

%%%%%%%%%%%%%%%%%%%%%%%%%%%%%%%%%%%%%%%%%%%%%%%%%%%%%%%%%%%%%%%%%%%%%%%%
\section{Lie algebroids}\label{section;algebroid}
%%%%%%%%%%%%%%%%%%%%%%%%%%%%%%%%%%%%%%%%%%%%%%%%%%%%%%%%%%%%%%%%%%%%%%%%

This appendix is an exposition of some elementary properties of Lie
algebroids, including fibred products, pullbacks, a regular value
theorem, and differential forms.  This is mostly standard material
that is covered more fully
in~\cite{fernandes;lie-algebroids-holonomy},
\cite{higgins-mackenzie;algebraic-lie-algebroids},
\cite{mackenzie;lie-groupoids},
and~\cite{meinrenken;groupoids-algebroids}.

%%%%%%%%%%%%%%%%%%%%%%%%%%%%%%%%%%%%%%%%%%%%%%%%%%%%%%%%%%%%%%%%%%%%%%%%
\subsection{Lie algebroids and morphisms}\label{subsection;morphism}
%%%%%%%%%%%%%%%%%%%%%%%%%%%%%%%%%%%%%%%%%%%%%%%%%%%%%%%%%%%%%%%%%%%%%%%%

A \emph{Lie algebroid} is a real vector bundle $\pi=\pi_A\colon A\to
M$ equipped with a Lie bracket on the space of sections, which we
denote by
\glossary{[]@$[{\cdot},{\cdot}]_A$, Lie algebroid bracket}
\[
[{\cdot},{\cdot}]=
[{\cdot},{\cdot}]_A\colon\Gamma(A)\times\Gamma(A)\longto\Gamma(A),
\]
and a vector bundle map $\an=\an_A\colon A\to TM$ called the
\emph{anchor}.  The bracket is required to satisfy the \emph{Leibniz
  rule} with respect to the anchor:
$[\sigma,f\tau]=f[\sigma,\tau]+(\sigma\cdot f)\tau$ for all sections
$\sigma$, $\tau\in\Gamma(A)$ and functions $f\in\ca{C}^\infty(M)$,
where $\sigma\cdot f$ denotes the derivative of\/ $f$ along the vector
field~$\an(\sigma)$.
\glossary{AtoM@$A\to M$, Lie algebroid}
\glossary{an@$\an_A\colon A\to TM$, anchor of Lie algebroid}

For a vector bundle $E\to M$ and a (not necessarily embedded)
subbundle $F\to N$ we define a \emph{relative section} to be a section
of\/ $E$ whose restriction to $N$ is a section of\/ $F$.  We denote
the $\ca{C}^\infty(M)$-module of relative sections by
\glossary{GammaEF@$\Gamma(E;F)$, relative sections of vector bundle
  pair}
\begin{equation}\label{equation;relative}
\Gamma(E;F)=\{\,\sigma\in\Gamma(E)\mid\sigma|_N\in\Gamma(F)\,\}.
\end{equation}
Let $0_N$ denote the zero vector bundle over~$N$.
\glossary{0@$0_M$, zero vector bundle over $M$}
Then $\Gamma(E;0_N)$ is the module of sections that vanish at $N$.  We
call a pair of open subsets $U\subseteq M$ and $V\subseteq N$
\emph{adapted} to the submanifold $N$ if $V$ is a closed embedded
submanifold of $U$.  If the pair $(U,V)$ is adapted, then
\begin{equation}\label{equation;relative-adapted}
\Gamma(V,F)\cong\Gamma(E|_U;F|_V)/\Gamma(E|_U;0_V).
\end{equation}

A \emph{Lie subalgebroid} of a Lie algebroid $A\to M$ is a (not
necessarily embedded) subbundle $B\to N$ such that $\an_A(B)\subseteq
TN$ and $\Gamma(A|_U;B|_V)$ is a Lie subalgebra of $\Gamma(A|_U)$ for
all adapted pairs $(U,V)$.  If $B$ is a Lie subalgebroid of $A$, then
by the Leibniz rule $\Gamma(A|_U;0_V)$ is an ideal of
$\Gamma(A|_U;B|_V)$ for every adapted pair $(U,V)$.
Using~\eqref{equation;relative-adapted} we see that
$\Gamma(V,B)\cong\Gamma(A|_U;B|_V)/\Gamma(A|_U;0_V)$ is a Lie algebra.
This makes $B$ a Lie algebroid over $N$ with anchor $\an_B=\an_A|_B$
(\cite[Proposition~6.14]{meinrenken;groupoids-algebroids}).

A \emph{morphism} from $A$ to another Lie algebroid $\pi_B\colon B\to
N$ is most conveniently defined as a vector bundle morphism
$\phi\colon A\to B$ whose graph is a Lie subalgebroid of the direct
product Lie algebroid $A\times B$
(\cite[\S\,7.1]{meinrenken;groupoids-algebroids}).  We will denote the
base map induced by a Lie algebroid morphism $\phi$ by
\glossary{phi@$\mr\phi$, base map of vector bundle map $\phi$}
\[\mr\phi\colon M\longto N.\]
The anchor of $A$ itself is a morphism of Lie algebroids $\an_A\colon
A\to TM$ with base map $\mr\an_A=\id_M$.  Lie algebroids and their
morphisms form a category.

%%%%%%%%%%%%%%%%%%%%%%%%%%%%%%%%%%%%%%%%%%%%%%%%%%%%%%%%%%%%%%%%%%%%%%%%
\subsection{Fibred products}\label{subsection;fibred}
%%%%%%%%%%%%%%%%%%%%%%%%%%%%%%%%%%%%%%%%%%%%%%%%%%%%%%%%%%%%%%%%%%%%%%%%

Given Lie algebroids $A_0\to M_0$, $A_1\to M_1$, and $A_2\to M_2$ and
Lie algebroid morphisms $\phi_1\colon A_1\to A_0$ and $\phi_2\colon
A_2\to A_0$, we form the fibred product
\begin{equation}\label{equation;fibred-product}
A=A_1\times_{A_0}A_2=\{\,(a_1,a_2)\in A_1\times
A_2\mid\phi_1(a_1)=\phi_2(a_2)\,\},
\end{equation}
which we regard as a subspace of the direct product Lie algebroid
$A_1\times A_2$ over $M_1\times M_2$.  The image of the natural
projection $A\to M_1\times M_2$ is the fibred product
$M=M_1\times_{M_0}M_2$.  Though $M$ may not be a smooth manifold and
$A$ may not be a vector bundle, for every $x=(x_1,x_2)\in M$ the fibre
of the projection $A\to M$ is a vector space, namely
\begin{equation}\label{equation;fibred-fibre}
A_x=A_{1,x_1}\times_{A_{0,x_0}}A_{2,x_2},
\end{equation}
where $x_0=\mr\phi_1(x_1)=\mr\phi_2(x_2)$.  The next result is taken
from Meinrenken's
notes~\cite[Proposition~7.14]{meinrenken;groupoids-algebroids}.

\begin{proposition}\label{proposition;fibred-product}
Let $A_0\to M_0$, $A_1\to M_1$, and $A_2\to M_2$ be Lie algebroids and
let $\phi_1\colon A_1\to A_0$ and $\phi_2\colon A_2\to A_0$ be Lie
algebroid morphisms.  If $\phi_1$ and $\phi_2$ intersect cleanly, the
fibred product $A=A_1\times_{A_0}A_2$ is a Lie subalgebroid of
$A_1\times A_2$ over the submanifold $M=M_1\times_{M_0}M_2$ of
$M_1\times M_2$.  The fibre of $A$ at $x\in M$ is the vector
space~\eqref{equation;fibred-fibre}.
\end{proposition}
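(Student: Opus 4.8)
The plan is to verify the three defining properties of a Lie subalgebroid for $A=A_1\times_{A_0}A_2$ sitting inside the direct product $A_1\times A_2$: that $A$ is a subbundle over a submanifold $M$ of $M_1\times M_2$, that its anchor carries $A$ into $TM$, and that its relative sections are closed under the bracket of $A_1\times A_2$. The fibre formula \eqref{equation;fibred-fibre} is immediate, since by construction the set-theoretic fibre of $A$ over $x=(x_1,x_2)$ is exactly $\{(a_1,a_2)\in A_{1,x_1}\times A_{2,x_2}\mid\phi_1(a_1)=\phi_2(a_2)\}$. The clean intersection hypothesis on $\phi_1$ and $\phi_2$ (Definition~\ref{definition;clean}) says precisely that $A$ is a submanifold of $A_1\times A_2$ and that at each point its tangent space is the fibred product $T_{a_1}A_1\times_{T_{a_0}A_0}T_{a_2}A_2$; restricting to the zero sections, which the vector bundle maps $\phi_i$ carry into the zero section of $A_0$, entails that $\mr\phi_1,\mr\phi_2$ also intersect cleanly and hence that $M=M_1\times_{M_0}M_2$ is a submanifold of $M_1\times M_2$.

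The fibres $A_x$ are linear subspaces, being the kernels of $(a_1,a_2)\mapsto\phi_{1,x_1}(a_1)-\phi_{2,x_2}(a_2)$, and $A$ is invariant under fibrewise scaling; a submanifold of a vector bundle with these two features is a subbundle over its base as soon as the fibre rank is locally constant, and constancy follows from the clean tangent formula, since $\dim T_{(a_1,a_2)}A-\dim T_xM=\dim A_x$ is then locally constant. For the anchor I would use that $\an_{A_1\times A_2}=\an_{A_1}\times\an_{A_2}$ and that each morphism $\phi_i$ intertwines anchors, $\an_{A_0}\circ\phi_i=T\mr\phi_i\circ\an_{A_i}$. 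For $(a_1,a_2)\in A$ with common image $a_0$ this gives $T\mr\phi_1(\an_{A_1}a_1)=\an_{A_0}(a_0)=T\mr\phi_2(\an_{A_2}a_2)$, so $(\an_{A_1}a_1,\an_{A_2}a_2)$ lies in $T_{x_1}M_1\times_{T_{x_0}M_0}T_{x_2}M_2=T_xM$, giving $\an(A)\subseteq TM$.

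The essential point is that the relative sections $\Gamma((A_1\times A_2)|_U;A|_V)$ over an adapted pair $(U,V)$ form a Lie subalgebra. Writing $\ca{I}=\Gamma((A_1\times A_2)|_U;0_V)$ for the sections vanishing on $V=M\cap U$, a local-frame computation shows that $[\Sigma,Z]|_V=0$ for any relative section $\Sigma$ and any $Z\in\ca{I}$: on $V$ the only surviving term is $\an(\Sigma)$ applied to the coefficients of $Z$, which vanishes because $\an(\Sigma)$ is tangent to $V$ by the anchor step while those coefficients vanish on $V$. Hence $\ca{I}$ is a Lie ideal, and by the Leibniz rule closure reduces to a generating set. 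I would take as generators relative sections of the product form $\sigma_1\times\sigma_2$, with $\sigma_i\in\Gamma(A_i)$ chosen so that $\sigma_1\sim_{\phi_1}\beta$ and $\sigma_2\sim_{\phi_2}\beta$ for a single $\beta\in\Gamma(A_0)$. For such sections the product bracket is computed componentwise, $[\sigma_1\times\sigma_2,\tau_1\times\tau_2]=[\sigma_1,\tau_1]\times[\sigma_2,\tau_2]$, and since the bracket preserves $\phi$-relatedness one has $[\sigma_1,\tau_1]\sim_{\phi_1}[\beta,\gamma]$ and $[\sigma_2,\tau_2]\sim_{\phi_2}[\beta,\gamma]$ whenever $\tau_1\sim_{\phi_1}\gamma$ and $\tau_2\sim_{\phi_2}\gamma$; evaluating at $x=(x_1,x_2)\in M$ makes both $\phi_1([\sigma_1,\tau_1])$ and $\phi_2([\sigma_2,\tau_2])$ equal to $[\beta,\gamma](\mr\phi_1(x_1))$, so the bracket is again a relative section of $A$.

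The main obstacle is to prove that these doubly $\phi_i$-related product sections, together with $\ca{I}$, actually generate all relative sections of $A$ locally: an arbitrary vector in $A_x$ need not be the value of a globally $\phi_i$-related section, since $\beta(\mr\phi_i(y))$ need not lie in the image of $\phi_{i,y}$ away from $x$. This is exactly the step that consumes the clean intersection hypothesis, which supplies a local normal form for $\phi_1$ and $\phi_2$ in adapted frames in which the required generators can be written down explicitly. Granting this local generation, the componentwise bracket formula and the ideal property reduce the closure of relative sections to a formal calculation, and $A=A_1\times_{A_0}A_2$ is a Lie subalgebroid with the stated fibres.
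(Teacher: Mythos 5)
The routine parts of your argument are fine: the fibre formula, the anchor computation showing $\an(A)\subseteq TM$, the observation that sections vanishing on $V$ form an ideal (so that brackets of relative sections, restricted to $V$, depend only on their restrictions to $V$), and the closure of doubly related product sections under the bracket. (Your subbundle step is essentially a re-derivation of Lemma~\ref{lemma;vector-clean}; both the claim that cleanness of $\phi_1,\phi_2$ forces cleanness of the base maps and your dimension count $\dim T_{(a_1,a_2)}A-\dim T_xM=\dim A_x$ are nontrivial and are handled in the paper via the Grabowski--Rotkiewicz theorem. Note also that the paper does not prove this proposition at all: it imports it from Meinrenken's notes \cite{meinrenken;groupoids-algebroids} and proves only the bundle-level lemma.) The fatal problem is the step you flag and then grant: that, locally and modulo the ideal $\ca{I}$, the relative sections of $(A_1\times A_2;A)$ are generated by product sections $\sigma_1\times\sigma_2$ with $\sigma_1\sim_{\phi_1}\beta$ and $\sigma_2\sim_{\phi_2}\beta$ for a common $\beta$. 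This is not a missing verification that cleanness will supply; it is false under cleanness. Take $A_1=A_2=A_0=T\R$, $\phi_2=\id$, and $\phi_1=Tf$ with $f(t)=t^3$. This intersection is clean (the fibred product is $\mathrm{Gr}(Tf)=T\,\mathrm{Gr}(f)$, the base maps intersect cleanly, and the fibres have constant dimension one), and the conclusion of the proposition is obvious here, since relative sections are just vector fields on $\R^2$ tangent to the curve $\mathrm{Gr}(f)$. But if $\sigma_1=g(t)\,\partial_t$ is $Tf$-related to $\beta=h(x)\,\partial_x$, then $3t^2g(t)=h(t^3)$ near $t=0$, and smoothness of $h$ forces $g(t)=\tfrac13 h'(0)\,t+O(t^4)$, hence $g(0)=0$. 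So every one of your generators vanishes at the point $(0,0)\in M$, and no $\ca{C}^\infty$-combination of them, corrected by elements of $\ca{I}$, can reproduce the relative section $\partial_t+3t^2\partial_x$, whose value $(\partial_t,0)$ spans the one-dimensional fibre $A_{(0,0)}$. This is exactly the obstruction you anticipated, and no local normal form removes it, because the example is already clean: the failure is one of smooth divisibility ($h$ would have to behave like $s^{2/3}$), which transversality-type hypotheses do not repair.

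Consequently the bracket closure needs a mechanism that never extends sections across the $\phi_i$. One standard route, in the spirit of the cited source, is dual: a subbundle $B\to N$ of a Lie algebroid $C$, whose anchor maps $B$ into $TN$, is a Lie subalgebroid if and only if the ideal $\ca{I}(B)\subseteq\Omega_C^\bu$ of forms whose restriction to $B$ vanishes is preserved by $d_C$. For $C=A_1\times A_2$ and $B=A$, cleanness (constancy of $\dim A_x$, via Lemma~\ref{lemma;vector-clean}) shows that $\ca{I}(A)$ is locally generated by functions vanishing on $M$ together with the $1$-forms $\pr_1^*\phi_1^*\gamma-\pr_2^*\phi_2^*\gamma$ with $\gamma\in\Omega_{A_0}^1(M_0)$, since these span the annihilator $A_x^\circ$ pointwise. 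Stability under $d$ is then immediate: the differential of a function vanishing on $M$ annihilates $A$ by your anchor computation, and $d\bigl(\pr_1^*\phi_1^*\gamma-\pr_2^*\phi_2^*\gamma\bigr)=\pr_1^*\phi_1^*d\gamma-\pr_2^*\phi_2^*d\gamma$ because pullback along a Lie algebroid morphism is a cochain map, and this $2$-form again vanishes on $A_x$ since $\phi_1(u_1)=\phi_2(u_2)$ there. This uses cleanness only where it is genuinely needed (the subbundle structure and the constant-rank generation of the annihilator ideal) and requires no section of $A_0$ to be related to anything.
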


A surprisingly recent result of Grabowski and
Rotkiewicz~\cite{grabowski-rotkiewicz;higher-vector-bundles} provides
us with a convenient criterion for two vector bundle maps to intersect
cleanly or transversely.

\begin{lemma}\label{lemma;vector-clean}
Let $X_0$, $X_1$, and $X_2$ be manifolds and let $E_0\to X_0$, $E_1\to
X_1$, and $E_2\to X_2$ be vector bundles.  Let\/ $\phi_1\colon E_1\to
E_0$, resp.\ $\phi_2\colon E_2\to E_0$, be vector bundle morphisms
with base maps $\mr\phi_1\colon X_1\to X_0$, resp.\ $\mr\phi_2\colon
X_2\to X_0$.  Form the fibred products
\begin{gather*}
X=X_1\times_{X_0}X_2=\{\,(x_1,x_2)\in X_1\times
X_2\mid\mr\phi_1(x_1)=\mr\phi_2(x_2)\,\},
\\
E=E_1\times_{E_0}E_2=\{\,(u_1,u_2)\in E_1\times
E_2\mid\phi_1(u_1)=\phi_2(u_2)\,\}.
\end{gather*}
\[
\begin{tikzcd}
&E\arrow[dl]\arrow[rr]\arrow[dd]&&E_2\arrow[dl,"\phi_2"]\arrow[dd]\\
E_1\arrow[rr,crossing over,"\phi_1"near end]\arrow[dd]&&E_0\\
&X\arrow[dl]\arrow[rr]&&X_2\arrow[dl,"\mr\phi_2"]\\
X_1\arrow[rr,"\mr\phi_1"]&&X_0\arrow[from=uu,crossing over]\\
\end{tikzcd}
\]
Define $\pi_E\colon E\to X$ by
$\pi_E(u)=(\pi_{E_1}(u_1),\pi_{E_2}(u_2))$ for $u=(u_1,u_2)\in E$ and
define $E_x=\pi_E^{-1}(x)$ for $x\in X$.
\begin{enumerate}
\item\label{item;fibred-clean}
The morphisms\/ $\phi_1$ and\/ $\phi_2$ intersect cleanly if and only
if the base maps $\mr\phi_1$ and $\mr\phi_2$ intersect cleanly and the
dimension of the vector space $E_x$ is independent of $x\in X$.
\item\label{item;fibred-transverse}
The morphisms\/ $\phi_1$ and\/ $\phi_2$ intersect transversely if and
only if $\mr\phi_1$ and $\mr\phi_2$ intersect transversely and
$E_{0,x_0}=\phi_1(E_{1,x_1})+\phi_2(E_{2,x_2})$ for all triples
$(x_0,x_1,x_2)\in X_0\times X_1\times X_2$ with
$x_0=\mr\phi_1(x_1)=\mr\phi_2(x_2)$.
\item\label{item;fibred-product}
If\/ $\phi_1$ and\/ $\phi_2$ intersect cleanly, then $X$ is a
submanifold of $X_1\times X_2$ and $E$ is a subbundle of $E_1\times
E_2$ with base $X$ and bundle projection~$\pi_E$.
\end{enumerate}
\end{lemma}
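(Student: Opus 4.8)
The plan is to reduce all three assertions to a single fibrewise computation — the snake lemma — supplemented by the Grabowski--Rotkiewicz principle that a submanifold of a vector bundle which is stable under fibrewise scalar multiplication is automatically a vector subbundle. Fix $u=(u_1,u_2)\in E$ and set $x_i=\pi_{E_i}(u_i)$, $x_0=\mr\phi_1(x_1)=\mr\phi_2(x_2)$, $e_0=\phi_1(u_1)=\phi_2(u_2)$. For each $i\in\{0,1,2\}$ there is a short exact sequence $0\to E_{i,x_i}\to T_{u_i}E_i\to T_{x_i}X_i\to0$ (vertical vectors, followed by the tangent projection), and since $\phi_i$ is fibrewise linear its tangent map $T_{u_i}\phi_i$ is compatible with these sequences: it restricts to the fibre map $\phi_{i,x_i}\colon E_{i,x_i}\to E_{0,x_0}$ and covers $T_{x_i}\mr\phi_i$. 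First I would record the elementary inclusion $T_uE\subseteq W$, where $W=T_{u_1}E_1\times_{T_{e_0}E_0}T_{u_2}E_2$ is the fibred product of tangent spaces, and recall that a clean intersection means exactly that $E$ is a submanifold with $T_uE=W$ for all $u$, while transversality means $\im(T_{u_1}\phi_1)+\im(T_{u_2}\phi_2)=T_{e_0}E_0$ at each $u\in E$.

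The engine is as follows. Regarding the three sequences above as the columns of a commutative ladder whose two horizontal maps are $\phi_{1,x_1}-\phi_{2,x_2}$ on fibres, $T_{u_1}\phi_1-T_{u_2}\phi_2$ on total spaces, and $T_{x_1}\mr\phi_1-T_{x_2}\mr\phi_2$ on bases, one gets a short exact sequence of two-term complexes. The snake lemma then produces the six-term exact sequence
\[0\to E_x\to W\to V\to C_{\mathrm f}\to C_{\mathrm t}\to C_{\mathrm b}\to0,\]
where $E_x=E_{1,x_1}\times_{E_{0,x_0}}E_{2,x_2}$ is the fibre of $E$, $V=T_{x_1}X_1\times_{T_{x_0}X_0}T_{x_2}X_2$, the connecting map $\delta\colon V\to C_{\mathrm f}$ is part of the sequence, and $C_{\mathrm f}$, $C_{\mathrm t}$, $C_{\mathrm b}$ are the cokernels of the three horizontal maps (so $C_{\mathrm f}=E_{0,x_0}/(\phi_1(E_{1,x_1})+\phi_2(E_{2,x_2}))$, etc.). The second observation I would isolate is that when $u$ lies on the zero section the linearisation $T_{(0,0)}\phi_i$ is block diagonal, equal to $\phi_{i,x_i}\oplus T_{x_i}\mr\phi_i$; hence the ladder splits as the direct sum of its top and bottom rows, the connecting map $\delta$ vanishes, and $C_{\mathrm t}\cong C_{\mathrm f}\oplus C_{\mathrm b}$ at such points.

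For~\eqref{item;fibred-clean} and~\eqref{item;fibred-product} I would argue in both directions. If $\phi_1,\phi_2$ intersect cleanly then $E$ is a submanifold of $E_1\times E_2$ stable under the diagonal scalar multiplication (as each $\phi_i$ is fibrewise linear), so by Grabowski--Rotkiewicz~\cite{grabowski-rotkiewicz;higher-vector-bundles} it is a subbundle over the submanifold $X=X_1\times_{X_0}X_2$ with projection $\pi_E$; this proves~\eqref{item;fibred-product} and shows $\dim E_x$ is locally constant. Evaluating $T_uE=W$ on the zero section, where the splitting gives $\dim W=\dim E_x+\dim V$ while $\dim T_uE=\dim T_xX+\dim E_x$, yields $\dim T_xX=\dim V$, hence $T_xX=V$ and the base maps intersect cleanly. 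Conversely, if $\mr\phi_1,\mr\phi_2$ are clean and $\dim E_x$ is constant, then $\phi_1-\phi_2$ defines a vector bundle map over $X$ of constant rank with kernel $E$, so $E$ is a subbundle, in particular a submanifold; now $\dim_uE=\dim X+\dim E_x=\dim V+\dim E_x$, whereas the snake sequence gives $\dim W=\dim E_x+\dim V-\dim\im\delta$, and $T_uE\subseteq W$ forces $\dim\im\delta=0$ and $T_uE=W$, i.e.\ clean intersection.

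For~\eqref{item;fibred-transverse}, transversality of $\phi_1,\phi_2$ is the condition $C_{\mathrm t}=0$ at every $u\in E$. Applied at zero-section points, the splitting $C_{\mathrm t}\cong C_{\mathrm f}\oplus C_{\mathrm b}$ forces $C_{\mathrm f}=0$ (the asserted spanning condition, as every admissible triple arises there) and $C_{\mathrm b}=0$ (transversality of the base maps); conversely, if $C_{\mathrm f}=0$ and $C_{\mathrm b}=0$ then exactness of $C_{\mathrm f}\to C_{\mathrm t}\to C_{\mathrm b}\to0$ gives $C_{\mathrm t}=0$ at every $u$. The main obstacle I anticipate is the careful bookkeeping at the zero section: one must verify that the linearisation of a vector bundle morphism along the zero section really is block diagonal — this is what annihilates $\delta$ and decouples the fibre and base contributions — and one must check the hypotheses of the Grabowski--Rotkiewicz theorem, notably smoothness of the restricted scalar action on the (possibly non-embedded) submanifold $E$, before concluding that $E$ is a genuine subbundle.
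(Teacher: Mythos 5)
Your proof is correct, and its skeleton is the same as the paper's: the forward implication of (i)/(iii) rests on Grabowski--Rotkiewicz (a homothety-invariant submanifold of a vector bundle is a subbundle over a submanifold of the zero section), the converse rests on the constant-rank-kernel argument over $X$, and in both cases the tangent spaces are compared through the same ladder of short exact sequences relating fibre, total space, and base. What differs is the linear-algebra engine and, in one place, the level of rigour. The paper extracts $T_uE=W$ by a direct nine-lemma-style chase (exact rows and exact outer columns force the middle column to be exact), which yields the inclusion $W\subseteq T_uE$ outright; you instead run the snake lemma and kill the connecting map $\delta$ by counting dimensions against the elementary inclusion $T_uE\subseteq W$. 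These two devices are interchangeable here, since both require the subbundle structure of $E$ before they can be applied. Where your packaging genuinely buys something is part (ii): the paper's one-line justification --- that the outer arrows $(\phi_1,-\phi_2)$ and $(T\mr\phi_1,-T\mr\phi_2)$ of its diagram are surjective if and only if the middle arrow $(T\phi_1,-T\phi_2)$ is --- fails as a pointwise statement at points of $E$ off the zero section. For instance, for $\phi_1(x,v)=(x,xv)$ and $\phi_2(y,w)=(y,0)$ on trivial line bundles over $\R$, at the point $((0,1),(0,0))\in E$ the middle arrow is surjective while the fibre arrow is zero; off the zero section one retains only the exact tail $C_{\mathrm f}\to C_{\mathrm t}\to C_{\mathrm b}\to0$. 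The global equivalence holds precisely because every admissible triple of base points is realized by a zero-section point of $E$, where $T\phi_i$ is block diagonal, $\delta=0$, and $C_{\mathrm t}\cong C_{\mathrm f}\oplus C_{\mathrm b}$ --- which is exactly the observation your argument makes explicit, so your treatment of (ii) is tighter than the paper's. The two caveats you flag at the end are genuine but benign: block diagonality of a vector bundle morphism along the zero section is a standard computation, and the question of applying Grabowski--Rotkiewicz to the (a priori merely immersed) submanifold $E$ is an issue your proof shares with the paper's own, not one it introduces.
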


\begin{proof}
\eqref{item;fibred-clean}~and \eqref{item;fibred-product}~We view
$E_1\times E_2$ as a vector bundle over $X_1\times X_2$ and we
identify $X_1\times X_2$ with the zero section of $E_1\times E_2$.
The intersection of $E$ with the zero section is $E\cap(X_1\times
X_2)=X$ and $E$ is invariant under fibrewise scalar multiplication on
$E_1\times E_2$.  Suppose $\phi_1$ and $\phi_2$ intersect cleanly.
Then $E$ is a submanifold of $E_1\times E_2$.  In view
of~\cite[Theorem~2.3]{grabowski-rotkiewicz;higher-vector-bundles} this
implies the following two facts: (1)~$E$ cleanly intersects the zero
section (so $X_0$ is a submanifold of $X_1\times X_2$); and (2)~$E$ is
a subbundle of $E_1\times E_2$ over the submanifold~$X$.  It follows
from fact~(1) that $\mr\phi_1$ and $\mr\phi_2$ intersect cleanly.  It
follows from fact~(2) that the fibre $E_x$ has constant dimension for
$x\in X$.  To prove the converse, now suppose $\phi_1$ and $\phi_2$
intersect cleanly and that $E_x$ has constant dimension for $x\in X$.
Then $X$ is a submanifold of $X_1\times X_2$.  For $i=0$, $1$, $2$ let
$F_i$ be the pullback of $E_i$ to $X$, and for $i=1$, $2$ let
$\psi_i\colon F_i\to F_0$ be the vector bundle map induced
by~$\phi_i$.  Let $\psi\colon F_1\oplus F_2\to F_0$ be the vector
bundle map defined by $\psi=(\psi_1,-\psi_2)$.  Then $E$ is the kernel
of $\psi$, which by hypothesis has constant rank, so $E$ is a vector
bundle over~$X$.  To show that $\phi_1$ and $\phi_2$ intersect cleanly
we must calculate the tangent space to $E$ at an arbitrary point $u\in
E$.  Let $x\in X$ be the basepoint of $u$; then $x$ is a pair
$x=(x_1,x_2)\in X_1\times X_2$ with $\mr\phi_1(x_1)=\mr\phi_2(x_2)$.
Likewise $u$ is a pair $u=(u_1,u_2)\in E_1\times E_2$ with
$\phi_1(u_1)=\phi_2(u_2)$.  Put $x_0=\mr\phi_1(x_1)=\mr\phi_2(x_2)\in
X_0$ and $u_0=\phi_1(u_1)=\phi_2(u_2)\in E_0$; then $x_0$ is the
basepoint of~$u_0$.  The tangent space $T_uE$ fits into a short exact
sequence
\[
\begin{tikzcd}[column sep=large]
E_x\ar[r,hook]&T_uE\ar[r,two heads]&T_xX.
\end{tikzcd}
\]
We have similar sequences for $E_0$, $E_1$, and $E_2$, which combine
into a commutative diagram
\begin{equation}\label{equation;vector-clean}
\begin{tikzcd}[row sep=large]
E_x\ar[r,hook]\ar[d,hook]&T_uE\ar[r,two
  heads]\ar[d,hook]&T_xX\ar[d,hook]
\\
E_{1,x_1}\times E_{2,x_2}\ar[r,hook]\ar[d,"{(\phi_1,-\phi_2)}"']&
T_{u_1}E_1\times T_{u_2}E_2\ar[r,two
  heads]\ar[d,"{(T\phi_1,-T\phi_2)}"']& T_{x_1}X_1\times
T_{x_2}X_2\ar[d,"{(T\mr\phi_1,-T\mr\phi_2)}"']
\\
E_{0,x_0}\ar[r,hook]&T_{u_0}E_0\ar[r,two heads]&T_{x_0}X_0,
\end{tikzcd}
\end{equation}
which has exact rows.  The right column of the diagram is exact
because $\mr\phi_1$ and $\mr\phi_2$ intersect cleanly.  The left
column is exact because $E=\ker(\psi)$.  Hence the middle column is
exact, which shows that
$T_uE=T_{u_1}E_1\times_{T_{u_0}E_0}T_{u_2}E_2$, proving that $\phi_1$
and $\phi_2$ intersect cleanly.

\eqref{item;fibred-transverse}~This follows
from~\eqref{item;fibred-clean} plus the observation that the arrows
$(\phi_1,-\phi_2)$ and $(T\mr\phi_1,-T\mr\phi_2)$
in~\eqref{equation;vector-clean} are surjective if and only if the
arrow $(T\phi_1,-T\phi_2)$ is surjective.
\end{proof}

%%%%%%%%%%%%%%%%%%%%%%%%%%%%%%%%%%%%%%%%%%%%%%%%%%%%%%%%%%%%%%%%%%%%%%%%
\subsection{Pullbacks}\label{subsection;pull}
%%%%%%%%%%%%%%%%%%%%%%%%%%%%%%%%%%%%%%%%%%%%%%%%%%%%%%%%%%%%%%%%%%%%%%%%

Given a smooth map $F\colon P\to M$ to the base of the Lie algebroid
$A$, the fibred product
\glossary{"!@{$"!$}, Lie algebroid pullback}
\glossary{FA@{$F^"!A$}, pullback of Lie algebroid along map}
\begin{equation}\label{equation;pullback}
F^!A=TP\times_{TM}A=\{\,(v,a)\in TP\times A\mid TF(v)=\an_A(a)\,\}
\end{equation}
of the Lie algebroid morphisms $TF\colon TP\to TM$ and $\an_A\colon
A\to TM$ is called the \emph{(Higgins-Mackenzie) pullback} of $A$
to~$P$.  We regard $F^!A$ as a subspace of the direct sum bundle
$TP\oplus F^*A$ over $P\times_MM=P$.  For every $p\in P$ the fibre of
the natural projection $F^!A\to P$ is the vector space
\begin{equation}\label{equation;pull-fibre}
(F^!A)_p=T_pP\times_{T_{F(p)}M}A_{F(p)}.
\end{equation}
The map $F$ lifts to a map
\glossary{F"!@{$F_{\,"!}$}, natural morphism $F^"!A\to A$}
\begin{equation}\label{equation;pull-morphism}
F_{\,!}\colon F^!A\to A
\end{equation}
given by $F_{\,!}(v,a)=a$, which is linear on the fibres.

\begin{definition}\label{definition;clean}
A smooth map $F\colon P\to M$ \emph{cleanly intersects} the Lie
algebroid $A\to M$ if the tangent map $TF\colon TP\to TM$ cleanly
intersects the anchor $\an\colon A\to TM$.  The map $F$
\emph{transversely intersects} $A$, or \emph{is transverse to} $A$, if
$TF$ is transverse to~$\an$.  If $P$ is a submanifold of $M$ and $F$
is the inclusion map, we say $P$ \emph{cleanly}
(resp.\ \emph{transversely}) \emph{intersects} $A$ if $F$ cleanly
(resp.\ transversely) intersects~$A$.
\end{definition}

The next statement follows from
Proposition~\ref{proposition;fibred-product} and
Lemma~\ref{lemma;vector-clean}.  The \emph{orbits} of $A$ are the
integral manifolds of the (singular) foliation $\im(\an_A)\subseteq
TM$; see e.g.\ \cite[\S\,8.6]{meinrenken;groupoids-algebroids}.

\begin{proposition}\label{proposition;pull}
Let $A\to M$ be a Lie algebroid and let $F\colon P\to M$ be a smooth
map.
\begin{enumerate}
\item\label{item;pull-clean-transverse}
The map $F$ intersects $A$ cleanly if and only if the dimension of the
vector space\/ $(F^!A)_p$ given by~\eqref{equation;pull-fibre} is
independent of $p\in P$.  The map $F$ is transverse to $A$ if and only
if\/ $F$ is transverse to all orbits of~$A$.
\item\label{item;pull}
If $F$ intersects $A$ cleanly, the pullback $F^!A$ is a Lie algebroid
over $P$, whose fibre at $p\in P$ is the vector
space~\eqref{equation;pull-fibre}.  The map $F_{\,!}\colon F^!A\to A$
given by~\eqref{equation;pull-morphism} is a Lie algebroid morphism
with base map equal to $\mr F_{\,!}=F$.
\end{enumerate}
\end{proposition}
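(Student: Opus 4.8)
The plan is to read off both parts of the statement from Lemma~\ref{lemma;vector-clean} and Proposition~\ref{proposition;fibred-product}, applied to the fibred product $F^!A=TP\times_{TM}A$ regarded as the fibred product of the two Lie algebroid morphisms $\phi_1=TF\colon TP\to TM$ and $\phi_2=\an_A\colon A\to TM$. In the notation of those results this means taking $E_1=TP$, $E_2=A$, $E_0=TM$ (so $X_1=P$, $X_2=M$, $X_0=M$), whence $\mr\phi_1=F$ and $\mr\phi_2=\id_M$. The base fibred product $X=P\times_MM$ is then the graph of $F$, a manifold diffeomorphic to $P$, and the fibre $E_x$ at the point $x=(p,F(p))$ is exactly the space $(F^!A)_p$ of~\eqref{equation;pull-fibre}.

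For part~\eqref{item;pull-clean-transverse} I would first dispose of the base-map hypotheses: since $\mr\phi_2=\id_M$ is a submersion, the base maps $F$ and $\id_M$ automatically intersect transversely, hence cleanly, so in the criteria of Lemma~\ref{lemma;vector-clean} only the fibrewise conditions survive. By Definition~\ref{definition;clean}, $F$ cleanly intersects $A$ precisely when $TF$ and $\an_A$ do, and Lemma~\ref{lemma;vector-clean}\eqref{item;fibred-clean} then reduces this to the constancy of $\dim(F^!A)_p$, giving the first assertion. For the second, Lemma~\ref{lemma;vector-clean}\eqref{item;fibred-transverse} reduces transversality of $TF$ and $\an_A$ to the condition $T_{F(p)}M=TF(T_pP)+\an_A(A_{F(p)})$ for all $p\in P$. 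The key observation is that $\an_A(A_{F(p)})=\im(\an_{A,F(p)})$ is exactly the tangent space at $F(p)$ to the orbit of $A$ through that point, so the condition says precisely that $F$ is transverse to every orbit of $A$.

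For part~\eqref{item;pull}, assuming $F$ intersects $A$ cleanly — equivalently, $TF$ and $\an_A$ intersect cleanly — I would invoke Proposition~\ref{proposition;fibred-product} directly, which makes $F^!A$ a Lie subalgebroid of the direct product $TP\times A$ over the submanifold $P\times_MM\cong P$, with fibre~\eqref{equation;fibred-fibre}, here equal to~\eqref{equation;pull-fibre}. Finally, the map $F_{\,!}$ of~\eqref{equation;pull-morphism} is the restriction to $F^!A$ of the Cartesian projection $\pr_A\colon TP\times A\to A$, which is a Lie algebroid morphism; precomposing with the subalgebroid inclusion $F^!A\hookrightarrow TP\times A$ (itself a morphism) shows $F_{\,!}$ is a morphism, and tracing base points $p\mapsto(p,F(p))\mapsto F(p)$ yields $\mr F_{\,!}=F$.

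I do not expect a genuine obstacle: the statement is essentially a translation of the two cited results into the pullback setting. The only points requiring a little care are the automatic verification that the base maps intersect cleanly and transversely (because $\id_M$ is a submersion) and the identification of $\im(\an_A)$ with the orbit tangent distribution, which is what converts the abstract transversality condition of Lemma~\ref{lemma;vector-clean}\eqref{item;fibred-transverse} into transversality to orbits.
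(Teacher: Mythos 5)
Your proposal is correct and takes essentially the same route as the paper, which derives this proposition precisely by applying Proposition~\ref{proposition;fibred-product} and Lemma~\ref{lemma;vector-clean} to the fibred product of the morphisms $TF\colon TP\to TM$ and $\an_A\colon A\to TM$. The details you supply --- the base maps intersect transversely automatically because $\mr\phi_2=\id_M$, and the orbit tangent spaces are exactly $\im(\an_A)$, converting the fibrewise condition of Lemma~\ref{lemma;vector-clean}\eqref{item;fibred-transverse} into transversality to orbits --- are exactly the verifications the paper leaves implicit.
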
  

A section of $F^!A$ is a pair $(v,\sigma)$ consisting of a vector
field $v$ on $P$ and a section of the (ordinary) pullback bundle
$\sigma\in\Gamma(F^*A)$ such that $T_pF(v(p))=\an_A(\sigma(p))$ for
all $p\in P$.  The anchor of $F^!A$ is given by
$\an_{F^!A}(v,\sigma)=v$.  Using the isomorphism
\[\Gamma(F^*A)\cong\ca{C}^\infty(P)\otimes_{\ca{C}^\infty(M)}\Gamma(A)\]
we can write any section of $F^!A$ as a pair
$\bigl(v,\sum_if_i\sigma_i\bigr)$, where $v\in\Gamma(TP)$,
$f_i\in\ca{C}^\infty(P)$, and $\sigma_i\in\Gamma(A)$ satisfy
$T_pF(v(p))=\sum_if_i(p)\an_A(\sigma_i(F(p)))$ for all $p\in P$.  The
Lie bracket of two sections $(v,\sigma)$ and $(w,\tau)$ of $F^!A$ with
$\sigma=\sum_if_i\sigma_i$ and $\tau=\sum_jg_j\tau_j$ is given by
\begin{equation}\label{equation;pull-bracket}
\bigl[(v,\sigma),(w,\tau)\bigr]=
\biggl([v,w],\sum_{i,j}f_ig_j[\sigma_i,\tau_j]+\sum_j(v\cdot
F^*g_j)\tau_j-\sum_i(w\cdot F^*f_i)\sigma_i\biggr).
\end{equation}

For maps of constant rank we can reformulate
Proposition~\ref{proposition;pull}\eqref{item;pull-clean-transverse}
as follows.

\begin{lemma}\label{lemma;constant-clean}
Let $A\to M$ be a Lie algebroid.
\begin{enumerate}
\item\label{item;constant-rank}
Let $F\colon P\to M$ be a smooth map of constant rank and let
$\ca{N}(F)=F^*TM/\im(TF)$ be the normal bundle of~$F$.  The anchor
$\an\colon A\to TM$ induces a vector bundle map $\overline{\an}\colon
F^*A\to\ca{N}(F)$.  The map $F$ intersects $A$ cleanly if and only
if\/ $\overline{\an}$ has constant rank.  In that case we have an
exact sequence of vector bundles over $P$,
\[
\begin{tikzcd}
\ker(TF)\ar[r,hook]&F^!A\ar[r]&F^*A\ar[r,"\overline{\an}"]&\ca{N}(F).
\end{tikzcd}
\]
The map $F$ is transverse to $A$ if and only if\/ $\overline{\an}$ is
surjective.
\item\label{item;submanifold}
Let $i_N\colon N\to M$ be a submanifold and let $\ca{N}(M,N)$ be the
normal bundle of $N$ in~$M$.  The anchor $\an\colon A\to TM$ induces a
vector bundle map $\overline{\an}\colon i_N^*A\to\ca{N}(M,N)$.  The
submanifold $N$ intersects $A$ cleanly if and only if\/
$\overline{\an}$ has constant rank.  In that case we have an embedding
of vector bundles over $N$,
\[
\begin{tikzcd}
i_N^*A/i_N^!A\ar[r,hook]&\ca{N}(M,N).
\end{tikzcd}
\]
The submanifold $N$ is transverse to $A$ if and only if this embedding
is an isomorphism.
\end{enumerate}
\end{lemma}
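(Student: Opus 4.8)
The plan is to deduce both parts from the fibred-product criterion of Lemma~\ref{lemma;vector-clean} together with the pullback description of Proposition~\ref{proposition;pull}, treating part~\eqref{item;submanifold} as the special case $F=i_N$ of part~\eqref{item;constant-rank}. First I would set up part~\eqref{item;constant-rank}. Since $F$ has constant rank, $\im(TF)$ is a subbundle of $F^*TM$, so the quotient $\ca{N}(F)=F^*TM/\im(TF)$ is a genuine vector bundle over $P$ and the composite $F^*A\xrightarrow{\an}F^*TM\to\ca{N}(F)$ is a well-defined vector bundle map $\overline{\an}$. Recall from~\eqref{equation;pull-fibre} that the fibre of the pullback is $(F^!A)_p=T_pP\times_{T_{F(p)}M}A_{F(p)}$, i.e.\ the space of pairs $(v,a)$ with $T_pF(v)=\an(a)$. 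Projecting such a pair to its second component $a\in(F^*A)_p=A_{F(p)}$ yields a short exact sequence of fibres
\[
0\longto\ker(T_pF)\longto(F^!A)_p\longto\ker(\overline{\an}_p)\longto0,
\]
since the kernel of this projection is $\ker(T_pF)\times\{0\}$ and its image is exactly those $a$ with $\an(a)\in\im(T_pF)$, that is $\overline{\an}_p(a)=0$. Hence $\dim(F^!A)_p=\dim\ker(T_pF)+\dim\ker(\overline{\an}_p)$.

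The first summand is constant because $F$ has constant rank, so $\dim(F^!A)_p$ is independent of $p$ if and only if $\dim\ker(\overline{\an}_p)$ is, i.e.\ if and only if $\overline{\an}$ has constant rank. By Proposition~\ref{proposition;pull}\eqref{item;pull-clean-transverse} the fibre dimension being constant is precisely the clean-intersection condition, which establishes the first assertion. Under the clean-intersection hypothesis all the maps in question then have locally constant rank, so the pointwise sequences above assemble into the claimed four-term exact sequence of vector bundles $\ker(TF)\hookrightarrow F^!A\to F^*A\xrightarrow{\overline{\an}}\ca{N}(F)$; exactness at $F^!A$ and at $F^*A$ is exactly the fibrewise statement just proved. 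For the transversality criterion I would invoke Lemma~\ref{lemma;vector-clean}\eqref{item;fibred-transverse} applied to $\phi_1=TF$ and $\phi_2=\an_A$: the base map of $\an_A$ is $\id_M$, a submersion, so base transversality holds automatically, and the remaining condition $T_{F(p)}M=\im(T_pF)+\an(A_{F(p)})$ is by definition the surjectivity of $\overline{\an}_p$.

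Finally, part~\eqref{item;submanifold} is the case $F=i_N$, which has constant rank $\dim N$, satisfies $\ker(Ti_N)=0$, and has $\ca{N}(i_N)=\ca{N}(M,N)$. The four-term sequence of part~\eqref{item;constant-rank} then collapses to $0\to i_N^!A\to i_N^*A\xrightarrow{\overline{\an}}\ca{N}(M,N)$, so $\overline{\an}$ induces an injection $i_N^*A/i_N^!A\hookrightarrow\ca{N}(M,N)$ with image $\im(\overline{\an})$; and $N$ is transverse to $A$ exactly when $\overline{\an}$ is surjective, i.e.\ when this injection is an isomorphism.

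The only point requiring genuine care is the verification that, once clean intersection is assumed, every arrow in the four-term sequence has locally constant rank so that the kernels and images are honest subbundles; this is precisely what the constant-rank hypothesis on $F$ together with constancy of the rank of $\overline{\an}$ guarantees. Beyond this bookkeeping I expect no serious obstacle, as the argument is essentially a fibrewise dimension count funnelled through the two cited results.
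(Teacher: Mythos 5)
Your proof is correct and is exactly the argument the paper intends: the lemma is stated there without proof, introduced as a ``reformulation'' of Proposition~\ref{proposition;pull}\eqref{item;pull-clean-transverse}, and your fibrewise count $\dim(F^!A)_p=\dim\ker(T_pF)+\dim\ker(\overline{\an}_p)$, together with Lemma~\ref{lemma;vector-clean}\eqref{item;fibred-transverse} applied to $\phi_1=TF$, $\phi_2=\an_A$ for the transversality clause, is precisely how that reformulation is carried out. Treating part~\eqref{item;submanifold} as the constant-rank case $F=i_N$, with $\ker(Ti_N)=0$ and $\ca{N}(i_N)=\ca{N}(M,N)$, likewise matches the intended reading.
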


%% \begin{proof}
%% \eqref{item;constant-rank}~For each $p\in P$ the fibre $(F^!A)_p$ fits
%% into an exact sequence
%% %
%% \begin{equation}\label{equation;exact-pull}
%% %
%% \begin{tikzcd}
%% %
%% \ker(T_pF)\ar[r,hook]&(F^!A)_p\ar[r]&A_{F(p)}\ar[r]&T_{F(p)}M/\im(T_pF),
%% %
%% \end{tikzcd}
%% %
%% \end{equation}
%% %
%% where the last map is induced by the anchor of~$A$.  Since $F$ has
%% constant rank, the kernel $\ker(TF)$ and normal bundle $\ca{N}(F)$ are
%% well-defined vector bundles over~$P$.  The result now follows from
%% Proposition~\ref{proposition;pull}\eqref{item;pull-clean-transverse}
%% and from the exactness of~\eqref{equation;exact-pull}.

%% \eqref{item;submanifold}~This is a special case
%% of~\eqref{item;constant-rank}.
%% \end{proof}

\begin{remarks}\phantomsection\label{remark;clean}
\begin{numerate}
\item\label{item;open}
If the submanifold $i_N\colon N\to M$ is open, then $N$ is transverse
to $A$ and $i_N^!A=i_N^*A$.
\item\label{item;point-orbit}
If $N$ is an orbit of $A$, then $N$ cleanly intersects $A$ (but the
intersection is not transverse unless the orbit is open) and
$i_N^!A=i_N^*A$.  Any submanifold of an orbit of $A$ intersects $A$
cleanly.  In particular, if $N=\{x\}$ consists of a single point, then
$N$ cleanly intersects $A$ (but the intersection is not transverse
unless the anchor is surjective at $x$).  So $i_N^!A=\ker(\an_x)$ is a
Lie algebroid over $x$, i.e.\ a Lie algebra, known as the
\emph{isotropy} or \emph{stabilizer} $\stab(A,x)$ of the Lie algebroid
$A$ at~$x$.
\glossary{stabAx@$\stab(A,x)$, Lie algebroid stabilizer of point}
\end{numerate}
\end{remarks}

%%%%%%%%%%%%%%%%%%%%%%%%%%%%%%%%%%%%%%%%%%%%%%%%%%%%%%%%%%%%%%%%%%%%%%%%
\subsection{A regular value theorem}\label{subsection;regular}
%%%%%%%%%%%%%%%%%%%%%%%%%%%%%%%%%%%%%%%%%%%%%%%%%%%%%%%%%%%%%%%%%%%%%%%%

The following special case of
Proposition~\ref{proposition;fibred-product} is a version of the
regular value theorem for Lie algebroids.

\begin{proposition}\label{proposition;clean-regular}
Let $A\to M$ and $E\to P$ be Lie algebroids, let $\phi\colon A\to E$
be a morphism, and let $F\to Q$ be a Lie subalgebroid of $E$.  Suppose
$\phi$ intersects $F$ cleanly.  Then $A\times_EF=\phi^{-1}(F)$ is a
Lie subalgebroid of $A$, whose base is the submanifold
$\mr\phi^{-1}(Q)$ of $M$.  In particular, if $p$ is a single point of
$P$, $\f$ is a Lie subalgebra of $\stab(E,p)$, and $\phi$ intersects
$\f$ cleanly, then $\phi^{-1}(\f)$ is a Lie subalgebroid of $A$, whose
base is the submanifold $\mr\phi^{-1}(p)$.
\end{proposition}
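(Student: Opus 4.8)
The plan is to exhibit $\phi^{-1}(F)$ as a fibred product of Lie algebroids and read off its structure from Proposition~\ref{proposition;fibred-product}. Let $i_F\colon F\to E$ denote the inclusion of the Lie subalgebroid $F$; it is a Lie algebroid morphism with base map the inclusion $i_Q\colon Q\to P$. The hypothesis that $\phi$ intersects $F$ cleanly means exactly that the morphisms $\phi\colon A\to E$ and $i_F\colon F\to E$ intersect cleanly, so I would apply Proposition~\ref{proposition;fibred-product} with $A_0=E$, $A_1=A$, $A_2=F$, $\phi_1=\phi$, $\phi_2=i_F$. This yields that the fibred product $G:=A\times_E F$ is a Lie subalgebroid of the direct product $A\times F$ over the submanifold $M\times_P Q$ of $M\times Q$, whose fibre at $(x,y)$ is $A_x\times_{E_{\mr\phi(x)}}F_y$.

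The next step is to push $G$ forward along the first projection. Let $q\colon A\times F\to A$ be the projection, a Lie algebroid morphism with base map $\mr q$. Restricting $q$ to $G$, I claim $q|_G$ is an isomorphism onto $\phi^{-1}(F)$. Since $i_F$ is injective, in any pair $(a,f)\in G$ the second entry $f=i_F^{-1}(\phi(a))$ is determined by the first, so $q|_G$ is fibrewise injective with image $\phi^{-1}(F)$; the same argument for the base map, using injectivity of $i_Q$, shows $\mr q$ restricts to a bijection $M\times_P Q\to\mr\phi^{-1}(Q)$. That both are immersions follows from the fibred-product description of tangent spaces established in the proof of Lemma~\ref{lemma;vector-clean}: a kernel vector of $T\mr q$ in $T_{(x,y)}(M\times_P Q)=T_xM\times_{T_{\mr\phi(x)}P}T_yQ$ has vanishing first component, hence vanishing second component because $i_Q$ is an immersion, and likewise for $G$ because $i_F$ is an immersion. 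Thus $\mr\phi^{-1}(Q)$ is an injectively immersed submanifold of $M$ and $\phi^{-1}(F)$ is a subbundle of $A$ over it, identified with $G$ by $q|_G$.

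It then remains to confirm that the transported structure makes $\phi^{-1}(F)$ a Lie subalgebroid of $A$, not merely an abstract Lie algebroid sitting inside $A$. Because $q$ is a morphism it intertwines anchors, $\an_A\circ q=T\mr q\circ\an_{A\times F}$; restricting to $G$ shows the anchor of $\phi^{-1}(F)$ is the restriction of $\an_A$ and lands in $T\mr\phi^{-1}(Q)$. Since $q|_G$ is a vector bundle isomorphism and $q$ preserves brackets, the bracket carried over from $G$ coincides with the $A$-bracket on relative sections, which is the defining property of a Lie subalgebroid. The ``in particular'' clause is then the special case in which $Q=\{p\}$ is a point and $F=\f$ is a Lie algebra viewed as a Lie algebroid over $\{p\}$, giving $\phi^{-1}(\f)$ as a Lie subalgebroid over $\mr\phi^{-1}(p)$.

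I expect the one genuinely delicate point to be this transport across $q|_G$: verifying that the clean fibred product of Proposition~\ref{proposition;fibred-product} projects isomorphically onto a subbundle of $A$, i.e.\ that $q|_G$ is simultaneously an immersion and fibrewise injective. Everything hinges on injectivity of the inclusions $i_F$ and $i_Q$, applied at the level of tangent spaces and not merely fibres.
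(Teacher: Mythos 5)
Your proposal is correct and follows the paper's own route: the paper obtains Proposition~\ref{proposition;clean-regular} precisely as the special case of Proposition~\ref{proposition;fibred-product} in which the two morphisms are $\phi$ and the inclusion $i_F\colon F\to E$, identifying the fibred product $A\times_EF$ with $\phi^{-1}(F)$ (and $M\times_PQ$ with $\mr\phi^{-1}(Q)$) via the first projection. The only difference is that you spell out this identification in detail --- fibrewise injectivity and the immersion property of $q|_G$ coming from injectivity of $i_F$ and $i_Q$, plus the transport of anchor and bracket --- steps the paper leaves implicit by writing $A\times_EF=\phi^{-1}(F)$ as an equality.
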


\begin{remark}\label{remark;clean-regular}
By Lemma~\ref{lemma;vector-clean}\eqref{item;fibred-clean}, the map
$\phi$ intersects $\f$ cleanly if and only if $p$ is a clean value of
$\mr\phi$ and the subspace $\phi_x^{-1}(\f)$ has constant dimension
for $x\in\mr\phi^{-1}(p)$.  By
Lemma~\ref{lemma;vector-clean}\eqref{item;fibred-transverse}, $\phi$
intersects $\f$ transversely if and only $p$ is a regular value of
$\mr\phi$ and $\phi_x(A_x)+\f=E_p$ for all $x\in\mr\phi^{-1}(p)$.
\end{remark}

The next result says that ``pullback commutes with fibred products''.

\begin{proposition}\label{proposition;pull-fibred}
Let $A\to M$ and $E\to P$ be Lie algebroids and $\phi\colon A\to E$ a
morphism.  Let $g\colon Q\to P$ be a smooth map which intersects $E$
cleanly (resp.\ transversely).  Suppose the natural morphism
$g_!\colon g^!E\to E$ intersects $\phi$ cleanly (resp.\ transversely).
Then the map $\bar{g}\colon M\times_PQ\to M$ induced by the projection
$M\times Q\to M$ intersects $A$ cleanly (resp.\ transversely), and we
have an isomorphism of Lie algebroids $\bar{g}^!A\cong A\times_Eg^!E$.
\end{proposition}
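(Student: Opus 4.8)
The plan is to show that the two Lie algebroids are, fibre by fibre, the \emph{same} fibred product of vector spaces, and then to promote this pointwise identification to an isomorphism of Lie algebroids by means of the universal property of fibred products. Throughout write $p=\mr\phi(x)=g(q)$ for a point $(x,q)\in M\times_PQ$, and recall that for a Lie algebroid morphism the anchors are intertwined, $\an_E\circ\phi=T\mr\phi\circ\an_A$. The hypotheses already give a good deal. Since $g$ intersects $E$ cleanly, $g^!E=TQ\times_{TP}E$ is a Lie algebroid and $g_!\colon g^!E\to E$ is a morphism with base map $g$ (Proposition~\ref{proposition;pull}). Since $g_!$ intersects $\phi$ cleanly, Proposition~\ref{proposition;fibred-product} makes $A\times_Eg^!E$ a Lie subalgebroid of $A\times g^!E$ over the submanifold $M\times_PQ$ of $M\times Q$; moreover, by Lemma~\ref{lemma;vector-clean}\eqref{item;fibred-clean}, the base maps $\mr\phi$ and $g$ intersect cleanly, so that $M\times_PQ$ is a manifold with $T_{(x,q)}(M\times_PQ)=T_xM\times_{T_pP}T_qQ$.

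Next I would prove that $\bar{g}$ intersects $A$ cleanly, so that $\bar{g}^!A$ is defined. The fibre $(\bar{g}^!A)_{(x,q)}=T_{(x,q)}(M\times_PQ)\times_{T_xM}A_x$ makes sense as a fibred product of vector spaces in any case. Unwinding the definitions, an element is a triple $(u,w,a)$ with $T\mr\phi(u)=T_qg(w)$ and $u=\an_A(a)$; eliminating $u=\an_A(a)$ identifies this space with $\{(w,a)\mid T_qg(w)=\an_E(\phi_x(a))\}$. The same elimination applied to $(A\times_Eg^!E)_{(x,q)}=A_x\times_{E_p}(g^!E)_q$, in which the $E$-component of an element is forced to equal $\phi_x(a)$, yields exactly the same space. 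Thus $(\bar{g}^!A)_{(x,q)}\cong(A\times_Eg^!E)_{(x,q)}$ naturally, and the right-hand side has constant dimension because $A\times_Eg^!E$ is a vector bundle; by Proposition~\ref{proposition;pull}\eqref{item;pull-clean-transverse} this forces $\bar{g}$ to intersect $A$ cleanly.

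Because transverse intersection is a special case of clean intersection, in the transverse case all of the above conclusions persist, and it remains only to upgrade ``clean'' to ``transverse'' for $\bar{g}$. Using the same fibre description, $\bar{g}$ is transverse to $A$ at $(x,q)$ precisely when $\im(T_x\mr\phi)\subseteq\im(T_qg)+\an_E(\phi_x(A_x))$. I would verify this by a short diagram chase. Transversality of $g_!$ and $\phi$ gives, via Lemma~\ref{lemma;vector-clean}\eqref{item;fibred-transverse}, that $E_p=\an_{E,p}^{-1}(\im T_qg)+\phi_x(A_x)$; applying $\an_{E,p}$ and using $\an_E(\an_E^{-1}(S))=S\cap\im\an_E$ gives $\an_E(E_p)\subseteq\im T_qg+\an_E(\phi_x(A_x))$. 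Combining this with the transversality $\im T_qg+\an_E(E_p)=T_pP$ of $g$ and $E$ yields $\im T_qg+\an_E(\phi_x(A_x))=T_pP$, which is more than enough.

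Finally I would produce the isomorphism of Lie algebroids $\bar{g}^!A\cong A\times_Eg^!E$. Rather than compute brackets through the pullback formula~\eqref{equation;pull-bracket}, I would assemble the map from canonical morphisms using the universal properties of the two fibred products. The canonical morphism $\bar{g}_!\colon\bar{g}^!A\to A$ is available, hence so is $\phi\circ\bar{g}_!\colon\bar{g}^!A\to E$, and likewise $T\bar{h}\circ\an_{\bar{g}^!A}\colon\bar{g}^!A\to TQ$, where $\bar{h}\colon M\times_PQ\to Q$ is the second projection, since anchors and tangent maps are morphisms. These two agree after projecting to $TP$, so the universal property of $g^!E=TQ\times_{TP}E$ produces a morphism $\bar{g}^!A\to g^!E$, $(u,w,a)\mapsto(w,\phi(a))$; since this morphism and $\bar{g}_!$ agree after applying $g_!$ and $\phi$, the universal property of $A\times_Eg^!E$ assembles them into a Lie algebroid morphism $\Psi\colon\bar{g}^!A\to A\times_Eg^!E$, $(u,w,a)\mapsto(a,(w,\phi(a)))$, covering $\id_{M\times_PQ}$. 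By the fibre computation $\Psi$ is a fibrewise isomorphism, hence a Lie algebroid isomorphism. I expect the main obstacle to be precisely this last step: one must check that fibred products of Lie algebroids, once known to exist as Lie subalgebroids via Proposition~\ref{proposition;fibred-product}, genuinely enjoy the expected universal property, namely that a morphism into the fibred product is the same as a compatible pair of morphisms into the two factors. This in turn reduces to the observation that a Lie algebroid morphism whose image lies set-theoretically in a Lie subalgebroid corestricts to a morphism into that subalgebroid, which I would justify by noting that its graph is then a Lie subalgebroid of the smaller direct product.
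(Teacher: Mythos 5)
Your proof is correct and follows essentially the same route as the paper's: both arguments reduce clean (resp.\ transverse) intersection of $\bar{g}$ with $A$ to the fibrewise constant-dimension (resp.\ surjectivity) criteria of Lemma~\ref{lemma;vector-clean} and Proposition~\ref{proposition;pull}, and both rest on the same natural identification of the fibres $(\bar{g}^!A)_{(x,q)}\cong A_x\times_{E_p}(g^!E)_q$, which you obtain by eliminating variables and the paper obtains by a chain of fibred-product isomorphisms. The only differences are expository: you spell out the surjectivity chase in the transverse case (where the paper says ``one deduces'') and the universal-property construction of the Lie algebroid isomorphism (where the paper simply asserts it from the natural fibrewise identification).
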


\begin{proof}
Since $g$ intersects $E$ cleanly, by
Proposition~\ref{proposition;pull} the Lie algebroid $g^!E$ and the
morphism $g_!$ are well-defined.  Since $\phi$ intersects $g_!$
cleanly, by Lemma~\ref{lemma;vector-clean}\eqref{item;fibred-clean}
the base maps $\mr\phi$ and $g$ intersect cleanly, so that the fibred
product $\barM=M\times_PQ$ is a manifold and the map
$\bar{g}\colon\barM\to M$ is smooth.  Moreover, the map
\[l_{(x,q)}=(\phi_x,-(g_!)_q)\colon A_x\times(g^!E)_q\longto E_p\]
has constant rank for all triples $(x,q,p)\in M\times Q\times P$ satisfying
\begin{equation}\label{equation;triple}
\mr\phi(x)=g(q)=p.
\end{equation}
The kernel of $l_{(x,q)}$ is the vector space
$A_x\times_{E_p}(g^!E)_q$.  By another application of
Lemma~\ref{lemma;vector-clean}\eqref{item;fibred-clean}, to show that
$\bar{g}$ intersects $A$ cleanly it is enough to show that the map
\[
\bar{l}_{(x,q)}=(\an_{A,x},-T_{(x,q)}\bar{g})\colon A_x\times
T_{(x,q)}\barM\longto T_xM
\]
has constant rank for all pairs $(x,q)\in\barM$.  The kernel of
$\bar{l}_{(x,q)}$ is the vector space $(\bar{g}^!A)_{(x,q)}$.  Using
$T\barM=TM\times_{TP}TQ$ we find a natural isomorphism
\begin{align*}
\ker\bigl(\bar{l}_{(x,q)}\bigr)=(\bar{g}^!A)_{(x,q)}&
=A_x\times_{T_xM}T_{(x,q)}\barM=
A_x\times_{T_xM}(T_xM\times_{T_pP}T_qQ)\\
&\cong A_x\times_{T_pP}T_qQ\cong
  A_x\times_{E_p}(E_p\times_{T_pP}T_qQ)\\
&\cong A_x\times_{E_p}(g^!E)_q=\ker\bigl(l_{(x,q)}\bigr)
\end{align*}
for all triples $(x,q,p)$ satisfying~\eqref{equation;triple}.  So
$\bar{l}$ has constant rank because $l$ does, and we have the
isomorphism $\bar{g}^!A\cong A\times_Eg^!E$.  This establishes the
proposition in the clean case.  If $g$ intersects $E$ transversely and
$g_!$ intersects $\phi$ transversely, then by
Lemma~\ref{lemma;vector-clean}\eqref{item;fibred-transverse} the map
\[(\an_{E,p},-T_qg)\colon E_p\times T_qQ\longto T_pP\]
and the map $l_{(x,q)}$ are surjective for all triples $(x,q,p)$
satisfying~\eqref{equation;triple}.  One deduces from this that
$\bar{l}_{(x,q)}$ is surjective, and hence that $\bar{g}$ is
transverse to~$A$.
\end{proof}

Taking $F=i_Q^!E$ in Proposition~\ref{proposition;clean-regular} and
applying Proposition~\ref{proposition;pull-fibred} gives the
following.

\begin{corollary}\label{corollary;clean-regular}
Let $A\to M$ and $E\to P$ be Lie algebroids and $\phi\colon A\to E$ a
morphism.  Let $Q$ be a submanifold of $P$ which intersects $E$
cleanly (resp.\ transversely).  Suppose $\phi$ intersects the Lie
subalgebroid $i_Q^!E$ of $E$ cleanly (resp.\ transversely).  Then
$N=\mr\phi^{-1}(Q)$ is a submanifold of $M$ which intersects the Lie
algebroid $A$ cleanly (resp.\ transversely), and
$i_N^!A=\phi^{-1}(i_Q^!E)$.  In particular, let $p$ be a point in $P$
such that $\phi$ intersects the subspace $\stab(E,p)$ of $E$ cleanly
(resp.\ transversely).  Then the submanifold $N=\mr\phi^{-1}(p)$ of
$M$ intersects $A$ cleanly (resp.\ transversely), and
$i_N^!A=\phi^{-1}(\stab(E,p))$.
\end{corollary}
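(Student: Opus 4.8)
The plan is to obtain the corollary as a direct consequence of the two preceding propositions, taking $F=i_Q^!E$ in Proposition~\ref{proposition;clean-regular} and $g=i_Q$ in Proposition~\ref{proposition;pull-fibred}. The only real content is to check that the hypotheses line up and that the fibred products produced by the two propositions are the expected pullbacks and preimages.

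First I would set $g=i_Q\colon Q\to P$, the inclusion of the submanifold. By Definition~\ref{definition;clean} the hypothesis that $Q$ intersects $E$ cleanly (resp.\ transversely) is precisely the statement that $g$ intersects $E$ cleanly (resp.\ transversely). Hence by Proposition~\ref{proposition;pull} the pullback $g^!E=i_Q^!E$ is a well-defined Lie subalgebroid of $E$ over $Q$, and the canonical morphism $g_!=(i_Q)_!\colon i_Q^!E\to E$ is defined; on fibres it is the inclusion $\an_E^{-1}(T_qQ)\hookrightarrow E_q$. Under this identification of $i_Q^!E$ with a subbundle of $E|_Q$, the remaining hypothesis---that $\phi$ intersects the Lie subalgebroid $i_Q^!E$ cleanly (resp.\ transversely)---is exactly the condition that $\phi$ intersects $g_!$ cleanly (resp.\ transversely).

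With these hypotheses in hand, Proposition~\ref{proposition;clean-regular} applied to $F=i_Q^!E$ shows that $\phi^{-1}(i_Q^!E)=A\times_E i_Q^!E$ is a Lie subalgebroid of $A$ whose base is the submanifold $N=\mr\phi^{-1}(Q)$ of $M$; here the fibrewise injectivity of $g_!$ identifies the fibred product $A\times_E i_Q^!E$ with the preimage $\phi^{-1}(i_Q^!E)$. To identify the pullback $i_N^!A$ I would then invoke Proposition~\ref{proposition;pull-fibred} with $g=i_Q$. Since $i_Q(q)=q$ for $q\in Q\subseteq P$, the fibred product $M\times_PQ$ is canonically $\mr\phi^{-1}(Q)=N$, and the induced map $\bar g\colon M\times_PQ\to M$ is the inclusion $i_N$. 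Proposition~\ref{proposition;pull-fibred} then yields that $i_N$ intersects $A$ cleanly (resp.\ transversely) and that $i_N^!A\cong A\times_E g^!E=A\times_E i_Q^!E=\phi^{-1}(i_Q^!E)$, which is the asserted equality. The ``in particular'' statement follows by taking $Q=\{p\}$: a single point always intersects $E$ cleanly (Remark~\ref{remark;clean}\eqref{item;point-orbit}), its pullback is the stabilizer $i_{\{p\}}^!E=\stab(E,p)$, and $\mr\phi^{-1}(\{p\})=\mr\phi^{-1}(p)$, so the general case specializes verbatim.

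The deduction is essentially bookkeeping, so there is no substantial obstacle; the point requiring the most care is the matching of the two distinct ways in which $i_Q^!E$ enters---as a Lie subalgebroid of $E$ (via the inclusion $g_!$) in Proposition~\ref{proposition;clean-regular}, and as the pullback $g^!E$ together with its canonical morphism $g_!$ in Proposition~\ref{proposition;pull-fibred}---together with the verification that the fibred products $A\times_E i_Q^!E$ and $M\times_PQ$ coincide with $\phi^{-1}(i_Q^!E)$ and $N$ respectively.
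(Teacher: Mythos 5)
Your proof is correct and follows exactly the route the paper takes: the paper's entire proof is the remark that one should take $F=i_Q^!E$ in Proposition~\ref{proposition;clean-regular} and then apply Proposition~\ref{proposition;pull-fibred} with $g=i_Q$, which is precisely your argument. Your additional bookkeeping---matching the two roles of $i_Q^!E$, identifying $M\times_PQ$ with $N$ and $A\times_Ei_Q^!E$ with $\phi^{-1}(i_Q^!E)$, and specializing to $Q=\{p\}$ via Remark~\ref{remark;clean}---is exactly the verification the paper leaves implicit.
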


\subsection{Lie algebroid differential forms and Cartan calculus}
\label{subsection;cartan}
%%%%%%%%%%%%%%%%%%%%%%%%%%%%%%%%%%%%%%%%%%%%%%%%%%%%%%%%%%%%%%%%%%%%%%%%

Let $\pi\colon A\to M$ be a Lie algebroid.  Let $\Lambda^\bu A^*$ be
the exterior algebra of the dual bundle $A^*$ and let $U$ be an open
subset of~$M$.  We denote the graded vector space of sections
$\Gamma(U,\Lambda^\bu A^*)$ by $\Omega^\bu_A(U)$ and we call elements
of $\Omega^\bu_A(U)$ \emph{Lie algebroid differential forms}, or
\emph{$A$-forms}, or just \emph{forms} on~$U$.  The \emph{exterior
  derivative} of an $A$-form $\alpha\in\Omega_A^k(U)$ is the $A$-form
$d_A\alpha\in\Omega_A^{k+1}(U)$ given by
\glossary{dA@$d_A$, Lie algebroid exterior derivative}
\glossary{Omega@$\Omega^\bu_A(M)$, Lie algebroid de Rham complex}
\begin{equation}\label{equation;d}
\begin{aligned}
d_A\alpha(\sigma_1,\sigma_2,\dots,\sigma_{k+1})&=
\sum_{i=1}^{k+1}(-1)^{i+1}\sigma_i\cdot
\alpha(\sigma_1,\sigma_2,\dots,\hat{\sigma}_i,\dots,\sigma_{k+1})+{}\\
&\sum_{1\le i<j\le
  k+1}(-1)^{i+j}\alpha\bigl([\sigma_i,\sigma_j],\sigma_1,\sigma_2,
\dots,\hat{\sigma}_i,\dots,\hat{\sigma}_j,\dots,\sigma_{k+1}\bigr)
\end{aligned}
\end{equation}
for sections $\sigma_1$,
$\sigma_2,\dots$,~$\sigma_{k+1}\in\Gamma(U,A)$.  Here $\sigma\cdot f$
denotes the Lie derivative of a function $f$ along the vector
field~$\an(\sigma)$.  The pair $\bigl(\Omega_A^\bu,d_A\bigr)$ is a
sheaf of commutative differential graded algebras (\textsc{cdga}),
which we call the \emph{de Rham} (or \emph{Chevalley-Eilenberg})
\emph{complex} of~$A$.  We denote the cohomology of the complex of
sections $\bigl(\Omega_A^\bu(U),d_A\bigr)$ by $H_A^\bu(U)$.

A Lie algebroid morphism $\phi\colon A\to B$ induces a morphism of
sheaves of \textsc{cdga}
\[\Omega^\bu(\phi)\colon\Omega_B^\bu\longto\phi_*\Omega_A^\bu.\]
On global sections this gives a pullback map
$\Omega^\bu(\phi)\colon\Omega_B^\bu(N)\to\Omega_A^\bu(M)$ and an
induced map in cohomology $H^\bu(\phi)\colon H_B^\bu(N)\to
H_A^\bu(M)$.  Usually we will denote both $\Omega^\bu(\phi)$ and
$H^\bu(\phi)$ by~$\phi^*$.  Taking $\phi$ to be the anchor $\an\colon
A\to TM$ of $A$ we get pullback maps
$\an^*\colon\Omega^\bu(M)\to\Omega_A^\bu(M)$ and $\an^*\colon
H^\bu(M)\to H_A^\bu(M)$, where $\Omega^\bu(M)$ denotes the usual de
Rham complex of $M$ and $H^\bu(M)$ its cohomology.

Let $\sigma$ be a global section of~$A$.  Associated with $\sigma$ are
various natural objects and operations.  The first is the
\emph{contraction operator}
$\iota_A(\sigma)\colon\Omega_A^k\to\Omega_A^{k-1}$ defined by
\glossary{iotaA@$\iota_A$, Lie algebroid interior product}
\begin{equation}\label{equation;iota}
\iota_A(\sigma)\alpha(\sigma_1,\sigma_2,\dots,\sigma_{k-1})=
\alpha(\sigma,\sigma_1,\sigma_2,\dots,\sigma_{k-1})
\end{equation}
for sections $\sigma_1$, $\sigma_2,\dots$,~$\sigma_{k-1}$ of~$A$.  It
is a derivation of degree $-1$ of the de Rham complex.  The section
$\sigma$ also determines a linear vector field $\bad_A(\sigma)$ on the
total space of $A$ that is given by the formula
\glossary{ad@$\bad_A\colon\Gamma(A)\to\Gamma(TA)$, ``adjoint
  representation'' of Lie algebroid}
\begin{equation}\label{equation;ad}
\bad_A(\sigma)(a)=
T_x\upsilon\bigl(\an_A(\sigma)(x)\bigr)-[\sigma,\upsilon](x)
\end{equation}
for $a\in A$ and $x=\pi_A(a)$, where $\upsilon\in\Gamma(A)$ is any
section with $\upsilon(x)=a$.  The vector field $\bad_A(\sigma)$ is
characterized by the following two requirements: first, the projection
$A\to M$ intertwines the flow $\Phi_t$ of $\bad_A(\sigma)$ with the
flow $\mr\Phi_t$ of the vector field $\an(\sigma)$ on $M$ and second,
\begin{equation}\label{equation;lie-section}
\frac{d}{dt}\Phi_t^*\tau=\Phi_t^*[\sigma,\tau]
\end{equation}
for all sections $\tau\in\Gamma(A)$
(\cite[\S\,7.6]{meinrenken;groupoids-algebroids}).  Here
$\Phi_t^*\tau$ denotes the section
$\Phi_t^{-1}\circ\tau\circ\mr\Phi_t$ of $A$.  We also write
$[\sigma,\tau]=\ca{L}_A(\sigma)\tau$ and call the operator
$\ca{L}_A(\sigma)$ the \emph{Lie derivative}.

A time-dependent section $\sigma_t$ of $A$ (defined for $t$ in some
interval containing $0$) gives rise to a time-dependent linear vector
field $\bad_A(\sigma_t)$ and hence to a flow $\Phi_t$ on $A$ with
initial condition $\Phi_0=\id_A$.  The
identity~\eqref{equation;lie-section} generalizes to
\begin{equation}\label{equation;lie-section-time}
\frac{d}{dt}\Phi_t^*\tau_t=
\Phi_t^*\bigl([\sigma_t,\tau_t]+\dot{\tau}_t\bigr)
\end{equation}
for all time-dependent sections $\tau_t$ of $A$.

\begin{lemma}\label{lemma;pull-auto}
Let $\sigma_t$ be a time-dependent section of a Lie algebroid $A\to M$
and let $\Phi_t$ be the flow of the vector field $\bad_A(\sigma_t)$
with initial condition $\Phi_0=\id_A$.
\begin{enumerate}
\item\label{item;automorphism}
For each $t$ the map
$\Phi_t$ is an automorphism of $A$ (where defined).
\item\label{item;preserve}
Let $B\to M$ be a Lie subalgebroid of $A$ over the same base which is
normalized by $\sigma_t$ in the sense that
$[\sigma_t,\tau]\in\Gamma(B)$ for all $t$ and all $\tau\in\Gamma(B)$.
Then the flow $\Phi_t$ preserves $B$.
\end{enumerate}
\end{lemma}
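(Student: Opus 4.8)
The plan is to prove~\eqref{item;automorphism} first and then deduce~\eqref{item;preserve} directly from the defining formula~\eqref{equation;ad}. For~\eqref{item;automorphism}, note first that $\bad_A(\sigma_t)$ is a (time-dependent) linear vector field on the total space of $A$ covering the vector field $\an_A(\sigma_t)$ on $M$, so by the standard theory of flows of linear vector fields each $\Phi_t$ is, where defined, a vector bundle isomorphism covering the flow $\mr\Phi_t$ of $\an_A(\sigma_t)$. Writing $\mu_t=\Phi_t^*\colon\Gamma(A)\to\Gamma(A)$ for the induced (invertible) pullback on sections, it remains to check that $\mu_t$ preserves the bracket and is compatible with the anchor. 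The heart of the matter is bracket-preservation, which I would obtain by a transport argument: for fixed $\alpha$, $\beta\in\Gamma(A)$ set $\tilde\alpha_t=\mu_t^{-1}\alpha$ and $\tilde\beta_t=\mu_t^{-1}\beta$. Differentiating the identity $\mu_t\tilde\alpha_t=\alpha$ and using~\eqref{equation;lie-section-time} gives $\dot{\tilde\alpha}_t=-[\sigma_t,\tilde\alpha_t]$, and likewise for $\tilde\beta_t$. Then for $\rho_t=[\tilde\alpha_t,\tilde\beta_t]$ the Jacobi identity yields exactly $[\sigma_t,\rho_t]+\dot\rho_t=0$, so~\eqref{equation;lie-section-time} forces $\frac{d}{dt}\mu_t[\mu_t^{-1}\alpha,\mu_t^{-1}\beta]=\mu_t\bigl([\sigma_t,\rho_t]+\dot\rho_t\bigr)=0$. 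Since this transported bracket equals $[\alpha,\beta]$ at $t=0$, it is constant, and substituting $\alpha=\mu_t\tau$, $\beta=\mu_t\upsilon$ gives $\mu_t[\tau,\upsilon]=[\mu_t\tau,\mu_t\upsilon]$.

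Anchor-compatibility I expect to be a purely formal consequence of bracket-preservation together with the module identity $\mu_t(f\tau)=(\mr\Phi_t^*f)\,\mu_t\tau$ and the Leibniz rule. Indeed, expanding $[\mu_t\tau,\mu_t(f\upsilon)]$ in two ways, once by the Leibniz rule and once through $\mu_t[\tau,f\upsilon]$, and comparing the coefficients of $\mu_t\upsilon$ shows $\an_A(\mu_t\tau)\cdot(\mr\Phi_t^*f)=\mr\Phi_t^*(\an_A(\tau)\cdot f)$ for all $f$, that is, $\an_A(\mu_t\tau)=(\mr\Phi_t)^*\an_A(\tau)$. This is precisely the anchor condition making $\Phi_t$ a Lie algebroid morphism, and being invertible it is an automorphism. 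The one place requiring care, and the main obstacle, is the bracket step: the naive quantity $P(t)=\mu_t[\tau,\upsilon]-[\mu_t\tau,\mu_t\upsilon]$ satisfies an equation that does not close on itself, and it is the conjugation by $\mu_t$ (transporting $\alpha$, $\beta$ back by $\mu_t^{-1}$) that turns the problem into a genuine ``derivative is zero'' statement via the Jacobi identity.

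For~\eqref{item;preserve}, I would avoid sections altogether and argue that $\bad_A(\sigma_t)$ is tangent to the submanifold $B$ of the total space $A$, whence its flow $\Phi_t$ preserves $B$. Fix $a\in B$, let $x=\pi_A(a)$, and choose a section $\upsilon\in\Gamma(B)$ with $\upsilon(x)=a$, which is possible because $B$ is a subbundle. In~\eqref{equation;ad} the term $T_x\upsilon\bigl(\an_A(\sigma_t)(x)\bigr)$ lies in $T_aB$ because $\upsilon$ takes values in $B$, while the term $[\sigma_t,\upsilon](x)$ lies in $B_x$, and hence in $T_aB$ as a vertical vector, because $\sigma_t$ normalizes $\Gamma(B)$. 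Thus $\bad_A(\sigma_t)(a)\in T_aB$ for every $a\in B$ and every $t$, so the time-dependent flow $\Phi_t$ maps $B$ into itself. This part presents no real difficulty once the freedom in the choice of $\upsilon$ in~\eqref{equation;ad} is exploited.
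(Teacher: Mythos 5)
Your proof is correct. Part~\eqref{item;preserve} coincides with the paper's argument: the paper likewise deduces from~\eqref{equation;ad} that $\bad_A(\sigma_t)$ is tangent to the submanifold $B$ of~$A$, and you have merely spelled out the choice of $\upsilon\in\Gamma(B)$ and the splitting of $\bad_A(\sigma_t)(a)$ into a term in the image of $T_x\upsilon$ and a vertical term in $B_x$. For part~\eqref{item;automorphism} both you and the paper rest on~\eqref{equation;lie-section-time} together with the fact that $\ad(\sigma_t)$ is a derivation of $\Gamma(A)$ (the Jacobi identity), but the executions differ. The paper reads~\eqref{equation;lie-section-time} as the operator equation $\frac{d}{dt}\Phi_t^*=\Phi_t^*\circ\ad(\sigma_t)$ and writes its solution as $\exp(\Sigma_t)$ with $\Sigma_t=\int_0^t\ad(\sigma_u)\,du$; strictly speaking this closed form is valid only when the operators $\ad(\sigma_u)$ commute among themselves (otherwise a time-ordered exponential is required), so the paper's formula is best read as shorthand for the principle that the flow of a family of derivations consists of automorphisms. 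Your transport argument --- pulling $\alpha$, $\beta$ back by $\mu_t^{-1}$, deriving $\dot{\tilde\alpha}_t=-[\sigma_t,\tilde\alpha_t]$, and using Jacobi to show that $\mu_t[\mu_t^{-1}\alpha,\mu_t^{-1}\beta]$ is constant --- proves exactly this principle directly and is immune to the time-ordering issue; as you observe, it is also the correct fix for the naive quantity $P(t)=\mu_t[\tau,\upsilon]-[\mu_t\tau,\mu_t\upsilon]$, whose derivative does not close on itself. Finally, you verify anchor compatibility from bracket preservation, the module identity $\mu_t(f\tau)=(\mr\Phi_t^*f)\mu_t\tau$, and the Leibniz rule; the paper leaves this step implicit in passing from ``automorphism of the Lie algebra $\Gamma(A)$'' to ``automorphism of the Lie algebroid $A$'', and your computation shows that it is indeed automatic.
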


\begin{proof}
\eqref{item;automorphism}~Reading~\eqref{equation;lie-section-time} as
a linear differential equation
$\frac{d}{dt}\Phi_t^*=\Phi_t^*\circ\ad(\sigma_t)$ for the operator
$\Phi_t^*$ (acting on time-independent sections $\tau$) with initial
condition $\Phi_0^*=\id$, we see that $\Phi_t^*=\exp(\Sigma_t)$, where
$\Sigma_t=\int_0^t\ad(\sigma_u)\,du$.  Since $\Sigma_t$ is a
derivation of the Lie algebra $\Gamma(A)$, $\Phi_t^*$ is an
automorphism of $\Gamma(A)$, so $\Phi_t$ is an automorphism of $A$.

\eqref{item;preserve}~It follows from~\eqref{equation;ad} that
$\bad_A(\sigma_t)$ is tangent to $B$.  Hence $\Phi_t(B)\subseteq B$.
\end{proof}

Dually, for differential forms the Lie derivative
$\ca{L}_A(\sigma)\colon\Omega_A^\bu\to\Omega_A^\bu$ is defined by
\glossary{LA@$\ca{L}_A$, Lie algebroid Lie derivative}
\begin{equation}\label{equation;lie}
\ca{L}_A(\sigma)\alpha=\frac{d}{dt}\Phi_t^*\alpha\Big|_{t=0}.
\end{equation}
It is a derivation of degree $0$ of the de Rham complex.  For a
time-dependent section $\sigma_t$ and a time-dependent $A$-form
$\alpha_t$ we have
\begin{equation}\label{equation;pull}
\frac{d}{dt}\Phi_t^*\alpha_t=
\Phi_t^*\bigl(\ca{L}_A(\sigma_t)\alpha_t+\dot{\alpha}_t\bigr).
\end{equation}

The exterior derivative~\eqref{equation;d}, the
contractions~\eqref{equation;iota}, and the Lie
derivatives~\eqref{equation;lie} obey the usual rules of the Cartan
differential calculus, namely
\begin{equation}\label{equation;cartan}
\begin{aligned}
{[\iota_A(\sigma),\iota_A(\tau)]}&=0,&\quad
[\ca{L}_A(\sigma),\ca{L}_A(\tau)]&=\ca{L}_A([\sigma,\tau]),\\
[\ca{L}_A(\sigma),d_A]&=0,&\quad
[\ca{L}_A(\sigma),\iota_A(\tau)]&=\iota_A([\sigma,\tau]),\\
[d_A,d_A]&=0,&\quad[\iota_A(\sigma),d_A]&=\ca{L}_A(\sigma)
\end{aligned}
\end{equation}
for all $\sigma$, $\tau\in\Gamma(A)$, where the square brackets denote
graded commutators.

%%%%%%%%%%%%%%%%%%%%%%%%%%%%%%%%%%%%%%%%%%%%%%%%%%%%%%%%%%%%%%%%%%%%%%%%
\section{Notation index}\label{section;notation}
%%%%%%%%%%%%%%%%%%%%%%%%%%%%%%%%%%%%%%%%%%%%%%%%%%%%%%%%%%%%%%%%%%%%%%%%

%%%%%%%%%%%%%%%%%%%%%%%%%%%%%%%%%%%%%%%%%%%%%%%%%%%%%%%%%%%%%%%%%%%%%%%%
\input{darboux.gls}
%%%%%%%%%%%%%%%%%%%%%%%%%%%%%%%%%%%%%%%%%%%%%%%%%%%%%%%%%%%%%%%%%%%%%%%%

%%%%%%%%%%%%%%%%%%%%%%%%%%%%%%%%%%%%%%%%%%%%%%%%%%%%%%%%%%%%%%%%%%%%%%%%

\bibliographystyle{amsplain}

\bibliography{hamilton}

%%%%%%%%%%%%%%%%%%%%%%%%%%%%%%%%%%%%%%%%%%%%%%%%%%%%%%%%%%%%%%%%%%%%%%%%

%%%%%%%%%%%%%%%%%%%%%%%%%%%%%%%%%%%%%%%%%%%%%%%%%%%%%%%%%%%%%%%%%%%%%%%%

\end{document}